\numberwithin{equation}{section}
\newtheorem{main}{Theorem}
\newtheorem{mcor}[main]{Corollary}
\newtheorem{theorem}{Theorem}[section]
\newtheorem{thrm}[theorem]{Theorem}
\newtheorem{lem}[theorem]{Lemma}
\newtheorem{prop}[theorem]{Proposition}
\newtheorem{conj}[theorem]{Conjecture}
\newtheorem{cor}[theorem]{Corollary}
\theoremstyle{definition}
\newtheorem{defn}[theorem]{Definition}
\newtheorem{notation}[theorem]{Notation}
\newtheorem{Remarks}[theorem]{Remarks}
\newtheorem{claim}[theorem]{Claim}
\newtheorem*{claim*}{Claim}
{ \theoremstyle{remark}\newtheorem*{remark}{Remark} }
\DeclareMathAlphabet{\pazocal}{OMS}{zplm}{m}{n}
\newcommand{\paA}{\pazocal{A}}
\def\ra{\rightarrow}
\def\la{\lambda}
\def\La{\Lambda}
\def\Om{\Omega}
\def\om{\omega}
\def\sg{\sigma}
\def\Sg{\Sigma}
\def\g{\gamma}
\def\G{\Gamma}
\def\Cal{\mathcal}
\newcommand{\Z}{\mathbb{Z}}
\newcommand{\C}{\mathbb{C}}
\newcommand{\Q}{\mathbb{Q}}
\newcommand{\R}{\mathbb{R}}
\theoremstyle{remark}
\newcommand{\set}[1]{ \left\lbrace #1 \right\rbrace }
\begin{document}

\title[Tensor product decompositions of II$_1$ factors arising from extensions of amalgamated free product groups]{Tensor product decompositions of II$_1$ factors arising from extensions of amalgamated free product groups}
% or if you want, simply \title{Title of the article}
%\author{Ionu\c t Chifan}
%\email{SOMETHING@math.ubc.ca}
\author[I. Chifan]{Ionu\c{t} Chifan}
\address{Department of Mathematics, The University of Iowa, 14 MacLean Hall, Iowa City, IA  
52242, USA}
\email{ionut-chifan@uiowa.edu}
\thanks{I.C. was partially supported by NSF grants DMS \# 1600688 and DMS \# 1301370}

\author[R. de Santiago]{Rolando de Santiago}
\address{Department of Mathematics, University of California Los Angeles, Los Angeles, CA, USA }
\email{rdesantiago@math.ucla.edu}
\thanks{R.dS. was partially supported by  a grant at from Sloan Center for Exemplary Mentoring, NSF grant DMS \# 1600688  and RTG Assistantship at UCLA}

\author[W. Sucpikarnon]{Wanchalerm Sucpikarnon}
\address{Department of Mathematics, The University of Iowa, 14 MacLean Hall, Iowa City, IA  
52242, USA}
\email{wanchalerm-sucpikarnon@uiowa.edu}
\thanks{W.S. was partially supported by NSF grant DMS \# 1600688}

\maketitle

%%%%%%%%%%%%%%%%%%%%%%%%%%%%%%%%%%%%%%%%%%%%%%%%%%%%%%%%%%%%%%%%%%%%%%%%%%%%%%%%%%%%%%%%%%%%%%%%%%%%%%%%%%%%%%%%%%%%%%%%%%%%%%%%%%%%%%%%%%%%%%%%%%%%%%%%%%%%%%%%%%%%%%%%%%%%%%%%%%%%%%%%%%%%%%%%%%%%%%%%%%%%%%%%%%%%%%%%%%%%%%%%%%%%%%%%%%%%%%%%%%%%%%%%%%%%

%\setcounter{MaxMatrixCols}{10}
%TCIDATA{OutputFilter=LATEX.DLL}
%TCIDATA{Version=5.50.0.2960}
%TCIDATA{<META NAME="SaveForMode" CONTENT="1">}
%TCIDATA{BibliographyScheme=Manual}
%TCIDATA{LastRevised=Monday, February 18, 2013 12:42:19}
%TCIDATA{<META NAME="GraphicsSave" CONTENT="32">}

%add ``[dvipdfmx]" in \usepackage{graphicx}

%\subjclass[2010]{22D40; 20F65; 43A15}

\begin{abstract}  In this paper we introduce a new family of icc groups $\G$ which satisfy the following product rigidity phenomenon, discovered in \cite{DHI16} (see also \cite{dSP17}): \emph{all} tensor product decompositions of the II$_1$  factor $L(\G)$ arise \emph{only} from the canonical direct product decompositions of the underlying group $\G$. Our groups are assembled from certain HNN-extensions and amalgamated free products and include many remarkable groups studied throughout mathematics such as graph product groups, poly-amalgam groups, Burger-Mozes groups, Higman group, various integral two-dimensional Cremona groups, etc. As a consequence we obtain several new examples of groups that give rise to prime factors. 
\end{abstract}

\maketitle

%%%%%%%%%%%%%%%%%%%%%%%%%%%%%%%%%%%%%%%%%%%%%%%%%%%%%%%%%%%%%%%%%%%%%%%%%%%%%
%%%%%%%%%%%%%%%%%%%%%%%%%%%%%%%%%%%%%%%%%%%%%%%%%%%%%%%%%%%%%%%%%%%%%%%%%%%%%
%%%%%%%%%%%%%%%%%%%%%%%%%%  INTRODUCTION %%%%%%%%%%%%%%%%%%%%%%%%%%%%%%%%%%%%%%%%%%
%%%%%%%%%%%%%%%%%%%%%%%%%%%%%%%%%%%%%%%%%%%%%%%%%%%%%%%%%%%%%%%%%%%%%%%%%%%%%%
%%%%%%%%%%%%%%%%%%%%%%%%%%%%%%%%%%%%%%%%%%%%%%%%%%%%%%%%%%%%%%%%%%%%%%%%%%%%%

\section{Introduction}
An important step towards understanding the structure of II$_1$ factors is the study their tensor product decompositions.  A factor is called \emph{prime} if it cannot be decomposed as a tensor product of diffuse factors. Using his notion of $*$-orthogonal von Neumann algebras, S. Popa was able to show in \cite{Po83} that the (non-separable) II$_1$ factor $L(\mathbb F_S)$ arising from the free group $\mathbb F_S$ with uncountably many generators $S$ is prime. More than a decade later, using Voiculescu's influential free probability theory, Ge managed to prove the same result about the free group factors $L(\mathbb F_n)$ with countably many generators, $n\geq 2$ \cite{Ge98}.  Using a completely different perspective based on $C^*$-techniques, Ozawa obtained a far-reaching generalization of this by showing that for every icc hyperbolic group $\G$ the corresponding factor $L(\G)$ is in fact \emph{solid} (for every diffuse $A\subseteq L(\G)$ von Neumann subalgebra, its relative commutant $A'\cap L(\G)$ is amenable) \cite{Oz03}.  Developing a new approach rooted in the study of closable derivations, Peterson showed primeness of $L(\G)$, whenever $\G$ is any nonamenable icc group with positive first Betti number \cite{Pe06}. Within the powerful framework of his deformation/rigidity theory Popa discovered a new proof of solidity of the free group factors \cite{Po06}. These methods laid out the foundations of a rich subsequent activity regarding the study of primeness and other structural aspects of II$_1$ factors \cite{Oz04, CH08, CI08, Si10, Fi10,CS11,CSU11, SW11, HV12,Bo12, BHR12, DI12, CKP14, Is14, HI15, Ho15, DHI16, Is16}.      

\subsection{Statements of main results} Drimbe, Hoff and Ioana have discovered in \cite{DHI16} a new classification result regarding the study of tensor product decompositions of II$_1$ factors. Precisely, whenever $\G$ is an icc group that is measure equivalent to a direct product of non-elementary hyperbolic groups then \emph{all} possible tensor product decompositions of the corresponding II$_1$ factor $L(\G)$ can arise \emph{only} from the canonical direct product decompositions of the underlying group $\G$. Pant and the second author showed the same result holds when $\G$ is a poly-hyperbolic group with non-amenable factors in its composition series \cite{dSP17}. In this paper we introduce several new classes of groups for which this tensor product rigidity phenomenon still holds. Our groups arise from natural algebraic constructions involving HNN-extensions or amalgamated free products thus including many remarkable groups intensively studied throughout mathematics such as graph products or poly-amalgams groups.  
\vskip 0.05in 
Basic properties in Bass-Serre theory of groups show that the only way an amalgam $\G_1\ast_\Sg\G_2$ could decompose as a direct product is through its core $\Sigma$. Precisely, if $\G_1\ast_\Sg \G_2 =\La_1\times \La_2$ then there is a permutation $s$ of $\{1,2\}$ so that $\La_{s(1)}\leqslant\Sg$. This further gives $\Sg= \La_{s(1)}\times \Sg_0$, $\G_1= \La_{s(1)} \times \G^0_1$, $\G_2= \La_{s(1)} \times \G^0_2$ for some groups $\Sg_0\leqslant \G^0_1,\G^0_2$  and hence $\La_{s(2)} =\G^0_1\ast_{\Sg_0} \G^0_2$. An interesting question is to investigate situations when this basic group theoretic aspect could be upgraded to the von Neumann algebraic setting. It is known this fails in general since there are examples of product indecomposable icc amalgams whose corresponding factors are McDuff and hence decomposable as tensor products (see Remarks \ref{rk1}). However, under certain indecomposability assumptions on the core algebra (see also Remarks \ref{rk1}) we are able to provide a positive answer to our question.    
\begin{main}\label{tensordecompampf} Let $\G=\G_1\ast_{\Sigma}\G_2$ be an icc group such that $[\G_1:\Sg]\geq 2$ and $[\G_2:\Sg]\geq 3$. Assume that $\Sigma$ is finite-by-icc and any corner of $L(\Sg)$ is virtually prime. Suppose that $L(\G)=M_1\bar\otimes M_2$, for diffuse $M_i$'s. Then there exist decompositions $\Sg=\Omega \times \Sg_0$ with $\Sg_0$ finite, $\G_1= \Omega \times \G^0_1$, $\G_2= \Omega \times \G^0_2$, for some groups $\Sg_0\leqslant \G_1^0,\G^0_2$, and hence $\G =\Omega \times (\G^0_1\ast_{\Sg_0} \G^0_2)$. Moreover, there is a unitary $u\in L(\G)$, $t>0$, and a permutation $s$ of $\{1,2\}$ such that 
\begin{equation}
M_{s(1)}= u L(\Omega)^t u^*\quad{ and }\quad  M_{s(2)} = u L(\G_1^0\ast_{\Sg_0} \G_2^0)^{1/t} u^*.
\end{equation}
\end{main}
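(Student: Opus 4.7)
My plan is to view $L(\G) = L(\G_1)\ast_{L(\Sigma)}L(\G_2)$ as an amalgamated free product and apply the product-rigidity machinery from Popa's deformation/rigidity theory, following the template of \cite{DHI16, dSP17}. The three tasks are: (i) force one of $M_1, M_2$ to intertwine into the core $L(\Sigma)$; (ii) identify that factor with an amplified corner of $L(\Omega)$ for some subgroup $\Omega \leq \Sigma$, using the virtual primeness hypothesis; and (iii) descend the von Neumann algebraic data to the group-theoretic direct product decomposition.

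First, since $M_1$ and $M_2$ commute and generate $L(\G)$, each has a diffuse relative commutant. An Ioana--Popa--Vaes type dichotomy for AFP factors states that for a subfactor $Q \subseteq L(\G)$ with diffuse relative commutant, there is a nonzero projection $p \in Q' \cap L(\G)$ such that either $Qp \prec_{L(\G)} L(\Sigma)$ or $Qp \prec_{L(\G)} L(\G_i)$ for some $i$. Applied to $Q = M_j$, the $L(\G_i)$ alternative can be ruled out using the index assumptions $[\G_1:\Sigma]\geq 2$ and $[\G_2:\Sigma]\geq 3$, which prevent $L(\G)$ (generated by $M_j$ and its diffuse commutant) from being controlled by a single vertex algebra. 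Hence both factors intertwine into $L(\Sigma)$, and after permuting $\{1,2\}$ via $s$ I may suppose $M_{s(1)}$ admits a corner unitarily conjugate into $L(\Sigma)^n$.

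Next, the intertwining yields a projection $p \in M_{s(1)}$, a $*$-embedding $\theta: pM_{s(1)}p \hookrightarrow qL(\Sigma)^n q$, and a conjugating partial isometry. Transporting $M_{s(2)}$ through produces a diffuse subalgebra commuting with $\theta(pM_{s(1)}p)$ inside the corner. The virtual primeness of corners of $L(\Sigma)$, together with the finite-by-icc assumption on $\Sigma$, rigidly restricts a pair of commuting diffuse subalgebras in such a corner, pinning $\theta(pM_{s(1)}p)$ down (up to amplification and the finite part) as $L(\Omega)$ for some $\Omega \leq \Sigma$. Chasing the intertwiner back yields a unitary $u \in L(\G)$ and $t > 0$ with $M_{s(1)} = u L(\Omega)^t u^*$.

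The main obstacle is the descent to group theory. Since $M_{s(2)} \subseteq M_{s(1)}' \cap L(\G)$ and the two factors generate $L(\G)$, every unitary in $L(\G)$ normalizes the image $u L(\Omega)^t u^*$. Translating this via normalizer/quasi-normalizer control in AFP algebras, every $g \in \G$ satisfies $g\Omega g^{-1} = \Omega$ modulo a finite piece. For $g \in \G_i \setminus \Sigma$, a Bass--Serre length argument together with the finite-by-icc assumption upgrades this to $[g,\Omega] = 1$. Consequently $\Omega$ lies centrally inside each $\G_i$, giving compatible splittings $\Sigma = \Omega \times \Sigma_0$ with $\Sigma_0$ finite and $\G_i = \Omega \times \G_i^0$ with $\Sigma_0 \leq \G_i^0$; hence $\G = \Omega \times (\G_1^0 \ast_{\Sigma_0} \G_2^0)$. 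Finally, since $M_{s(1)} = uL(\Omega)^t u^*$ and the two commuting factors generate $L(\G) = L(\Omega) \bar\otimes L(\G_1^0 \ast_{\Sigma_0} \G_2^0)$, a standard cancellation argument for tensor decompositions of factors gives $M_{s(2)} = u L(\G_1^0 \ast_{\Sigma_0} \G_2^0)^{1/t} u^*$.
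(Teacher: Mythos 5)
Your step (i) is essentially the paper's (Corollary \ref{intertwiningcoregroupsafphnn}, obtained from \cite[Theorem A]{Va13} together with the fact, from \cite{Io12}, that $L(\G)$ is not amenable relative to $L(\Sg)$), but step (ii) contains the central gap. What the virtual primeness hypothesis actually buys (Lemma \ref{intertwiningdichotomy}) is a dichotomy: writing $B=\Phi(pM_{s(1)}p)\subseteq eL(\Sg)e$, either $B'\cap eL(\Sg)e$ has a diffuse factorial corner --- excluded because then $eL(\Sg)e$ would contain commuting diffuse subfactors whose join has finite index --- or $B\subseteq eL(\Sg)e$ has \emph{finite index}, i.e.\ $M_{s(1)}\cong^{com}_{L(\G)} L(\Sg)$. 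It does not ``pin down'' $\theta(pM_{s(1)}p)$ as $L(\Omega)$ for a subgroup $\Omega\leqslant\Sg$: an abstract finite-index subfactor of a corner of a group factor need not be of group form, and you offer no mechanism for such an identification. Indeed, in the paper the relevant $\Omega$ is \emph{not} a subgroup of $\Sg$; it is produced inside the centralizer $C_\G(\Sg)$ by the commensurability machinery of \cite{CdSS15} (Theorem \ref{fromrelcomtocomgroups}), after a downward basic construction argument (Theorem \ref{virtualprod}) converts $M_{s(1)}\cong^{com}L(\Sg)$ into the statement that $L(\Sg)\vee (L(\Sg)'\cap L(\G))$ has finite index in a corner of $L(\G)$; this yields $[\G:\Sg\Omega]<\infty$ and $M_{s(2)}\cong^{com}L(\Omega)$. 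Only then does Theorem \ref{fromvirtualtoproduct} --- an argument in the style of \cite{OP03} and \cite{CdSS15}, built from families of partial isometries and the subgroups $\{\g\in\G : |\mathcal O_{\Sigma_1}(\g)|<\infty\}$ --- upgrade these corner-level commensurabilities to a genuine splitting $\G=\G_1\times\G_2$ with the $M_i$ unitarily conjugate to amplified group factors. ``Chasing the intertwiner back'' produces only corners and partial isometries, never the full unitary conjugacy $M_{s(1)}=uL(\Omega)^tu^*$ you assert at that stage.

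Your step (iii) also fails as stated. It is false that every unitary of $L(\G)$ normalizes $M_{s(1)}$ (only unitaries of the form $u_1u_2$ with $u_i\in\mathcal U(M_i)$ do), and your claimed upgrade ``$[g,\Omega]=1$ for every $g\in\G_i\setminus\Sg$'' proves too much: since $\G_i\setminus\Sg$ generates $\G_i$ (for $g\in\G_i\setminus\Sg$ and $s\in\Sg$ write $s=g^{-1}(gs)$ with both factors outside $\Sg$), it would make $\Omega$ central in both $\G_1$ and $\G_2$, hence in $\G$, forcing $\Omega=1$ because $\G$ is icc --- contradicting the diffuseness of $M_{s(1)}$. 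The correct statement is only that $\Omega$ commutes with a \emph{complement}, and the paper derives it by pure Bass--Serre theory once the splitting $\G=\La_1\times\La_2$ is already in hand: Lemma \ref{amalgamprodecomp} conjugates so that $\Sg\cap\La_{\sigma(1)}$ has finite index in $\La_{\sigma(1)}$, passes to a finite quotient, and invokes Karrass--Solitar \cite{KS70} to conclude $\La_{\sigma(1)}\leqslant\Sg$, from which the decompositions $\Sg=\Omega\times\Sg_0$, $\G_i=\Omega\times\G_i^0$ follow. Finally, you never address why $\Sg_0$ is finite; this follows because otherwise $L(\Sg)=L(\Omega)\bar\otimes L(\Sg_0)$ would exhibit commuting diffuse subalgebras generating $L(\Sg)$, contradicting the virtual primeness hypothesis.
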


The same question can be considered in the realm of non-degenerate HNN-extension groups and a similar approach leads to the following counterpart of Theorem \ref{tensordecompampf}.

\begin{main}\label{tensordecomphnnf} Let $\G={\rm HNN}(\La,\Sigma,{\theta})$ be an icc group such that $\Sg\neq \La \neq \theta(\Sg)$. Assume that $\Sigma$ is finite-by-icc and any corner of $L(\Sg)$ is virtually prime. Suppose that $L(\G)=M_1\bar\otimes M_2$, for diffuse $M_i$'s. Then there exist decompositions $\Sg=\Omega \times \Sg_0$ with $\Sg_0$ finite and $\La= \Omega \times \La_0$. In addition, there is $\omega \in \Om$ such that $\theta= {\rm ad}(\omega)_{|\Om} \times \theta_{|\Sg_0} :\Om \times \Sg_0 \ra \Om \times \La_0$ and hence $\G = \Omega \times {\rm HNN}(\La_0,\Sg_0,\theta_{|\Sg_0})$. Also, there is a unitary $u\in L(\G)$, $t>0$, and a permutation $s$ of $\{1,2\}$ such that  
\begin{equation}
M_{s(1)}= u L(\Omega)^t u^*\quad \text{ and }\quad  M_{s(2)} = u L({\rm HNN}(\La_0,\Sg_0, \theta_{| \Sg_0} ))^{1/t} u^*.
\end{equation}
\end{main}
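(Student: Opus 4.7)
The plan is to mirror, step by step, the argument for Theorem \ref{tensordecompampf}, replacing the Bass-Serre tree of the amalgam by the one attached to the HNN-extension $\G=\mathrm{HNN}(\La,\Sg,\theta)$. That tree has vertex stabilisers conjugate to $\La$ and edge stabilisers conjugate to $\Sg$; the non-degeneracy condition $\Sg\neq\La\neq\theta(\Sg)$ guarantees that the action of $\G$ on it is minimal and non-elementary. Consequently the full machinery of $s$-malleable deformations for HNN-extensions (Ioana--Peterson--Popa, Fima, Chifan--Ioana) applies to the inclusion $L(\Sg)\subset L(\G)$, and it is this deformation that governs how $M_1$ and $M_2$ can sit relative to the core.

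The first concrete step would be a location result: given the tensor decomposition $L(\G)=M_1\bar\otimes M_2$ with diffuse $M_i$'s, at least one factor---say $M_{s(1)}$---must intertwine into $L(\Sg)$ in Popa's sense, i.e.\ $M_{s(1)}\prec_{L(\G)}L(\Sg)$. The underlying mechanism is the standard dichotomy for HNN-extensions: any diffuse subalgebra either corners into the core or, together with its normaliser, becomes weakly mixing relative to $L(\Sg)$, and the latter alternative is incompatible with having a diffuse tensor complement $M_{s(2)}$. Once $M_{s(1)}\prec_{L(\G)}L(\Sg)$ is in hand, the finite-by-icc hypothesis allows one to write $\Sg=\Om\times\Sg_0$ with $\Sg_0$ finite and $\Om$ icc; virtual primeness of every corner of $L(\Sg)$ then forces $M_{s(1)}$ to coincide, up to unitary conjugation and an amplification by some $t>0$, with $L(\Om)^t$. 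The relative commutant calculation, combined with control of the normaliser of the core in $L(\G)$, then yields the claimed description of $M_{s(2)}$ modulo the group-theoretic identifications below.

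The hard part will be upgrading the von Neumann algebraic direct factor $\Om\leq\Sg$ to a genuine direct factor of $\La$ and pinning down the action of the stable letter. I would combine three ingredients: (i) Popa--Vaes type uniqueness of prime tensor decompositions to promote the algebraic splitting of $L(\Sg)$ into a group-level splitting $\G=\Om\times\G_0$, which automatically restricts to $\La=\Om\times\La_0$ and $\Sg=\Om\times\Sg_0$ since $\Om\leq\Sg\leq\La$; (ii) control of the quasi-normaliser of $L(\Sg)$ inside $L(\G)$ in the HNN setting, to ensure that this splitting is compatible with the Bass-Serre geometry rather than merely abstract; (iii) an analysis of the action of $\theta$: because $\theta$ is induced by conjugation by the stable letter, comparing the two embedded copies of $\Sg$ in $\La$ under the splitting forces $\theta|_{\Om}=\mathrm{ad}(\omega)|_{\Om}$ for some $\omega\in\Om$ and $\theta(\Sg_0)\subseteq\La_0$. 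Assembling these pieces gives both the identification $\G=\Om\times\mathrm{HNN}(\La_0,\Sg_0,\theta|_{\Sg_0})$ and the final unitary description of the tensor factors $M_{s(1)},M_{s(2)}$.
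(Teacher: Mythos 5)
Your high-level skeleton does match the paper's: locate one tensor factor in the core $L(\Sg)$, use virtual primeness to upgrade that location, promote an algebraic splitting to a group splitting, and finish with a group-theoretic analysis of the stable letter (your ingredient (iii) is exactly the paper's Lemma \ref{amalgamprodecomp}(2), which writes the stable letter as $t=t_1t_2$ with $t_i\in\La_{\sigma(i)}$ and deduces $\theta_{|\Om}={\rm ad}(t_1)$). But the middle of your argument has a genuine gap. Virtual primeness of corners of $L(\Sg)$ does \emph{not} ``force $M_{s(1)}$ to coincide, up to unitary conjugation and an amplification, with $L(\Om)^t$''; what it yields, via the dichotomy of Lemma \ref{intertwiningdichotomy}, is only \emph{spatial commensurability} $M_{s(1)}\cong^{com}_{L(\G)} L(\Sg)$, i.e.\ a corner of $M_{s(1)}$ conjugates by a partial isometry onto a finite-index subfactor of a corner of $L(\Sg)$. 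Passing from this to the unitary conjugacy and to the splitting of $\G$ is the paper's main technical content, and it is precisely what your sketch treats as nearly automatic: Theorem \ref{virtualprod} converts the commensurability --- through the group-theoretic criterion Theorem \ref{fromrelcomtocomgroups} from \cite{CdSS15}, which is where the finite-by-icc hypothesis is actually consumed --- into a subgroup $\Om\leqslant C_\G(\Sg)$ with $[\G:\Sg\Om]<\infty$ and $M_{s(2)}\cong^{com}L(\Om)$, and then Theorem \ref{fromvirtualtoproduct}, an \cite{OP03}-style perturbation argument, produces the genuine decomposition $\G=\G_1\times\G_2$ with $M_{s(1)}=uL(\G_1)^tu^*$ and $M_{s(2)}=uL(\G_2)^{1/t}u^*$. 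Your proposed substitute for these steps, ``Popa--Vaes type uniqueness of prime tensor decompositions,'' does not apply here: neither tensor factor is known to be prime (in particular the HNN factor $M_{s(2)}$ is not assumed prime), and no biexactness or hyperbolicity is available.

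A second concrete error: your claim that ``the finite-by-icc hypothesis allows one to write $\Sg=\Om\times\Sg_0$ with $\Sg_0$ finite and $\Om$ icc'' is false in general --- finite-by-icc extensions need not split as direct products. In the paper this splitting is a \emph{conclusion}, extracted from the product decomposition $\G=\La_1\times\La_2$ by Lemma \ref{amalgamprodecomp}(2): Theorem \ref{intertwiningincore2} places a conjugate of one direct factor at finite index inside $\Sg$, and the result of \cite{dC09} on finite direct factors of non-degenerate HNN quotients then forces $\La_{\sigma(1)}\leqslant\Sg$; finiteness of $\Sg_0$ follows afterwards by playing the commensurability $M_{s(1)}\cong^{com}L(\Sg)$ against the identification $M_{s(1)}=uL(\Om)^tu^*$. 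Finally, a minor point on your opening step: the paper does not run the location argument through $s$-malleable deformations or a weak-mixing dichotomy, but through Vaes's classification of normalizers \cite{Va13} applied to commuting subalgebras with finite-index join, transferred to the HNN setting via Ueda's realization of HNN extensions as corners of amalgamated free products \cite{Ue08}, together with non-amenability of $L(\G)$ relative to $L(\Sg)$ coming from the proof of \cite[Theorem 7.1]{Io12}.
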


One way to take this analysis to a next step would be to investigate more complicated groups which can be structurally assembled from amalgamated free products. We denote by $\mathcal{T}$ the class of all icc grous that are either amalgamated free products $\G_1\ast_{\Sigma}\G_2$ satisfying $[\G_1:\Sg]\geq 2$, $[\G_2:\Sg]\geq 3$ and $\Sg\in \mathcal C_{\rm rss}$, or, HNN-extensions ${\rm HNN}(\La,\Sg,\theta)$ satisfying $\Sg\neq \La\neq \theta(\Sg)$ and $\Sg\in \mathcal C_{\rm rss}$. For the precise definition of class $\mathcal C_{\rm rss}$ we refer the reader to \cite[Definition 2.7]{CIK13}, but we point out briefly that it contains all finite groups, all non-elementary hyperbolic groups and all non-amenable free products groups. Then we write $\G\in Quot_n(\mathcal{T})$ and say \emph{$\G$ is an $n$-step extension of groups in $\mathcal{T}$} (or a \emph{poly-amalgam of length $n$}) if $\G$ admits a composition series of length $n$ whose factor groups belong to $\mathcal{T}$ (see Section \ref{sec:FiniteStepExt}). This class covers many important groups such as direct products of polyfree groups, including: central quotients of most surface braid groups; most mapping class groups, Torelli groups, and Johnson kernels of low genus surfaces; etc \cite{CKP14}. Combining our methods with previous deep classification results in von Neumann algebras from \cite{PV11,PV12,Io12,Va13,CIK13,CdSS15} we are able to classify of all tensor decompositions of II$_1$ factors arising from these poly-amalgam groups:

\begin{main}  Let $\mathcal D$ be the class of all groups commensurable to a group in $\mathcal C_{\rm rss}\cup (\text{finite-by-}\mathcal{T})$. Let $\G\in Quot_n(\mathcal{T})\cup (Quot_n(\mathcal{T})$-by-$\mathcal C_{\rm rss})$ be an icc group. Assume that $L(\G)=M_1\bar\otimes M_2$, for diffuse $M_i$'s. Then there exist a decomposition $\G=\G_1\times \G_2$, where $\G_i$ is commensurable to a group in $Quot_{n_i}( \mathcal D)$ for some $n_i\in \overline{1,n}$. Moreover, one can find a unitary $u\in L(\G)$, $t>0$ and a permutation $s$ of $\{1,2\}$ such that 
\begin{equation}
M_{s(1)}=uL(\G_1)^tu^*\quad \text{ and }\quad  M_{s(2)}=uL(\G_2)^{1/t}u^*.
\end{equation}  
 \end{main}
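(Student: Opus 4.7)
The plan is to proceed by induction on the composition series length $n$, with Theorems \ref{tensordecompampf} and \ref{tensordecomphnnf} providing the base case. For $n=1$ we have $\G\in \mathcal{T}$, so $\G$ is an amalgamated free product $\G_1\ast_\Sg \G_2$ or a non-degenerate HNN-extension ${\rm HNN}(\La,\Sg,\theta)$ with core $\Sg\in \mathcal{C}_{\rm rss}$. Since every group in $\mathcal{C}_{\rm rss}$ is finite-by-icc and, by \cite[Definition 2.7]{CIK13}, every corner of its group factor is virtually prime, the hypotheses of Theorem \ref{tensordecompampf} or \ref{tensordecomphnnf} are satisfied, and those theorems directly supply the required product decomposition together with the unitary $u$, amplification $t>0$, and permutation $s$.

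For the inductive step, fix a composition series $1=N_0\triangleleft N_1\triangleleft\cdots\triangleleft N_n=\G$ with $N_i/N_{i-1}\in\mathcal{T}$, set $H=N_{n-1}$ and $Q=\G/H\in\mathcal{T}$, and view $L(\G)=L(H)\rtimes Q$. Starting from $L(\G)=M_1\bar\otimes M_2$, I would combine Popa's intertwining-by-bimodules with the s-malleable deformations and spectral gap arguments attached to the amalgam/HNN structure of $Q$, as developed in \cite{Io12,Va13,CdSS15}, to establish a dichotomy for $M_1,M_2$ relative to the subalgebra $L(H)\subseteq L(\G)$. The goal is to produce a pair of commuting subfactors corresponding, on one side, to a direct product decomposition of $Q$ obtained from Theorems \ref{tensordecompampf}/\ref{tensordecomphnnf}, and on the other side to a compatible tensor decomposition of $L(H)$ to which the inductive hypothesis applies on the poly-amalgam $H\in Quot_{n-1}(\mathcal{T})$. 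Splicing these together should yield $\G=\G_1\times\G_2$ with each $\G_i$ commensurable to a group in $Quot_{n_i}(\mathcal{D})$ and $n_1+n_2\leq n$, together with the matching tensor identification of the $M_i$'s up to conjugation and amplification.

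The second family, $\G$ being $Quot_n(\mathcal{T})$-by-$\mathcal{C}_{\rm rss}$, fits the same template. Writing $1\to H\to\G\to R\to 1$ with $H\in Quot_n(\mathcal{T})$ and $R\in\mathcal{C}_{\rm rss}$, one exploits $L(\G)=L(H)\rtimes R$ and the rigidity of the action $R\ca L(H)$ from \cite{PV11,PV12,CIK13} to descend the given tensor splitting to an $R$-equivariant decomposition of $L(H)$, and then invokes the poly-amalgam case just established for $H$. The reason the conclusion is stated only up to commensurability inside the enlarged class $\mathcal{D}=\mathcal{C}_{\rm rss}\cup(\text{finite-by-}\mathcal{T})$ is precisely that each descent through a short exact sequence with $\mathcal{C}_{\rm rss}$ quotient (or with finite kernel coming from a $\Sg_0$ as in Theorems \ref{tensordecompampf} or \ref{tensordecomphnnf}) only recovers a direct product structure on a finite-index subgroup and introduces finite normal corrections at both the group and von Neumann algebra levels.

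The principal obstacle is the inductive dichotomy itself. Ruling out mixed configurations in which neither $M_i$ intertwines cleanly into $L(H)$ requires the virtually prime corner assumption on each core $\Sg$ to propagate correctly all the way along the composition series, and demands careful tracking of how unitary conjugations, the amplification parameters, and the finite-index perturbations accumulate from one step to the next. Once this bookkeeping is handled, the precise conclusion $M_{s(1)}=uL(\G_1)^tu^*$ and $M_{s(2)}=uL(\G_2)^{1/t}u^*$ follows by composing the base-case identifications with the amplifications matched along the inductive chain.
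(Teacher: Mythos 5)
Your overall skeleton (induction on $n$, with Theorems \ref{tensordecompampf} and \ref{tensordecomphnnf} anchoring the case $n=1$, and the top quotient $Q=\G/N_{n-1}\in\mathcal{T}$ driving the induction) matches the architecture of the paper, but the inductive step as you describe it has a genuine gap. You propose to run the dichotomy ``relative to the subalgebra $L(H)\subseteq L(\G)$'' via s-malleable deformations of a crossed product, and then to descend the given splitting to an equivariant tensor decomposition of $L(H)$ to which the inductive hypothesis applies. Neither half works as stated. First, the correct target of the intertwining is not $L(H)$ but $L(\pi^{-1}(\Sg))$, the preimage of the amalgam/HNN core of the top quotient: what the paper proves (Theorem \ref{intertquotient}) is that some $M_i\prec L(\pi^{-1}(\Sg))$, and it obtains this not through deformations of $L(H)\rtimes Q$ but through the comultiplication $\Delta^\pi:L(\La)\ra L(\La)\bar\otimes L(\G)$, the AFP structure of the second tensor leg, Vaes' normalizer theorem \cite{Va13}, and the transfer results \cite[Propositions 8.5, 8.6]{CK15}. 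Since $\pi^{-1}(\Sg)$ properly contains $H$ whenever the core is infinite, an intertwining into $L(H)$ is simply not available; moreover, the group that must be fed back into the induction is $\pi^{-1}(\Sg)$, which is $Quot_{n-1}(\mathcal{T})$-by-$\mathcal C_{\rm rss}$ rather than a member of $Quot_{n-1}(\mathcal{T})$ --- this is exactly why the statement is formulated for the union class, and your treatment of the ``-by-$\mathcal C_{\rm rss}$'' case as a separate afterthought misses that it is forced at every step of the induction.

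Second, and more fundamentally, you are inducting on the exact tensor-splitting statement, and the ``splicing'' step --- recovering an honest decomposition of $\G$ from decompositions of $H$ and $Q$ --- is precisely what cannot be done directly: the factors $M_i$ intertwine into $L(\pi^{-1}(\Sg))$ only up to corners and finite-index perturbations, so no tensor decomposition of $L(H)$, equivariant or otherwise, is produced along the way. The paper resolves this by inducting on a weaker, commensurable statement that is stable under these perturbations: the notion $P\cong^{com}_M Q$ of Section \ref{sec:commensurablealgebras}, the dichotomy of Lemma \ref{intertwiningdichotomy} (where the virtual primeness of corners of $L(\Sg)$ for $\Sg\in\mathcal C_{\rm rss}$ is used --- note this is a consequence of Theorem \ref{intertquotientCrss}, not of the definition of $\mathcal C_{\rm rss}$ as you assert), the transitivity Proposition \ref{transitivity}, and the intermediate Theorem \ref{almosttensor}, which outputs commuting icc subgroups $\Sg_1,\Sg_2<\G$ with $[\G:\Sg_1\Sg_2]<\infty$ and $M_i\cong^{com}_{L(\G)}L(\Sg_i)$. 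Only after the induction closes is this upgraded, in a single step, to the exact conclusion: Theorem \ref{fromvirtualtoproduct} (following \cite{OP03}, \cite{CdSS15} and Ge's theorem) converts the virtual data into $\G=\G_1\times\G_2$ with $M_{s(1)}=uL(\G_1)^tu^*$ and $M_{s(2)}=uL(\G_2)^{1/t}u^*$, and the purely group-theoretic Theorem \ref{prodstructureextensionamalagam} (resting on Lemma \ref{amalgamprodecomp}, i.e.\ the fact that a direct factor of an amalgam or HNN extension must sit inside the core) places each $\G_i$ in $Quot^c_{n_i}(\mathcal D)$. Without this intermediate commensurable layer your induction has no statement strong enough to close on itself, so the proposal, while pointing at several of the right ingredients, does not constitute a proof.
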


Another remarkable class of groups whose tensor decompositions of the corresponding factors could be completely described are the graph product groups. These are natural generalizations of the right angled Artin and Coxeter groups and were introduced by E. Green in her PhD thesis \cite{Gr90}. The study of graph products and their subgroups has became a trendy subject over the last decade in geometric group theory and many important results have been discovered, \cite{A13,HW08,W11,MO13,AM10}. These groups have been considered in the von Neumann algebraic setting in \cite{CF14} and several structural results (strong solidity, Cartanless) for these algebras have emerged from the works \cite{CF14,Ca16}. Our work investigates new aspects regarding the structure of these algebras.   

To introduce the result we briefly recall the graph product construction. Let $\mathcal G$ be a finite graph with vertex set $\mathcal V$ and without loops or multiple edges. The graph product ${\mathcal G}(\G_v, v\in \mathcal V)$ of a given  family  of \emph{vertex groups} $\{\G_v\}_{v\in \mathcal V}$ is the quotient of the free product $\ast_{v\in \mathcal V} \G_v$ by the relations $[\G_u,\G_v]=1$ whenever $u$ and $v$ are adjacent vertices in $\mathcal G$. For every $v\in \mathcal V$ we denote by ${\rm link}(v)$ the set of all vertices $w\neq v$ that are adjacent to $v$. Given a subset $\mathcal A\subseteq \mathcal V$ we denote by ${\rm link}(\mathcal A)=\cap_{v\in \mathcal A}{\rm link}(v)$. Also, we let $\mathcal V_{\rm star}$ be the collection of all $v\in\mathcal V$ such that ${\rm link}(v)=\mathcal V\setminus \{v\}$. 

It is well known that a graph product group can be decomposed non-canonically as a direct product precisely when the vertices of the underlying graph can be partitioned into two disjoint  sets $\mathcal{V}_1\sqcup \mathcal{V}_2=V $ so that each vertex $v\in \mathcal{V}_1 $ is connected to by an edge to every vertex in $V_2$, and vice versa; equivalently, the complement of the underlying graph has two components.  In  this situation, the factors which decompose $\Gamma $ arise from the sub-graph products corresponding to the subgraphs induced by $\mathcal{V}_1 $ and $\mathcal{V}_2 $.  We show that this basic group factorization phenomenon passes to the von Neumann algebraic structure, thus enabling a complete classification of all tensor product decompositions of $L({\mathcal G}(\G_v, v\in \mathcal V))$, for a large family of graphs $\mathcal G$. Specifically, we have the following 

\begin{main}
Let $\G=\mathcal{G}(\G_v, v\in \pazocal V)$ an icc graph product group. Assume that $ \mathcal V_{\rm star}=\emptyset $ and also $|\G_v|=\infty$ for all $v\in \mathcal V$. Suppose that $L(\G)=M_1\bar\otimes M_2$, for diffuse $M_i$'s. Then one can find a proper subset $ \paA\subset \mathcal V$ such that $\paA\sqcup {\rm link}(\paA)=\mathcal V$, a unitary  $u\in L(\G)$, and a scalar $t>0$ satisfying 
\begin{align*}
M_1 = uL(\G_{\paA})^t u^*\quad\text{ and }\quad M_2  = u L(\G_{{\rm link}(\paA)})^{1/t}u^*.
\end{align*}\end{main}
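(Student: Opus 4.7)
The plan is to argue by induction on $|\mathcal V|$, splitting the analysis according to whether the complement graph $\mathcal G^{\mathrm c}$ is connected. Two classical facts about graph products underpin the strategy: first, every group-theoretic direct product decomposition of $\G$ arises from a partition of $\mathcal V$ of the form $\paA\sqcup {\rm link}(\paA)=\mathcal V$, and such a splitting exists if and only if $\mathcal G^{\mathrm c}$ is disconnected; second, for any non-star vertex $v$, one has the visual amalgamated free product decomposition
\[
\G \;=\; \G_{{\rm star}(v)} \ast_{\G_{{\rm link}(v)}} \G_{\mathcal V\setminus\{v\}},
\]
whose core has infinite index on both sides because every $\G_w$ is infinite.

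If $\mathcal G^{\mathrm c}$ is disconnected with partition $\mathcal V=\mathcal V_1\sqcup\mathcal V_2$ induced by its connected components, then $\G=\G_{\mathcal V_1}\times\G_{\mathcal V_2}$ at the group level. Each $\G_{\mathcal V_i}$ is again an icc graph product on a smaller vertex set still satisfying $\mathcal V_{\rm star}=\emptyset$ for its own graph, so the induction hypothesis applies to each factor. Iterating this reduction, one is left with pieces whose complements are connected. Thus the heart of the proof is the following primeness claim: if $\mathcal G^{\mathrm c}$ is connected, then $L(\G)$ is prime.

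To establish this, suppose $L(\G)=M_1\bar\otimes M_2$ with diffuse $M_i$ and pick a non-star vertex $v$, which exists since $\mathcal V_{\rm star}=\emptyset$. Applying Theorem \ref{tensordecompampf} to the visual AFP above produces a group decomposition $\G=\Om\times(\G_1^0\ast_{\Sg_0}\G_2^0)$ with $\Sg=\Om\times\Sg_0$ and $\Sg_0$ finite, together with $M_{s(1)}=uL(\Om)^tu^\ast$ for some unitary $u$ and scalar $t>0$. Because $\mathcal G^{\mathrm c}$ is connected, the standard structure theorem for direct product decompositions of graph products forces any direct factor $\Om$ of $\G$ of the form produced by Theorem \ref{tensordecompampf} to be trivial; this makes $M_{s(1)}=\mathbb C$, contradicting diffuseness. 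Hence no nontrivial tensor decomposition exists, so $L(\G)$ is prime.

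The main obstacle is verifying the hypothesis of Theorem \ref{tensordecompampf} that $\Sg=\G_{{\rm link}(v)}$ is finite-by-icc with virtually prime corners, since $\G_{{\rm link}(v)}$ is itself a graph product and may have intricate structure. The plan is to choose $v$ with care — in the favorable scenario ${\rm link}(v)=\emptyset$, the hypothesis holds trivially — and when no such vertex exists, to invoke the inductive hypothesis on the subgraph ${\rm link}(v)$ to refine the amalgamation until its core either lies in $\mathcal C_{\rm rss}$ or fits into the poly-amalgam framework developed earlier for Theorem C. Once the primeness claim is settled in the connected-complement case, producing the explicit unitary identification of $M_1,M_2$ with the claimed corners in the decomposable case follows by combining Popa's intertwining-by-bimodules techniques with the essentially unique tensor factorization of finite products of prime II$_1$ factors.
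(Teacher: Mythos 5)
Your reduction to the connected-complement case founders exactly at the step you flag as ``the main obstacle,'' and the proposed patch cannot be carried out. Theorem \ref{tensordecompampf} requires the amalgamated core to be finite-by-icc with \emph{all corners of $L(\Sigma)$ virtually prime}, and for $\Sigma=\G_{{\rm link}(v)}$ this fails generically: whenever ${\rm link}(v)$ contains two adjacent vertices $u,w$ spanning a join of its induced subgraph (e.g.\ take a triangle $\{a,b,c\}$ with pendant vertices $d$ attached to $a$ and $e$ attached to $b$; then $\mathcal V_{\rm star}=\emptyset$, the complement is connected, and ${\rm link}(c)=\{a,b\}$ with $a,b$ adjacent), one has $\G_{{\rm link}(v)}=\G_a\times\G_b$ with both factors infinite, so $L(\G_{{\rm link}(v)})$ is a tensor product of diffuse algebras and in particular not virtually prime. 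Remarks \ref{rk1} show this hypothesis cannot simply be dropped from Theorem \ref{tensordecompampf}. Your inductive fallback does not supply it either: the induction hypothesis concerns graph products with $\mathcal V_{\rm star}=\emptyset$, whereas the induced graph on ${\rm link}(v)$ typically has star vertices (in the example above, both $a$ and $b$ are star vertices of the edge $\{a,b\}$); moreover, even where it applies, a classification of tensor decompositions is strictly weaker than virtual primeness of all corners, which is what Lemma \ref{intertwiningdichotomy} consumes. The ``poly-amalgam framework'' escape route also fails, since it requires cores in $\mathcal C_{\rm rss}$ at every stage, and direct products of infinite groups are not in $\mathcal C_{\rm rss}$. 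Separately, your concluding appeal to an ``essentially unique tensor factorization of finite products of prime II$_1$ factors'' invokes a theorem that does not exist: unique prime factorization results require additional structure (biexactness, hyperbolicity, etc.), not primeness alone, so even granting the primeness claim the disconnected case would not close.

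The paper's actual proof avoids primeness of the pieces entirely and runs no induction on the graph. For \emph{every} non-star vertex $v$ it applies Corollary \ref{intertwiningcoregroupsafphnn} --- which, unlike Theorem \ref{tensordecompampf}, places no hypothesis on the core --- to the decomposition \eqref{111}, obtaining $M_1\prec L(\G_{{\rm link}(v)})$ or $M_2\prec L(\G_{{\rm link}(v)})$, upgraded to $\prec^s$ by \cite[Lemma 2.4(2)]{DHI16}. Taking $A_i\subseteq\mathcal V$ maximal with $M_i\prec^s L(\G_{{\rm link}(v)})$ for all $v\in A_i$, it then intersects these intertwinings analytically using Vaes's approximate-containment lemmas \cite[Lemmas 2.5, 2.7]{Va10} together with the parabolic-intersection combinatorics of \cite[Proposition 3.4]{AM10}, yielding $M_i\prec^s L(\G_{{\rm link}(A_i)})$; Lemma \ref{lem:FiniteIndexGraphGroup} then forces $A_2\sqcup{\rm link}(A_2)=\mathcal V$. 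Finally, the two-sided strong intertwinings $M_1\prec^s L(\G_{{\rm link}(A)})$, $M_2\prec^s L(\G_A)$, $L(\G_A)\prec^s M_2$, $L(\G_{{\rm link}(A)})\prec^s M_1$ feed into the unitary conjugacy criterion \cite[Theorem 6.1]{DHI16}, together with Proposition \ref{prop:ProductGraphProd} --- this replaces the unique-prime-factorization step your sketch presupposes. If you want to salvage your outline, you would need to replace the appeal to Theorem \ref{tensordecompampf} by an intertwining statement with no primeness hypothesis on the link, which is precisely the role Corollary \ref{intertwiningcoregroupsafphnn} plays in the paper.
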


We note in passing that if $ \mathcal V_{\rm star}\neq\emptyset$ then similar statements could be obtained under the additional assumption that the vertex groups $\G_v$ for $v\in \mathcal V_{\rm star}$ are icc hyperbolic (or biexact \cite{Oz03}).

As an application of the aforementioned classification results we are able to identify several new examples of remarkable groups which give rise to prime factors. In particular these include various well known classes of simple groups---a novelty in the area (see part c) in Corollary \ref{prime}). In addition, we are able to recover some of the results obtained in \cite[Corollary B]{DHI16} (see part d) in Corollary \ref{prime}).    

\begin{mcor}\label{prime} If $\G$ is a group in one of the following classes then $L(\G)$ is prime.
\begin{enumerate}
\item [a)] the integral two-dimensional Cremona group $Aut_k (k[x,y])$, where $k$ is a countable field;
\item [b)] Higman's group $\langle a,b,c,d \, |\, a^b=a^2, b^c=b^2, c^d=c^2, d^a=d^2\rangle$ \cite{Hi51};
\item [c)] Burger-Mozes' groups \cite{BM01}, Camm's groups \cite{Ca51}, Bhattacharjee's groups \cite{Bh94};
\item [d)] $PSL_2(\mathbb Z[S^{-1}])$, where $S$ is a finite collection of primes.
\item [e)] $PGL_2(k[t])$, $PSL_2(k[t])$, where $k$ countable field, e.g. $k $ is a number field.
\item [f)]  $PE_2(R[t])\cong E_2(R[t])/ (I_2)  $ where  $R $ is a finite integral domain or $R$ is a countable commutative integral domain with $|(r)|=\infty $ for every $r\in R\setminus\set{ 0} $.
\end{enumerate} 
\end{mcor}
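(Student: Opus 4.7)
The plan is to derive each item as an application of Theorems \ref{tensordecompampf} and \ref{tensordecomphnnf} together with the poly-amalgam theorem stated between them. The argument has a uniform shape. For $\G$ in any one of the listed classes I would (i) exhibit a splitting of $\G$ as an amalgamated free product $\G_1 \ast_\Sigma \G_2$ or an HNN-extension $\mathrm{HNN}(\Lambda,\Sigma,\theta)$ satisfying the index and core hypotheses of the relevant theorem, and (ii) show that $\G$ admits no nontrivial direct product decomposition $\Omega \times \G'$ with $\Omega$ infinite. Assuming $L(\G) = M_1 \bar\otimes M_2$ with both $M_i$ diffuse, the theorem then yields a splitting $\G = \Omega \times \G_0$ for which $M_{s(1)}$ is (an amplification of) $L(\Omega)$; diffuseness of $M_{s(1)}$ forces $|\Omega|=\infty$, contradicting the indecomposability established in (ii). Hence $L(\G)$ is prime.

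The amalgam/HNN descriptions are supplied by well-known structure theorems. For (a), the Jung--van der Kulk theorem realizes $\mathrm{Aut}_k(k[x,y]) = A \ast_B J$ with $A$ the affine group, $J$ the de Jonquières group, and $B = A\cap J$. For (b), Higman's group admits a presentation as the amalgamated free product of two groups of the form $\langle a,b,c \mid aba^{-1}=b^2,\, bcb^{-1}=c^2\rangle$ over an abelian core (or, equivalently, as an iterated amalgam along the cyclic relators). Parts (c) and (d) are classical: the Burger--Mozes, Camm and Bhattacharjee groups are built explicitly as amalgamated products of finitely generated free groups along finitely generated free subgroups, while $PSL_2(\mathbb Z[S^{-1}])$ acts on the product of Bruhat--Tits trees $\prod_{p\in S} T_p$, giving a poly-amalgam structure of length $|S|$ whose successive cores are (virtually) free. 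Parts (e) and (f) follow from Nagao's theorem $PGL_2(k[t]) = GL_2(k) \ast_{B(k)} B(k[t])$ and its $E_2$-counterpart for $PE_2(R[t])$; the intersection $B(k)$ is solvable, so in cases where it is not itself finite-by-icc one applies the poly-amalgam theorem after a further refinement of the splitting.

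The indecomposability step in (ii) is class-specific. Burger--Mozes and many Camm/Bhattacharjee groups are finitely generated simple, so they have no proper normal subgroups at all and the issue is vacuous. For Higman's group, perfectness, the absence of nontrivial finite quotients, and the analysis of its action on the Bass--Serre tree preclude an infinite direct factor. For $\mathrm{Aut}_k(k[x,y])$, a putative infinite direct factor would stabilize the edge of the amalgam tree, forcing it into $B$, which together with triviality of the center rules it out. For $PSL_2(\mathbb Z[S^{-1}])$, $PGL_2(k[t])$, $PSL_2(k[t])$ and $PE_2(R[t])$ one invokes centerlessness and the description of normal subgroups (essentially congruence-theoretic) to exclude a factorization $\Omega \times \G'$ with both factors infinite.

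The main obstacle, and the bulk of the work, will be verifying the core hypothesis: $\Sg$ finite-by-icc with every corner of $L(\Sg)$ virtually prime, or $\Sg\in \mathcal C_{\rm rss}$. Several of the natural amalgam decompositions have solvable or otherwise non-$\mathcal C_{\rm rss}$ cores (as in (a), (b), (e), (f)), so one must either refine the splitting to one with a smaller core, or stratify $\G$ as an iterated amalgam and invoke the poly-amalgam theorem. This choice of splitting — balancing the demand of nontrivial $[\G_1:\Sg]$, $[\G_2:\Sg]$ with the demand of a well-behaved core — is the delicate part, and must be executed separately in each of the listed examples.
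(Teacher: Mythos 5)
There is a genuine gap, and it is concentrated exactly where you flag ``the bulk of the work'' without resolving it. Your plan funnels every item through Theorems \ref{tensordecompampf}/\ref{tensordecomphnnf} (or the poly-amalgam theorem), but for a), e), f) this cannot be made to work by any ``refinement of the splitting'': the natural cores are \emph{infinite amenable} groups (the core $C=A\cap B$ of the Jung--van der Kulk amalgam is nilpotent; $PB(k)$ and $PSB(R)$ are solvable), so no corner of $L(\Sg)$ is virtually prime and $\Sg\notin\mathcal C_{\rm rss}$, and any further splitting of a solvable group again produces amenable pieces, while $Quot_n(\mathcal{T})$ requires a genuine normal composition series with factors in $\mathcal{T}$, not merely an iterated amalgam decomposition. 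The latter point also sinks your treatment of d): by Margulis' normal subgroup theorem $PSL_2(\Z[S^{-1}])$ admits no such composition series, so ``a poly-amalgam structure of length $|S|$'' obtained from the action on the product of Bruhat--Tits trees is not an instance of $Quot_{|S|}(\mathcal{T})$; the paper instead splits off a single prime via Serre, $PSL_2(\Z[S^{-1}])=PSL_2(\Z[T^{-1}])\ast_{\widetilde\Gamma_0(p_n)}PSL_2(\Z[T^{-1}])$, and feeds Theorem \ref{tensordecompamalgam} the commensurable-to-free Hecke core. For the amenable-core cases the paper uses tools your proposal never invokes: for a) it bypasses Theorem \ref{tensordecompamalgam} entirely and applies Corollary \ref{primeamalgam1} (primeness from \emph{weak malnormality} of $\Sg$, with no hypothesis on $L(\Sg)$, supplied by \cite[Lemma 4.23]{MO13}, and the amenability of $C$ even upgrades this via Corollary \ref{primeamalgam2}); for e)/f) with $k$ or $R$ finite it quotes \cite[Theorem 5.2]{CH08}; and for infinite $k$ it runs Corollary \ref{intertwiningcoregroupsafphnn} to get $P_1\prec L(PB(k))$ and then pushes the intertwining into $L\bigl(PB(k)\cap PB(k)^g\bigr)\cong L(k^*\times k^*)$, which is abelian, contradicting that $P_1$ is a II$_1$ factor --- a two-conjugate malnormality trick using \cite[Theorem 6.16]{PV06}, \cite[Corollary 7]{HPV10}. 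Note that your step (ii), ruling out direct-product decompositions of $\G$, is orthogonal to this obstruction: the failure mode is not a spurious splitting but that the hypotheses of the splitting theorems are never satisfied.

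A secondary but consequential error: Higman's group is \emph{not} an amalgam over an abelian core. Its amalgamated subgroup $\langle a,e\rangle=\langle c,f\rangle$ is free of rank two, and it is precisely this nonabelian freeness --- solidity of $L(\mathbb F_2)$ by \cite{Oz03}, hence virtual primeness of all corners --- that makes Theorem \ref{tensordecompamalgam} applicable; your fallback ``iterated amalgam along the cyclic relators'' has edge groups isomorphic to $\Z\notin\mathcal C_{\rm rss}$ and is equally unavailable. Where your proposal does match the paper is c) and the indecomposability inputs: the Burger--Mozes/Camm/Bhattacharjee groups are amalgams of nonabelian free groups over finite-index free subgroups (solid core), Theorem \ref{tensordecompamalgam} applies, and simplicity (resp.\ irreducibility of the lattice $PSL_2(\Z[S^{-1}])$ in d)) kills the resulting product splitting, exactly as you propose.
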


\subsection{Comments on the proof of Theorem \ref{tensordecompampf}} The proofs of the main results in the paper follow a common general strategy. Although Theorem \ref{tensordecompampf} is not the most technically involved, we believe that a brief outline of its proof will give the reader a solid informal understanding of the underlying method of proof. Assume that $\G=\G_1\ast_\Sg\G_2$ and consider a tensor decomposition of $L(\G)=M_1\bar\otimes M_2$. Making use of the classification of normalizers of subalgebras in amalgamated free products developed in \cite{Io12,Va13} we can assume that a corner of $M_1$ can be intertwined into $L(\Sg)$ inside $L(\G)$, in the sense of Popa, ($M_1 \prec_{L(\G)} L(\Sg)$ \cite{Po03}). This in combination with intertwining techniques from \cite{CKP14} and the virtual primeness assumption on $L(\Sg)$ imply that, up to corners, $M_1$ is actually \emph{spatially commensurable} to $L(\Sg)$ (see Section \ref{sec:commensurablealgebras}). This property, denoted by $M_1 \cong^{com}_{L(\G)} L(\Sg)$, roughly means that a (nontrivial) corner of $M_1$ can by conjugated via a partial isometry from $L(\G)$ onto a finite index subalgebra inside a corner of $L(\Sg)$. Then developing new techniques that build upon some ideas from \cite{CdSS15} (see also \cite{DHI16}) we are able to show that $M_1 \cong^{com}_{L(\G)} L(\Sg)$ always implies that the subgroup generated by $\Sigma$ and its centralizer $C_\G(\Sg)$ has finite index in $\G$. Finally adapting some arguments from \cite{OP03} and \cite{CdSS15} we conclude that $\G$ splits non-trivially as a direct product and the statement follows using basic Bass-Serre group theory considerations and intertwining techniques. 

The aforementioned criterion of deducing the direct product splitting of $\G$ (and hence the tensor decomposition of $L(\G)$)  only from spatial commensurability between a corner of $M_1$ and a corner of a subalgebra $L(\Sigma)$ arising from a subgroup $\Sg<\G$ is the common underlying strategy used in the proofs of the main results of the paper.  
%\subsection{Acknowledgments}

\subsection{Terminology} 
The von Neumann algebras $M$ considered in this paper are endowed with a unital, faithful, normal linear functional $\tau:M\ra \mathbb C$ that is tracial, i.e. $\tau(xy)=\tau(yx)$, for all $x,y\in M$. If $x\in M$ then we denote by $\|x\|$ its operator norm and by $\|x\|_2=\tau(x^*x)^{1/2}$ its $2$-norm. Given a von Neumann algebra $M$, we denote by $\mathcal Z(M)$ its center, by $\mathcal U(M)$ its unitary group, by $\mathcal P(M)$ the set of its projections, and by $(M)_1$ its unit ball with respect to the operator norm. For a set $S\subseteq M$, we denote by $S''$ the smallest von Neumann subalgebra of $M$ which contains $S$.

All inclusions $P\subseteq M$ of von Neumann algebras are assumed unital, unless otherwise specified. 
Given von Neumann subalgebras $P, Q\subseteq M$,  we denote by $E_{P}:M\rightarrow P$ the conditional expectation onto $P$, 
by $P'\cap M=\{x\in M\;|\;[x,P]=0\}$ the relative commutant of $P$ in $ M$,
and by $P\vee Q$ the von Neumann algebra generated by $P$ and $Q$.

Unless otherwise specified, all groups considered in this paper are assumed countable and discrete. Given a group $\G$, we denote by $\{ u_\g\}_{\g\in\G} \subseteq \mathcal U(\ell^2\G)$ its left regular representation given by $u_\g(\xi)(\la) = \xi(\g^{-1}\la)$, for all $\xi\in \ell^2\G$ and $\la\in \G$. The weak operator closure of the linear span of $\{ u_\g \}_{\g\in G}$ in $\mathbb B(\ell^2(\G))$ is called the group von Neumann algebra of $\Gamma$ and is denoted by $L(\G)$. $L(\G)$ is a II$_1$ factor precisely when $\G$ has infinite non-trivial conjugacy classes (icc) \cite{MvN43}.

Let $\G$ be a group. Given subsets $K,F\subseteq \G$, we put $KF=\{ kf | k\in K,f\in F\}$. Given a subgroup $\Sigma<\G$, we denote by $C_{\Sigma}(K)=\{ g \in \Sigma | \g k=k \g, \text{ for all }k\in K\}$  the centralizer of $K$ in $\Sigma$. If $\mathcal C$ and $\mathcal D$ are classes of groups then we say that a group $\G$ is $\mathcal C$-by-$\mathcal D$ if there exist a short exact sequence $1\ra A\ra \G \ra B \ra 1$ with $A \in \mathcal C$ and $B\in \mathcal D$. We denote by $\mathfrak S_n$ the permutation group of $n$ letters. If $m< n $ are positive integers, $\overline{m,n} $ will denote the set $\set{m,\ldots,n} $. 

\section{Intertwining Results in Amalgamated Free Product von Neumann algebras}
\subsection{Popa's intertwining techniques} Over a decade ago, S. Popa has introduced  in \cite [Theorem 2.1 and Corollary 2.3]{Po03} the following powerful analytic criterion to identify intertwiners between arbitrary subalgebras of tracial von Neumann algebras.

%\cite{Po03} \label{corner}  
In order to study the structural theory of von Neumann algberas S. Popa has introduced the following notion of intertwining subalgebras which has been very instrumental in the recent developments in the classification of von Neumann algebras. Given (not necessarily unital) inclusions $P, Q\subset M$ of von Neumann subalgebras, one says that \emph{a corner of $P$ embeds into $Q$ inside $M$} and writes $P\prec_M Q$ if there exist $ p\in  \mathcal P(P), q\in  \mathcal P (Q)$, a $\ast$-homomorphism $\Theta:p P p\rightarrow qQ q$  and a non-zero partial isometry $v\in qM p$ so that $\Theta(x)v=vx$, for all $x\in pP p$. The partial isometry $v$ is also called an \emph{intertwiner} between $P$ and $Q$. If we moreover have that $P p'\prec_{M}Q$, for any nonzero projection  $p'\in P'\cap 1_P M 1_P$ (equivalently, for any nonzero projection $p'\in\mathcal Z(P'\cap 1_P M 1_P)$), then we write $P\prec_{M}^{s}Q$.  For ease of notation, we write $P\prec Q $ (or $P\prec^s Q $) instead of $P\prec_M Q $ (or $P\prec_M^s Q $) whenever  the ambient algebra $ M$ is understood from  context.

Then in \cite [Theorem 2.1 and Corollary 2.3]{Po03} Popa developed a powerful analytic method to identify intertwiners between arbitrary subalgebras of tracial von Neumann algebras.

\begin{thrm}\cite{Po03} \label{corner} Let $(M,\tau)$ be a separable tracial von Neumann algebra and let $P, Q\subset M$ be (not necessarily unital) von Neumann subalgebras. 
Then the following are equivalent:
\begin{enumerate}
\item $P\prec_M Q$.
\item For any group $\mathcal G\subset  \mathcal U(P)$ so that $\mathcal G''= P$ there is no sequence $(u_n)_n\subset \mathcal G$ satisfying $\|E_{ Q}(xu_ny)\|_2\rightarrow 0$, for all $x,y\in  M$.
\end{enumerate}
\end{thrm}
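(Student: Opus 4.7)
My plan is to establish the statement in two steps. First, I would prove the underlying Popa equivalence: $P \prec_M Q$ if and only if there is no sequence $(u_n) \subset \mathcal{U}(P)$ with $\|E_Q(xu_ny)\|_2 \to 0$ for all $x, y \in M$. Second, I would deduce the equivalence $(1) \Leftrightarrow (2)$ from this by observing that $(1) \Rightarrow (2)$ is immediate (any $\mathcal{G} \subseteq \mathcal{U}(P)$ is in particular a subset of $\mathcal{U}(P)$, so a sequence in $\mathcal{G}$ is also a sequence in $\mathcal{U}(P)$), while $(2) \Rightarrow (1)$ follows by specializing (2) to the admissible choice $\mathcal{G} = \mathcal{U}(P)$, which trivially satisfies $\mathcal{G}'' = P$.

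The substantive work lies in the underlying equivalence. I would work inside the Jones basic construction $\langle M, e_Q \rangle$ with its canonical semifinite normal faithful trace $\mathrm{Tr}$ normalized so that $\mathrm{Tr}(xe_Qy) = \tau(xy)$ for $x, y \in M$. The central object is the $\|\cdot\|_{2,\mathrm{Tr}}$-closed convex hull
\[
\mathcal{K} := \overline{\mathrm{co}}\bigl\{\, u e_Q u^* \;:\; u \in \mathcal{U}(P) \,\bigr\},
\]
a closed convex subset of the Hilbert space $L^2(\langle M, e_Q\rangle, \mathrm{Tr})$. By Hilbert-space geometry $\mathcal{K}$ contains a unique element $a$ of minimal $\|\cdot\|_{2,\mathrm{Tr}}$-norm; $\mathcal{U}(P)$-invariance of $\mathcal{K}$ together with uniqueness forces $u a u^* = a$ for every $u \in \mathcal{U}(P)$, whence $a \in P' \cap \langle M, e_Q\rangle$, $0 \le a \le 1$, and $\mathrm{Tr}(a) \le 1$. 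I then argue by dichotomy. If $a = 0$, the approximation of $0$ by elements of $\mathcal{K}$ furnishes, for every finite $F \subset M$ and every $\varepsilon > 0$, a convex combination $T = \sum_i \lambda_i u_i e_Q u_i^*$ with $\|T\|_{2,\mathrm{Tr}}$ arbitrarily small; using the identity $\|E_Q(xuy)\|_2^2 = \mathrm{Tr}(xuy\, e_Q\, y^* u^* x^*\, e_Q)$ and a Cauchy--Schwarz estimate in $L^2(\langle M, e_Q\rangle, \mathrm{Tr})$ one selects some $u_i$ in the combination satisfying $\sum_{x,y \in F} \|E_Q(x u_i y)\|_2^2 < \varepsilon$, and a diagonal argument over $F_n \uparrow M$, $\varepsilon_n \downarrow 0$ yields the desired sequence (conversely, the existence of such a sequence forces $u_n e_Q u_n^* \to 0$ weakly, hence $0 \in \mathcal{K}$ and $a = 0$). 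If $a \neq 0$, then for small $c > 0$ the spectral projection $f := \chi_{[c,\infty)}(a)$ is a nonzero $P$-central finite-trace projection in $\langle M, e_Q\rangle$; Popa's structural analysis of finite-trace projections in the basic construction, together with a polar decomposition and compression by a central projection in $(pPp)' \cap pMp$, extracts from $f$ the intertwining data $(p, q, \Theta, v)$ witnessing $P \prec_M Q$.

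The principal obstacle is the extraction in the case $a \neq 0$: producing simultaneously compatible $p \in \mathcal{P}(P)$, $q \in \mathcal{P}(Q)$, a $*$-homomorphism $\Theta \colon pPp \to qQq$, and a nonzero partial isometry $v \in qMp$ with $\Theta(x)v = vx$ out of a single nonzero $P$-central finite-trace projection in $\langle M, e_Q\rangle$ requires Popa's detailed structural theory of such projections together with a careful central-decomposition argument. The remaining ingredients --- the Hilbert-space minimization producing $a$, the convex-combination / Cauchy--Schwarz translation between smallness of $\sum_i \lambda_i u_i e_Q u_i^*$ in $L^2(\mathrm{Tr})$ and smallness of $\|E_Q(xu_iy)\|_2$, and the final passage from $\mathcal{U}(P)$ to an arbitrary generating subgroup $\mathcal{G}$ --- are essentially routine.
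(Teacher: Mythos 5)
The paper itself gives no proof of this statement---it is quoted from Popa's work \cite{Po03} (Theorem 2.1 and Corollary 2.3 there)---so the comparison must be with Popa's original argument, and your plan reproduces that argument faithfully in its architecture: the basic construction with its canonical semifinite trace, the closed convex hull $\mathcal{K}$ of $\{ue_Qu^*\}$, the unique minimal-norm element $a\in P'\cap\langle M,e_Q\rangle$, the dichotomy on $a$, the spectral projection $\chi_{[c,\infty)}(a)$, and the extraction of the intertwining data, which you correctly single out as the technical heart. Your Cauchy--Schwarz selection step in the case $a=0$ is right (via $\|E_Q(xu_iy)\|_2^2=\mathrm{Tr}\bigl(u_i(ye_Qy^*)u_i^*\,(x^*e_Qx)\bigr)\le\|y\|^2\,\langle T,x^*e_Qx\rangle_{\mathrm{Tr}}$, pigeonhole over the convex combination, and diagonalization over a countable $\|\cdot\|_2$-dense set, which is where separability enters), and your observation that the generating-subgroup refinement in (2) is formally soft \emph{for this formulation} is also correct: $(2)\Rightarrow(1)$ by specializing to $\mathcal{G}=\mathcal{U}(P)$, and $(1)\Rightarrow(2)$ because a sequence in $\mathcal{G}$ is in particular a sequence in $\mathcal{U}(P)$.

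There is, however, one genuine logical gap in your proof of the underlying equivalence. Your scaffolding establishes exactly three implications: $a=0$ implies a vanishing sequence exists; $a\neq 0$ implies $P\prec_M Q$; and (your parenthetical) a vanishing sequence implies $a=0$. From these you obtain ``no vanishing sequence $\Rightarrow P\prec_M Q$,'' but \emph{not} the converse ``$P\prec_M Q\Rightarrow$ no vanishing sequence in $\mathcal{U}(P)$''---and that converse is precisely what your ``immediate'' deduction of $(1)\Rightarrow(2)$ consumes. The missing implication is ``$P\prec_M Q\Rightarrow a\neq 0$'' (equivalently $0\notin\mathcal{K}$), and it does not follow formally from the dichotomy; it needs its own, separate, argument. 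The standard one: from the intertwiner produce a nonzero positive element $c\in P'\cap\langle M,e_Q\rangle$ with $\mathrm{Tr}(c)<\infty$ (e.g.\ starting from $v^*e_Qv$, which commutes with $pPp$, and extending from the corner); since $u_ncu_n^*=c$ one has $\mathrm{Tr}(cu_ncu_n^*)=\mathrm{Tr}(c^2)>0$ for all $n$, while approximating $c$ in $\|\cdot\|_{2,\mathrm{Tr}}$ by a finite sum $b=\sum_i x_ie_Qy_i$ and using
\begin{equation*}
\bigl|\mathrm{Tr}(x_ie_Qy_i\,u_n\,y_j^*e_Qx_j^*\,u_n^*)\bigr|\le\|E_Q(y_iu_ny_j^*)\|_2\,\|E_Q(x_j^*u_n^*x_i)\|_2
\end{equation*}
shows that a vanishing sequence would force $\mathrm{Tr}(bu_nb^*u_n^*)\to 0$, hence (as conjugation by $u_n$ is a $\mathrm{Tr}$-preserving $L^2$-isometry) $\mathrm{Tr}(cu_ncu_n^*)\to 0$, a contradiction. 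With that one paragraph added, your plan is complete and coincides with the canonical proof from \cite{Po03}.
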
 
For further use we record the following basic intertwining result in amalgamated free products von Neumann algebras.
\begin{prop}\label{intertamalgam} Let $M=M_1\ast_P M_2$ be an amalgamated free product von Neumann algebra.
If for each $i$ there is $u_i\in \mathcal U(M_i)$ so that $E_P(u_i)=0$ then $M\nprec_M M_k$ for all $k=1,2$. 
  \end{prop}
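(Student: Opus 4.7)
My plan is to verify Popa's intertwining criterion (Theorem \ref{corner}) in its negative form: to conclude $M\nprec_M M_k$ it suffices to exhibit a subgroup $\mathcal G \subseteq \mathcal U(M)$ with $\mathcal G''=M$ together with a sequence $(w_n)_n\subseteq \mathcal G$ such that $\|E_{M_k}(xw_ny)\|_2 \to 0$ for every $x,y\in M$. I would take $\mathcal G := \langle \mathcal U(M_1)\cup \mathcal U(M_2)\rangle$, which generates $M_1\vee M_2 = M$ as a von Neumann algebra, and set $w_n := (u_1 u_2)^n \in \mathcal G$. Writing $M_i^\circ := M_i\ominus P$, the hypothesis $E_P(u_i)=0$ gives $u_i\in M_i^\circ$, so $w_n$ is a reduced word of length $2n$ in the amalgamated free product.

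Using $\|w_n\|=1$ together with the elementary inequality $\|axb\|_2 \le \|a\|\,\|b\|\,\|x\|_2$, a standard $2$-norm density argument (with Kaplansky-type bounded approximation) reduces the convergence to the case where $x = x_1\cdots x_p$ and $y = y_1\cdots y_q$ are themselves reduced words whose letters $x_j, y_j$ lie in the appropriate $M_i^\circ$ with alternating indices. For such $x,y$ I would carry out the reduced-word expansion of $xw_n y$ inside $M_1*_P M_2$: whenever consecutive letters meet in a common factor $M_i$, decompose their product using $M_i = P\oplus M_i^\circ$; the $M_i^\circ$-piece truncates the junction into a single letter, while the $P$-piece slides into the adjacent letter and may trigger further cancellations. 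A routine bookkeeping shows that the cascade at the left boundary consumes at most $p$ letters of $w_n$ (one per letter of $x$ absorbed), and symmetrically at most $q$ at the right, so every reduced-word summand of $xw_n y$ has length at least $2n - p - q$.

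Now invoke the orthogonal direct-sum decomposition
\[
L^2(M) \;=\; L^2(P) \;\oplus\; \bigoplus_{r\ge 1,\; i_1\neq i_2\neq \cdots\neq i_r} L^2(M_{i_1}^\circ) \otimes_P \cdots \otimes_P L^2(M_{i_r}^\circ).
\]
The subspace $L^2(M_k) = L^2(P)\oplus L^2(M_k^\circ)$ meets only the length-$\le 1$ summands supported in factor $k$, hence $E_{M_k}$ annihilates every reduced word of length $\ge 2$. Choosing $n$ with $2n\ge p+q+2$ forces every reduced summand of $xw_n y$ to have length $\ge 2$, whence $E_{M_k}(xw_n y)=0$ for all $n$ sufficiently large (depending on $p,q$). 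Combining with the density reduction and Popa's criterion yields $M\nprec_M M_k$ for both $k=1,2$.

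The single point that genuinely requires care is the length bound $2n-p-q$ in the second paragraph: a multi-step $E_P$-cascade can in principle propagate through many letters, and one must verify that each propagation step consumes exactly one letter of $w_n$ for each letter of $x$ (or $y$) it absorbs, so that the uniform lower bound holds without hidden slack. This is a familiar but not entirely cosmetic computation in amalgamated free products, and it is the only nontrivial moving part of the proof.
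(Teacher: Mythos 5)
Your proposal is correct and takes essentially the same route as the paper's proof: the paper likewise sets $u=u_1u_2$, notes that ``freeness and basic approximation properties'' give $\lim_n\|E_{M_k}(xu^ny)\|_2=0$ for all $x,y\in M$, and concludes via Theorem \ref{corner}. Your reduced-word cascade, the length bound $2n-p-q$ (which does hold, since each cascade step at a junction consumes one letter of $x$ or $y$ along with one letter of $w_n$, the leftover $P$-component being absorbed into the adjacent letter without creating a new $P$-component once the outer word is exhausted), and the orthogonal decomposition of $L^2(M)$ simply supply the details the paper leaves to the reader.
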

\begin{proof} Let  $u=u_1u_2\in \mathcal U(M)$. Using freeness and basic approximation properties one can see that $\lim_{n\ra \infty}\|E_{M_k} (x u^n y)\|_2=0$ for all $x,y\in M$. Then Theorem \ref{corner} (b) gives the conclusion. %If $M, M_i$ are factors, $P'\cap M\subset P$ and $M\prec_M M_1$ then either $M=M_1$ or, $M=M_2$ and $[M:M_1]<\infty$.
\end{proof}

\subsection{Finite index inclusions of von Neumann algebras} The {\it Jones index}  of a II$_1$ factors inclusion $P\subseteq M$, denoted  $[M:P]$, is  the dimension of $L^2(M)$ as a left  $P$-module. For various basic properties of finite index inclusions of factors we refer the reader to the groundbreaking work of V.F.R. Jones, \cite{Jo81}. Subsequently, several generalizations of the finite index notion for an inclusion of arbitrary von Neumann algebras emerged. For instance, M. Pimsner and S. Popa  discovered a ``probabilistic'' notion of index for an inclusion $P\subseteq M$ of arbitrary von Neumann algebras, which  the case of inclusions of II$_1$ factors coincides with Jones' index, \cite[Theorem 2.2]{PP86}. Namely, the {\it probabilistic index} of a tracial von Neumann algebras inclusion $P\subseteq M$ is defined as $[M:P]=\lambda^{-1}$, where $$\lambda=\inf\;\{ \|E_P(x)\|_2^2 \|x\|_2^{-2} \;\vert \; x\in M_{+}, \; x\not=0\}.$$
Here we convene that $\frac{1}{0}=\infty$.

Below we record some basic properties of finite index inclusions of von Neumann algebras that will be needed throughout the paper. They are well known and some of the proofs are included here just for the sake of completeness.

\begin{prop}\label{finiteindexbasicprop} Let $N\subseteq M$ be von Neumann algebras with $[M:N]<\infty$. Then the following hold:\begin{enumerate}
\item \label{20} If $N$ is a factor then $dim_{\mathbb C}(N'\cap M)\leq [M:N]+1$. 
\item \label{20'} If $\mathcal Z(M)$ is purely atomic then $\mathcal Z(N)$ is also purely atomic, \cite[1.1.2(iv)]{Po95}.  
\item\label{20''} If $N$ is a factor and $r\in N'\cap M$ then $[rMr:Nr]\leq \tau(r) [M:N]<\infty$, \cite[1.1.2(ii)]{Po95}. 
\item \label{20'''} Assume $P\subseteq N\subseteq M$ are II$_1$ factors and there exists $0\not=e\in\mathcal P(P'\cap M)$ satisfying $P e=eM e$ and $E_{N}(e)=\tau(e) 1$. Then $[N:P]<\infty$, and there is $f\in \mathcal P (N'\cap M)$ so that $f\geq e$ and  $[fM f:N f]<\infty$.
\end{enumerate}
\end{prop}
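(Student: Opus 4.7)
The plan is to handle items (1)--(3) via Pimsner--Popa basis arguments (with parts (2) and (3) attributed to \cite[1.1.2]{Po95}) and item (4) by a Jones basic-construction argument. The main obstacle will be the careful implementation of the basic construction in part (4).

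For (1), since $N$ is a factor, $M$ admits a (possibly truncated) Pimsner--Popa basis $m_1,\dots,m_k$ with $k \leq [M:N]+1$, so that every $x \in M$ expands as $x = \sum_i m_i E_N(m_i^*x)$. Restricting to $x \in N' \cap M$, the coefficients $E_N(m_i^*x) \in N$ must commute with $N$ and therefore lie in $\mathcal Z(N) = \mathbb C$. This realizes $N' \cap M$ as the image of a $\mathbb C$-linear map from $\mathbb C^k$, yielding the dimension bound. For (3), using that $r \in N' \cap M$ is a projection one verifies $E_{Nr}(x) = E_N(x)\, r$ for $x \in rMr$; combining the Pimsner--Popa inequality $E_N(x) \geq [M:N]^{-1} x$ (for $x \in M_+$) with the renormalization $\tau_{rMr} = \tau(\cdot)/\tau(r)$ produces the quoted bound. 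Part (2) follows from applying this circle of ideas to the central decomposition $M = \bigoplus_i M_i$ induced by the atoms of $\mathcal Z(M)$, together with the general fact that finite-index subalgebras of II$_1$ factors have finite-dimensional relative commutants.

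For part (4), the strategy is to identify $e$ as a Jones projection for $P \subseteq N$ inside $M$. Writing $ene \in eMe = Pe$ as $\varphi(n)e$ for a uniquely determined $\varphi(n) \in P$ (uniqueness follows from $P$ acting faithfully on $eL^2(M)$, a consequence of $P$ being a factor with $e \neq 0$), a trace computation using $E_P = E_P \circ E_N$ and $E_N(e) = \tau(e)\,1$ shows $\varphi = E_P|_N$, giving the defining relation
\[ ene = E_P(n)\, e, \qquad n \in N. \]
The subalgebra $\tilde M := (N \cup \{e\})'' \subseteq M$ then satisfies the universal relations of the Jones basic construction $\langle N, e_P\rangle$ for $P \subseteq N$, and the existence of the faithful normal trace on $M$ restricting to $\tilde M$ forces the basic construction to be of finite type, equivalently $[N:P] < \infty$ with $\tau(e) = [N:P]^{-1}$.

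For the remaining assertion of (4), I would take $f \in N' \cap M$ to be the central support of $e$ in the (finite-dimensional, by (1)) algebra $N' \cap M$; then $f \geq e$ and $f \in N' \cap M$. The corner $fMf$ contains $\tilde M f$ as a finite-index extension of $Nf$ via part (3) applied to the finite-index inclusion $N \subseteq \tilde M$, yielding $[fMf:Nf]<\infty$. The principal difficulty lies in justifying that $\tilde M$ genuinely realizes a finite basic construction inside $M$ (so the abstract Jones relations translate into finite index under the trace) and in controlling the passage to the corner cut by $f$; both steps require careful trace bookkeeping and use of the finite-dimensional structure of $N' \cap M$ supplied by (1).
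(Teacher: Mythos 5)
Your treatment of part (3) is essentially the paper's computation, and the first half of your part (4) (deriving the Jones relation $ene=E_P(n)e$ from $eMe=Pe$, $E_P=E_P\circ E_N$ and $E_N(e)=\tau(e)1$) coincides with the paper's proof of its equation \eqref{expect}. But there are two genuine gaps. The first is in part (1): the claim that for $x\in N'\cap M$ the basis coefficients $E_N(m_i^*x)$ commute with $N$ is false. From $xn=nx$ you only get $E_N(m_i^*x)n=E_N(m_i^*nx)$, and since $m_i^*n\neq nm_i^*$ there is no way to move $n$ to the left of the coefficient. Concretely, take $N=M_2(\mathbb C)\otimes 1\subseteq M=M_2(\mathbb C)\otimes M_2(\mathbb C)$ with the Pimsner--Popa basis $m_j=F(1\otimes\sigma_j)$, where $F$ is the tensor flip and $\{\sigma_j\}$ the $\tau$-orthonormal Pauli basis (one checks $E_N(m_i^*m_j)=\delta_{ij}1$): for $x=1\otimes\sigma_1\in N'\cap M$ one computes $E_N(m_0^*x)=\tfrac12\,\sigma_1\otimes 1\notin\mathbb C1$. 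So your linear map does not land in $\mathbb C^k$ and the dimension bound collapses. The paper avoids bases altogether: since $N$ is a factor, $E_N(p)=\tau(p)1$ for every projection $p\in N'\cap M$, and the Pimsner--Popa inequality gives $\tau(p)^2\geq [M:N]^{-1}\tau(p)$, i.e., every nonzero projection of $N'\cap M$ has trace at least $[M:N]^{-1}$, which yields finite-dimensionality. (A correct repair of your idea bounds $N'\cap M$ by the algebra of $N$-bimodular endomorphisms of $M_N\hookrightarrow \bigoplus^k L^2(N)_N$, giving $\dim\leq k^2$, not by forcing the coefficients to be scalars.)

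The second gap is in the closing step of part (4). To begin with, ``the central support of $e$ in $N'\cap M$'' is not defined: $e\in P'\cap M$ but $e\notin N'\cap M$ (a Jones projection for $P\subseteq N$ never commutes with $N$ unless $P=N$). The paper instead takes $f$ to be the central support of $e$ in $\{N,e\}''$; this lies in $\mathcal Z(\{N,e\}'')\subseteq N'\cap M$, and cutting by it is also precisely what resolves the faithfulness issue you flagged: $\{Nf,e\}''$ becomes canonically isomorphic to the basic construction of $Pf\subseteq Nf$, its factoriality follows from $e(P'\cap M)e=(eMe)'\cap eMe=\mathbb Ce$ (using $Pe=eMe$ and $M$ a factor), and then \cite[Proposition 3.1.7]{Jo81} gives $[Nf:Pf]<\infty$; your appeal to ``the trace forces finite type'' presupposes that $\tilde M$ is a \emph{faithful} normal representation of $\langle N,e_P\rangle$, which is exactly what must be proved. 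More seriously, your final deduction is a non sequitur: from $Nf\subseteq \tilde Mf\subseteq fMf$ and $[f\tilde Mf:Nf]<\infty$ one cannot conclude $[fMf:Nf]<\infty$ unless $f\tilde Mf$ exhausts $fMf$ (or at least sits there with finite index) --- nothing in your argument rules out $fMf$ being far larger, and for your $f$ it is not even clear that $f\in\tilde M$, so that $\tilde Mf$ need not be an algebra. This is where the hypothesis $Pe=eMe$ must be used a second time: the paper observes $e\{Nf,e\}''e=Pe=eMe$ and, since $\{Nf,e\}''$ is a factor in which $e$ has central support $f$, concludes $\{Nf,e\}''=fMf$, whence $[fMf:Nf]=[Nf:Pf]<\infty$. (Your sketch of part (2) is likewise too thin --- the finite-dimensionality of relative commutants you invoke is for non-factor $N$ exactly the content of \cite[1.1.2]{Po95}, and the paper's own route is an intertwining argument showing $\mathcal Z(N)q\prec \mathcal Z(M)q$ --- but since both the paper and you attribute (2) to \cite{Po95}, this is a lesser concern.)
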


\begin{proof} (\ref{20}) Fix $0\neq p\in N'\cap M$ a projection. Since $N$ is a factor then $E_N(p)=\tau(p)1$. As $[M:N]<\infty$  then $\tau(p)^2=\|E_N(p)\|_2^2\geq [M:N]^{-1}\|p\|_2^2=[M:N]^{-1}\tau(p)$. Thus $\tau(p)\geq [M:N]^{-1}$ for all projections $p\in N'\cap M$ and hence $dim_{\mathbb C}(N'\cap M)\leq [M:N]+1$. 

(\ref{20'}) Let $p\in \mathcal Z(N)$ be a maximal projection such that $\mathcal Z(N)p$ is purely atomic and $\mathcal Z(N)(1-p)$ is diffuse. To prove the conclusion it suffices to show that $q=1-p$ vanishes. Since 
$N\subseteq M$ is finite index so is $qNq\subseteq qMq$. This implies that $qMq\prec_{qMq} qNq$ and hence $qNq'\cap qMq\prec_{qMq} qMq'\cap qMq=\mathcal Z(M)q$. Therefore $\mathcal Z(N)q\prec \mathcal Z(M)q$ and since $\mathcal Z(M)$ is purely atomic it follows that there exists a minimal projection of $\mathcal Z(N)$ under $q$. This forces $q=0$, as desired.

(\ref{20''}) Since $r\in N'\cap M$ and $N$ is a factor we have $E_N(r)=\tau(r)1$. Thus $E_{Nr}(rxr)= \tau(r)^{-1} E_{N}(rxr)  r$ for all $x\in M$. Hence $\|E_{Nr}(rxr)\|^2_{2,r}=\tau(r)^{-2} \|E_N(rxr)\|_2^2\geq \tau(p)^{-2} [M:N]^{-1}\|rxr\|^2_2=\tau(r)^{-1} [M:N]^{-1}\|rxr\|^2_{2,r}$ which shows $[rMr:Nr]\leq \tau(r)[M:N]$.  

(\ref{20'''}) First we show that for all $x\in N$ we have \begin{equation}\label{expect}E_{P}(x)e=exe.
\end{equation}
If $y\in P$, then $\tau(E_{P}(x)ey)=\tau(E_{P}(x)E_{P}(e)y)=\tau(e)\tau(E_{P}(x)y)=\tau(e)\tau(xy)$, and similarly 
 $\tau(exey)=\tau(xye)=\tau(xyE_{N}(e))=\tau(e)\tau(xy)$. Since $E_{P}(x)e, exe\in P e$,   \eqref{expect} follows.
 
 \vskip 0.03in
 \noindent
 Let $f$ be the central support of $e$ in $\{N, e\}''$. Using  \eqref{expect} we get that $\{N f,e\}''$ is canonically isomorphic to the basic construction of the inclusion $P f\subseteq N f$.

 \vskip 0.03in
 \noindent
Now, since $P e=eM e$ and $M$ is a factor, we get that $e(P'\cap M)e=\mathbb Ce$. In particular, $\mathcal Z(\{N,e\}'')e=\mathbb Ce$, and hence $\mathcal Z(\{N f,e\}'')=\mathcal Z(\{N,e\}'')f=\mathbb Cz$. Thus, $\{N f,e\}''$ is a II$_1$ factor.
Applying  \cite[Proposition 3.17]{Jo81}, the inclusion $P f\subseteq N f$ has finite index. Since $E_N(f)=\tau(f)1$ then $P\subseteq N$ has finite index as well.

\vskip 0.03in
\noindent
 Finally, since $\{N f,e\}''$ is a factor and $e\{N f,e\}''e=eM e$, it follows that $\{N f,e\}''=fM f$. Applying \cite[Proposition 3.17]{Jo81} again we get $[fM f:N f]=[N f:P f]<\infty.$
\end{proof}

\begin{lem}\label{finiteindeximage}Let $N\subseteq M$ be a finite index inclusion of II$_1$ factors. Then one can find projections $p\in M$, $q\in N$, a partial isometry $v\in M$, and a unital injective $*$-homomorphism $\phi:pMp \ra qNq$ such that\begin{enumerate} 
\item $\phi(x)v=vx \text{ for all } x\in pMp$, and
\item $[qNq: \phi(pMp)]<\infty$.
\end{enumerate}
\end{lem}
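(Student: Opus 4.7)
The plan is to invoke the downward basic construction for the finite-index II$_1$ factor inclusion $N\subseteq M$. By Jones' tunnel theorem from classical subfactor theory \cite{Jo81}, there exist a subfactor $P\subseteq N$ with $[N:P]=[M:N]$ and a projection $e\in M$ satisfying the Jones projection relations: $E_N(e)=[M:N]^{-1}\,1$, the projection $e$ commutes with $P$, $eMe=Pe$, and the map $P\ni m\mapsto me \in eMe$ is a $*$-isomorphism.

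Given this tunnel data, I would set $p=e\in\mathcal P(M)$, $q=1_N\in N$, and $v=e\in M$ viewed as a partial isometry with $v^*v=vv^*=e$. Define $\phi\colon eMe\to N$ as the composition of the inverse of the $*$-isomorphism $P\cong Pe=eMe$ with the inclusion $P\hookrightarrow N$. Then $\phi$ is unital since $\phi(e)=1_P=1_N=q$, and injective because $pMp=eMe$ is a factor. For the intertwining relation, given $x=me\in eMe$ with $m\in P$, one computes $\phi(x)v=m\cdot e=me$ while $vx=e\cdot me=me^2=me$, using that $m\in P$ commutes with $e$. Finally, $\phi(eMe)=P$ yields $[qNq:\phi(pMp)]=[N:P]=[M:N]<\infty$, as required.

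The main obstacle is the existence of the Jones tunnel, which is classical but nontrivial; it ensures that any finite-index II$_1$ factor inclusion fits into a basic construction from below. An alternative, more self-contained route would use Popa's intertwining (Theorem \ref{corner}): the Pimsner-Popa inequality $\|E_N(u)\|_2\geq [M:N]^{-1/2}$ for unitaries $u\in M$ gives $M\prec_M N$ and hence yields preliminary data $p_0\in\mathcal P(M)$, $q_0\in\mathcal P(N)$, $v_0\in q_0 Mp_0$ and a $*$-homomorphism $\phi_0\colon p_0Mp_0\to q_0Nq_0$ with $\phi_0(x)v_0=v_0 x$. After normalizing so that $v_0^*v_0=p_0$ (this is possible since $v_0^*v_0\in\mathcal Z(p_0Mp_0)=\mathbb Cp_0$) and replacing $q_0$ by $\phi_0(p_0)\in N$ to make $\phi_0$ unital (injectivity being automatic from the factor hypothesis), the projection $e_0=v_0v_0^*$ satisfies $e_0\in\phi_0(p_0Mp_0)'\cap q_0Mq_0$ together with $\phi_0(p_0Mp_0)e_0=e_0Me_0$. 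One could then apply Proposition \ref{finiteindexbasicprop}(\ref{20'''}) to conclude $[q_0Nq_0:\phi_0(p_0Mp_0)]<\infty$, but this requires arranging the spectral condition $E_{q_0Nq_0}(e_0)=\tau(e_0)q_0$, which is the main technical difficulty on this alternative route; it would typically be addressed by further cutting down through the spectral projections of $E_N(e_0)\in q_0Nq_0$.
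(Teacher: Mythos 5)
Your proposal is correct, and your primary route is genuinely different from the paper's. The paper's proof starts from $M\prec_M N$ (available via the Pimsner--Popa inequality and Theorem \ref{corner}), takes intertwining data $\phi:pMp\to qNq$ and $v$ with $v^*v=p$, and then has to manufacture finiteness of $[qNq:\phi(pMp)]$: using factoriality of $M$ it shows $vv^*\bigl(\phi(pMp)'\cap qMq\bigr)vv^*=\mathbb C\, vv^*$, cuts down by a suitable projection $r\in\phi(pMp)'\cap qNq$ with $rv\neq 0$ (here the normalization $s(E_N(vv^*))=q$ is needed), so that after replacement $E_{qNq}(vv^*)$ is a scalar multiple of $q$, and finally identifies $\langle qNq,vv^*\rangle$ with the basic construction of $\phi(pMp)\subseteq qNq$, whence the index is finite. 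You instead go ``downward'': Jones' tunnel \cite[Lemma 3.1.8]{Jo81} --- a tool the paper itself invokes later, in the proof of Theorem \ref{virtualprod}, so it is squarely within scope --- hands you $P\subseteq N$ and the Jones projection $e$ with $[P,e]=0$, $eMe=Pe$, and $E_N(e)=[M:N]^{-1}1$, after which $p=v=e$, $q=1_N$, and $\phi=(me\mapsto m)$ satisfy all the required conditions by direct computation: the intertwining $\phi(me)e=me=e(me)$ uses $[P,e]=0$, injectivity of $m\mapsto me$ follows from $E_N(me)=[M:N]^{-1}m$, and you even obtain the exact value $[qNq:\phi(pMp)]=[N:P]=[M:N]$ rather than mere finiteness. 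So your argument is shorter and quantitatively sharper, at the cost of citing the tunnel machinery; what the paper's argument buys is a reusable template --- extract intertwining data, trivialize relative commutants by cutting corners, recognize a basic construction upward --- which recurs in Lemma \ref{intertwiningdichotomy} and Theorem \ref{virtualprod}. Your alternative sketch is essentially the paper's own route, and you correctly isolated its one real difficulty, namely arranging $E_{qNq}(vv^*)\in\mathbb C q$ before invoking Proposition \ref{finiteindexbasicprop}(4); the paper resolves exactly this point by the cut-down through $r$ described above.
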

\begin{proof} Since $ [M:N]<\infty$ then $M\prec N$. Thus there exist projections $p\in M$, $q\in N$, a partial isometry $v\in M$, and a unital injective $*$-homomorphism $\phi:pMp \ra qNq$ so that 
\begin{equation}\label{11}
\phi(x)v=vx \text{ for all } x\in pMp.
\end{equation}
Denoting by $Q=\phi(pMp)\subseteq qNq$ notice that $vv^*\in Q'\cap qMq$ and $v^*v=p$. Moreover by restricting $vv^*$ if necessary we can assume wlog the support projection of $E_N (vv^*)$ equals $q$. Also from (\ref{11}) we have that $Qvv^*=vMv^*=vv^*Mvv^*$. Since $M$ is a factor, passing to relative commutants we have $vv^*(Q'\cap qMq)vv^*=(Qvv^*)'\cap vv^*Mvv^*=\mathcal Z(vv^*Mvv^*)= \mathbb C vv^*$. Since $Q'\cap qNq\subseteq Q'\cap qMq$ there is a projection $r\in Q'\cap qN q$ such that $r (Q'\cap q N q) r=  Q r'\cap r N r=\mathbb C r$. Since $q= s(E_N(vv^*))$ one can check that $rv\neq 0$. Thus replacing $ Q$ by $Qr$, $\phi(\cdot)$  by $\phi(\cdot)r$, $q$ by $r$, and $v$ by the partial isometry from the polar decomposition of $rv$ then the intertwining relation (\ref{11}) still holds with the additional assumption that $Q'\cap qM q=\mathbb Cq$. In particular, $E_{q N q}(vv^*)=c q$ where $c$ is a positive scalar.  

To finish the proof we only need to argue that $[qNq:Q]<\infty$. Consider the von Neumann algebra $\langle qNq, vv^*\rangle$ generated by $qNq$ and $vv^*$ inside $qMq$. Therefore we have the following inclusions  $Q\subseteq qNq\subseteq \langle qNq, vv^*\rangle\subseteq qMq$. Since $vv^* Mvv^* =Qvv^*$ then $vv^*  qNq, vv^*= Qvv^* $. Moreover since $vv^*\in Q'\cap qMq$ and $E_{q N q}(vv^*)=c1$ one can check that $\langle qNq, vv^*\rangle$ is isomorphic to the basic construction of $Q\subseteq qNq$. Therefore $Q\subseteq qNq$ has index $c$ (hence finite).  \end{proof}

\subsection{Relative amenability for von Neumann algebras}
 Let $P\subset M$ be an inclusion of von Neumann algebras. A state $\phi:M\to \C $ is \emph{$P$-central} if $\phi(mx)=\phi(xm) $ for every $x\in P $ and every $m\in M $.  A tracial von Neuman algebra $(M,\tau) $ is  \emph{amenable} if there exists an $M$-central state on $\phi:B(L^2(M,\tau))\to \C $ with $\phi|_M=\tau$.  By the celebrated result of A.~Connes, a von Neumann algebra is amenable if and only if it is approximately finite dimensional, i.e.~$M=\varinjlim M_n $ for an increasing sequence of finite dimensional algebras $M_n$, \cite{Co76}.  In \cite{OP07} it was considered a relative version of this notion which turned out very instrumental in studying structural properties of von Neumann algebras.

\begin{defn} \cite[Section 2.2]{OP07}
Let $(M,\tau)$ be a tracial von Neumann algebra, $p\in M$ a projection, and $P\subset pMp,Q\subset M$ von Neumann subalgebras. We say that $P$ is {\it amenable relative to $Q$ inside $M$} if there exists a $P$-central state $\phi:p\langle M,e_Q\rangle p\rightarrow\mathbb C$ such that $\phi(x)=\tau(x)$, for all $x\in pMp$.
\end{defn}
One easily  recovers classical notion of amenability  by choosing $ Q=\mathbb{C}$; more generally, if $Q$ is amenable and $P $ is amenable relative to $Q$,  it necessarily follows that $P$ is amenable. This definition is fruitful for numerous reasons, not the least of which its strong parallel properties. Also we will often use that every amenable $P$ is automatically amenable with respect to any subalgebra $Q$ inside $M$.  

As an amenable group Neumann algebra corresponds to an amenable group, so does relative amenability of  group von Neumann algebras coincide with relative amenability of the groups.  More specifically, given a group $\Gamma $ and a pair of subgroups  $\Lambda_1,\Lambda_2\leqslant \Gamma $,   $\Lambda_1<\Gamma $ is amenable relative to $\Lambda_2 $ inside $\Gamma$ i if and only if $L(\Lambda_1 )$ is amenable relative to $L(\Lambda_2)$ inside $L(\Gamma) $.

%%%%%%%%%%%%%%%%%%%%%%%%%%%%%%%%%%%%%%%%%%%%%%%%%%%%%%%%%%%%%%%%%%%%%%%%%%%%%%%%%%%%%%%%%%%%%%
%%%%%%%%%%%%%%%%%%%%%%%%%%%%%%%%%%%%%%%%%%%%%%%%%%%%%%%%%%%%%%%%%%%%%%%%%%%%%%%%%%%%%%%%%%%%%
%%%%%%%%%%%%%%%%%%%%%%%%%%%%%%%%%%%%%%%%%%%%%%%%%%%%%%%%%%%%%%%%%%%%%%%%%%%%%%%%%%%%%%%%%%%%%%
%%%%%%%%%%%%%%%%%%%% INTERTWINING IN AFP VON NEUMANN ALGEBRAS  %%%%%%%%%%%%%%%%%%%%%%%%%%%%%%
%%%%%%%%%%%%%%%%%%%%%%%%%%%%%%%%%%%%%%%%%%%%%%%%%%%%%%%%%%%%%%%%%%%%%%%%%%%%%%%%%%%%%%%%%%%%%%
%%%%%%%%%%%%%%%%%%%%%%%%%%%%%%%%%%%%%%%%%%%%%%%%%%%%%%%%%%%%%%%%%%%%%%%%%%%%%%%%%%%%%%%%%%%%%%
%%%%%%%%%%%%%%%%%%%%%%%%%%%%%%%%%%%%%%%%%%%%%%%%%%%%%%%%%%%%%%%%%%%%%%%%%%%%%%%%%%%%%%%%%%%%%%%

\section{Product Decompositions of AFP von Neumann Algebras}

In this section we obtain several structural results describing all possible direct product decompositions for large classes of II$_1$ factors that arise either as AFP or HNN von Neumann algebras (see Theorems \ref{tensordecompafp}, \ref{tensordecomphnn}). Along the way we also prove a few intertwining results for commuting subalgebras inside AFP and HNN extensions that are essential for the next sections. Our arguments rely heavily on the classification results of normalizers of subalgebras in AFP and HNN von Neumann algebras from \cite{Io12,Va13}.

\begin{thrm}\label{intertwiningincore1} Let $P\subset M_i$ for $i=1,2$ be von Neumann algebras such that for each $i=1,2$ there is $u_i\in \mathcal U(M_i)$ so that $E_P(u_i)=0$. Consider $M=M_1\ast_P M_2$ be an amalgamated free product von Neumann algebra and assume in addition that $M$ is not amenable relative to $P$ inside $M$.  Let $p\in M$ be a nonzero projection and assume $A_1,A_2 \subseteq pMp$ are two commuting diffuse subalgebras that $A_1\vee A_2\subseteq pMp$ has finite index. Then $A_i\prec P$ for some $i=1,2.$
\end{thrm}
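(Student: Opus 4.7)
The strategy is to argue by contradiction: assume that both $A_1 \not\prec_M P$ and $A_2 \not\prec_M P$, and derive a contradiction using the dichotomy for normalizers in AFP von Neumann algebras from \cite{Io12, Va13}, combined with Proposition \ref{intertamalgam} and the relative non-amenability hypothesis.

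The first key observation is that, since $A_1$ and $A_2$ commute inside $pMp$, every unitary in $A_2$ normalizes $A_1$, and hence $A_1 \vee A_2 \subseteq \mathcal{N}_{pMp}(A_1)''$. Because by hypothesis $A_1 \vee A_2 \subseteq pMp$ has finite index, the normalizer $\mathcal{N}_{pMp}(A_1)''$ has finite index in $pMp$ as well. A standard Pimsner--Popa basis argument (using that finite index forces $\|E_{\mathcal{N}}(u)\|_2$ to stay bounded away from zero on all unitaries $u \in pMp$) then gives the reverse intertwining $pMp \prec_M \mathcal{N}_{pMp}(A_1)''$.

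Next, I would apply the Ioana--Vaes structural dichotomy for subalgebras of AFPs to $A_1 \subseteq pMp$. Since $A_1$ is diffuse and $A_1 \not\prec_M P$, after cutting by a suitable central projection of $A_1$ to handle the ``no amenable direct summand'' hypothesis, at least one of the following must hold: (a) $\mathcal{N}_{pMp}(A_1)''$ is amenable relative to $P$ inside $M$, or (b) $\mathcal{N}_{pMp}(A_1)'' \prec_M M_i$ for some $i \in \{1,2\}$. In case (a), the finite-index inclusion $\mathcal{N}_{pMp}(A_1)'' \subseteq pMp$ combined with the hereditary properties of relative amenability \cite{OP07} forces $pMp$, and hence $M$ itself, to be amenable relative to $P$ --- contradicting the hypothesis. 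In case (b), transitivity of $\prec_M$ together with $pMp \prec_M \mathcal{N}_{pMp}(A_1)''$ yields $pMp \prec_M M_i$, and after a standard amplification one gets $M \prec_M M_i$, which contradicts Proposition \ref{intertamalgam} (where the existence of trace-zero unitaries $u_1 \in M_1, u_2 \in M_2$ is exactly what rules out such intertwining).

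The main obstacle is the delicate invocation of the Ioana--Vaes dichotomy: the existing theorems are usually stated for subalgebras without amenable direct summands or with some regularity hypothesis, so one must decompose $A_1$ by central projections and verify each summand's hypotheses independently, then reassemble. A secondary technical point is ensuring that the intertwining or relative amenability conclusions on $\mathcal{N}_{pMp}(A_1)''$ propagate correctly through the finite-index inclusion back to $pMp$ and then to $M$, which relies on the corner/amplification properties recorded in Proposition \ref{finiteindexbasicprop}.
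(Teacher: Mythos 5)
Your plan contains a genuine gap at its central step: the ``Ioana--Vaes structural dichotomy'' you invoke for $A_1$ does not exist in the form you use it. \cite[Theorem A]{Va13} (and likewise the relevant technical theorems of \cite{Io12}) requires the subalgebra to which it is applied to be \emph{amenable relative to $P$} inside $M$; its hypothesis is not ``no amenable direct summand,'' and it cannot be applied to an arbitrary diffuse subalgebra $A_1$. Your proposed repair --- cutting $A_1$ by central projections and verifying hypotheses summand by summand --- does not help, because no central cut of a non-amenable $A_1$ becomes amenable relative to $P$. Moreover, even when the theorem is legitimately applicable it has \emph{three} branches, the first being $A \prec_M P$ for the input algebra $A$; your contradiction hypothesis $A_1 \nprec_M P$ does not eliminate that branch when the theorem is applied to a subalgebra of $A_1$, since a diffuse amenable subalgebra of $A_1$ may perfectly well intertwine into $P$ even when $A_1$ itself does not.

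The paper's proof circumvents exactly this obstruction: it fixes an \emph{arbitrary diffuse amenable} subalgebra $A \subseteq A_1$ (such an $A$ is automatically amenable relative to $P$), applies \cite[Theorem A]{Va13} to $A$, and --- since $A_2$ commutes with $A$, hence $A_2 \subseteq \mathcal{N}_{pMp}(A)''$ --- obtains the trichotomy: $A \prec P$, or $A_2 \prec M_i$, or $A_2$ is amenable relative to $P$. The latter two branches are then processed essentially as in your cases (a) and (b): a second application of \cite[Theorem A]{Va13}, the finite-index hypothesis on $A_1 \vee A_2 \subseteq pMp$, Proposition \ref{intertamalgam} (ruling out $M \prec_M M_i$), and the assumption that $M$ is not amenable relative to $P$ reduce everything to the disjunction ``$A \prec P$ or $A_2 \prec P$.'' The decisive final ingredient, absent from your plan, is \cite[Appendix]{BO08} (Corollary F.14): if $A_2 \nprec P$, then \emph{every} diffuse amenable subalgebra of $A_1$ embeds into $P$, and this upgrades to $A_1 \prec P$. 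Without such an upgrading device, conclusions about amenable subalgebras of $A_1$ --- which is all the AFP machinery can legitimately deliver --- never yield the desired conclusion about $A_1$ itself. Your peripheral steps (that $A_1 \vee A_2 \subseteq \mathcal{N}_{pMp}(A_1)''$, the finite-index propagation of relative amenability to $pMp$, and the route $A_1\vee A_2 \prec M_i \Rightarrow M \prec_M M_i \Rightarrow$ contradiction with Proposition \ref{intertamalgam}) do match the paper's argument, but they cannot be reached as you set them up.
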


\begin{proof} Fix $A\subset A_1$ arbitrary diffuse amenable subalgebra of $A_1$. Using \cite[Theorem A]{Va13}, one of the following holds:
\begin{enumerate}
\item \label{e1} $A\prec P$;
\item \label{e2} $A_2\prec M_i$ for some $i=1,2$; or 
\item \label{e3} $A_2$ is amenable relative to $P$ inside $M$.
\end{enumerate}
\noindent If (\ref{e2}) holds then either 
\begin{enumerate}[resume]
\item \label{e4} $A_2\prec P$; or 
\item \label{e5} $A_1\vee A_2\prec M_i$. 
\end{enumerate}
\noindent If (\ref{e5}) holds, since 
$[pMp:A_1\vee A_2]<\infty$, then we must have $M\prec_M M_i$. Then Proposition \ref{intertamalgam} will lead to a contradiction. If case (\ref{e3}) holds, then applying \cite[Theorem A]{Va13} again 
we get one of the following
\begin{enumerate}[resume]
\item \label{e6} $A_2\prec P$;
\item \label{e7} $A_1\vee A_2$ is a amenable relative to $P$ inside $M$; or 
\item \label{e8}$A_1\vee A_2\prec M_i$ for some $i.$
\end{enumerate}

\noindent If (\ref{e7}) holds, since $[pMp:A_1\vee A_2]<\infty$, it follows that $pMp$ is a amenable relative to $P$ inside $M$, contradicting the initial assumption. Notice (\ref{e8}) was already eliminated before.
Summarizing, we have obtained that for any subalgebra $A\subset A_1$ amenable we have either $A\prec P$ or $A_2\prec P.$ Using \cite[Appendix]{BO08} this implies either $A_1\prec P$ or $A_2\prec P.$\end{proof}

Since HNN extensions are corners of AFP \cite{Ue08}, then the previous result implies the following counterpart for HNN-extensions. We leave the details to the reader.

\begin{thrm}\label{intertwiningincore2} Let $P\subset N$ be finite von Neumann algebras and let $\theta: P\ra N$ be a trace preserving embedding.   
Consider $M=\text{HNN}(N,P,\theta)$ be the corresponding HNN-extension von Neumann algebra and assume $M$ is not amenable relative to $P$. Let $p\in M$ be a nonzero projection and assume $A_1,A_2 \subseteq pMp$ are two commuting diffuse subalgebras such that $A_1\vee A_2\subseteq pMp$ is finite index. Then $A_i\prec P$ for some $i=1,2.$
\end{thrm}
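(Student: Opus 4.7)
The plan is to reduce Theorem \ref{intertwiningincore2} to Theorem \ref{intertwiningincore1} by invoking Ueda's realization of HNN-extension von Neumann algebras as corners of amalgamated free product von Neumann algebras \cite{Ue08}. Concretely, set $B = P \oplus \theta(P) \subseteq M_2(\C) \otimes N$ along the diagonal, and form an amalgamated free product $\tilde M = (M_2(\C) \otimes N) \ast_B \tilde B$, where $\tilde B$ is the canonical extension of $B$ containing a unitary that conjugates $P \oplus 0$ onto $0 \oplus \theta(P)$. Ueda's theorem supplies a projection $e \in \tilde M$ and a trace-preserving isomorphism $\Phi : e\tilde M e \to M$ that sends $eBe$ onto $P$.

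With this model in hand, I would first transfer the hypotheses of Theorem \ref{intertwiningincore1} to the pair $(\tilde M, B)$. The nondegeneracy assumption $\Sg \neq \La \neq \theta(\Sg)$ (here read at the algebra level as $P \neq N \neq \theta(P)$-complement in an appropriate sense) guarantees that each factor in the amalgamated decomposition of $\tilde M$ contains a unitary with zero conditional expectation onto $B$; in particular Proposition \ref{intertamalgam} applies to $\tilde M$. Relative non-amenability of $\tilde M$ over $B$ would then follow from the stated hypothesis that $M$ is not amenable relative to $P$, since relative amenability passes in both directions between a corner and the ambient algebra under the identification $\Phi$.

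Next I would pull the pair $(A_1, A_2)$ over to $\tilde M$. View $p$ as a projection $\tilde p \leq e$ in $\tilde M$ and regard $A_1, A_2 \subseteq \tilde p \tilde M \tilde p$ as commuting diffuse subalgebras whose join has finite index in $\tilde p \tilde M \tilde p$. An application of Theorem \ref{intertwiningincore1} then yields $A_i \prec_{\tilde M} B$ for some $i \in \{1,2\}$. To finish, I would descend this intertwining to $M$: because $A_i \subseteq e\tilde M e$ and $B \cap e\tilde M e = eBe$, a standard corner argument (restricting the intertwining partial isometry to $e\tilde M e$ and normalizing) upgrades $A_i \prec_{\tilde M} B$ to $A_i \prec_{e\tilde M e} eBe$, at which point applying $\Phi$ gives $A_i \prec_M P$.

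The main obstacle will be the bookkeeping in the last reduction step, namely ensuring that the intertwiner witnessing $A_i \prec_{\tilde M} B$ can be arranged to live inside the corner $e\tilde M e$ (so that no part of the partial isometry is lost upon restriction) and that the target projection in $B$ lies in $eBe$. This is a standard but delicate compression argument, and should be handled exactly as in the analogous reductions carried out in \cite{Ue08} and the HNN portions of \cite{Io12,Va13}, so I would simply indicate that the details are left to the reader, as suggested in the excerpt.
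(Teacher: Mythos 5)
Your proposal takes essentially the same route as the paper: the paper's entire proof of Theorem \ref{intertwiningincore2} is the remark preceding it, namely that HNN extensions are corners of amalgamated free products by \cite{Ue08}, so that Theorem \ref{intertwiningincore1} applies, with the details explicitly left to the reader. Your write-up correctly supplies exactly those omitted details --- Ueda's model $\tilde M=(M_2(\C)\otimes N)\ast_B \tilde B$ with $M\cong e\tilde M e$ and $P\cong eBe$, the transfer of relative non-amenability and of the finite-index commuting pair, the (in fact automatic, since the off-diagonal flip in $M_2(\C)\otimes N$ and the stable-letter unitary in $\tilde B$ always have zero expectation onto $B$) verification of the hypotheses of Proposition \ref{intertamalgam}, and the compression of the intertwiner back to the corner --- so it is a faithful fleshing-out of the paper's intended argument.
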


\begin{cor}\label{intertwiningcoregroupsafphnn} Assume one of the following holds: \begin{enumerate}\item $\G=\G_1\ast_\Sigma \G_2$ such that $[\G_1:\Sigma]\geq 2$ and $[\G_2:\Sigma]\geq 3$; 
\item $\G={\rm HNN}(\G,\Sigma, \theta)$ such that $\Sigma\neq \G\neq \theta(\Sigma)$.
\end{enumerate}

Denote by $M=L(\G)$ let $p\in \mathcal P(M)$ and assume $A_1,A_2 \subseteq pMp$ are two commuting diffuse subalgebras such that $A_1\vee A_2\subseteq pMp$ is finite index. Then $A_i\prec_M L(\Sigma)$ for some $i=1,2.$
\end{cor}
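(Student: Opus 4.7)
The plan is to apply Theorems \ref{intertwiningincore1} and \ref{intertwiningincore2} directly to $M = L(\G)$, after verifying their hypotheses at the level of group von Neumann algebras. In case (1), the standard identification yields $M = L(\G_1) \ast_{L(\Sigma)} L(\G_2)$; in case (2), $M = \mathrm{HNN}(L(\Lambda), L(\Sigma), \theta)$, where $\Lambda$ denotes the base group of the HNN-extension.

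For case (1), the two hypotheses needed are the existence of unitaries $u_i \in \mathcal U(L(\G_i))$ with $E_{L(\Sigma)}(u_i) = 0$, and the non-amenability of $M$ relative to $L(\Sigma)$ inside $M$. The first is immediate from the index assumption: pick any $g_i \in \G_i \setminus \Sigma$ (which exists since $[\G_i : \Sigma] \geq 2$) and take $u_i = u_{g_i}$; then $E_{L(\Sigma)}(u_{g_i}) = 0$ because $g_i \notin \Sigma$. For the second, by the correspondence between relative amenability of groups and of their group von Neumann algebras recalled in the preliminaries, it suffices to show that $\G$ is not amenable relative to $\Sigma$, i.e.\ $\ell^\infty(\G/\Sigma)$ admits no $\G$-invariant mean. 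I would use Bass-Serre theory: $\G$ acts on its Bass-Serre tree $T$, which is bipartite with vertex valences $[\G_1 : \Sigma] \geq 2$ and $[\G_2 : \Sigma] \geq 3$; such a tree has exponential growth and is non-amenable. Since $\Sigma$ stabilizes an edge of $T$, a $\G$-invariant mean on $\G/\Sigma$ would push forward to a $\G$-invariant mean on the edge set $E(T)$, contradicting non-amenability of $T$.

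Case (2) is entirely analogous: Theorem \ref{intertwiningincore2} requires only the non-relative-amenability hypothesis, and the Bass-Serre tree of $\G = \mathrm{HNN}(\Lambda, \Sigma, \theta)$ is regular of valence $[\Lambda : \Sigma] + [\Lambda : \theta(\Sigma)] \geq 2 + 2 = 4$, hence non-amenable, yielding the same conclusion by the edge-stabilizer argument. With the hypotheses verified in both cases, Theorems \ref{intertwiningincore1} and \ref{intertwiningincore2} apply to the commuting pair $A_1, A_2 \subseteq pMp$ with $A_1 \vee A_2$ of finite index in $pMp$, delivering $A_i \prec_M L(\Sigma)$ for some $i \in \{1,2\}$. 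The main substantive step in this plan is the verification of non-relative amenability, which is handled cleanly by the non-amenability of the Bass-Serre tree together with the group--algebra transfer of relative amenability; everything else is essentially bookkeeping on the AFP/HNN dictionary.
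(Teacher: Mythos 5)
Your proof is correct and follows the same skeleton as the paper's: both reduce the corollary to Theorems \ref{intertwiningincore1} and \ref{intertwiningincore2} after establishing that $L(\Gamma)$ is not amenable relative to $L(\Sigma)$ inside $L(\Gamma)$. The difference lies in how that key step is justified. The paper simply cites the proof of \cite[Theorem 7.1]{Io12}, whereas you prove it directly: you use the group--algebra transfer of relative amenability recorded in the paper's Section 2 to reduce to non-co-amenability of $\Sigma$ in $\Gamma$ (no $\Gamma$-invariant mean on $\ell^\infty(\Gamma/\Sigma)$) and then argue on the Bass--Serre tree. This makes the corollary self-contained and has the added virtue of isolating exactly where $[\Gamma_2:\Sigma]\geq 3$ enters (with indices $(2,2)$ the tree is a line, $\Gamma$ surjects onto a virtually cyclic group with $\Sigma$ containing the kernel, an invariant mean on $\Gamma/\Sigma$ exists, and the conclusion genuinely fails). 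You also verify explicitly the hypothesis of Theorem \ref{intertwiningincore1} that each $L(\Gamma_i)$ contains a unitary with vanishing expectation onto $L(\Sigma)$, taking $u_{g_i}$ for $g_i\in\Gamma_i\setminus\Sigma$; the paper leaves this implicit, and it is needed (it is also where $[\Gamma_i:\Sigma]\geq 2$ is used).

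One step in your tree argument is stated too loosely, though it is easily repaired. Non-amenability of $T$ as a graph (positive Cheeger constant, exponential growth) does not by itself forbid a $\Gamma$-invariant mean on the edge set $E(T)\cong \Gamma/\Sigma$: graph amenability concerns F\o lner sets, while your mean is only invariant under a group acting cocompactly but very far from properly, so there is no direct contradiction to ``push forward'' to. The correct mechanism is non-elementarity of the action: the valence assumptions force $T$ to have infinitely many ends, and since the action is cocompact it fixes no point, no end, and no pair of ends; hence $\Gamma$ contains two hyperbolic isometries which, after passing to powers, are in Schottky position with pairwise disjoint edge half-trees $A^{\pm}, B^{\pm}$ satisfying $g(E(T)\setminus A^{-})\subseteq A^{+}$ and $h(E(T)\setminus B^{-})\subseteq B^{+}$. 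An invariant mean $m$ would then give $m(A^{+})+m(A^{-})\geq 1$ and $m(B^{+})+m(B^{-})\geq 1$, contradicting $m(E(T))=1$; equivalently, $E(T)$ is $\Gamma$-paradoxical. The same argument covers your HNN case, where the tree is regular of degree $[\Lambda:\Sigma]+[\Lambda:\theta(\Sigma)]\geq 4$ and Theorem \ref{intertwiningincore2} indeed requires only the non-relative-amenability hypothesis. With this one substitution your proof is complete.
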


\begin{proof} Since $[\G_1:\Sigma]\geq 2$ and $[\G_2:\Sigma]\geq 3$ then by the proof of \cite[Theorem 7.1]{Io12} it follows that $L(\G)$ is not amenable relative to $L(\Sg)$. The conclusion follows then from Theorem \ref{intertwiningincore1}. The HNN extension case follows similarly.\end{proof}

\begin{thrm}\label{tensordecompafp} Let $M=M_1\ast_P M_2$ be an amalgamated free product such that $M, M_1, M_2, P$ are II$_1$ factors and $[M_k:P]=\infty$ for all $k=1,2$. Also assume $A_1,A_2 \subset M$ are diffuse factors such that $M=A_1\bar\otimes A_2$. Then there exist 
tensor product decompositions $P=C\bar \otimes P_0$ , $M_1 =C\bar\otimes M^0_1$, $M_2 =C\bar\otimes M^0_2$ and hence $M= C\bar\otimes (M^0_1\ast_{P_0} M^0_2)$. Moreover, there exist $t>0$ and a permutation $\sigma \in \mathfrak S_2$ such that $A_{\sigma(1)}^t\cong  C$ and $A_{\sigma(2)}^{1/t}\cong M^0_1\ast_{P_0} M^0_2$. 
\end{thrm}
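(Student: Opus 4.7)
The proof follows the general strategy outlined in the paper's introduction: use Popa intertwining together with the AFP normalizer analysis of Ioana--Vaes to locate $A_1$ (or $A_2$) inside a corner of $P$, exploit the tensor complement in $M$ to transfer this into a tensor factorization of $P$, and then propagate through the AFP structure to $M_1, M_2$.

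I first apply Theorem \ref{intertwiningincore1} to the commuting diffuse factors $A_1, A_2 \subseteq M$ whose join is $M$ itself (trivially of finite index). The hypotheses are met: since $P \subsetneq M_i$ are II$_1$ factors, one can find unitaries $u_i \in \mathcal U(M_i)$ with $E_P(u_i) = 0$; and $M$ is not amenable relative to $P$ by the argument from \cite[Theorem 7.1]{Io12} already exploited in Corollary \ref{intertwiningcoregroupsafphnn}, using $[M_k:P] = \infty$. The theorem yields, after relabeling, $A_1 \prec_M P$. Because $A_1' \cap M = A_2$ is a factor, $\mathcal Z(A_1' \cap M) = \mathbb{C}$, so the maximal projection $z_0 \in \mathcal Z(A_1' \cap M)$ for which $A_1 z_0 \prec_M P$ must be $1$, i.e.\ $A_1 \prec^s_M P$. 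Fixing a Popa intertwiner---projections $p \in A_1$, $q \in P$, a $*$-embedding $\phi : pA_1 p \hookrightarrow qPq$, and a partial isometry $v \in qMp$ with $\phi(x) v = vx$---a direct calculation using $A_1'\cap M = A_2$ shows $p_0 := v^*v \in pA_2$, $q_0 := vv^* \in \phi(pA_1p)' \cap qMq$, and $vA_2v^* \subseteq \phi(pA_1p)' \cap q_0 M q_0$. Since $\phi(pA_1p)vv^*$ and $vA_2v^*$ commute and together generate $q_0 M q_0$ spatially, one obtains a tensor factorization $q_0 M q_0 \cong \phi(pA_1p) \,\bar\otimes\, vA_2v^*$ in which the first factor sits inside $q_0(qPq)q_0$.

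Writing $B := \phi(pA_1 p) \subseteq qPq$, I would next extract a tensor factorization $rPr \cong Br \,\bar\otimes\, C_0$ for some projection $r \in B' \cap qPq$, following the blueprint of \cite[Section 4]{CdSS15} and \cite[Section 4]{DHI16}. The commuting complement $vA_2v^*$ supplies a rich family of normalizers of $B$ inside $q_0 M q_0$; intersecting with $qPq$ produces enough elements of $B' \cap qPq$ to isolate a tensor factor. The infinite index $[M_k:P] = \infty$ prevents $B$ from exhausting $qPq$, and standard amplification arguments then upgrade the corner decomposition to a global $P = C \,\bar\otimes\, P_0$ together with some $t > 0$ satisfying $C^t \cong A_1$.

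Finally, with $C$ identified as a tensor factor of $P$, I would argue that $C$ is also a tensor factor of each $M_i$: since $C \subseteq M_i$ and $C' \cap M$ is (up to amplification) isomorphic to $A_2$, the subalgebras $C$ and $C' \cap M_i$ together generate $M_i$; combined with factoriality and a commuting-square/uniqueness argument this yields $M_i = C \,\bar\otimes\, M_i^0$ with $P_0 \subseteq M_i^0$. The AFP structure descends to $M = C \,\bar\otimes\, (M_1^0 \ast_{P_0} M_2^0)$, and matching this against $M = A_1 \,\bar\otimes\, A_2$ via uniqueness of tensor factorizations of II$_1$ factors (up to unitary conjugacy and amplification) delivers the permutation $\sigma \in \mathfrak{S}_2$ and scalar $t > 0$ asserted in the statement. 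The principal obstacle is the third step: converting $A_1 \prec^s_M P$ (with its commuting complement) into a genuine tensor decomposition of $P$ requires delicate control of $B' \cap qPq$, and it is precisely here that the infinite index hypothesis $[M_k:P] = \infty$ must enter to exclude $B$ being too large inside $qPq$.
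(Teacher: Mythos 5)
Your opening moves match the paper: Theorem \ref{intertwiningincore1} yields $A_1\prec_M P$ (the hypotheses do hold, essentially as you say), and your computation of $v^*v$, $vv^*$ and $vA_2v^*\subseteq B'\cap q_0Mq_0$, giving $q_0Mq_0\cong B\,\bar\otimes\,vA_2v^*$, reproduces the paper's relations (\ref{-3})--(\ref{-4}). But the third step, which you yourself flag as the principal obstacle, is a genuine gap, and the mechanism you sketch would not work: the elements of $vA_2v^*$ commute with $B$ inside a corner of $M$, but there is no reason their intersection with $qPq$ is anything beyond scalars, so ``intersecting with $qPq$ produces enough elements of $B'\cap qPq$'' has no content. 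The paper never attempts to produce $B'\cap qPq$ directly. Instead it lets $z$ be the central support of $vv^*$ in $B\vee(B'\cap pMp)$, so that $(B\vee(B'\cap pMp))z=zMz$, and proves $z=p$ by showing $z\in pPp$: for the spectral projections $e^k_t=\chi_{[t,\infty)}(E_{M_k}(z))$ the inclusion $(B\vee(B'\cap pM_kp))e^k_t\subseteq e^k_tM_ke^k_t$ has finite index by \cite[Lemma 2.3]{CIK13}; the hypothesis $[M_k:P]=\infty$ enters \emph{exactly here}, via \cite[Lemma 3.7]{Va07}, to get $(B\vee(B'\cap pM_kp))e^k_t\nprec_{M_k}P$, whereupon \cite[Theorem 1.2.1]{IPP05} forces $e^k_tz\in M_k$; letting $t\to 0$ gives $z\in pM_1p\cap pM_2p=pPp$, and $s(E_P(z))=p$ then yields $z=p$. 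So the tensor splitting $pMp=B\,\bar\otimes\,(B'\cap pMp)$ holds on the \emph{whole} corner, and the decompositions of $pPp$ and $pM_kp$ follow not from a normalizer analysis but from Ge's splitting theorem \cite[Theorem A]{Ge96} applied to the intermediate algebras $B\subseteq pPp\subseteq pMp$ and $B\subseteq pM_kp\subseteq pMp$ (this is also the missing tool behind your vague ``commuting-square/uniqueness argument'' for $M_i=C\,\bar\otimes\,M_i^0$). Note that your gloss on the role of infinite index---preventing $B$ from ``exhausting'' $qPq$---misplaces it: it is used to rule out intertwining into $P$, feeding the quasinormalizer control of \cite{IPP05}.

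Your concluding step is also wrong as stated: there is no general ``uniqueness of tensor factorizations of II$_1$ factors up to unitary conjugacy and amplification''---this fails badly (any McDuff factor, e.g.\ the hyperfinite II$_1$ factor, admits many non-conjugate decompositions; unique prime factorization is a hard theorem valid only under strong structural hypotheses, and invoking it here would be circular since the present theorem is a result of precisely that type). The paper needs no such principle: the identifications $A_{\sigma(1)}^t\cong C$ and $A_{\sigma(2)}^{1/t}\cong M_1^0\ast_{P_0}M_2^0$ are read off directly from the relations $Bvv^*=u(r_1A_1r_1\otimes r_2)u^*$ and $vv^*(B'\cap pMp)vv^*=u(r_1\otimes r_2A_2r_2)u^*$, tracking the traces $\tau(r_1)$, $\tau(r_2)$, $\tau(vv^*)$ and the $M_m(\mathbb C)$-amplification used to pass from $pMp$ back to $M$, with $t=m\tau(r_1)$.
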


\begin{proof} By Theorem \ref{intertwiningincore1} we have that 
$A_1\prec P$ and hence there exist projections $0\neq a\in A_1$, $ 0\neq p\in P$ a partial isometry $0\neq v\in M$ and a unital injective $\ast$-homomorphism $\Phi: aA_1 a\ra pPp$ such that 
\begin{equation}\label{-1}
\Phi(x)v=vx \text{ for all } x\in aA_1 a.
\end{equation}    

Shrinking $a$ if necessary we can assume there is an integer $m$ such that $\tau(p)=m^{-1}$. Letting $B=\phi(aA_1a)$ we have that $vv^*\in B'\cap pMp$. Also we can assume wlog that  $s(E_P(vv^*))=p$ and using factoriality of $A_i$ that  $v^*v= r_1\otimes r_2$. Thus by (\ref{-1}) there is a unitary $u\in M$ so that 
\begin{equation}\label{-3}Bvv^*=vA_1v^*= u r_1 A_1 r_1 \otimes r_2 u^*.
\end{equation}

Passing to relative commutants we also have \begin{equation}\label{-4}\begin{split}vv^* (B'\cap pMp )vv^* &= (Bvv^*)'\cap vv^* Mvv^*=(vA_1v^*)'\cap vMv^* \\& = u ((r_1 A_1 r_1 \otimes r_2 )'\cap r_1 A_1 r_2 \otimes r_2 A_2 r_2 )u^* = u r_1 \otimes r_2A_2 r_2u^*.\end{split}
\end{equation} 

Altogether, we have $vv^*(B\vee B'\cap pMp)vv^*= u (r_1 A_1r_1) \bar\otimes (r_2A_2 r_2 )u^* =vv^* Mvv^*$. Letting $z$ be the central support of $vv^*$ in $B\vee B'\cap pMp$ we conclude that \begin{equation}\label{-2}(B\vee B'\cap pMp)z=zMz.
 \end{equation}
Note by construction we actually have $z\in \mathcal Z(B'\cap pMp)$. In addition, we have $p\geq z\geq vv^*$ and hence $p\geq  s(E_P(z))\geq s(E_P(vv^*))=p$. Thus  $ s(E_P(z))=p$. Also notice that $p\geq s(E_{M_k}(z))\geq z\geq vv^*$.  For every $t>0$ denote by $e^k_t=\chi_{[t,\infty )( E_{M_k}(z))}$.  Using relation (\ref{-2}) and \cite[Lemma 2.3]{CIK13} it follows that the inclusion $(B\vee B'\cap pM_k p)e^k_t\subseteq e^k_t M_k e^k_t$ is finite index. This, together with the assumptions and \cite[Lemma 3.7]{Va07} further imply that $(B\vee B'\cap pM_k p)e^k_t\nprec_{M_k} P$. But $e^k_t z$ commutes with $(B\vee B'\cap pM_k p)e^k_t$ and hence by \cite[Theorem 1.2.1]{IPP05} we have $e^k_tz\in M_k$. Since $e^k_t z\ra z$ in $WOT$, as $t\ra 0$, we obtain that $z\in pM_kp$, for all $k=1,2$. In conclusion $z\in pM_1p\cap pM_2p=pPp$ and hence $z=p$. Thus using factoriality and (\ref{-2}) we get that $pMp= B \bar \otimes (B'\cap pMp)$. Moreover,  we have $B\subset pPp\subset pMp= B \bar \otimes (B'\cap pMp)$ and since $B$ is a factor it follows from \cite[Theorem A]{Ge96} that $pPp= B\bar\otimes (B'\cap pPp)$. Similarly one can show that $pM_kp= B\bar\otimes (B'\cap pM_kp)$ for all $k=1,2$. Thus, 
$$B'\cap pMp = (B'\cap pM_1p) \vee (B'\cap pM_2p)= (B'\cap pM_1p)\ast_{(B'\cap pPp)} ( B'\cap pM_2p).$$
Combining these observations, we now have
\begin{equation*}\begin{split}
pMp= B\bar\otimes (B'\cap pMp)& = B\bar \otimes ((B'\cap pM_1p)\ast_{(B'\cap pPp)} ( B'\cap pM_2p)) \\&=(B\bar\otimes (B'\cap pM_1p))\ast_{B\bar \otimes (B'\cap pPp)} (B\bar \otimes ( B'\cap pM_2p))\end{split} 
\end{equation*} 

Tensoring by $M_m(\mathbb C)$ this further gives \begin{equation*}\begin{split}
M&= M_m(\mathbb C)\bar \otimes pMp\\
&= M_m(\mathbb C)\bar \otimes B\bar \otimes ((B'\cap pM_1p)\ast_{(B'\cap pPp)} ( B'\cap pM_2p)) \\&=(M_m(\mathbb C)\bar \otimes B\bar\otimes (B'\cap pM_1p))\ast_{M_m(\mathbb C)\bar \otimes B\bar \otimes (B'\cap pPp)} (M_m(\mathbb C)\bar \otimes B\bar \otimes ( B'\cap pM_2p))\end{split} 
\end{equation*} 

Letting $C:= M_m(\mathbb C)\bar \otimes B$, $P_0:= B'\cap pPp$, $M^0_k:=B'\cap pM_kp$, altogether, the previous relations show that $P=C\bar \otimes P_0$, $M_1 =C\bar\otimes M^0_1$, $M_2 =C\bar\otimes M^0_2$, and $M= C\bar\otimes (M^0_1\ast_{P_0} M^0_2)$.

For the remaining part of the conclusion, notice that relations (\ref{-3}), (\ref{-4}) and $p=z(vv^*)$ show that $A_i^{\tau(r_1)} \cong B$, $A_{i+1}^{\tau(r_2)}\cong (B'\cap pMp)^{\tau(vv^*)}$. Using amplifications these further imply that $A_i^{m \tau(r_1)} \cong C$, $A_{i+1}^{\tau(r_2)/ (m\tau(vv^*))}\cong M^0_1\ast_{P_0}M^0_2$. Letting $t= m\tau(r_1)$ we get the desired conclusion.\end{proof}

%\begin{thrm}\label{tensordecomphnn} Let $M=M_1\ast_P M_2$ be an amalgamated free product such that $M,  M_1, M_2, P$ are II$_1$ factors and $[M_k:P]=\infty$ for all $k=1,2$. Also assume $A_1,A_2 \subset M$ are diffuse factors such that $M=A_1\bar\otimes A_2$. Then there exist 
%tensor product decompositions $P=C\bar \otimes P_0$ , $M_1 =C\bar\otimes M^0_1$, $M_2 =C\bar\otimes M^0_2$ and hence $M= C\bar\otimes (M^0_1\ast_{P_0} M^0_2)$. Moreover, there exist $t>0$ and a permutation $\sigma \in \mathfrak S_2$ such that $A_{\sigma(1)}^t\cong  C$ and $A_{\sigma(2)}^{1/t}\cong M^0_1\ast_{P_0} M^0_2$. 
%\end{thrm}

%%%%%%%%%%%%%%%%%%%%%%%%%%%%%%%%%%%%%%%%%%%%%%%%%%%%%%%%%%%%%%%%%%%%%%%%%%%%%%%%%%%%%%%%%%%%%%%%%%%%%%%%%%%%
%%%%%%%%%%%%%%%%%%%%%%%%%%%%%%%%%%%%%%%%%%%%%%%%%%%%%%%%%%%%%%%%%%%%%%%%%%%%%%%%%%%%%%%%%%%%%%%%%%%%%%%%%%%%
%%%%%%%%%%%%%%%%%%%% COMMENSURABLE VON NEUMANN ALGEBRAS %%%%%%%%%%%%%%%%%%%%%%%%%%%%%%%%%55%%%%%%%%%%%%%%%%%%%%%%%%%%%%%%%%%%%%
%%%%%%%%%%%%%%%%%%%%%%%%%%%%%%%%%%%%%%%%%%%%%%%%%%%%%%%%%%%%%%%%%%%%%%%%%%%%%%%%%%%%%%%%%%%%%%%%%%%%%%%%%%%
%%%%%%%%%%%%%%%%%%%%%%%%%%%%%%%%%%%%%%%%%%%%%%%%%%%%%%%%%%%%%%%%%%%%%%%%%%%%%%%%%%%%%%%%%%%%%%%%%%%%%%%%%%%

\section{Commensurable von Neumann Algebras}\label{sec:commensurablealgebras}

In the context of Popa's concept of weak intertwining of von Neumann algebras we introduce a notion of commensurable von Neumann algebras up to corners. This notion is essential to this work as it can be used very effectively to detect tensor product decompositions of II$_1$ factors (see Theorems \ref{fromrelcomtocomgroups} and  \ref{virtualprod} below). In the first part of section we build the necessary technical tools to prove these two results. Several of the arguments developed here are inspired by ideas from \cite{CdSS15} and \cite{DHI16}.  

\begin{defn} Let $P,Q\subset M$ (not necessarily unital) inclusions of von Neumann algebras. We write $P\cong^{com}_M Q$ (and we say \emph{a corner of $P$ is spatially commensurable to a corner of $Q$}) if there exist nonzero projections $p\in P$, $q\in Q$, a nonzero partial isometry $v\in M$ and a $\ast$-homomorphism $\phi: pPp\ra qQq $ such that \begin{eqnarray}
 \label{com1}& \phi(x)v=vx \text{ for all }x\in pPp\\
 \label{com2}&[qQq: \phi(pPp) ]<\infty\\
 \label{com3}& s(E_Q(vv^*))=q.
\end{eqnarray} 

\noindent  When just condition (\ref{com1}) is satisfies together with  $\phi(pPp)=qQq$ (i.e $\phi$ is a $\ast$-isomorphism ) we write $pPp\cong^{\phi,v}_M qQq$.
\end{defn}
\begin{remark} When $pPp$ is a II$_1$ factor then so is $\phi(pPp)$. By Proposition \ref{finiteindexbasicprop} (1), $\mathcal Z(qQq)$ is finite dimensional, so there exists $r\in \mathcal Z(qQq)$ such that $rv\neq 0$. Thus replacing $\phi(\cdot)$ by $\phi(\cdot)r$ and $v$ by the isometry in the polar decomposition of $rv$ one can check  (\ref{com1}) still holds. Also from Proposition \ref{finiteindexbasicprop} (3) it follows that $\phi(pPp)r\subseteq rQr$ is an finite index inclusion of II$_1$ factors. Hence throughout this article, whenever $P\cong^{com}_M Q$ and $P$ is a factor, we will always assume the algebras in (\ref{com2}) are II$_1$ factors.  
\end{remark}

We record next a technical variation of \cite[Proposition 2.4]{CKP14} in the context of commensurable von Neumann algebras that will be essential to deriving the main results of this section.

\begin{lem}\label{intertwiningdichotomy} Let $\Sg<\G$ be groups where $\G$ is icc. Assume $\mathcal Z(L(\Sigma))$ is purely atomic, $r\in L(\G)$ is a projection, and there exist commuting II$_1$ subfactors $P,Q\subseteq rL(\G)r$  such that $P\vee Q\subseteq rL(\G)r$ is finite index. If $P\prec L(\Sigma)$ then one of the following holds:
\begin{enumerate} 
\item\label{p1}  There exist projections $p\in P, e\in L(\Sg) $, a partial isometry $w\in M$,  and a unital injective $*$-homomorphism $\Phi: pPp\ra eL(\Sg)e$ such that \begin{enumerate}
\item\label{11'''''} $\Phi(x)w=wx \text{ for all } x\in pPp$;
\item\label{11''''''} $s(E_{L(\Sg)}(ww^*))=e$;
\item \label{12'''''} If $B:=\Phi(pPp)$ then $B\vee (B'\cap eL(\Sg)e) \subseteq eL(\Sg)e$ is a finite index inclusion of II$_1$ factors.
 \end{enumerate}
\item\label{p2} $P\cong^{com}_M L(\Sigma)$.
\end{enumerate}
\end{lem}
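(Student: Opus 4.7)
The plan is to invoke Popa's intertwining criterion, transport the finite-index structure of $P \vee Q$ through the resulting partial isometry, and then argue by a dichotomy on whether the relative commutant of the image already lies in $L(\Sg)$.

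First, I would apply Theorem \ref{corner} to the hypothesis $P \prec_M L(\Sg)$ to obtain projections $p \in P$, $e \in L(\Sg)$, a partial isometry $w \in eMp$, and a unital injective $*$-homomorphism $\Phi: pPp \to eL(\Sg)e$ satisfying $\Phi(x)w = wx$ for all $x \in pPp$. Shrinking $e$ down to $s(E_{L(\Sg)}(ww^*))$ settles (a) and (b). Setting $B := \Phi(pPp)$, the algebra $B$ is automatically a II$_1$ subfactor since $pPp$ is a factor and $\Phi$ is unital injective. Exploiting that $\mathcal Z(L(\Sg))$ is purely atomic together with Proposition \ref{finiteindexbasicprop}, I may further refine $e$ so that $eL(\Sg)e$ is itself a II$_1$ factor, keeping the finite-index machinery applicable.

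Next, I would transport the commuting structure. Since $P,Q$ commute in $rMr$ and $P \vee Q \subseteq rMr$ is finite index, the compression $(pPp) \vee (pQp) \subseteq pMp$ is also finite index. For $x \in pPp$, $y \in pQp$, the commutation $xy = yx$ combined with the intertwining relation yields
\begin{equation*}
\Phi(x)\, wyw^* = wxyw^* = wyxw^* = wyw^* \, \Phi(x),
\end{equation*}
so $w(pQp)w^* \subseteq B' \cap fMf$ where $f = ww^*$. Since conjugation by $w$ implements a $*$-isomorphism $pMp \to fMf$, the inclusion $B \vee w(pQp)w^* \subseteq fMf$ inherits finite index; in particular, $B \vee (B' \cap fMf) \subseteq fMf$ is finite index. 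The remaining question is whether this finite-index structure inside $fMf$ descends to $eL(\Sg)e$.

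The dichotomy now splits on this descent. In the first case, if $B \vee (B' \cap eL(\Sg)e) \subseteq eL(\Sg)e$ is finite index, conclusion (1) holds directly. In the second case, following the basic-construction strategy from \cite{CdSS15} (see also \cite{DHI16}), I would argue that the ``excess'' of $B' \cap fMf$ over $B' \cap eL(\Sg)e$ must be absorbed by $B$ itself. Concretely, by passing to $\langle M, e_{L(\Sg)} \rangle$ and using Popa-style intertwining applied to the failure of $B' \cap fMf$ to sit (modulo finite index) inside $L(\Sg)$, one extracts a new quadruple $(p', q, v', \phi')$ with $\phi': p'Pp' \to qL(\Sg)q$ unital injective, $\phi'(x)v' = v'x$, $s(E_{L(\Sg)}(v'{v'}^*)) = q$, and crucially $[qL(\Sg)q : \phi'(p'Pp')] < \infty$. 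This is exactly $P \cong^{com}_M L(\Sg)$, i.e., conclusion (2).

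The hard part will be the second case: upgrading the failure of the ``weak'' finite-index statement $B \vee (B' \cap eL(\Sg)e) \subseteq eL(\Sg)e$ into the genuine commensurability $[qL(\Sg)q : \phi'(p'Pp')] < \infty$. This requires delicate corner bookkeeping via Proposition \ref{finiteindexbasicprop} parts (\ref{20''}) and (\ref{20'''}) to propagate finite index through compressions and basic constructions, and relies essentially on the atomicity of $\mathcal Z(L(\Sg))$ to reduce everything to factor corners where the Popa-style intertwining machinery applies in its sharpest form. Verifying the support condition $s(E_{L(\Sg)}(v'{v'}^*)) = q$ after all these reductions is routine but must be tracked with care.
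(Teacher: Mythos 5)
There is a genuine gap, and it sits exactly where you flagged it yourself. Your transport step only yields finite index of $B\vee(B'\cap fMf)$ inside $fMf$, where $f=ww^*$ is a projection in $M$, not in $L(\Sg)$; the descent of this statement into the corner $eL(\Sg)e$ is not a hypothesis to split cases on but the actual content of the lemma. The paper obtains it from \cite[Proposition 2.4]{CKP14}: since $Q\subseteq P'\cap rMr$, the inclusion $P\vee(P'\cap rMr)\subseteq rMr$ is finite index, and that proposition then gives directly that $C\vee(C'\cap qL(\Sg)q)\subseteq qL(\Sg)q$ is finite index, where $C=\phi(pPp)$ is the image of the intertwining. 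In other words, the finite-index statement in the target corner of $L(\Sg)$ \emph{always} holds, so your second case (``descent fails'') does not characterize alternative (2), and your plan to extract the commensurability $[qL(\Sg)q:\phi'(p'Pp')]<\infty$ from such a failure has no mechanism behind it---you announce the conclusion but the ``hard part'' you defer is precisely the proof. There is also a smaller flaw in the transport itself: you compress by $w^*w$, which commutes with $pPp$ but not with $pQp$, so ``$B\vee w(pQp)w^*\subseteq fMf$ inherits finite index'' does not follow by conjugation as stated; but even repaired, it lands the finite index in the wrong ambient algebra.

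The actual dichotomy is driven by the purely atomic hypothesis in a different way than you use it. You shrink $e$ so that $eL(\Sg)e$ is a factor, but this controls the wrong center: alternative (1) requires $B\vee(B'\cap eL(\Sg)e)$ to be a II$_1$ factor, i.e.\ $\mathcal Z(B'\cap eL(\Sg)e)$ to be trivial, which your reduction does not provide, so even your first case does not close. The paper instead applies Proposition \ref{finiteindexbasicprop}(2) to the finite-index inclusion $C\vee(C'\cap qL(\Sg)q)\subseteq qL(\Sg)q$ to deduce that $\mathcal Z(C'\cap qL(\Sg)q)$ is purely atomic, and chooses a \emph{minimal} projection $e$ in that center with $ev\neq 0$. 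Then $(C'\cap qL(\Sg)q)e$ is a factor, and the dichotomy is on its type: if it is a II$_1$ factor one gets alternative (1) with $B=Ce$ and $w$ the polar part of $ev$; if instead $(C'\cap qL(\Sg)q)e=M_n(\mathbb C)e$, the finite-index statement collapses to $[eL(\Sg)e:B]<\infty$, which is exactly the commensurability condition defining $P\cong^{com}_M L(\Sg)$, i.e.\ alternative (2). So the missing ideas are two: the citation-strength fact that finite index transports through Popa intertwining into the target algebra (not merely into a corner of $M$), and the recognition that atomicity is exploited at the level of $\mathcal Z(C'\cap qL(\Sg)q)$ to produce the factor-versus-matrix-algebra alternative, rather than to make $eL(\Sg)e$ a factor.
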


\begin{proof} Let $M=L(\G)$. Since $P\prec_M L(\Sg)$ there exist projections $p\in P, q\in L(\Sg) $, a partial isometry $v\in M$,  and a unital injective $*$-homomorphism $\phi: pPp\ra qL(\Sg)q$ such that 
\begin{align}\label{11'''''a}
\phi(x)v=vx \text{ for all } x\in pPp.
\end{align}
Let $C:=\phi(pPp)$. Note $v^*v\in pPp'\cap pMp$, $vv^*\in C'\cap qMq$ and we can also assume that \begin{equation}%\label{111''''''}
s(E_{L(\Sg)}(vv^*))=q
\end{equation} Since $P\vee (P'\cap rMr) $ has finite index in $rMr$ then \cite[Proposition 2.4]{CKP14} implies that
\begin{align}\label{12'''''b}
C\vee (C'\cap qL(\Sg)q) \subset qL(\Sg)q\end{align} is also a finite index inclusion of algebras. 

By Proposition \ref{finiteindexbasicprop}(2)  $\mathcal Z(C'\cap qL(\Sg)q)$ is purely atomic and there is $e\in \mathcal Z(C'\cap qL(\Sg)q)$ so that $ev\neq 0$ and we have either 
\begin{enumerate}[label=\roman*)]
\item
$ (C'\cap qL(\Sg)q)e$ is a II$_1$ factor, or \label{13a}
\item$(C'\cap qL(\Sg)q)e={\rm M}_n(\mathbb C) e$ for some $n\in \mathbb N$.\label{13b}\end{enumerate} 
Consider $\Phi: pPp\ra Ce=:B$ given by $\Phi(x)=\phi(x)e$ for all $x\in pPp$ and let $w$ be the partial isometry in the polar decomposition of $ev $.  Then (\ref{11'''''a}) implies that $\Phi(x)w=wx$ for all $x\in pPp$. Moreover we have $evv^*e\leq ww^*$ and hence $E_{L(\Sg) }(vv^*)e= E_{L(\Sg) }(evv^*e)\leq E_{L(\Sg)}(ww^*)$. Thus  $e=s(E_{L(\Sg) }(vv^*))e=s(E_{L(\Sg) }(vv^*)e)\leq s(E_{L(\Sg)}(ww^*))$ and hence $s(E_{L(\Sg)}(ww^*))=e$. 

Assume case \ref{13a} above. Using (\ref{12'''''b}),  $B\vee (B'\cap eL(\Sg)e)= C e\vee (C'\cap qL(\Sg)q)e\subseteq eL(\Sg)e)$ is a finite index inclusion of II$_1$ factors. Altogether, these lead to possibility (\ref{p1}) in the statement.

Assume case \ref{13b} above.  Then relation (\ref{12'''''b}) implies that  $C=B e\subseteq  eL(\Sg)e$ is finite index which gives possibility (\ref{p2}) in the statement. \end{proof}

\begin{prop}[Lemma 1.4.5 in \cite{IPP05}]\label{transitivity} Let $R,Q\subseteq N \subseteq  M$, $P\subseteq M$ be (not necessarily unital) inclusions of von Neumann algebras such that $1_Q=1_N$ and assume $P$ is factor. Assume that $P\cong^{\phi,v}_M Q$ and $r\in Q'\cap N$ is a projection such that $s(E_N(vv^*))=1_N$ and $rv\neq 0$.  If $Qr\cong^{com}_N R$ then $P\cong^{com}_M R$.
\end{prop}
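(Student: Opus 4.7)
The plan is to \emph{compose} the intertwining data of $P\cong^{\phi,v}_M Q$ with that of $Qr\cong^{com}_N R$ to produce intertwining data witnessing $P\cong^{com}_M R$. I would first observe that the hypothesis $s(E_N(vv^*))=1_N$ forces $q=1_Q=1_N$: since $vv^*\le q$ and $q\in N$ we have $E_N(vv^*)\le q$, so $q\ge s(E_N(vv^*))=1_N$. Thus $\phi$ is an onto $\ast$-isomorphism $pPp\to Q$.

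As $Q$ is a factor and $r\in Q'\cap N$ is a nonzero projection (since $rv\neq 0$), the compression map $\rho:Q\to Qr$, $y\mapsto yr$, is a $\ast$-isomorphism; its kernel would otherwise be a proper two-sided ideal in the factor $Q$. Letting $f\in Qr$, $g\in R$, $w\in N$ and $\psi:fQf\to gRg$ denote the data witnessing $Qr\cong^{com}_N R$, I would write $f=f_0r$ with $f_0:=\rho^{-1}(f)\in Q$ a projection and set $p':=\phi^{-1}(f_0)\in P$. Then $\phi_r:p'Pp'\to fQf$, defined by $\phi_r(x):=\phi(x)r$, is a $\ast$-isomorphism (the composite $\rho\circ\phi|_{p'Pp'}$), and $\phi':=\psi\circ\phi_r:p'Pp'\to gRg$ is a $\ast$-homomorphism whose image has finite index in $gRg$ by hypothesis.

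For the intertwiner I would take $wv$. Observing that $w=gwf$ together with $f\le r$ yields $wr=w$, the computation
\begin{align*}
\phi'(x)\cdot(wv)\;=\;\psi(\phi(x)r)\,wv\;=\;w\,\phi(x)r\,v\;=\;wr\,\phi(x)v\;=\;(wv)\,x,\qquad x\in p'Pp',
\end{align*}
combines the three intertwining identities. Non-vanishing $wv\neq 0$ would follow since $w^*w\cdot E_N(vv^*)=E_N(w^*w\,vv^*)$ is nonzero (using $w^*w\neq 0$ and $s(E_N(vv^*))=1_N$), forcing $w^*w\,vv^*\neq 0$. Next I would take the polar decomposition $wv=v'|wv|$. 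The relation $\psi(y)w=wy$ on $fQf$ forces $w^*w\in(fQf)'\cap fNf$; since $w^*w\le r$ and $[\phi(x),r]=0$, a short calculation yields $[w^*w,\phi(x)]=0$ for $x\in p'Pp'$, hence $|wv|^2=v^*w^*wv$ commutes with $p'Pp'$. Therefore $v'$ inherits the intertwining $\phi'(x)v'=v'x$, and $[gRg:\phi'(p'Pp')]=[gRg:\psi(fQf)]<\infty$.

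The main technical obstacle I anticipate is verifying the support condition $s(E_R(v'v'^*))=g$. The inequality $s(E_R(v'v'^*))\le g$ is immediate from $v'v'^*\le ww^*$. For the reverse, since $v'v'^*$ is the support projection of the positive element $wvv^*w^*$, one has $v'v'^*\ge\|wvv^*w^*\|^{-1}\,wvv^*w^*$; combined with $w\in N$ this gives
\begin{align*}
E_R(v'v'^*)\;\ge\;\|wvv^*w^*\|^{-1}\,E_R\bigl(w\,E_N(vv^*)\,w^*\bigr).
\end{align*}
Writing $a:=E_N(vv^*)$, which satisfies $s(a)=1_N$, it suffices to show $s(E_R(waw^*))\ge g$. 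For every nonzero projection $e\le g$ in $R$, one computes $\tau(e\,waw^*)=\|ewa^{1/2}\|_2^2$. Since $e\le g=s(E_R(ww^*))$ gives $\tau(eww^*)=\tau(e\,E_R(ww^*))>0$, we have $ew\neq 0$; combined with $w=w\cdot 1_N$ and the fact that $a^{1/2}$ has dense range in $s(a)\mathcal H=1_N\mathcal H$, it follows that $ewa^{1/2}\neq 0$, so $e\,E_R(waw^*)\neq 0$. Taking the supremum over such $e$ forces $s(E_R(waw^*))\ge g$, completing the argument.
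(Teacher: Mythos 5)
Your proposal is correct, and it supplies what the paper deliberately omits: Proposition \ref{transitivity} is stated with the attribution \cite[Lemma 1.4.5]{IPP05} and no in-text argument, the point being that the composition-of-intertwiners mechanism of that lemma carries over to the relation $\cong^{com}$. Your proof is exactly that mechanism written out: compose the two intertwiners into $wv$, pass to the polar part $v'$, and verify conditions \eqref{com1}--\eqref{com3} by hand. The two places where the citation hides genuine work are both handled soundly: the commutation $[w^*w,\phi(x)]=0$ for $x\in p'Pp'$ (via $w^*w\le f\le r$ and $r\in Q'\cap N$), which is what lets $v'$ inherit the intertwining after polar decomposition, and above all the support condition $s(E_R(v'v'^*))=g$, where your chain $v'v'^*\ge\|wvv^*w^*\|^{-1}\,wvv^*w^*$, the factorization $E_R=E_R\circ E_N$ (valid since $w\in N$ and $R\subseteq N$), and the full-support fact that $ew\neq 0$ forces $ewa^{1/2}\neq 0$ when $s(a)=1_N$, constitute a complete argument; your upper bound $v'v'^*\le ww^*$ closes it. Two caveats are worth recording. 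First, you use the normalizations $vv^*\le q$ (equivalently $v\in qMp$) and $w=gwf$ without comment; these are legitimate under the paper's standing conventions (the intertwiner in Popa's criterion is taken in $qMp$, and homomorphisms are unital, so $\phi(p)=q$ and $\psi(f)w=wf$ yields $gw=wf$), but note that your opening deduction $q=1_Q=1_N$ genuinely presupposes $v\in qMp$ --- if $v$ were merely in $M$, replacing it by the polar part of $qvp$ could alter $E_N(vv^*)$ and hence the hypothesis --- so that convention should be stated rather than silently assumed. Second, you should note explicitly that $Q\cong pPp$ is a factor (this is where the hypothesis that $P$ is a factor enters, making $\rho:Q\to Qr$ injective), and observe that you invoke $rv\neq 0$ only to get $r\neq 0$, which already follows from $Qr\cong^{com}_N R$; indeed your own nonvanishing argument derives $wv\neq 0$ purely from $w\neq 0$ and $s(E_N(vv^*))=1_N$, so the full strength of $rv\neq 0$ is never needed in your reading --- a harmless redundancy, but worth flagging.
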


To be able to derive the first main result of the section we need to recall two results. The first provides a von Neumann algebraic criterion deciding when is a group commensurable to product of infinite groups. This technical tool was an essential piece in the work \cite{CdSS15} and has also lead for other important subsequent developments \cite{CdSS15}. For its proof we refer the reader to the proofs of Claims 4.7-4.12 in \cite{CdSS15}.

\begin{thrm}[Claims 4.7-4.12 in \cite{CdSS15}]\label{fromrelcomtocomgroups} Let $\Sigma<\Lambda$ be finite-by-icc groups. Also assume there exists $0\neq p\in \mathcal Z(L(\Sigma)'\cap L(\La))$ such that $L(\Sigma)\vee L(\Sigma)'\cap L(\Lambda)p\subseteq pL(\Lambda)p$ admits a finite Pimnser-Popa basis. Then there exists $\Omega<\Lambda$ such that $[\Sigma,\Omega]=1$ and $[\Lambda:\Sigma\Omega]<\infty$.  
\end{thrm}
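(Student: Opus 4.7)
The plan is to follow the Fourier-analytic strategy of \cite[Claims 4.7--4.12]{CdSS15}, which converts the algebraic finite-index hypothesis into coset data for $\Lambda$. The argument has three movements: identify $L(\Sigma)'\cap L(\Lambda)$ (up to a finite-dimensional defect coming from the finite core of $\Sigma$) with the group algebra of a subgroup of $\Lambda$ that virtually centralizes $\Sigma$; extract from this a literal centralizer $\Omega \leqslant \Lambda$ with $[\Sigma,\Omega]=1$; and finally use the finite Pimsner--Popa basis to bound $[\Lambda:\Sigma\Omega]$.

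First, since $\Sigma$ is finite-by-icc, denote by $F \lhd \Sigma$ its finite normal core, so $\Sigma/F$ is icc. A direct Fourier computation in $L(\Lambda)$ shows that any $x = \sum_{\lambda} c_\lambda u_\lambda$ commuting with $L(\Sigma)$ must be supported on elements $\lambda$ whose $\Sigma$-conjugacy class modulo $F$ is finite. Consequently, the Fourier support of $L(\Sigma)'\cap L(\Lambda)$ lies inside the virtual centralizer
$$\Omega_0 := \{\lambda \in \Lambda : [\Sigma : C_\Sigma(\lambda)]<\infty\}.$$
Because $\Sigma/F$ is icc, the natural conjugation action of $\Omega_0$ on $\Sigma/F$ has a kernel $\Omega \leqslant \Omega_0$ of finite index, and this $\Omega$ satisfies $[\Sigma,\Omega] = 1$. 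In particular one obtains $L(\Omega) \subseteq L(\Sigma)'\cap L(\Lambda)$ with finite-dimensional complement controlled by $L(F)$.

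Next I would handle the cornering projection $p \in \mathcal Z(L(\Sigma)'\cap L(\Lambda))$. Since $p$ commutes with both $L(\Sigma)$ and $L(\Omega)$, and since the center of the relative commutant is essentially generated by $L(\Omega_0)\cdot L(F)$, I would, after averaging over $F$ and a small amplification absorbing $p$, reduce to the situation where $L(\Sigma) \vee L(\Omega) \subseteq L(\Lambda)$ itself admits a finite Pimsner--Popa basis $\{m_1, \ldots, m_n\}$. Because $L(\Sigma)\vee L(\Omega)$ has Fourier support contained in $\Sigma\Omega$, expanding an arbitrary $u_\lambda$ as $\sum_i m_i y_i$ with $y_i \in L(\Sigma)\vee L(\Omega)$ forces every coset of $\Sigma\Omega$ in $\Lambda$ to meet the (finite) union of the Fourier supports of the $m_i$'s. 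This yields $[\Lambda : \Sigma\Omega] < \infty$ as required.

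The main obstacle will be the second movement above: the finite radical $F$ creates a genuine finite-dimensional defect between $L(\Omega_0)$ and $L(\Sigma)'\cap L(\Lambda)$, and one must propagate the finite-index hypothesis through both the cornering by $p$ and the truncation from $\Omega_0$ down to the literal centralizer $\Omega$ without losing control of the Fourier support counts. This delicate bookkeeping is precisely what the iterative argument of \cite[Claims 4.10--4.12]{CdSS15} achieves, by combining averaging over $F$ with polar decomposition manipulations of the intertwiner produced by the finite index hypothesis.
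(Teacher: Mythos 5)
Your first movement is sound and matches the cited argument: the Fourier coefficient computation shows $L(\Sigma)'\cap L(\Lambda)$ is supported on the virtual centralizer $\Omega_0=\{\lambda\in\Lambda : [\Sigma:C_\Sigma(\lambda)]<\infty\}$, a subgroup normalized by $\Sigma$, so that $L(\Sigma)\vee (L(\Sigma)'\cap L(\Lambda))\subseteq L(\Sigma\Omega_0)$. The second movement, however, contains a genuine error, and it sits exactly at the crux of the theorem. You assert that ``the natural conjugation action of $\Omega_0$ on $\Sigma/F$ has a kernel $\Omega\leqslant\Omega_0$ of finite index, and this $\Omega$ satisfies $[\Sigma,\Omega]=1$.'' There is no such action: an element $\lambda\in\Omega_0$ centralizes a finite-index subgroup of $\Sigma$, but it need not normalize $\Sigma$ (nor $F$), so conjugation by $\Omega_0$ does not induce automorphisms of $\Sigma/F$. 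Even on the subgroup of elements that do normalize $\Sigma$, icc-ness of $\Sigma/F$ gives no control on the outer action, so finite index of the kernel is unjustified; and an element $\omega$ acting trivially on $\Sigma/F$ only satisfies $[\Sigma,\omega]\subseteq F$, not $[\Sigma,\omega]=1$ (take $\Sigma=F\times\Sigma_0$ with $\omega$ acting nontrivially on $F$ alone). The passage from the virtual centralizer to an honest commuting subgroup simply cannot be extracted from icc-ness before the finite Pimsner--Popa basis enters: this is precisely where the cited Claims 4.7--4.12 of \cite{CdSS15} spend their effort, using the basis to obtain uniform bounds on $\Sigma$-orbit sizes and then a B.H.~Neumann-type covering/pigeonhole argument at the group level to produce $\Omega$ with $[\Sigma,\Omega]=1$ and $[\Lambda:\Sigma\Omega]<\infty$. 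Your sketch claims this step for free from icc-ness and then, in the closing paragraph, defers ``the main obstacle'' back to the very claims being proved --- for a blind proof that is circular. (Relatedly, the assertion that $L(\Omega)$ sits inside $L(\Sigma)'\cap L(\Lambda)$ with only a finite-dimensional defect controlled by $L(F)$ is false in general: the relative commutant contains the orbit averages $\sum_{\mu\in\mathcal O_\Sigma(\lambda)}u_\mu$ over all finite orbits, which typically span an infinite-dimensional complement of $L(C_\Lambda(\Sigma))$.)

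The third movement also needs repair, though along standard lines. The Fourier supports of the basis elements $m_i$ are countable, not finite, so the statement that ``every coset of $\Sigma\Omega$ must meet the finite union of the Fourier supports of the $m_i$'s'' requires a $\|\cdot\|_2$ finite-support approximation of the $m_i$ together with a uniform lower bound on the quantities $\|pu_\lambda p\|_2$ as $\lambda$ ranges over coset representatives (or an averaging/maximality device circumventing this); none of this is addressed. Moreover the central projection $p\in\mathcal Z(L(\Sigma)'\cap L(\Lambda))$ cannot be ``absorbed by a small amplification'': the finite-index hypothesis concerns only the corner $pL(\Lambda)p$, and the counting estimate must be run with $p$ in place, using support control such as $s(E_{L(\Sigma\Omega_0)}(p))$, before one can even speak of expanding $u_\lambda$ against the basis. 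What is salvageable as written is the weaker conclusion $[\Lambda:\Sigma\Omega_0]<\infty$ for the virtual centralizer $\Omega_0$; upgrading $\Omega_0$ to a subgroup genuinely commuting with $\Sigma$ is the missing mathematical content.
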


The second result is a basic von Neumann's projections equivalence property for inclusions of von Neumann algebras. Its proof is standard and we include it only for reader's convenience. 

\begin{lem}\label{intinsubfactor} Let $N\subseteq (M, \tau)$ be finite von Neumann algebras, where $N$ is a II$_1$ factor. Then for every projection $0\neq e\in M$ there exists a projection $f\in N$ and a partial isometry $w\in M$ such that $e=w^*w$ and $ww^*=f$. 
\end{lem}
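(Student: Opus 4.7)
The plan is to construct $w$ by a Zorn's lemma exhaustion argument, using the II$_1$ factor structure of $N$ to supply target projections of every desired trace.

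Let $\mathcal F$ denote the poset of pairs $(v,g)$ with $v\in M$ a partial isometry, $g\in N$ a projection, $v^*v\le e$ and $vv^*=g$, ordered by extension: $(v_1,g_1)\le(v_2,g_2)$ iff $v_2(v_1^*v_1)=v_1$. Any chain admits an upper bound given by the SOT-limit of the first coordinates (still a partial isometry since $M$ is finite) together with the supremum of the second coordinates, so Zorn's lemma yields a maximal element $(w,f)\in\mathcal F$. It then suffices to show $w^*w=e$.

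Suppose instead that $p:=e-w^*w$ is nonzero. The equivalence $w^*w\sim_M ww^*=f$ forces $\tau(f)=\tau(w^*w)<\tau(e)\le 1$, so $q:=1_N-f$ is a nonzero projection in $N$. Since the corner $qNq$ is again a II$_1$ factor, it contains projections $g\le q$ of every prescribed trace in $[0,\tau(q)]$. To contradict the maximality of $(w,f)$ it is enough to produce a nonzero partial isometry $u\in M$ with $u^*u\le p$ and $uu^*=g$ for some such $g$: the orthogonalities $u^*u\le p\perp w^*w$ and $uu^*=g\le q\perp f$ then ensure that $w+u$ is a partial isometry in $M$ (a brief computation using $w=ww^*w=fw$ and $u=gu$ shows that the cross terms $w^*u$ and $u^*w$ vanish, since $fg=0$), and that the pair $(w+u,\,f+g)$ strictly extends $(w,f)$ in $\mathcal F$.

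The main obstacle is producing this partial isometry $u$ inside $M$. I would pick a nonzero $g\in qNq$ with $\tau(g)$ sufficiently small and invoke the comparison theorem for projections in the finite algebra $M$ to conclude $g\preceq_M p$; the essential ingredients are that $N$ being a II$_1$ factor forces $c_M(g)=1_M$ (so the comparison cannot be obstructed by a central cut annihilating $g$) and that $qNq$ supplies projections $g$ of arbitrarily small scalar trace. Extracting $u\in M$ with $u^*u=g$ and $uu^*\le p$ from this subequivalence closes the Zorn contradiction. In the paper's principal setting $M=L(\Gamma)$ is itself a II$_1$ factor (because $\Gamma$ is icc), and the whole argument collapses to the one-line fact that any $f\in N$ with $\tau(f)=\tau(e)$---which exists because $N$ is a II$_1$ factor---is automatically Murray--von Neumann equivalent to $e$ in the factor $M$, directly producing $w$.
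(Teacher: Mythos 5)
Your Zorn exhaustion is essentially the paper's own argument, and your reduction of the problem to producing one nonzero partial isometry $u\in M$ with $u^*u\le p$ and $uu^*=g$ for some projection $g\in N$ under $q=1_N-f$ is correct (the cross-term computation included). The proposal breaks at exactly that step. In a finite von Neumann algebra with nontrivial center, comparison of projections is governed by the \emph{center-valued} trace, so ``$\tau(g)$ small'' buys nothing; worse, the property $z_M(g)=1_M$ that you correctly extract from $N$ being a II$_1$ factor makes $g\preceq_M p$ \emph{impossible} whenever $z_M(p)\neq 1_M$, since $g\preceq_M p$ forces $z_M(g)\le z_M(p)$. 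Full central support of $g$ is thus an obstruction to, not an ingredient for, the comparison you want. What it actually yields, via the comparison theorem, is only that $p$ and $g$ admit nonzero \emph{subequivalent} pieces; but the piece of $g$ so produced is a projection of $M$ lying under $g$, with no reason to belong to $N$, so it cannot be fed back into your poset $\mathcal F$, whose second coordinates must be projections of $N$.

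This defect is not repairable, because the lemma is false at the stated level of generality: let $N_0$ be any II$_1$ factor, $M=N_0\oplus N_0$, and $N=\{(x,x):x\in N_0\}$ the diagonal copy, a unital II$_1$ subfactor of the finite algebra $M$. For $e=(q,0)$ with $q\neq 0$, any $w\in M$ with $w^*w=e$ and $ww^*=(r,r)\in N$ has vanishing second component, forcing $r=0$ and then $q=0$. It is worth noting that the paper's own proof stumbles on the very same point: its final step asserts that the comparison paragraph yields $0\neq f_0\le 1-f$ together with $f_0\in N$ (as required for membership in its maximal family), which is precisely what comparison does not, and by the example cannot, provide. Your closing observation is the sound part of the proposal: when the ambient algebra is a II$_1$ factor---which is the situation in which the lemma is actually invoked in the proof of Theorem \ref{virtualprod}, e.g.\ inside the factor corner $(U'\cap pL(\Gamma)p)b$---one simply picks $f\in N$ with $\tau(f)=\tau(e)$, available because $N$ is a II$_1$ factor, and equality of traces in the factor $M$ gives $e\sim f$. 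So your argument, like the paper's, establishes the lemma only when $M$ is a factor; unlike the paper's, it at least records that case explicitly, and that case is all the applications require.
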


\begin{proof} First we show for every $0\neq y\in \mathcal P(N)$, its central support in $M$ satisfies $z_M(y)=1$. To see this note $z_M(y)\geq y$ and hence $E_N(z_M(y))\geq E_N(y)=y$. As $N$ is a factor then    $E_N(z_M(y))=\tau(z_M(y))1$ and hence $\tau(z_M(y)) 1\geq y$. This forces $\tau(z_M(y))\geq 1$ and hence $z_M(y)=1$. In particular, for every $0\neq y\in \mathcal P(N)$ and $0\neq x\in \mathcal P(M)$ we have $z_M(x)z_M(y)=z_M(x)\neq 0$. By Comparison Theorem $x$ and $y$ have nonzero subequivalent projections. 

Let $\mathcal F$ be the set of all collections of mutually orthogonal projections $e_i\leq e$ for which there exist mutually orthogonal projections $f_i\in N$ and  $w_i\in M$ satisfying $w_i^*w_i=e_i$ and $w_iw_i^*=f_i$ for all $i$. By above paragraph $\mathcal F\neq \emptyset$ and consider the order $(\mathcal F, \prec )$ given by set inclusion. By Zorn's lemma there is $\{e_i\,|\,i\in I\}$, a maximal collection. Then set $\sum_i e_i=:e'\leq e$, $f:=\sum_if_i\in N$ and note that $e'=w^*w$ and $f=ww^*$, where $w=\sum_i w_i$. To reach our conclusion we only need to argue that $e=e'$. Indeed, if $0\neq e-e'$, by the previous paragraph there exist $0\neq e_0\leq e-e'$, $0\neq f_0\leq 1-f$ and $w_0\in M$ such that $w_0^*w_0=e_0$ and $w_0w_0^*=f_0$. Thus $\{e_i\,|\, i\in I\} \precneqq\{e_i\,|\,i\in I\}\cup \{ e_0\}$ contradicting the maximality of $\{e_i\}$.\end{proof}

With this two facts at hand we are now ready to prove the first main result of the section.

\begin{thrm}\label{virtualprod} Let $\Sigma \leqslant \G$ be countable groups, where $\G$ is icc and $\Sg$ is finite-by-icc. Let $r\in L(\G)$ be a projection and let $P,Q\subset rL(\G)r$ be commuting II$_1$ factors such that $P\vee Q\subseteq rL(\G)r$ has finite index. If  $P\cong^{com}_{L(\G)} L(\Sg)$ 
then there exist a subgroup $\Omega\leqslant C_{\G}(\Sigma)$ such that 
\begin{enumerate}\item \label{a1} $[\G:\Sigma \Omega]<\infty$;
\item \label{a3}$Q\cong^{com}_M L(\Omega)$.
\end{enumerate} 
\end{thrm}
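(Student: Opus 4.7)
The plan is to transport the spatial commensurability $P\cong^{com}_M L(\Sigma)$ across the intertwiner to $Q$, produce a companion subgroup $\Omega\leqslant C_\Gamma(\Sigma)$ with $Q\cong^{com}_M L(\Omega)$, and then invoke Theorem~\ref{fromrelcomtocomgroups} for the finite-index conclusion (\ref{a1}).

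First I would unpack the hypothesis. From $P\cong^{com}_M L(\Sigma)$ extract projections $p\in P$, $q\in L(\Sigma)$, a partial isometry $v\in M:=L(\Gamma)$, and a unital embedding $\phi:pPp\hookrightarrow qL(\Sigma)q$ with $\phi(x)v=vx$, $s(E_{L(\Sigma)}(vv^*))=q$, and $B:=\phi(pPp)\subseteq qL(\Sigma)q$ of finite Jones index. Writing $e_1=vv^*\in B'\cap qMq$ and using $\tau(p)=\tau(e_1)$, the map $\mathrm{Ad}(v)$ is a $\ast$-isomorphism $pMp\to e_1Me_1$. Since $p\in P\cap Q'$ and $[rMr:P\vee Q]<\infty$, the compression $pPp\vee pQp=p(P\vee Q)p\subseteq pMp$ has finite index; conjugating by $v$ yields commuting II$_1$ subfactors $B_0:=Be_1=v(pPp)v^*$ and $\widetilde Q:=v(pQp)v^*$ inside $e_1Me_1$, with $B_0\vee\widetilde Q\subseteq e_1Me_1$ of finite index and $\widetilde Q\subseteq B_0'\cap e_1Me_1=(B'\cap qMq)e_1$.

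Next, and this is the technical heart, I would show $\widetilde Q\prec_M L(C_\Gamma(\Sigma))$ and upgrade this to spatial commensurability. Because $B\subseteq qL(\Sigma)q$ is a finite-index inclusion of II$_1$ factors, a Pimsner--Popa basis for this inclusion converts $E_B$-moments into $E_{qL(\Sigma)q}$-moments with bounded distortion; via Theorem~\ref{corner} this forces any subalgebra of $B'\cap qMq$ that does not escape to a proper corner to intertwine into $(qL(\Sigma)q)'\cap qMq=q(L(\Sigma)'\cap L(\Gamma))q$. Because $\Sigma$ is finite-by-icc, $\mathcal Z(L(\Sigma))$ is purely atomic, and on each of its minimal central projections $L(\Sigma)'\cap L(\Gamma)$ is, modulo a finite-dimensional piece, generated by $L(C_\Gamma(\Sigma))$. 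Combining this with the finite-index constraint $B_0\vee\widetilde Q\subseteq e_1Me_1$ yields $\widetilde Q\prec_M L(C_\Gamma(\Sigma))$. I would then apply Lemma~\ref{intertwiningdichotomy} with $\widetilde Q$ in place of $P$ and $C_\Gamma(\Sigma)$ in place of $\Sigma$: the finite-index condition $B_0\vee\widetilde Q\subseteq e_1Me_1$ together with the icc-ness of $\Gamma$ prevents the nontrivial-commutant alternative (\ref{p1}) from occurring without forcing a finite-index inclusion of the image itself, so the dichotomy collapses to alternative (\ref{p2}), producing $\Omega\leqslant C_\Gamma(\Sigma)$ with $\widetilde Q\cong^{com}_M L(\Omega)$. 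Applying Proposition~\ref{transitivity} to transport this back through $\mathrm{Ad}(v^*)$ gives conclusion (\ref{a3}).

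Finally, for the finite-index conclusion (\ref{a1}), I would combine the two commensurabilities $P\cong^{com}_M L(\Sigma)$, $Q\cong^{com}_M L(\Omega)$, and the hypothesis $[rMr:P\vee Q]<\infty$. A short corner-by-corner computation then produces a nonzero central projection $p_0\in\mathcal Z(L(\Sigma)'\cap L(\Gamma))$ such that $L(\Sigma)\vee(L(\Sigma)'\cap L(\Gamma))p_0\subseteq p_0L(\Gamma)p_0$ admits a finite Pimsner--Popa basis; this is precisely the hypothesis of Theorem~\ref{fromrelcomtocomgroups} applied with $\Lambda=\Gamma$, which delivers a subgroup $\Omega'\leqslant C_\Gamma(\Sigma)$ with $[\Gamma:\Sigma\Omega']<\infty$. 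Replacing $\Omega$ by the subgroup of $C_\Gamma(\Sigma)$ generated by $\Omega$ and $\Omega'$ preserves (\ref{a3}) (since commensurability with $L(\Omega)$ persists under passage to a finite-index overgroup inside $C_\Gamma(\Sigma)$) and establishes (\ref{a1}). The main obstacle is the middle step: bridging from $\widetilde Q$'s commutation with the (possibly much smaller) finite-index subfactor $B$ to a genuine Popa intertwining into $L(C_\Gamma(\Sigma))$, and then verifying that the dichotomy of Lemma~\ref{intertwiningdichotomy} truly collapses to the commensurability branch; tracking the finite-dimensional centre arising from the finite-by-icc structure of $\Sigma$ contributes nontrivial bookkeeping throughout.
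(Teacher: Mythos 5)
Your overall architecture (reduce to Theorem \ref{fromrelcomtocomgroups}, then transport commensurability to $Q$) is pointed in the right direction, but the middle step you yourself flag as the technical heart is genuinely broken, in two places. First, the passage from ``$\widetilde Q$ commutes with the finite-index subfactor $B\subseteq qL(\Sg)q$'' to ``$\widetilde Q\prec_M L(C_\G(\Sg))$'' is asserted via a vague ``bounded distortion'' of moments, but no such mechanism is available: commuting with a finite-index subfactor of $L(\Sg)$ does not by itself intertwine anything into $(qL(\Sg)q)'\cap qMq$, and the relation between $L(\Sg)'\cap L(\G)$ and $L(C_\G(\Sg))$ is precisely the hard content of Theorem \ref{fromrelcomtocomgroups}, not something you may use beforehand ``modulo a finite-dimensional piece.'' Second, your appeal to Lemma \ref{intertwiningdichotomy} with $C_\G(\Sg)$ in place of $\Sg$ is illegitimate three times over: the lemma requires $\mathcal Z(L(C_\G(\Sg)))$ purely atomic, and nothing at that stage makes $C_\G(\Sg)$ finite-by-icc; its conclusion would give commensurability with $L(C_\G(\Sg))$ itself, not with a subgroup $\Omega\leqslant C_\G(\Sg)$ as you claim; and you offer no actual argument that alternative (\ref{p1}) collapses---icc-ness of $\G$ does not rule it out. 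The paper sidesteps all of this with a device absent from your proposal: the \emph{downward basic construction} for $R=\Phi(pPp)\subseteq eL(\Sg)e$, producing $T\subseteq R$ and a Jones projection $a$ with $aL(\Sg)a=Ta$, which \emph{reverses} the intertwining so that a corner $aL(\Sg)a$ embeds as a finite-index subfactor $U=\Phi^{-1}(T)\subseteq pPp$. This yields an honest unitary conjugacy between a corner of $U'\cap pL(\G)p$ and a corner of $L(\Sg)'\cap L(\G)$; since $U\vee(U'\cap pL(\G)p)\subseteq pL(\G)p$ has finite index (from the hypothesis on $P\vee Q$), one gets finite index of a corner of $L(\Sg)\vee(L(\Sg)'\cap L(\G))$ in the corresponding corner of $L(\G)$---exactly the hypothesis of Theorem \ref{fromrelcomtocomgroups}. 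Along the way a Pimsner--Popa estimate shows $Qb\subseteq (U'\cap pL(\G)p)b$ has finite index for suitable minimal central $b$, which is what later links $Q$ to the commutant picture.

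Your endgame has a second gap: you obtain (\ref{a3}) for one subgroup $\Omega$ and (\ref{a1}) for a different $\Omega'$, and then merge them by replacing $\Omega$ with $\langle\Omega,\Omega'\rangle$, claiming commensurability ``persists under passage to a finite-index overgroup.'' But nothing guarantees $\Omega$ has finite index in $\langle\Omega,\Omega'\rangle$, and $\cong^{com}$ does not pass to infinite-index overgroups (the finite Jones index condition on the image fails). The paper's order of operations avoids the merge entirely: Theorem \ref{fromrelcomtocomgroups} is applied \emph{first}, producing a single $\Omega\leqslant C_\G(\Sg)$ with $[\G:\Sg\Omega]<\infty$; that finite index then forces $q_2L(\Omega)q_2\subseteq q_2(L(\Sg)'\cap L(\G))q_2$ to have finite index, Lemma \ref{finiteindeximage} provides an intertwining of a corner of $L(\Sg)'\cap L(\G)$ into $L(\Omega)$ with finite-index image, and composing with the unitary conjugacy above (using Lemma \ref{intinsubfactor} to land the relevant projections in $Q$ and $L(\Omega)$) yields $Q\cong^{com}_{L(\G)}L(\Omega)$ for that same $\Omega$. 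Relatedly, your description of the input to Theorem \ref{fromrelcomtocomgroups} as ``a short corner-by-corner computation'' from the two commensurabilities inverts the logic of the proof: in your scheme the second commensurability was itself derived from the unproved middle step, so the hard work is hidden rather than done.
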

\begin{proof} Since $P\cong^{com}_{L(\G)} L(\Sg)$ there exist projections $p\in P, e\in L(\Sigma)$, a partial isometry $v\in L(\G)$, and  a injective, unital $\ast$-homomorphism $\Phi: pPp\ra eL(\Sigma)e$ so that \begin{enumerate}
\item\label{301} $\Phi(x)v=vx$ for all $x\in pPp$, and 
\item \label{302}$\Phi(pPp)\subseteq eL(\Sigma)e$ is a finite index inclusion of II$_1$ factors.
\end{enumerate}

Denote by $R:=\Phi(pPp)\subseteq eL(\Sigma)e$. Let $T \subseteq R\subseteq eL(\Sigma) e$ be the downward basic construction for inclusion $R\subseteq eL(\Sigma) e$ \cite[Lemma 3.1.8]{Jo81} and let $a\in T'\cap qL(\Sigma) q$ be the Jones' projection satisfying $eL(\Sigma) e=\langle R,a \rangle $ and $a L(\Sigma)a = Ta$. Also note that $[eL(\Sigma)e: R]=[R:T]$. Using equation (\ref{301}) the restriction $\Phi^{-1}: T\ra pPp$ is  an injective $\ast$-homomorphism such that $U:=\Phi^{-1}(T)\subseteq pPp$ is a finite Jones index subfactor and 
\begin{equation}\label{4}
\Phi^{-1} (y)v^*=v^*y\text{, for all }y\in T.
\end{equation}  
Let $\theta':Ta\ra T$ be the $\ast$-isomorphism given by $\theta'(xa)=x$. One can check that $0\neq v^*a$ and let $0\neq w$ be a partial isometry so that $v^*a=w^*_0|v^*a|$. Then $Ta=aL(\Sigma) a$ together with (\ref{4}) shows that $\theta=\Phi^{-1}\circ\theta': aL(\Sigma) a\ra pPp$ is an injective $\ast$-homomorphism satisfying $\theta(aL(\Sigma )a)=U$ and  
\begin{equation}\label{3}
\theta (y)w^*_0=w^*_0y\text{, for all }y\in aL(\Sigma) a.
\end{equation} 

Since $P\vee Q \subseteq rL(\G)r$ is finite index then so is $pPp\vee Q \subseteq pL(\G)p$. Also since $U \subset pPp$ is finite index it follows that $U\vee Q\subseteq pL(\G)p$ is finite index. Since these are factors it follows that $U\vee Q\subseteq pL(\G)p$ admits a finite Pimsner-Popa basis. From construction we have $U\vee Q\subseteq U\vee U'\cap pL(\G)p\subseteq pL(\G)p$ and hence  $U\vee Q\subseteq U\vee U'\cap pL(\G)p$ admits a finite Pimsner-Popa basis. Also since $U\vee Q$ is a factor we have by Proposition \ref{finiteindexbasicprop}(1) that $dim_{\mathbb C}(\mathcal Z(U\vee U'\cap pL(\G)p))<\infty$. As $\mathcal Z(U\vee U'\cap pL(\G)p)=\mathcal Z(U'\cap pL(\G)p)$ we conclude that $dim_{\mathbb C}(\mathcal Z(U'\cap pL(\G)p))<\infty$. Using Proposition \ref{finiteindexbasicprop}(3) for every minimal projection $b\in \mathcal Z(U'\cap pL(\G)p)$ it follows that $(U\vee Q) b\subseteq U\vee (U'\cap pL(\G)p) b$ is a finite index inclusion of II$_1$ factors. Thus there exists $C_b>0$ such that for all $x\in U_+$ and $y\in (U'\cap pL(\G)p)_+$ we have \begin{equation}\label{1201}
\|E_{U\vee Q b}(xyb)\|^2_{2,b}\geq C_b\|xyb\|^2_{2,b}.
\end{equation}

 Since $E_{U\vee Q}(b)=\tau_p(b) p$ we have  $E_{U\vee Q b}(zb)=E_{U\vee Q}(zb) b \tau^{-1}_p(b)$  for all $z\in  U\vee U'\cap pL(\G)p$. Thus for every $x\in U$ and $y\in U'\cap pL(\G)p$ we have $E_{U\vee Q b}(xyb)=E_{U\vee Q}(xyb) b \tau^{-1}_p(b)=xE_{U\vee Q}(yb) b \tau^{-1}_p(b)=xE_{Q}(yb) b \tau^{-1}_p(b)=xE_{Qb}(yb)$. Also since $U$ is a factor one can check that for all $x\in U$ and $y\in U'\cap pL(\G)p$ we have $\|xyb\|^2_2=\|x\|^2_2\|yb\|^2_2$. This further implies that $\|xyb\|^2_{2,b}=\|x\|^2_2\|yb\|^2_{2,b}$. Using these formulas together with (\ref{1201}) we see that $\|x\|^2_2 \| E_{Q b}(yb)\|^2_{2,b}=\|x E_{Q b}(yb)\|^2_{2,b}=\|E_{U\vee Q b}(xyb)\|^2_{2,b}\geq C_b\|xyb\|^2_{2,b}=C_b\|x\|^2_2 \|yb\|^2_{2,b}$ and hence $ \| E_{Q b}(yb)\|^2_{2,b}\geq C_b \|yb\|^2_{2,b}$ for all $y\in U'\cap pL(\G)p$. Hence, $Qb\subseteq (U'\cap pL(\G)p) b$ is a finite index inclusion of II$_1$ factors for every minimal projection $b\in \mathcal Z(U'\cap pL(\G)p)$. 
 
Choose a minimal projection $b\in \mathcal Z(U'\cap pL(\G)p)$ so that $w^*=bw^*_0\neq 0$. Thus (\ref{3}) gives 
 \begin{equation}\label{3'}
\theta (y)w^*=w^*y\text{, for all }y\in aL(\Sigma) a.
\end{equation} 

Notice that $w^*w\in (U'\cap pL(\G)p) b$  and $ww^*\in aL(\Sigma)a'\cap aL(\G)a$. Letting $u\in pL(\G)p$ be a unitary so that $uw^*w=w$, relation (\ref{3'}) entails  \begin{equation}\label{5}u U w^*wu^*=ww^* aL(\Sigma)a.
\end{equation} 

Passing to relative commutants we also have 

\begin{equation}\label{6}
uw^*w (U'\cap pL(\G)p) w^*wu^*=ww^* (aL(\Sigma)a'\cap aL(\G)a) ww^*= ww^* (L(\Sigma)'\cap L(\G)) ww^*
\end{equation}

Altogether, (\ref{5}) and (\ref{6}) imply that 
\begin{equation}\label{7}\begin{split}uw^*w (U\vee (U'\cap pL(\G)p)) w^*wu^*& =ww^* (a L(\Sigma)a \vee  (aL(\Sigma)a'\cap aL(\G)a)) ww^*\\& = ww^* ( L(\Sigma) \vee  (L(\Sigma)'\cap L(\G))) ww^*.\end{split}
\end{equation} 

Since from assumptions $pPp \vee Qp =p(P\vee Q)p\subseteq pL(\G)p$ is a finite index and $U\subseteq pPp$ is finite index it follows that $U \vee Qp \subseteq pL(\G)p$ is finite index as well. Also notice that
$U\vee Qp \subseteq U\vee (P'\cap rL(\G)r)p = U\vee (pPp'\cap pL(\G)p)\subseteq U\vee (U'\cap pL(\G)p)\subseteq pL(\G)p$. Thus $U\vee (U'\cap pL(\G)p)\subseteq pL(\G)p$ is finite index. Combining with (\ref{7}) we obtain that $ww^*( L(\Sigma) \vee  (L(\Sigma)'\cap L(\G))) ww^*\subseteq ww^* L(\G)ww^*$ is a finite index inclusion of II$_1$ factors. Using Theorem \ref{fromrelcomtocomgroups} there exists a subgroup $\Omega<\Lambda$ such that $[\Sigma,\Omega]=1$ and $[\G:\Sigma\Omega]<\infty$.  Since $\G$ is icc if follows that $\Sigma,\Omega$ are icc as well; in particular, $L(\Sigma)$, $L(\Omega)$ are II$_1$ factors.
Using Lemma \ref{intinsubfactor} there exist unitaries $v_1\in U'\cap pL(\G)p$, $u_2\in L(\Sigma)'\cap L(\G)$ such that $v_1 w^*w v_1^*=q_1\in Qb$ and $u^*_2 ww^* u_2=q_2\in L(\Omega)$. Denoting by $t:=u_2^*uv^*_1$ then relation (\ref{6}) can be rewritten as
\begin{equation}\label{8}
t q_1 (U'\cap pL(\G)p) q_1 t^*=q_2 (L(\Sigma)'\cap L(\G))q_2. 
\end{equation}  
Since $[\G:\Sigma\Omega]<\infty$ then $q_2 L(\Sigma \Omega) q_2 \subseteq q_2L(\G)q_2$ has finite index. Since $L(\Omega)\subseteq L(\Sigma)'\cap L(\G)$ it follows that $q_2 L(\Sigma \Omega)q_2\subseteq q_2L(\Sigma )\vee L(\Sigma)'\cap L(\G)q_2$ is finite index as well. Therefore $q_2L(\Omega)q_2\subseteq q_2L(\Sigma)'\cap L(\G)q_2$ is finite index inclusion of II$_1$ factors. By Lemma \ref{finiteindeximage} there exist $r_i\leq q_2$, $w_1\in q_2L(\Sigma)'\cap L(\G)q_2$,  and a $*$-isomorphism  $\phi': r_1L(\Sigma)'\cap L(\G)r_1 \ra B\subseteq r_2L(\Omega)r_2$ such that 
\begin{enumerate}
\item \label{10'}$\phi'(x)w_1=w_1 x$ for all $x\in r_1L(\Sigma)'\cap L(\G)r_1$
\item\label{10} $[r_2L(\Omega)r_2:B]<\infty$
\end{enumerate}
Using Lemma \ref{intinsubfactor}, relation (\ref{8}), and perturbing more the unitary $t$ we can assume there exists a projection  $q_3\in Q$ such that  $q_3 b\leq q_1$ and 
\begin{equation}\label{8'}
t q_3 (U'\cap pL(\G)p) q_3 b t^*=r_1 (L(\Sigma)'\cap L(\G))r_1. 
\end{equation} 
 
Consider the $\ast$-isomorphism $\Psi':q_3Qq_3\ra tq_3 Qq_3b t^*$ given by
$\Psi'(x)=txb t^*$ and let $\Psi=\phi'\circ\Psi': q_3Qq_3 \ra r_2L(\Omega)r_2$. Using (\ref{10'}) above for every $x\in q_3Qq_3$ we have $\Psi(x) w_1t=\phi'(\Psi'(x))w_1t = w_1 \Psi'(x) t= w_1 txb  = w_1tb x$. Next we argue that $w_1tb\neq 0$. If $w_1tb= 0$ then  $w_1tbq_1 t^* = 0$ and hence $w_1q_2 = 0$. Thus $w_1=w_1 r_1= w_1r_1q_2=0$, a contradiction.  So letting $w$ to be the isometry in the polar decomposition of $w_1tb$, $q:=q_3$ and $f:=r_2$ we get that $\Psi: qQq\ra fL(\Omega)f$ a injective, unital $\ast$-homomorphism so that    $\Psi(x)w=wx$ for all $x\in qQq$. Moreover since $Qb\subseteq q_1(U'\cap pL(\G)p) q_1$ is finite index then using (\ref{302}) above and (\ref{8'}) one gets that $\Psi(qQq)\subseteq r_2 L(\Omega)r_2$ has finite index. Altogether these show that $Q\cong^{com}_{L(\G)} L(\Omega)$.  \end{proof}

We end this section presenting the second main result. This roughly asserts that tensor product decompositions of group von Neumann algebras whose factors are commensurable with subalgebras arising commuting subgroups can be ``slightly perturbed'' to tensor product decompositions arising from the actual direct product decompositions of the underlying group. The proof uses the factor framework in an essential way and it is based on arguments from \cite[Proposition 12]{OP03} and \cite[Theorem 4.14]{CdSS15} (see also \cite[Theorem 6.1]{DHI16}).  

\begin{thrm}\label{fromvirtualtoproduct}  Let $\G$ be an icc group and assume that  $M=L(\G)=M_1\bar\otimes M_2$, where $M_i$ are diffuse factors. Also assume there exist commuting, non-amenable, icc subgroups $\Sigma_1,\Sigma_2<\G$ such that $[\G:\Sigma_1 \Sigma_2]<\infty$, $M_1\cong^{com}_M L(\Sg_1)$, and $M_2\cong^{com}_M L(\Sg_2)$.  Then there exist a group decomposition $\G=\G_1\times \G_2$, a unitary $u\in M$ and $t>0$ such that $M_1=uL(\G_1)^tu^*$ and $M_2=uL(\G_2)^{1/t}u^*$.  
 \end{thrm}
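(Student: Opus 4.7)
The plan is to leverage the two commensurabilities, together with the commutation $[\Sigma_1,\Sigma_2]=1$ and the finite-index hypothesis, to conjugate the tensor decomposition $M_1\bar\otimes M_2$ into position with the finite-index tensor subfactor $L(\Sigma_1\times\Sigma_2)\subset M$, and then to promote this virtual product realization to an honest direct-product decomposition of $\G$ along the lines of \cite[Proposition 12]{OP03} and \cite[Theorem 4.14]{CdSS15}.

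First I would verify that $\Sigma_1\Sigma_2$ is in fact an internal direct product of finite index in $\G$: any $g\in\Sigma_1\cap\Sigma_2$ commutes with $\Sigma_1\Sigma_2$, which has finite index in $\G$, so its $\G$-conjugacy class is finite, and the icc hypothesis forces $g=e$. Since both $\Sigma_i$ are icc, so is $\Sigma_1\times\Sigma_2$, and $L(\Sigma_1\times\Sigma_2)=L(\Sigma_1)\bar\otimes L(\Sigma_2)\subset M$ is a finite-index inclusion of II$_1$ factors.

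Next, I would extract from each commensurability $M_i\cong^{com}_M L(\Sigma_i)$ concrete intertwining data: projections $p_i\in M_i$, $e_i\in L(\Sigma_i)$, partial isometries $v_i\in M$, and injective $*$-homomorphisms $\phi_i:p_iM_ip_i\to e_iL(\Sigma_i)e_i$ with finite-index images, satisfying $\phi_i(x)v_i=v_ix$ and $s(E_{L(\Sigma_i)}(v_iv_i^*))=e_i$. Using $M=M_1\bar\otimes M_2$, I would arrange $p_1,p_2$ to commute, set $p:=p_1p_2$, and consider $pMp=(p_1M_1p_1)\bar\otimes(p_2M_2p_2)$. Because the images $\phi_i(p_iM_ip_i)$ lie in the commuting subalgebras $L(\Sigma_i)$, the join $N:=\phi_1(p_1M_1p_1)\vee\phi_2(p_2M_2p_2)\subseteq e_1e_2\,L(\Sigma_1\times\Sigma_2)\,e_1e_2$ is canonically their tensor product and has finite index there.

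The central step will be to merge the two partial isometries $v_1,v_2$, which a priori need not commute, into a single spatial intertwiner $w\in M$ realizing an isomorphism from $pMp$ onto a corner of $L(\Sigma_1\times\Sigma_2)$ that simultaneously maps $p_iM_ip_i$ onto an amplification of $L(\Sigma_i)$. Following the transitivity mechanism of Proposition \ref{transitivity} together with Lemma \ref{intinsubfactor}, and exploiting the factoriality of $M_1,M_2$ and the tensor product structure of $N$, one can construct such a $w$ and extract an amplification parameter $t>0$ so that $wM_iw^*$ is spatially identified with an amplification of $L(\Sigma_i)$ inside $L(\Sigma_1\times\Sigma_2)$.

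Finally, I would promote this virtual tensor-product identification to an honest direct-product decomposition of $\G$ following the scheme of \cite[Theorem 4.14]{CdSS15} (compare \cite[Theorem 6.1]{DHI16}): coset representatives of $\Sigma_1\times\Sigma_2$ in $\G$ must normalize, up to the relative-commutant data, the pair of commuting subalgebras realizing the tensor decomposition, and this rigidity, combined with the icc assumption and the factor structure, forces each coset to split as a product, yielding subgroups $\G_1,\G_2\leqslant\G$ with $\G=\G_1\times\G_2$ and $M_i$ an amplification of $L(\G_i)$, all implemented by a single unitary $u$. I expect this last step---tracking how the finite quotient $\G/(\Sigma_1\times\Sigma_2)$ interacts with the commuting-factor data to produce an honest direct product together with the exact scaling $t$---to be the main obstacle, since it is the technical heart of \cite[Theorem 4.14]{CdSS15} and requires delicate control of the coset combinatorics within the factor framework.
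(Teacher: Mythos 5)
Your outline has a genuine gap at what you yourself call the central step: merging the two intertwiners $v_1,v_2$ into a single partial isometry $w$ that simultaneously conjugates $p_1M_1p_1$ and $p_2M_2p_2$ onto finite-index subalgebras of corners of $L(\Sg_1)$ and $L(\Sg_2)$. Separate intertwinings do not combine: $v_2$ need not commute with $M_1$ or with $\phi_1(p_1M_1p_1)$, so $v_1v_2$ (which may even vanish) satisfies no intertwining relation for the joint map $\phi_1\otimes\phi_2$, and nothing in the commensurability data forces the two intertwiners to be spatially compatible. The tools you invoke do not supply this alignment: Proposition \ref{transitivity} composes commensurabilities along a chain $P\cong^{com}Q$, $Q\cong^{com}R$ (transitivity), not two intertwiners with commuting targets; Lemma \ref{intinsubfactor} only moves a single projection into a subfactor. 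Note also that $\cong^{com}$ intertwines $M_i$ \emph{into} $L(\Sg_i)$, whereas the patching technology that upgrades a partial isometry to a unitary (factoriality of the relevant relative commutants) is available in the reverse direction, which is the one the paper actually uses.

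Indeed, the paper's proof is engineered to avoid the simultaneity problem entirely and is one-sided: from $M_1\cong^{com}_M L(\Sg_1)$ it extracts only $L(\Sg_1)\prec_M M_1$ and, following \cite[Proposition 12]{OP03}, produces a partial isometry $v$ with $vL(\Sg_1)v^*\subseteq M_1^{\mu}p$; it then passes to the virtual centralizer $\Omega_2=\{\g\in\G : |\mathcal O_{\Sg_1}(\g)|<\infty\}\supseteq\Sg_2$ and $\Omega_1=C_{\Sg_1}(\Omega_2)$, and uses factoriality of $L(\Omega_2)$ and $M_2^{1/\mu}$ to patch $v$ into a genuine unitary $u$ with $uL(\Omega_1)u^*\subseteq M_1^{\mu}$. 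Iterating with $\Theta_2=\{\la : |\mathcal O_{\Omega_1}(\la)|<\infty\}$ gives $uL(\Theta_2)u^*\supseteq M_2^{1/\mu}$, which Ge's theorem \cite[Theorem A]{Ge96} splits as $uL(\Theta_2)u^*=B\bar\otimes M_2^{1/\mu}$; only here does the second commensurability enter, via $uL(\Sg_2)u^*\prec M_2^{1/\mu}$, forcing the factor $B\subseteq M_1^{\mu}$ to have an atomic corner, hence $B=M_k(\mathbb C)$, which pins $t=k/\mu$ and yields $uL(\G_2)u^*=M_2^{t}$ with $\G_2=\Theta_2$. A final application of Ge's theorem identifies $\G_1$, the FC-centralizer of $\Theta_2$, with $uL(\G_1)u^*=M_1^{1/t}$, and $\G=\G_1\times\G_2$ falls out of these algebra equalities. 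So your closing step also misplaces the difficulty: no coset combinatorics for $\G/(\Sg_1\Sg_2)$ is ever performed (the finite quotient is absorbed invisibly into the virtual-centralizer subgroups), while the real work --- the unitary conjugation and the identification of the amplification parameter, where Ge's splitting theorem is indispensable and absent from your sketch --- is exactly what your proposal leaves unsupported.
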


\begin{proof}
In particular, we have $L(\Sigma_1)\prec_M M_1 $. Since $M=M_1\bar\otimes M_2$ then proceeding as in the proof of  \cite[Proposition 12]{OP03}  there exist a scalar $\mu>0$ and a partial isometry $v\in M$ satisfying $p=vv^*\in M^{1/\mu}_2$, $q=v^*v\in L(\Sigma_1)'\cap M$ and
\begin{equation}\label{4.1'}vL(\Sigma_1) v^*\subseteq M^\mu_1p.\end{equation} 

Let $\Omega_2=\{ \g \in\G \,:\, |\mathcal O_{\Sigma_1}(\g) |<\infty \}$ and notice that $ \Sigma_2 \leqslant \Omega_2$. As $[\G:\Sg_1\Sg_2]<\infty$  then $[\G: \Omega_2\Sg_1]<\infty$. Letting $\Omega_1=C_{\Sigma_1}(\Omega_2)$, one can check that $\Omega_1,\Omega_2<\La$ are commuting, non-amenable, icc subgroups  satisfying $[\G:\Omega_1 \Omega_2]<\infty$ and $[\Sg_1:\Omega_1]<\infty$. Also notice that $L(\Sigma_1) '\cap M \subseteq L(\Omega_2)$ and  by relation (\ref{4.1'}) we have $uL(\Omega_1) u^*\subseteq M^\mu_1p$. Since $L(\Omega_2)$ and $M^{1/\mu}_2$ are factors then as in the proof of \cite[Proposition 12]{OP03}, one can find partial isomoetries $w_1, \ldots,w_m\in L(\Omega_2)$ and  $u_1,\ldots, u_m \in M^{1/\mu}_2$ satisfying $w_i{w_i}^*=q'\leq q$, ${u_i}^*u_i=p'=uq'u^*\leq p$ for all $i$ and $\sum_j {w_j}^*w_j=1_{L(\Omega_2)}$, $\sum_j u_j{u_j}^*=1_{M^{1/\mu}_2}$. Combining with the above, one can check $u=\sum_j u_j vw_j\in M$ is a unitary satisfying $u L(\Omega_1)u^*\subseteq M^\mu_1$. Since $M=M_1^{\mu}\bar\otimes M_2^{1/\mu}$ then  \begin{equation}\label{4.2'} u(L(\Omega_1)'\cap M)u^*\supseteq M^{1/\mu}_2.\end{equation}

Let $\Theta_2=\{ \la \in\La \,|\, |\mathcal O_{\Omega_1}(\la) |<\infty \}$ and $\Theta_1=C_{\Omega_1}(\Theta_2)$ and as before it follows that $\Theta_1,\Theta_2 <\La$ are commuting, non-amenable, icc subgroups  such that $[\G:\Theta_1 \Theta_2]<\infty$ and $[\Sg_1:\Theta_1]<\infty$.  Moreover, by (\ref{4.2'}) we have $uL(\Theta_2)u^*\supseteq M^{1/\mu}_2$. Since $M=M^\mu_1\bar \otimes M^{1/\mu}_2$ by \cite[Theorem A]{Ge96} we have $uL(\Theta_2)u^*=B\bar \otimes M^{1/\mu}_2$, where $B\subseteq M^\mu_1$ is a factor. But relation implies that $uL\Sigma_2u^* \prec M^{1/\mu}$ and since $[\Omega_2:\Sg_2]<\infty$ it follows that $uL(\Omega_2)u^*\prec M_2^{1/\mu}$. Since $B\subseteq uL(\Omega_2)u^*$ it follows that $B\prec M_2^{1/\mu}$. However since $B\subseteq M_1^{\mu}$ and $M=M_1^{\mu}\bar\otimes M_2^{1/\mu}$ this forces that $B$ has an atomic corner. As $B$ is a factor we get $B=M_k(\mathbb C)$, for some $k\in \mathbb N$. Altogether, we have $uL(\Theta_2)u^*= M^{t}_2$ where $t=k/\mu$. Since $M=M^{1/t}_1\bar\otimes M^t_2$ we also get $u(L(\Theta_2)'\cap M)u^*= M^{1/t}_1$. Let $\G_1=\{ \la \in\La \,|\, |\mathcal O_{\Theta_2}(\la) |<\infty \}$ and since $\Theta_2$ is icc it follows that $ \G_1\cap\Theta_2=1$.  By construction, $uL(\G_1)u^*\supseteq u(L(\Theta_2)'\cap M)u^*= M^{1/t}_1$; hence, \cite[Theorem A]{Ge96} gives $uL(\G_1)u^*=A\bar \otimes u(L(\Theta_2)'\cap M)u^*$ for some $A \subseteq uL(\Theta_2)u^*$. In particular, we have $A=u L(\G_1)u^*\cap u L(\Theta_2)u^* = \mathbb C 1$ and hence $uL(\G_1)u^*=u(L(\Theta_2)'\cap M) u^*$. Letting $\G_2=\Theta_2$ it follows that $\G_1$ and $\G_2$ are commuting, non-amenable subgroups of $\G$ such that $\G_1\cap\G_2=1$, $\G_1\G_2=\G$, $uL(\G_1)u^*=  M^{1/t}_1$, and $uL(\G_2)u^*= M^{t}_2$.\end{proof}

%%%%%%%%%%%%%%%%%%%%%%%%%%%%%%%%%%%%%%%%%%%%%%%%%%%%%%%%%%%%%%%%%%%%%%%%%%%%%%%%%%%%%%%
%%%%%%%%%%%%%%%%%%%%%%%%%%%%%%%%%%%%%%%%%%%%%%%%%%%%%%%%%%%%%%%%%%%%%%%%%%%%%%%%%%%%%%%
%%%%%%%%%%%%%%%%%%              QUOT_N(C)             %%%%%%%%%%%%%%%%%%%%%%%%%%%%%%%%
%%%%%%%%%%%%%%%%%%%%%%%%%%%%%%%%%%%%%%%%%%%%%%%%%%%%%%%%%%%%%%%%%%%%%%%%%%%%%%%%%%%%%%%
%%%%%%%%%%%%%%%%%%%%%%%%%%%%%%%%%%%%%%%%%%%%%%%%%%%%%%%%%%%%%%%%%%%%%%%%%%%%%%%%%%%%%%

\section{Finite-step Extensions of Amalgamated Free Product (or Poly-amalgam) Groups}\label{sec:FiniteStepExt} Fixing $\Cal C$ a class of groups we recall next the finite-step extensions groups over $\Cal C$. First let $Quot_{1}(\mathcal C)=\mathcal C$. For any integer $n\geq 2$, we say that a group $\G\in Quot_n(\Cal C)$ if there exist: 
\begin{enumerate}
\item groups  $\Gamma_k$, for all $k\in \overline{0,n}$, such that $\G_0=1$ and $\G=\G_n$, and 
\item epimorphisms $ \pi_k:\G_{k}\rightarrow \G_{k-1}$ such that $ker(\pi_k)\in \Cal C$, for all $k\in \overline{1,n}$.
\end{enumerate}

If $n$ is the minimal integer satisfying conditions (1)-(2) above then $\G$ is called a $n$-step extension of groups in $\mathcal C$. We denote by $Quot(\mathcal C):=\cup_{n\in\mathbb N} Quot_n(\mathcal C)$, the class of all \emph{finite-step extension of groups in $\mathcal C$}. We also denote by $Quot_n(\mathcal C)^c$ (resp $Quot(\mathcal C)^c$) the class of all groups commensurable with groups in $Quot_n(\mathcal C)$ (resp $Quot(\mathcal C)$). 

Below we recall some useful elementary algebraic properties of group extensions. The proofs are omitted as they are either straightforward or already contained in \cite[Lemmas 2.9-2.10]{CIK13} and \cite[Proposition 3.3]{CKP14}.

\begin{prop}\label{quot} The following properties hold: 
\begin{enumerate}
\item If $\G\in Quot_n(\Cal C)$ and $p: \La \ra \G$ is a epimorphism such that $ker(p)\in\Cal C$ then $\La \in Quot_{n+1}(\Cal C)$; 
\item If $\G_i \in Quot_{n_i}(\mathcal C)$ for all $i\in \overline{1, k}$ then $\G_1 \times \cdots \times \G_k \in Quot_{n_1+\cdots +n_k}(\Cal C)$;
\item If $\Cal C=\Cal C^c$ then $Quot_n(\Cal C)^c= Quot_n(\Cal C)$ and $Quot(\Cal C)^c= Quot(\Cal C)$;
\item Let $\G\in Quot_n(\Cal C)$, let $ \pi_k:\G_{k}\rightarrow \G_{k-1}$ be the epimorphisms satisfying the previous definition, and let $p_n=\pi_2\circ\cdots \circ\pi_n:\Gamma_n\rightarrow\Gamma_1$. Then the following hold:
\begin{enumerate} 
\item If $\La<\G_1$ is a subgroup such that $\La\in \Cal C$ then $p_n^{-1}(\La)\in Quot_n(\Cal C)$;
\item $\ker(p_n)\in Quot_{n-1}(\Cal C)$; moreover, if $\Cal C$ is closed under commensurability and $\La<\G$ is a subgroup such that $p_n(\La)$ is finite  then $p_n^{-1}(p_n(\La))\in Quot_{n-1}(\Cal C)$. 
\end{enumerate}

\item  $\G\in Quot_n(\mathcal C)$ if and only if there exists a composition series $1= \La_0\lhd\La_1\lhd \cdots \lhd \La_n$ such that $\G=\La_n$ and the factors $\La_i/\La_{i-1}\in \mathcal C$ for all $i\in \overline{0,n-1}$; in other words, $\G$ is a poly-$\mathcal C$ group.      
\end{enumerate}
\end{prop}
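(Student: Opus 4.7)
The proof hinges on the equivalence in (5), which translates the recursive definition of $Quot_n(\mathcal C)$ via epimorphisms into a normal series description of $\G$ itself; I would establish (5) first and deduce the remaining parts from it. For (5), given a chain $\G_0=1,\G_1,\ldots,\G_n=\G$ with epimorphisms $\pi_k:\G_k\to\G_{k-1}$ having kernels in $\mathcal C$, form the composites $\sigma_k=\pi_{k+1}\circ\cdots\circ\pi_n:\G\to\G_k$ (with $\sigma_n=\mathrm{id}$) and set $\La_j:=\ker(\sigma_{n-j})$. These are normal in $\G$, strictly ascend from $1$ to $\G$, and surjectivity of $\sigma_{n-j}$ identifies $\La_j/\La_{j-1}\cong\ker(\pi_{n-j+1})\in\mathcal C$. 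Conversely, a normal series $1\lhd\La_1\lhd\cdots\lhd\La_n=\G$ with factors in $\mathcal C$ produces the desired chain upon setting $\G_k:=\G/\La_{n-k}$ with canonical projections as $\pi_k$. Worth observing is that the series produced from the recursive definition is actually normal in $\G$ (not merely subnormal), which is what keeps all the subsequent arguments clean.

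Part (1) is then immediate: prepending $p:\La\to\G$ to the given chain for $\G$ yields a chain of length $n+1$ for $\La$. Part (2) follows via (5) by picking normal series for each $\G_i$ and forming the concatenated chain $1\lhd \La_1^{(1)}\times 1\times\cdots\lhd\cdots\lhd \G_1\times\cdots\times\La_1^{(k)}\lhd\cdots\lhd\G_1\times\cdots\times\G_k$; each term is normal in the product since each $\La_j^{(i)}$ is normal in $\G_i$, the factors are preserved, and the total length is $n_1+\cdots+n_k$. For (4)(a), I pull back the chain: set $\tilde\G_k:=p_k^{-1}(\La)\subseteq\G_k$ for $k\geq 1$ and $\tilde\G_0:=1$; the $\pi_k$ restrict to surjections $\tilde\G_k\to\tilde\G_{k-1}$ with unchanged kernels in $\mathcal C$, while the bottom map $\tilde\G_1=\La\to 1$ has kernel $\La\in\mathcal C$ by hypothesis. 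For the first claim of (4)(b), the normal subgroups $\ker(\sigma_k)$ for $k=1,\ldots,n-1$ form a normal series of length $n-1$ inside $\ker(p_n)=\ker(\sigma_1)$ with factors $\cong\ker(\pi_{k+1})\in\mathcal C$, placing $\ker(p_n)\in Quot_{n-1}(\mathcal C)$ via (5).

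The delicate part is the interplay between (3) and the ``moreover'' clause of (4)(b), which I would handle by a simultaneous induction on $n$. The base $n=1$ is trivial from $Quot_1(\mathcal C)=\mathcal C=\mathcal C^c$. In the inductive step of (3), given $\La$ commensurable with $\G\in Quot_n(\mathcal C)$ and a common finite-index subgroup $\Delta$ (passing to cores if necessary), I intersect the normal series of $\G$ from (5) with $\Delta$: the modular identity $(\Delta\cap\La_i)\La_{i-1}=\Delta\La_{i-1}\cap\La_i$ presents $(\Delta\cap\La_i)/(\Delta\cap\La_{i-1})$ as a finite-index subgroup of $\La_i/\La_{i-1}\in\mathcal C$, which by commensurability closure is still in $\mathcal C$, so $\Delta\in Quot_n(\mathcal C)$. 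Extending back up from $\Delta$ to $\La$ is where the ``moreover'' clause of (4)(b) at level $n-1$ comes in, allowing the finite quotient $\La/\Delta$ to be absorbed into the top factor of the normal series without increasing the length beyond $n$; and this ``moreover'' clause itself follows from the $n-1$ case of (3), because $p_n^{-1}(p_n(\La))$ is a finite extension of $\ker(p_n)\in Quot_{n-1}(\mathcal C)$, hence commensurable with it, and commensurability closure of $Quot_{n-1}(\mathcal C)$ then places it in $Quot_{n-1}(\mathcal C)$. The main obstacle throughout is keeping the length of the chain pinned at $n$ when passing between commensurable groups, which is precisely what this simultaneous induction is engineered to handle.
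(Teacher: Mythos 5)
Your proofs of parts (5), (1), (2), (4)(a), and the first claim of (4)(b) are correct, and your observation about normality is exactly the right reading of the statement: the subgroups $\ker(\sigma_k)$ are normal in $\G$ itself, the converse construction $\G_k:=\G/\La_{n-k}$ requires precisely this, and so (5) only holds if the series $\La_0\lhd\La_1\lhd\cdots\lhd\La_n$ is understood to consist of subgroups normal in $\G$, not merely a subnormal (poly-$\mathcal C$) chain. (The paper omits the proof, deferring to \cite{CIK13} and \cite{CKP14}, so there is nothing to compare against there; note, though, that the paper later uses (5) in the subnormal reading, e.g.\ in the proof of Theorem \ref{prodstructureextensionamalagam}, so this distinction is not pedantic.)

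The genuine gap is in your proof of (3), at the step where the finite quotient $\La/\Delta$ is ``absorbed into the top factor.'' This step is asserted, never carried out, and the appeal to the moreover clause of (4)(b) cannot supply it: that clause concerns subgroups of a group already carrying a chain and produces subgroups of $\Delta$ (preimages under $p_n$ of finite subgroups of $\G_1$), whereas what you need is to pass \emph{upward} from $\Delta$ to the overgroup $\La$, i.e.\ to replace the terms $D_i\lhd\Delta$ of the normal series of $\Delta$ by subgroups normal in $\La$. Taking $\La$-cores of the $D_i$ fails because the core can have infinite index in $D_i$, destroying the factors. In fact the missing implication --- a finite-index overgroup of a $Quot_n(\mathcal C)$ group lies in $Quot_n(\mathcal C)$ --- is false at this level of generality. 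Take $\mathcal C$ to be the class of virtually cyclic groups (including finite ones), which satisfies $\mathcal C=\mathcal C^c$. Then $\mathbb Z^2\in Quot_2(\mathcal C)$, but the commensurable wallpaper group $\La=\mathbb Z^2\rtimes_\rho\mathbb Z_4$ (with $\rho$ rotation by $90^\circ$) lies in no $Quot_n(\mathcal C)$: its only finite normal subgroup is trivial, and an infinite virtually cyclic normal subgroup would meet $\mathbb Z^2$ in a rank-one $\rho$-invariant sublattice, which cannot exist since $\rho$ has no real eigenvector. (The same group admits the subnormal series $1\lhd\mathbb Z\lhd\mathbb Z^2\lhd\La$ with virtually cyclic factors, confirming that the subnormal reading of (5) is genuinely weaker and that your normal-series version is the correct one.) So your simultaneous induction, while structurally sound in deriving the moreover clause of (4)(b) from (3) in lower degree, does not close the loop: the deduction of (3) at level $n$ is precisely this unproved, and in general false, upward-extension step, and any correct proof of (3) must exploit properties of $\mathcal C$ beyond closure under commensurability.
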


\begin{notation} \label{notation:TreeGroups}We denote by $\mathcal{T}$ all groups $\G$ belonging to one of the following families:\begin{enumerate} 

\item amalgamated free products $\G =\G_1\ast_\Sigma\G_2$ satisfying $[\G_1:\Sigma]\geq 2$, $[\G_2:\Sigma]\geq 3$, and $\Sigma\in \mathcal C_{\rm rss}$ (see  \cite[Definition 2.7]{CIK13} for $\mathcal C_{\rm rss}$); note this includes all finite $\Sg$;

\item HNN-extensions $\G ={\rm HNN}(\La,\Sg,\theta)$  satisfying $\Sg\neq \La\neq \theta(\Sg)$ and $\Sigma\in \mathcal C_{\rm rss}$.
\end{enumerate}\end{notation}

Next we will describe all possible nontrivial direct product decompositions of finite step extensions of (tree) groups $\mathcal{T}$. These results will be used later on in the proofs of the main results of Section \ref{sec:classtensorgroupfactor}. First we treat the amalgamated free product and HNN-extension case. 
\begin{lem}\label{amalgamprodecomp} The following hold:
\begin{enumerate} 
\item Let $\G=\G_1\ast_\Sigma \G_2$. If $\G=\La_1\times \La_2$ then one can find $\sigma\in \mathfrak S_2$  satisfying $\Sg=\La_{\sigma(1)} \times \Sg_0$, $\G_1=\La_{\sigma(1)} \times \G^0_1$, $\G_2= \La_{\sigma(1)}\times \G^0_2$, and $\La_{\sigma(2)}=\G^0_1\ast_{\Sg_0} \G^0_2$.
\item Let $\G={\rm HNN}(\Psi,\Sigma,\theta) $ be non-degenerate. If $\G=\La_1\times \La_2$ then one can find $\sigma \in \mathfrak S_2$  such that $\Sg=\La_{\sigma(1)} \times \Sg_0$, $\Psi=\La_{\sigma(1)} \times \Psi_0$. Moreover $\theta_{\La_{\sg(1)}} =ad(t_1)$ for some $t_1\in \La_{\sg(1)}$  and $\La_{\sigma(2)}={\rm HNN}(\Psi_0,\Sg_0, \theta_{| \Sg_0} )$. 
\end{enumerate}
\end{lem}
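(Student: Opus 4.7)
The plan is to leverage Bass--Serre theory: the groups $\G_1\ast_\Sigma\G_2$ and $\mathrm{HNN}(\Psi,\Sigma,\theta)$ act minimally and without inversions on their Bass--Serre trees $T$, with vertex stabilizers the conjugates of $\G_1,\G_2$ (resp.\ of $\Psi$) and edge stabilizers the conjugates of $\Sigma$. Given the direct product decomposition $\G=\La_1\times\La_2$, the central claim I will establish is that, after a suitable permutation $\sigma\in\mathfrak S_2$, the factor $\La_{\sigma(1)}$ is literally contained in $\Sigma$.

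The first step is to show that one of the $\La_i$ fixes a vertex of $T$. For this I will invoke the classical dichotomy for commuting subgroups of tree automorphisms: if $\La_1\times\La_2$ acts on $T$ without a common fixed point, then either one factor already fixes a vertex, or both factors preserve a common invariant line (the point being that commuting hyperbolic tree isometries share an axis, while an elliptic element commuting with a hyperbolic one must fix its axis pointwise). By minimality of the $\G$-action, the second alternative would force $T$ itself to be a line. The HNN case excludes this automatically, since the hypothesis $\Sigma\neq\Psi\neq\theta(\Sigma)$ makes every vertex of $T$ have valence at least four. In the amalgam case, the degenerate situation $[\G_1:\Sigma]=[\G_2:\Sigma]=2$ is treated separately: here $\G/K\cong D_\infty$ where $K$ is the kernel of the action on the line, and an elementary analysis of centralizers in $D_\infty$ (the only nontrivial centralizers are the translation subgroup $\Z$ and order-two subgroups generated by single reflections) shows that among the images of $\La_1,\La_2$ in $D_\infty$, one must be trivial; hence the corresponding $\La_i$ lies in $K\subseteq\Sigma$. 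I expect this degenerate line case to be the main technical obstacle, since it falls outside the generic Bass--Serre framework.

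Once one $\La_{\sigma(1)}$ is shown to fix a vertex, normality of $\La_{\sigma(1)}$ in $\G$ makes $\mathrm{Fix}(\La_{\sigma(1)})$ a $\G$-invariant subtree of $T$; by minimality it equals $T$, so $\La_{\sigma(1)}$ lies in every edge stabilizer and in particular $\La_{\sigma(1)}\subseteq\Sigma$. The remaining decompositions follow by routine manipulation: setting $\Sigma_0:=\Sigma\cap\La_{\sigma(2)}$, $\G_i^0:=\G_i\cap\La_{\sigma(2)}$ (resp.\ $\Psi_0:=\Psi\cap\La_{\sigma(2)}$), the inclusion $\La_{\sigma(1)}\subseteq\Sigma\subseteq\G_i$ (resp.\ $\subseteq\Psi$) combined with $\La_{\sigma(1)}\cap\La_{\sigma(2)}=1$ and the modular law yields $\Sigma=\La_{\sigma(1)}\times\Sigma_0$, $\G_i=\La_{\sigma(1)}\times\G_i^0$, and $\Psi=\La_{\sigma(1)}\times\Psi_0$. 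Quotienting $\G$ by the normal subgroup $\La_{\sigma(1)}$ and identifying $\G/\La_{\sigma(1)}\cong\La_{\sigma(2)}$ then gives $\La_{\sigma(2)}\cong\G_1^0\ast_{\Sigma_0}\G_2^0$ in the amalgam case and $\La_{\sigma(2)}\cong\mathrm{HNN}(\Psi_0,\Sigma_0,\theta|_{\Sigma_0})$ in the HNN case. To verify the identity $\theta|_{\La_{\sigma(1)}}=\mathrm{ad}(t_1)$, I will decompose the stable letter as $t=t_1t_2$ with $t_i\in\La_i$; the HNN relation $\theta(\lambda)=t\lambda t^{-1}$ for $\lambda\in\La_{\sigma(1)}$ collapses to $t_1\lambda t_1^{-1}$ because $t_2\in\La_{\sigma(2)}$ commutes with $\La_{\sigma(1)}$, which gives the required $t_1\in\La_{\sigma(1)}$.
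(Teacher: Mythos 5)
Your argument is correct, but it proceeds by a genuinely different route than the paper's. The paper proves this purely group-theoretic lemma with operator-algebraic input: viewing $L(\G)=L(\La_1)\bar\otimes L(\La_2)$, it applies Theorem \ref{intertwiningincore1} (resp.\ Theorem \ref{intertwiningincore2}) together with \cite[Lemma 2.2]{CI17} to produce $h\in\G$ with $[\La_{\sigma(1)}:h\Sg h^{-1}\cap \La_{\sigma(1)}]<\infty$; it then replaces $\Sg\cap\La_{\sigma(1)}$ by its normal core (automatically normal in all of $\G$, since $\La_{\sigma(2)}$ centralizes $\La_{\sigma(1)}$), passes to the quotient amalgam (resp.\ HNN extension), and quotes Karrass--Solitar \cite[Theorem 10]{KS70} (resp.\ de Cornulier \cite[Proposition 3]{dC09}) to place the now-finite normal factor $\La_{\sigma(1)}/(\Sg\cap\La_{\sigma(1)})$ inside the image of $\Sg$, whence $\La_{\sigma(1)}\leqslant\Sg$. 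From that point on the two proofs coincide: the modular-law splittings, the identification of $\La_{\sigma(2)}$ with the quotient amalgam or HNN extension, and the computation $\theta|_{\La_{\sigma(1)}}={\rm ad}(t_1)$ via the decomposition $t=t_1t_2$ are exactly the paper's. Your Bass--Serre argument reaches the key inclusion $\La_{\sigma(1)}\leqslant\Sg$ directly and more cheaply: it avoids the von Neumann algebraic machinery entirely; it does not need the hypotheses the paper's proof silently imports from its applications (each $\La_i$ infinite, so that $L(\La_i)$ is diffuse, and $[\G_1:\Sg]\geq 2$, $[\G_2:\Sg]\geq 3$, so that $L(\G)$ is non-amenable relative to $L(\Sg)$); and it covers the degenerate case $[\G_1:\Sg]=[\G_2:\Sg]=2$ through your $D_\infty$ centralizer analysis, a case the intertwining route cannot reach since there $\G$ is amenable relative to $\Sg$. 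What the paper's route buys in exchange is mainly economy within its own framework: the intertwining theorems are already in place for the main results, so the lemma comes nearly for free.

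One point in your plan needs patching. The dichotomy you invoke for a commuting product acting without a global fixed point --- either one factor fixes a vertex, or both preserve a common line --- omits a third a priori possibility: a factor might consist entirely of elliptic elements and yet fix no vertex (fixing only an end of $T$; this genuinely occurs for infinitely generated groups, and the $\La_i$ are not assumed finitely generated). This case is excluded here by one extra observation: if neither $\La_1$ nor $\La_2$ contained a hyperbolic element, then every $g=ab\in\G$ with $a\in\La_1$, $b\in\La_2$ would be a product of two commuting elliptics, and such a product is elliptic --- indeed $b$ preserves ${\rm Fix}(a)$, and an elliptic isometry preserving a subtree fixes a point of it (project any fixed point of $b$ onto the subtree), so $a$ and $b$ share a fixed vertex. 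This contradicts the fact that $\G$ contains hyperbolic elements on its Bass--Serre tree ($g_1g_2$ with $g_i\in\G_i\setminus\Sg$ in the amalgam case; the stable letter in the HNN case). With this remark inserted, your case analysis is exhaustive, and the rest of your outline --- the reflection-versus-translation argument forcing a common axis, minimality collapsing $T$ to a line, the ${\rm Fix}(\La_{\sigma(1)})=T$ step via normality, and the routine quotient identifications --- goes through as described.
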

\begin{proof} (1) Since $\G=\La_1\times \La_2$ then using Theorem \ref{intertwiningincore1} and  \cite[Lemma 2.2]{CI17} there  is $h\in \G$ so that $[\La_{\sigma(1)}: h\Sigma h^{-1}\cap\La_{\sigma(1)}]<\infty$. So conjugating by $h$ we can assume that $[\La_{\sigma(1)}: \Sigma \cap \La_{\sigma(1)}]<\infty$. Also passing to a finite index subgroup we can assume  $\Sigma \cap \La_{\sigma(1)}$ is normal in $\G$. Therefore we have $\G/(\Sigma\cap\La_{\sigma(1)})= (\G_1/(\Sigma\cap \La_{\sigma(1)})) \ast_{\Sigma /(\Sigma\cap \La_{\sigma(1)})} (\G_2 /(\Sigma\cap \La_{\sigma(1)})) = \La_{\sigma(1)}/(\Sigma \cap \La_{\sigma(1)}) \times \La_{\sigma(2)}$. Since $\La_{\sigma(1)}/(\Sigma \cap\La_{\sigma(1)})$ is finite then \cite[Theorem 10]{KS70}  implies that $\La_{\sigma(1)}/(\Sigma \cap \La_{\sigma(1)})\leqslant\Sigma /(\Sigma\cap\La_{\sigma(1)})$  and thus $\La_{\sigma(1)}\leqslant \Sigma $. Hence $\Sg=\La_{\sigma(1)}\times \Sg_0$ and $\G_i= \La_{\sigma(1)} \times \G^0_i$ for all $i\in\overline{1,2}$, and  $\La_{\sigma(2)}=\G^0_1\ast_{\Sg_0} \G^0_2$. 

(2) Since $\G=\La_1\times \La_2$ then using Theorem \ref{intertwiningincore2} and proceeding as in the previous case  we can assume that  $\Omega:=\Sigma \cap \La_{\sigma(1)}$ is normal in $\G$ and has finite index in $\La_{\sigma(1)}$. Let $t\in \G$ be so that $\theta(x)=txt^{-1}$ for all $x\in \Sg$. By normality we have $\theta(\Omega)=\Omega$ and moreover one can check $\G/\Omega={\rm HNN}(\Psi/\Omega, \Sigma/\Omega, \tilde \theta)$, where $\tilde\theta(x\Omega)= \theta(x)\Omega
$.  Also we have $\G/\Omega = \La_{\sg(1)}/\Omega \times \La_{\sg(2)}$ and since $ \La_{\sg(1)}/\Omega$ is finite it follows from  \cite[Proposition 3]{dC09} $ \La_{\sg(1)}/\Omega\leqslant \Sg/\Omega$; in particular, $\Omega =\La_{\sg(1)}\leqslant \Sigma$. Hence $\Sg=\La_{\sigma(1)} \times \Sg_0$, $\Psi=\La_{\sigma(1)} \times \Psi_0$. Writing $t=t_1t_2$ with $t_i\in \La_{\sg(i)}$ one can check that $\theta(x)=t_2xt_2^{-1}$ for all $x\in \Sg_0$ and also $\theta(\Sigma_0)<\Psi_0$. In addition one can check that 
$\theta_{|\La_{\sg(1)}} =ad(t_1)$ and also $\La_{\sg(2)}= {\rm HNN}(\Psi_0,\Sg_0,\theta_{| \Sg_0})$. \end{proof}

\begin{thrm}\label{prodstructureextensionamalagam} Denote by $\mathcal D=\mathcal C_{\rm rss} \cup (\text{finite-by-}\mathcal{T})^c$. Let $\G \in Quot^c_n(\mathcal{T})$ such that $\G=\La_1\times \La_2$. Then for every $i\in\overline{1,2}$ there exist there exists $k_i\in \overline{1,n}$ such that $\La_i\in Quot^c_{k_i}(\mathcal D)$. \end{thrm}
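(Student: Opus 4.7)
The plan is to proceed by induction on $n$, first reducing to the case $\G \in Quot_n(\mathcal{T})$ itself. Indeed, both $\mathcal C_{\rm rss}$ and $(\text{finite-by-}\mathcal{T})^c$ are commensurability-closed, so $\mathcal D = \mathcal D^c$ and Proposition \ref{quot}(3) gives $Quot_n(\mathcal D)^c = Quot_n(\mathcal D)$. Hence it suffices to prove $\La_i \in Quot_{k_i}(\mathcal D)$ for some $k_i \leq n$, and after replacing $\G$ and each $\La_i$ by appropriate finite-index subgroups we may assume $\G \in Quot_n(\mathcal{T})$, realized by a composition series $1 = K_0 \triangleleft K_1 \triangleleft \cdots \triangleleft K_n = \G$ with factors $K_j/K_{j-1} \in \mathcal{T}$.

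For the base case $n = 1$, $\G \in \mathcal{T}$ and Lemma \ref{amalgamprodecomp} applies directly: up to a permutation $\sg \in \frakS_2$, the edge group decomposes as $\Sg = \La_{\sg(1)} \times \Sg_0$, so $\La_{\sg(1)} \in \mathcal C_{\rm rss}$ by closure of $\mathcal C_{\rm rss}$ under direct factors, while $\La_{\sg(2)} = \G_1^0 \ast_{\Sg_0}\G_2^0$ (or its HNN analogue) preserves the defining conditions of $\mathcal{T}$, so $\La_{\sg(2)} \in \mathcal{T}$. Both lie in $\mathcal D$, giving $\La_i \in Quot_1(\mathcal D)$.

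For the inductive step, let $p : \G \to \G_1 := \G/K_{n-1} \in \mathcal{T}$ be the final quotient and write $K := K_{n-1} \in Quot_{n-1}(\mathcal{T})$. The images $p(\La_1), p(\La_2)$ are commuting subgroups of $\G_1$ with $p(\La_1) p(\La_2) = \G_1$, so in $L(\G_1)$ the corresponding diffuse commuting subalgebras $L(p(\La_i))$ jointly generate $L(\G_1)$. By Theorem \ref{intertwiningincore1} (respectively Theorem \ref{intertwiningincore2} in the HNN case) we obtain $L(p(\La_{\sg(1)}))\prec L(\Sg)$ for some $\sg \in \frakS_2$. Replaying the argument of Lemma \ref{amalgamprodecomp}---converting this intertwining into a group-theoretic commensuration via \cite[Lemma 2.2]{CI17} and invoking \cite[Theorem 10]{KS70} (or the HNN counterpart \cite[Proposition 3]{dC09}) to place the resulting finite quotient inside the core---we conjugate and pass to further finite-index subgroups of the $\La_i$ (harmless by the preliminary reduction) to arrange $p(\La_{\sg(1)}) \leqslant \Sg$, $\Sg = p(\La_{\sg(1)}) \times \Sg_0$, and $\G_1 = p(\La_{\sg(1)}) \times p(\La_{\sg(2)})$ as a \emph{genuine} direct product with $p(\La_{\sg(1)}) \in \mathcal C_{\rm rss}$ and $p(\La_{\sg(2)}) \in \mathcal{T}$.

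Once $\G_1$ is an honest direct product, the map $\G = \La_1 \times \La_2 \to \G_1$ factors coordinate-wise as $p|_{\La_1} \times p|_{\La_2}$, so $K = \ker(p) = (K\cap \La_1)\times(K\cap\La_2)$. Applying the inductive hypothesis to this direct product decomposition of $K \in Quot_{n-1}(\mathcal{T})$ gives $K\cap\La_i \in Quot_{k_i'}(\mathcal D)$ for some $k_i' \leq n-1$. Each $\La_i$ then fits into the short exact sequence
$$1 \to K\cap \La_i \to \La_i \to p(\La_i) \to 1,$$
with kernel in $Quot_{k_i'}(\mathcal D)$ and quotient in $\mathcal D$; concatenating the two composition series via Proposition \ref{quot}(5) yields $\La_i \in Quot_{k_i'+1}(\mathcal D)\subseteq Quot_n(\mathcal D)$, closing the induction. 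The main obstacle I anticipate is the step where $\G_1$ must be realized as an \emph{honest} direct product $p(\La_1)\times p(\La_2)$: a priori $p(\La_1)\cap p(\La_2)$ is a nontrivial central subgroup of $\G_1$, and eliminating it requires exploiting the genuine direct product structure on $\G$ itself (not merely on $\G_1$) when replaying the KS70 reduction, since quotienting $\G_1$ by $\Sg\cap p(\La_{\sg(1)})$ produces only a commuting-subgroups structure rather than a bona fide product in the absence of such bookkeeping.
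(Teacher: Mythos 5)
Your proposal follows essentially the same route as the paper's proof: induction on $n$, with the base case delegated to Lemma \ref{amalgamprodecomp}, and the inductive step performed by pushing $\G=\La_1\times\La_2$ through the quotient onto the $\mathcal{T}$-group at the bottom of the tower (your $p:\G\ra \G/K_{n-1}$ is exactly the paper's $\rho=\pi_2\circ\cdots\circ\pi_n:\G_n\ra\G_1$, by Proposition \ref{quot}(5)), splitting $\ker(p)=(K\cap\La_1)\times(K\cap\La_2)$, applying the induction hypothesis to the kernel, and concatenating composition series. Two remarks. First, the ``main obstacle'' you flag at the end is a phantom: by Notation \ref{notation:TreeGroups} every group in $\mathcal{T}$ is icc, hence centerless, and any element of $p(\La_1)\cap p(\La_2)$ commutes with both images and so is central in $\G_1=p(\La_1)p(\La_2)$; it is therefore automatically trivial, $\G_1=p(\La_1)\times p(\La_2)$ is an honest direct product, and no replaying of the \cite{KS70}/\cite{dC09} reduction or extra bookkeeping on $\G$ is needed. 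The paper uses this silently when it writes $\G_1=\rho(\La_1)\times\rho(\La_2)$ before invoking Lemma \ref{amalgamprodecomp}, so your intertwining detour through Theorems \ref{intertwiningincore1}/\ref{intertwiningincore2} at the level of $\G_1$ is redundant: once the product is honest, Lemma \ref{amalgamprodecomp} applies as a black box.

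Second, your preliminary reduction is misstated: since neither $\mathcal{T}$ nor $Quot_n(\mathcal{T})$ is closed under commensurability, replacing $\G$ and the $\La_i$ by finite-index subgroups does not place $\G$ in $Quot_n(\mathcal{T})$. What is true is that the commensurable representative $\G_n\in Quot_n(\mathcal{T})$ contains a finite-index subgroup of the form $\La_1'\times\La_2'$ with $\La_i'$ of finite index in $\La_i$, so the product hypothesis survives only virtually; to be fair, the paper elides the identical point (it applies $\rho$ to the $\La_i$ as if $\G=\G_n$), so your write-up matches the paper's level of rigor there, and the icc/FC-center argument above still removes the central intersection even for a finite-index product subgroup. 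Relatedly, where you invoke Proposition \ref{quot}(3) to replace $Quot^c_{k_i'}(\mathcal D)$ by $Quot_{k_i'}(\mathcal D)$ before concatenating --- which requires knowing $\mathcal D=\mathcal D^c$, i.e.\ commensurability closure of $\mathcal C_{\rm rss}$ --- the paper is more conservative: it extracts a finite-index subgroup $A\leqslant \La_i^0$ that is normal in $\La_i$ with $A\in Quot_{k_i}(\mathcal D)$, and observes that $\La_i/A$ is then finite-by-$\mathcal C_{\rm rss}$, hence in $\mathcal D$, sidestepping any closure claim. Similarly, in the base case the paper does not use ``closure of $\mathcal C_{\rm rss}$ under direct factors'' but the dichotomy that a direct product decomposition of a group in $\mathcal C_{\rm rss}$ has one finite factor (with finite groups lying in $\mathcal C_{\rm rss}$), which is what guarantees $\Sg_0$ finite and $\La_{\sg(2)}\in\mathcal{T}$. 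None of these issues is fatal, but as written the first reduction is false and the closing paragraph leaves a (fortunately vacuous) step unresolved.
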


\begin{proof} We proceed by induction on $n$. As the case $n=1$ follows immediately from Lemma \ref{amalgamprodecomp} we only need to show the induction step. Since $\G \in Quot^c_n(\mathcal{T})$ there exist: 
\begin{enumerate}
\item groups  $\Gamma_k$, for all $k\in \overline{0,n}$, such that $\G_0=1$ and $\G$ is commensurable to  $\G_n$, and 
\item epimorphisms $ \pi_k:\G_{k}\rightarrow \G_{k-1}$ such that $\ker(\pi_k)\in \Cal {T}$, for all $k\in \overline{1,n}$.
\end{enumerate}
Consider the epimorphism $\rho= \pi_2\circ \cdots \circ \pi_n:\G_n \ra \G_1$. Since  $\G=\Lambda_1\times \Lambda_2$, then $\Gamma_1=\rho(\Lambda_1)\times\rho(\Lambda_2) $.  Since $\Gamma_1 \in\mathcal{T}$, either $\Gamma_1=\Theta_1 \ast_\Delta \Theta_2 $ or $\Gamma_1={\rm HNN}(\Theta, \Delta,\theta) $.

We first  treat the case where $\Gamma_1\in \mathcal{T} $ is an  amalgamated free product $\Theta_1 \ast_\Delta \Theta_2 $ as described in Notation \ref{notation:TreeGroups}.   By Lemma \ref{amalgamprodecomp} we can assume that
$\Delta=\Omega \times \Delta_0$,  $\Theta_1= \Omega \times \Theta^0_1$, $\Theta_2= \Omega \times \Theta^0_2$, $\rho(\La_1)=\Omega$, and $\rho(\La_2)=\Theta^0_1\ast_{\Delta_0} \Theta^0_2$.  Thus $\rho^{-1} (\Omega)=\La_1\times \La^0_2$ and $\rho^{-1} (\Theta^0_1\ast_{\Delta_0} \Theta^0_2)=\La^0_1\times \La_2$, for some subgroups $\La^0_i\leqslant \La_i$. Observe that $\La^0_1 \times \La^0_2 =\ker (\rho)$ and hence $\La^0_i\lhd \La_i$ are normal. Combining with the above we have \begin{equation}\label{quot1}
\La_1/\La^0_1 \cong \Omega,\quad   \La_2/\La^0_2 \cong \Theta^0_1\ast_{\Delta_0} \Theta^0_2.\end{equation} 
Since  $\ker(\rho)\in Quot_{n-1}(\mathcal{T})$ and $\La_i$ are icc it follows by the induction hypothesis that either $\La^0_i\in Quot^c_{k_i}(\mathcal D)$  for some $k_i\in \overline{1,n-1}$ or $\La^0_i=1$.   Also, as $\Delta \in \mathcal C_{\rm rss}$ we have either 
\begin{enumerate}\item [a)] $\Omega$ is finite and $\Delta_0 \in \mathcal C_{\rm rss}$, or 
\item [b)] $\Delta_0$ is finite and $\Omega \in \mathcal C_{\rm rss}$.
\end{enumerate}

If a) holds then by (\ref{quot1}) $\La_1$ is commensurable with $\La_1^0$, and hence $\La_1\in Quot^c_{k_1}(\mathcal D)$ as desired. When $\La^0_2=1$ (\ref{quot1}) already gives $\La_2\in Quot_1(\mathcal{T})$, so assume $\La^0_2$ is infinite. Since $\La_2^0\lhd \La_2$ is normal then using the inductive hypothesis there is a finite index subgroup $A\leqslant \La_2^0$ which is normal in $\La_2$ and satisfies $A\in Quot_{k_2}(\mathcal D)$. By Proposition \ref{quot} (5), $A$ is a poly-$\mathcal D$ group of length $k_2$. Using the isomorphism theorem we have $\La_2/\La^0_2= (\La_2/A)/(\La_2^0/A)$ and from (\ref{quot1}) it follows $\La_2/A$ is finite-by-$\mathcal C_{rss}$ and hence in $\mathcal D$. Thus $\La_2$ is poly-$\mathcal D$ of length $k_2+1$ and by Proposition \ref{quot}(5) $\La_2\in Quot_{k_2+1}(\mathcal D)$. 

Assume b) holds. If $\La^0_1=1$ then from (\ref{quot1}) $\La_1\in \mathcal C_{\rm rss}\subset Quot_1(\mathcal D)$.  Also since $\Delta_0$ is finite it follows form  (\ref{quot1}) that $\La_2/\La_2^0 = \Theta^0_1\ast_{\Delta_0} \Theta^0_2\in \mathcal C_{\rm rss}$. Using the inductive hypothesis there exists a finite index subgroup $A\leqslant \La^0_2$ that is normal in $\La_2$ and satisfies $A\in Quot_{k_2}(\mathcal D)$. Proceeding as in the previous case we have $\La_2 \in Quot_{k_2+1}(\mathcal D)$, as desired. When both $\La^0_i$ are infinite one can show in a similar manner $\La^0_i\in Quot_{k_i+1}(\mathcal D)$, for all $i\in\overline{1,2}$. When $\La^0_2=1$ one can show that $\La^0_1\in Quot_{k_1+1}(\mathcal D)$ and $\La_2= \Theta^0_1\ast_{\Delta_0} \Theta^0_2\in \mathcal C_{\rm rss}\subset Quot_1(\mathcal D)$. 

Now if $\Gamma={\rm HNN}(\Theta, \Delta, \theta) $, Lemma \ref{amalgamprodecomp}  once again allows us to assume that $\Delta=\Omega\times \Delta $, $\Theta=\Omega\times \Theta_0 $, $\rho(\Lambda_1)=\Omega $, and $\rho(\Lambda_2)={\rm HNN}(\Theta_0,\Delta_0, \theta_{| \Delta_0}) $.  The result now follows by proceeding as in the case where  $\Gamma_1$ was assumed to be an amalgamated free product. 
\end{proof}

\begin{prop}\label{productstructureiterationamalgam} Let $\G= (\G_1\ast_{\Sigma_1}\G_2)\ast_{\Sigma_2}\G_3)\ast_{\Sigma_3}\G_4)\cdots )\ast_{\Sigma_{n-1}} \G_{n}$ be an iterated amalgamated free product and assume that for each $i=\overline{1,n}$ the group $\Sg_i$ is direct product indecomposable. Assume that $\G=\La_1\times\La_2$ for $\La_i$ infinite groups. Then there exist direct product decompositions $\G_i =\Omega_i\times \Sigma_i$  and a subset $F\subseteq \overline{3,n}$ satisfying the following properties:  
\begin{enumerate}
\item Denoting by $F^c= \overline{1,n}\setminus F$ we have $$\La_1 = \mathcal B(F^c)=\Sigma_1 \ast (\ast_{i\in F^c}  \Omega_i),\qquad   \La_2=\mathcal A(F)= \ast_{i\in F}  \Omega_i;$$

\item For all $i\in\overline{1,n}$ we have $$\Sigma_{i}=\begin{cases} \mathcal A(F\setminus\overline{n,i+1}),\text{ if } i+1 \in F^c \setminus\overline{n,i+2} \\ \mathcal B(F^c\setminus \overline{i+2}), \text{ if } i+1 \in F \setminus\overline{n,i+2}\end{cases}.$$ 
\end{enumerate}
 \end{prop}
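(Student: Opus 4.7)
The plan is to argue by induction on $n$, at each stage peeling off the outermost amalgamation and invoking Lemma \ref{amalgamprodecomp}(1) together with the indecomposability hypothesis on the amalgamating subgroups.

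For the base case $n=2$, the amalgam $\G_1\ast_{\Sigma_1}\G_2=\La_1\times\La_2$ falls directly under Lemma \ref{amalgamprodecomp}(1): up to swapping $\La_1$ and $\La_2$, one obtains $\Sigma_1=\La_1\times\Sigma_{1,0}$, $\G_1=\La_1\times\G_1^0$, $\G_2=\La_1\times\G_2^0$, and $\La_2=\G_1^0\ast_{\Sigma_{1,0}}\G_2^0$. Since $\Sigma_1$ is direct product indecomposable and $\La_1$ is infinite (hence nontrivial), $\Sigma_{1,0}$ must be trivial, so $\La_1=\Sigma_1$ and $\La_2=\Omega_1\ast\Omega_2$ with $\Omega_i:=\G_i^0$. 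Taking $F=\emptyset\subseteq\overline{3,2}=\emptyset$ matches the form in the statement.

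For the inductive step, I would write $\G=H\ast_{\Sigma_{n-1}}\G_n$ with $H=((\G_1\ast_{\Sigma_1}\G_2)\cdots)\ast_{\Sigma_{n-2}}\G_{n-1}$ and apply Lemma \ref{amalgamprodecomp}(1) to the decomposition $\G=\La_1\times\La_2$. Up to swapping, this yields $\Sigma_{n-1}=\La_\epsilon\times\Sigma_{n-1,0}$, $H=\La_\epsilon\times H^0$, $\G_n=\La_\epsilon\times\Omega_n$, and the complementary factor equals $H^0\ast_{\Sigma_{n-1,0}}\Omega_n$. Indecomposability of $\Sigma_{n-1}$ together with the requirement that both $\La_i$ be infinite force $\Sigma_{n-1,0}=1$, i.e.\ $\La_\epsilon=\Sigma_{n-1}$, so the other factor becomes the free product $H^0\ast\Omega_n$. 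I would then apply the inductive hypothesis to the iterated amalgam $H$ together with the direct product decomposition $H=\Sigma_{n-1}\times H^0$. This yields decompositions $\G_i=\Omega_i\times\Sigma_i$ for $i\leq n-1$ together with a set $F'\subseteq\overline{3,n-1}$ describing the free product structure of $H^0$. The final set $F$ is obtained from $F'$ by either adjoining $n$ or leaving it out, according to which of $\La_1,\La_2$ absorbed $\Omega_n$. The explicit formulas for the intermediate $\Sigma_i$ in part (2) are then extracted by tracing the peeling: at each inductive stage, the amalgamating subgroup must absorb into whichever direct factor contains the already-processed chunk, producing the alternation between $\mathcal A(F\setminus\overline{n,i+1})$ and $\mathcal B(F^c\setminus\overline{i+2})$ according as $i+1\in F^c$ or $i+1\in F$.

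The main obstacle is justifying that the inductive hypothesis genuinely applies to $H=\Sigma_{n-1}\times H^0$; a Bass-Serre tree argument is required to show that the inherited direct product decomposition of $H$ respects its iterated amalgam structure, and in particular that each intermediate amalgamating subgroup of $H$ remains direct product indecomposable (this is immediate since they are a subset of the original $\Sigma_i$'s) and that $\Sigma_{n-1}$ sits diagonally across the tower in a manner consistent with a common direct factor. A secondary bookkeeping challenge is handling the degenerate cases where $H^0$ or some $\Omega_i$ becomes trivial at an intermediate step, so that the inductive recipe for $F$ and the $\Sigma_i$ formulas still produce the correct expressions.
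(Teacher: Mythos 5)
Your proposal is correct and follows essentially the same route as the paper: the paper's entire proof of this proposition reads ``Follows by applying successively Lemma \ref{amalgamprodecomp},'' and your induction --- peeling off the outermost amalgamation, invoking Lemma \ref{amalgamprodecomp}(1), and using the direct product indecomposability of $\Sigma_{n-1}$ (together with the infiniteness of the $\La_i$) to force the core $\Sigma_{n-1,0}$ to vanish before recursing on $H=\Sigma_{n-1}\times H^0$ --- is precisely the intended expansion of that one-line argument. If anything, your write-up is more careful than the paper's, since you explicitly flag the degenerate cases (e.g.\ $H^0$ finite or trivial, where the inductive hypothesis requires both direct factors infinite) and the bookkeeping for $F$ and the intermediate $\Sigma_i$, issues the published proof passes over in silence.
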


\begin{proof} Follows by applying successively Lemma \ref{amalgamprodecomp}. \end{proof}

%%%%%%%%%%%%%%%%%%%%%%%%%%%%%%%%%%%%%%%%%%%%%%%%%%%%%%%%%%%%%%%%%%%%%%%%%%%%%%%%%%%%%%%%%%%%%%%%%%%%%%%%%%%%%%%%%%%%%%%%%%%%%%%%%%%%%%%%%%%%%%%%%%%%%%%%%%%%%%%%%%%%%%%%%%%%%%%%%%%%%%%%%%%%%%%%%           CLASSIFICATION OF TENSOR PRODUCTS         %%%%%%%%%%%%%%%%%%%%%%%%%%%%%%%%%%%%%%%%%%%%%%%%%%%%%%%%%%%%%%%%%%%%%%%%%%%%%%%%%%%%%%%%%%%%%%%%%%%%%%%%%%%%%%%%%%%%%%%%%%%%%%%%%%%%%%%%%%%%%%%%%%%%%%%%%%%%%%%%%%%%%%%%%%%%%%%%%%%%%%%%%%%%%%%%%%

\section{Classification of Tensor Product Decompositions of Group II$_1$ Factors}\label{sec:classtensorgroupfactor}

In \cite{DHI16} it was discovered a new classification result in the study of tensor product decompositions of II$_1$ factors. Precisely, whenever $\G$ is an icc group that is measure equivalent to a direct product of non-elementary hyperbolic groups then all possible tensor product decompositions of the II$_1$  factor $L(\G)$ arise from the canonical direct product decompositions of the underlying group $\G$. The same result holds when $\G$ is poly-hyperbolic with non-amenable factors in the composition series \cite{dSP17}.    

In this section we provide yet several new examples of groups $\G$ for which this classification of tensor product decompositions of $L(\G)$ holds. These include many natural examples $\G$ of amalgamated free product groups (Theorem \ref{tensordecompamalgam}), poly-amalgam groups (Theorem \ref{tensordecomppolyamalgams}), or graph product groups (Theorem \ref{tensordecompgraphprod}). These results  are obtained by combining  the classification of normalizers of subalgebras in AFP von Neumann algebras from \cite{Io12,Va13} or group-extensions von Neumann algebras \cite{CIK13} with the techniques on commensurable algebras from Section \ref{sec:commensurablealgebras}.  

\subsection{Amalgamated free products}

\begin{thrm}\label{tensordecompamalgam} Let $\G=\G_1\ast_{\Sigma}\G_2$ be an icc group with $[\G_1:\Sigma]\geq 2$ and $[\G_2:\Sigma]\geq 3$ and assume that any corner of $L(\Sg)$ is virtually prime. Assume that $M_1\bar\otimes M_2 =L(\G)$ for diffuse $M_i$'s. Then there exist direct product decompositions $\Sg=\Omega \times \Sg_0$,  $\G_1= \Omega \times \G^0_1$, $\G_2= \Omega \times \G^0_2$ with  $\Sg_0$ finite. Moreover there exist a unitary $u\in L(\G)$, $t>0$ and a permutation $\sigma$ of $\{1,2\}$ such that  
\begin{equation}
M_{\sigma(1)}= u L(\Omega)^t u^*\quad \text{ and }\quad  M_{\sigma(2)} = u L(\G_1^0\ast_{\Sg_0} \G_2^0)^{1/t} u^*.
\end{equation}
\end{thrm}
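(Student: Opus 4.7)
The overall strategy follows the blueprint sketched in the introduction: locate a corner of one of the tensor factors inside $L(\Sigma)$, promote this to a spatial commensurability, find a commuting subgroup $\Omega$ with $[\G:\Sigma\Omega]<\infty$, translate this into an honest direct product decomposition of $\G$ compatible with the tensor splitting, and finally run a Bass--Serre analysis to extract the amalgam-level decomposition.

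First, I apply Corollary \ref{intertwiningcoregroupsafphnn} to the commuting diffuse subfactors $M_1,M_2\subseteq L(\G)$ (whose join is $L(\G)$, trivially of finite index in itself). This yields $M_i\prec_{L(\G)} L(\Sigma)$ for some $i\in\{1,2\}$; after relabelling, assume $i=1$, and this is precisely where the permutation $s$ in the statement will come from. Next I invoke Lemma \ref{intertwiningdichotomy} with $P=M_1$, $Q=M_2$, $r=1$; note that $\mathcal Z(L(\Sigma))$ is purely atomic because $\Sigma$ is finite-by-icc. Possibility (p1) would produce a finite-index inclusion of II$_1$ factors $B\vee (B'\cap eL(\Sigma)e)\subseteq eL(\Sigma)e$ with two commuting diffuse II$_1$ subfactors $B$ and $B'\cap eL(\Sigma)e$. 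The diffuseness of the second piece is the one subtle point: it follows because $[M_1,M_2]=0$ combined with \cite[Proposition 2.4]{CKP14} shows that $B'\cap eL(\Sigma)e$ absorbs an intertwined corner of the diffuse factor $M_2$. Such a splitting contradicts virtual primeness of every corner of $L(\Sigma)$, so conclusion (p2) must hold, i.e.\ $M_1\cong^{com}_{L(\G)} L(\Sigma)$.

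Now I apply Theorem \ref{virtualprod} with the same data $P=M_1$, $Q=M_2$ to obtain a subgroup $\Omega\leqslant C_\G(\Sigma)$ with $[\G:\Sigma\Omega]<\infty$ and $M_2\cong^{com}_{L(\G)} L(\Omega)$. Virtual primeness of corners of $L(\Sigma)$ rules out any amenable/finite piece of $\Sigma$ (which would otherwise produce a hyperfinite tensor factor inside a corner), and combined with $[\G:\Sigma\Omega]<\infty$ and the commensurabilities obtained above, the same argument applied symmetrically to $\Omega$ ensures that $\Sigma$ and $\Omega$ (after absorbing their finite normal parts into the icc quotients) fulfil the non-amenable icc hypotheses of Theorem \ref{fromvirtualtoproduct}. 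That theorem then gives a direct product decomposition $\G=\Psi_1\times \Psi_2$, a unitary $u\in L(\G)$, and $t>0$ with $M_1=uL(\Psi_1)^t u^*$ and $M_2=uL(\Psi_2)^{1/t}u^*$. Finally, Lemma \ref{amalgamprodecomp}(1), applied to the equality $\G_1\ast_\Sigma\G_2=\Psi_1\times\Psi_2$, delivers a permutation $s\in\mathfrak S_2$ for which $\Sigma=\Psi_{s(1)}\times\Sigma_0$, $\G_1=\Psi_{s(1)}\times\G^0_1$, $\G_2=\Psi_{s(1)}\times\G^0_2$, and $\Psi_{s(2)}=\G^0_1\ast_{\Sigma_0}\G^0_2$. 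Setting $\Omega:=\Psi_{s(1)}$ produces the stated decomposition; finiteness of $\Sigma_0$ is forced one last time by virtual primeness of corners of $L(\Sigma)$, for otherwise $L(\Sigma)=L(\Omega)\bar\otimes L(\Sigma_0)$ would itself be a nontrivial tensor product of diffuse factors.

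The main obstacle is the careful deployment of the virtual primeness hypothesis, which is invoked twice (to exclude case (p1) and to force $\Sigma_0$ finite) and implicitly underlies the non-amenable/icc bookkeeping needed to feed Theorem \ref{fromvirtualtoproduct}. Because $\Sigma$ is only finite-by-icc and not icc, at each intertwining step one must transfer commensurabilities between $\Sigma$ and its icc quotient and check that the finite center of $L(\Sigma)$ does not obstruct any of the partial-isometry/finite-index manipulations in Lemma \ref{intertwiningdichotomy} and Theorem \ref{virtualprod}; verifying diffuseness of the relevant relative commutant in Step 2 via \cite[Proposition 2.4]{CKP14} is the most technical point.
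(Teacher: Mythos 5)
Your proposal follows exactly the same route as the paper's own proof: Corollary \ref{intertwiningcoregroupsafphnn} to get $M_{\sigma(1)}\prec L(\Sigma)$, Lemma \ref{intertwiningdichotomy} together with virtual primeness of corners of $L(\Sg)$ to upgrade this to $M_{\sigma(1)}\cong^{com}_{L(\G)}L(\Sigma)$, then Theorems \ref{virtualprod} and \ref{fromvirtualtoproduct} to produce the splitting $\G=\La_1\times\La_2$ with the unitary and amplification, and finally Lemma \ref{amalgamprodecomp}(1) for the Bass--Serre step. Your extra remarks (excluding case (p1), the finite-by-icc bookkeeping, and forcing $\Sg_0$ to be finite) are correct elaborations of points the paper's four-line proof leaves implicit, so the argument is sound and essentially identical to the published one.
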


\begin{proof}Since $M_1\bar\otimes M_2=L(\G)$, by Corollary \ref{intertwiningcoregroupsafphnn} we can assume $M_{\sigma(1)} \prec L(\Sigma)$. Since any corner  of $L(\Sigma)$ is virtually prime then by Lemma \ref{intertwiningdichotomy} we must have  $M_{\sigma(1)}\cong_M^{com} L(\Sigma)$, and further applying Theorems \ref{virtualprod} and \ref{fromvirtualtoproduct} there exist infinite groups $\La_i$ so that $\G=\La_1\times \La_2$. Thus the desired conclusion follows by using Lemma \ref{amalgamprodecomp}.  \end{proof}

\begin{cor}\label{tensordecompamalgamiterated} Let $\G= (\G_1\ast_{\Sigma_1}\G_2)\ast_{\Sigma_2}\G_3)\cdots )\ast_{\Sigma_{n-1}} \G_{n}$ be an iterated amalgamated free product satisfying $[(\G_1\ast_{\Sigma_1}\G_2)\ast_{\Sigma_2}\G_3)\cdots \ast_{\Sg_{n-2}}\G_{n-1}:\Sigma_{n-1}]\geq 2$ and $[\G_n:\Sigma_{n-1}]\geq 3$. Also for each $i\in\overline{1,n}$ assume $\Sg_i$ is direct-product indecomposable and $L(\Sg_i)$ virtually prime.  Assume that $M_1\bar\otimes M_2 =L(\G)$ for diffuse $M_i$'s. Then there exist direct-product decompositions $\G_i =\Omega_i\times \Sigma_i$  and a subset $F\subseteq\overline{3,n} $ satisfying the following properties:  
\begin{enumerate}
\item  $\G = \mathcal A(F) \times  \mathcal B(F^c)$, where $F^c= \overline{1,n}\setminus F$ and  $$ \mathcal A(F)= \ast_{i\in F}  \Omega_i \qquad \mathcal B(F^c)=\Sigma_1 \ast (\ast_{i\in F^c}  \Omega_i)  ;$$

\item For all $i\in \overline{1,n}$ we have $$\Sigma_{i}=\begin{cases} \mathcal A(F\setminus\overline{n,i+1}),\text{ if } i+1 \in F^c \setminus \overline{n,i+2} \\ \mathcal B(F^c\setminus \overline{n,i+1}), \text{ if } i+1 \in F \setminus \overline{n,i+2}\end{cases}.$$ 
\end{enumerate}
Moreover, there is a unitary $u\in L(\G)$, $t>0$, and a permutation $\sigma$ of $\{1,2\}$ such that  
\begin{equation}
M_{\sigma(1)}= u L(\mathcal A(F))^t u^*\quad \text{ and }\quad  M_{\sigma(2)} = u L(\mathcal B(F^c))^{1/t} u^*.
\end{equation}
\end{cor}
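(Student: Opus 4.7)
The plan is to combine Theorem~\ref{tensordecompamalgam}, applied to the outermost amalgam structure of $\G$, with the purely group-theoretic dissection provided by Proposition~\ref{productstructureiterationamalgam}. First, I would regard $\G$ as the single amalgamated free product $\G = \G^{(n-1)} \ast_{\Sigma_{n-1}} \G_n$, where $\G^{(n-1)}$ denotes the iterated AFP of $\G_1, \dots, \G_{n-1}$. The index hypotheses of Theorem~\ref{tensordecompamalgam} are exactly the ones listed in the corollary, $\G$ is icc (because $L(\G)$ is a $\mathrm{II}_1$ factor), and every corner of $L(\Sigma_{n-1})$ is virtually prime by hypothesis. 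Applying Theorem~\ref{tensordecompamalgam} therefore yields direct-product splittings $\Sigma_{n-1} = \Omega \times \Sigma_{n-1,0}$, $\G^{(n-1)} = \Omega \times \Theta$, $\G_n = \Omega \times \G_n^0$, with $\Sigma_{n-1,0}$ finite, together with a unitary $u \in L(\G)$, a scalar $t > 0$ and $\sigma \in \mathfrak{S}_2$ for which $M_{\sigma(1)} = u L(\Omega)^t u^*$ and $M_{\sigma(2)} = u L(\Theta \ast_{\Sigma_{n-1,0}} \G_n^0)^{1/t} u^*$. Since $M_{\sigma(1)}$ is diffuse, $\Omega$ is infinite, and the direct-product indecomposability of $\Sigma_{n-1}$ then forces $\Sigma_{n-1,0} = \{1\}$, so $\Sigma_{n-1} = \Omega$ and $\G = \Omega \times (\Theta \ast \G_n^0)$ is a nontrivial direct product of infinite groups.

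Next, I would feed this product decomposition, written as $\G = \La_1 \times \La_2$ with $\La_1 := \Omega$ and $\La_2 := \Theta \ast \G_n^0$, into Proposition~\ref{productstructureiterationamalgam}. Using the standing assumption that each $\Sigma_i$ is direct-product indecomposable, that proposition produces the claimed splittings $\G_i = \Omega_i \times \Sigma_i$, a subset $F \subseteq \overline{3,n}$, and identifications $\La_1 = \mathcal B(F^c)$ and $\La_2 = \mathcal A(F)$ (after possibly swapping the two factors), together with the compatibility conditions on each $\Sigma_i$ listed in part~(2) of the corollary. Substituting $\Omega = \mathcal B(F^c)$ and $\Theta \ast \G_n^0 = \mathcal A(F)$ back into the tensor formulas supplied by Theorem~\ref{tensordecompamalgam} then gives, with the same unitary $u$ and the same $t$, the desired equalities $M_{\sigma(1)} = u L(\mathcal B(F^c))^t u^*$ and $M_{\sigma(2)} = u L(\mathcal A(F))^{1/t} u^*$, possibly after relabeling $\sigma$.

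The only substantial analytic ingredient is Theorem~\ref{tensordecompamalgam} itself; everything that follows is a purely combinatorial reading of Proposition~\ref{productstructureiterationamalgam}, used as a black box. The one point requiring care is the verification that the virtual-primeness and index conditions on the outermost amalgamating subgroup $\Sigma_{n-1}$ hold, but these are direct consequences of the standing hypotheses of the corollary. I therefore do not expect any serious obstacle beyond the bookkeeping: this corollary is essentially the iterated repackaging of the $n=2$ case of Theorem~\ref{tensordecompamalgam} via the group-theoretic dissection of iterated amalgams whose amalgamated subgroups are all indecomposable.
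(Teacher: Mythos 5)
Your proposal is correct and takes essentially the same route as the paper: the paper's own proof is the one-line observation that the corollary follows by combining Theorem~\ref{tensordecompamalgam} (applied to the outermost amalgam) with the group-theoretic dissection of Proposition~\ref{productstructureiterationamalgam}, which is exactly the combination you carry out. The extra bookkeeping you supply---using direct-product indecomposability of $\Sigma_{n-1}$ to eliminate the finite factor $\Sigma_{n-1,0}$, checking both direct factors of $\G$ are infinite before invoking Proposition~\ref{productstructureiterationamalgam}, and relabeling $\sigma$ and $t$---is precisely the detail the paper leaves implicit.
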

\begin{proof} The proof follows from Theorem \ref{tensordecompamalgam} and Proposition \ref{productstructureiterationamalgam}.\end{proof}

\begin{Remarks}\label{rk1}  Theorem \ref{tensordecompamalgam} presents a situation when a faithful von Neumann algebraic counterpart of Lemma \ref{amalgamprodecomp} could be successfully obtained. However, if one drops the primeness assumption on $L(\Sigma)$, the conclusion of the theorem is no longer true. Precisely, there exists icc amalgam $\G=\G_1\ast_\Sigma \G_2$ whose group factors $L(\G)$ admit nontrivial tensor product decompositions while $\G$ is direct-product indecomposable. For example, take groups $\Sigma <\Omega$ satisfying the following conditions: 
\begin{enumerate} \item [i)] for each finite $E\subset \Omega$ there are $\g,\lambda \in \Sigma$ so that $[\g,E]=[\la,E]=1$ and $\g\la\neq \la\g$ \cite{Jo98}; 
\item [ii)] for each $\g\in \Sg$ there is $\la\in \Omega$ so that $\g\la\neq \la\g$.
\end{enumerate}
Concrete such examples are $\Sigma = \oplus_{\mathfrak S_{\infty}} H < \Omega= \cup_{n\in \mathbb N}(H\wr \mathfrak S_n)$, where $H$ is any icc group and $\mathfrak S_{\infty}$ is the group of finite permutations of $\mathbb N$.

Then the inclusion  $\Sigma <\G=\Omega\ast _\Sg \Omega$ still satisfies i) and by \cite[Proposition 2.4]{Jo98} $L(\G)$ is McDuff so $L(\G)=L(\G)\bar\otimes \Cal R$, where $\Cal R$ is the hyperfinite factor. On the other hand, combining Lemma \ref{amalgamprodecomp} with ii) one can see $\G$ is indecomposable as a direct product. 

Thus it remains open to investigate other natural conditions on $L(\Sg)$ that will insure a statement similar to the conclusion of Theorem \ref{tensordecompamalgam}. For instance recycling the same arguments from the proof of Theorem \ref{tensordecompamalgam} one can easy show this is still the case if one requires $\Sigma\in Quot(\mathcal{T} \cup \mathcal C_{rss})^c$. However little is known beyond those examples and a future investigation in this direction as potential of revealing new interesting technology.
\end{Remarks}
To this end we notice the following counterpart of Theorem \ref{tensordecompamalgam} for HNN-extensions. 
\begin{thrm}\label{tensordecomphnn} Let $\G={\rm HNN}(\La,\Sigma,{\theta})$ be an icc group and assume that any corner of $L(\Sg)$ is virtually prime. Assume that $M_1\bar\otimes M_2 =L(\G)=M$, for diffuse $M_i$'s. Then there exist decompositions $\Sg=\Omega \times \Sg_0$ with $\Sg_0$ finite, $\La= \Omega \times \La_0$. Also, there is $\om\in \Om$ such that  $\theta= ad(\om)_{|\Om} \times \theta_{|\Sg_0} :\Om \times \Sg_0 \ra \Om \times \La_0$ and also $\G = \Omega \times {\rm HNN}(\La_0,\Sg_0,\theta_{|\Sg_0})$. In addition, one can find a unitary $u\in L(\G)$ and $t>0$ such that  
\begin{equation}
M_1= u L(\Omega)^t u^*\quad \text{ and }\quad  M_2 = u L({\rm HNN}(\La_0,\Sg_0, \theta_{| \Sg_0} ))^{1/t} u^*.
\end{equation}
\end{thrm}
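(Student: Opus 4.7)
The plan is to mimic the argument given for Theorem \ref{tensordecompamalgam} line-by-line, substituting the HNN versions of each tool, and then to read off the specific algebraic structure of the HNN decomposition from Lemma \ref{amalgamprodecomp}(2). Throughout, I assume the implicit non-degeneracy condition $\Sigma\neq\Lambda\neq\theta(\Sigma)$ (which is needed for the HNN extension to be non-trivial and for Corollary \ref{intertwiningcoregroupsafphnn}(2) to apply).

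First I would set $M=L(\G)$ and note that by assumption $M=M_1\bar\otimes M_2$ for diffuse factors $M_i$. The subfactors $M_1,M_2\subseteq M$ commute, each is diffuse, and their join is finite index in $M$ (in fact equals $M$). Applying Corollary \ref{intertwiningcoregroupsafphnn}(2), which covers the non-degenerate HNN case, I obtain some permutation $\sigma\in\mathfrak{S}_2$ for which $M_{\sigma(1)}\prec_M L(\Sigma)$. Since $\Sigma$ is finite-by-icc, every corner of $L(\Sigma)$ has purely atomic center, so I may invoke the dichotomy in Lemma \ref{intertwiningdichotomy}: either (i) a corner of $M_{\sigma(1)}$ is conjugated onto a subfactor $B\subseteq eL(\Sigma)e$ such that $B\vee(B'\cap eL(\Sigma)e)\subseteq eL(\Sigma)e$ is finite index, or (ii) $M_{\sigma(1)}\cong^{com}_M L(\Sigma)$. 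The virtual primeness hypothesis on every corner of $L(\Sigma)$ rules out (i): otherwise $eL(\Sigma)e$ would contain two commuting diffuse subfactors whose join is finite index, contradicting virtual primeness. Hence (ii) holds.

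Now I apply Theorem \ref{virtualprod} to the pair of commuting factors $M_{\sigma(1)},M_{\sigma(2)}\subseteq M$: since $M_{\sigma(1)}\cong^{com}_M L(\Sigma)$ and $M_{\sigma(1)}\vee M_{\sigma(2)}=M$ is (trivially) finite index in $M$, I obtain a subgroup $\Omega\leqslant C_\G(\Sigma)$ with $[\G:\Sigma\Omega]<\infty$ and $M_{\sigma(2)}\cong^{com}_M L(\Omega)$. Because $\G$ is icc, both $\Sigma$ and $\Omega$ are necessarily infinite and non-amenable (the HNN hypothesis $\Sigma\neq\Lambda\neq\theta(\Sigma)$ forces $L(\G)$ to be non-amenable relative to $L(\Sigma)$ by the proof of Corollary \ref{intertwiningcoregroupsafphnn}). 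I then feed the data $(\Sigma,\Omega,M_{\sigma(1)},M_{\sigma(2)})$ into Theorem \ref{fromvirtualtoproduct} to conclude that $\G$ admits a genuine direct product splitting $\G=\Lambda_1\times\Lambda_2$, together with a unitary $u\in M$ and a scalar $t>0$ implementing $M_{\sigma(1)}=uL(\Lambda_{\sigma(1)})^tu^*$ and $M_{\sigma(2)}=uL(\Lambda_{\sigma(2)})^{1/t}u^*$.

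Finally, Lemma \ref{amalgamprodecomp}(2) applied to the non-degenerate HNN-extension $\G=\mathrm{HNN}(\Lambda,\Sigma,\theta)=\Lambda_1\times\Lambda_2$ extracts precisely the stated algebraic structure: one factor, say $\Lambda_{\sigma(1)}$, embeds into $\Sigma$, so $\Sigma=\Lambda_{\sigma(1)}\times\Sigma_0$ and $\Lambda=\Lambda_{\sigma(1)}\times\Lambda_0$; moreover the stable letter decomposes as $t=t_1 t_2$ with $t_i\in\Lambda_{\sigma(i)}$, whence $\theta$ restricts to $\mathrm{ad}(t_1)$ on $\Lambda_{\sigma(1)}$ and to $\theta_{|\Sigma_0}$ on $\Sigma_0$, and $\Lambda_{\sigma(2)}=\mathrm{HNN}(\Lambda_0,\Sigma_0,\theta_{|\Sigma_0})$. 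Setting $\Omega:=\Lambda_{\sigma(1)}$ and $\omega:=t_1$, and combining with the unitary conjugacy above, delivers the full conclusion. I expect the only subtlety to be bookkeeping: verifying that the hypotheses of Theorem \ref{fromvirtualtoproduct} (in particular, the non-amenability and icc properties of $\Sigma,\Omega$) are genuinely available from our setting, and confirming that the permutation produced by the AFP-style argument matches the labeling prescribed in the theorem statement — this can be absorbed by relabeling $M_1,M_2$ if necessary.
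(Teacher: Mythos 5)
Your proposal is correct and takes essentially the same route as the paper, whose proof of this theorem is literally a one-liner invoking Corollary \ref{intertwiningcoregroupsafphnn} (HNN case), the dichotomy of Lemma \ref{intertwiningdichotomy} resolved by virtual primeness, Theorems \ref{virtualprod} and \ref{fromvirtualtoproduct}, and finally Lemma \ref{amalgamprodecomp}(2) --- i.e.\ exactly the recycled argument of Theorem \ref{tensordecompamalgam} that you reconstructed, including the implicit non-degeneracy and finite-by-icc hypotheses stated in Theorem B. One minor note on the bookkeeping you flagged: the non-amenability of $\Sigma$ needed in Theorem \ref{fromvirtualtoproduct} comes from the virtual primeness hypothesis itself (an infinite amenable $\Sigma$ would make corners of $L(\Sigma)$ hyperfinite, hence not virtually prime) rather than from the relative non-amenability of $L(\G)$ over $L(\Sigma)$ as your parenthetical suggests, but the paper elides this point in the same way.
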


\begin{proof} Follows using Corollary \ref{intertwiningcoregroupsafphnn}, Lemma \ref{amalgamprodecomp} (2), and recycling the same arguments from the proof of Theorem \ref{tensordecompamalgam}.\end{proof}

\subsection{Extensions of amalgamated free products} 

\begin{notation}\label{1} Assume that $\La,\G$ are groups and let $\pi:\La\ra \G$ be an epimorphism. Then following \cite{CIK13} we denote by $\Delta^\pi: L(\La)\ra L(\La)\bar\otimes L(\G)$ the $\star$-homomorphism induced by $\Delta(u_\la)= u_\la\otimes v_{\pi(\la)}$ where $\{u_\la | \la\in \La\}$  and $\{v_\g |\g\in \G\}$ are the canonical unitaries of  $L(\La)$ and $L(\G)$, respectively.       
\end{notation}

\begin{thrm}\label{intertquotient} Assume Notation \ref{1} above and suppose that  $\G=\G_1\ast_\Sigma\G_2$ or  $\G={\rm HNN} (\Lambda,\Sigma,\theta)$. Let $A_1,A_2\subset L(\La)=M$ be diffuse subalgebras such that $A_1\vee A_2\subset M$ has finite index. Then there exists $i=1,2$ such that $A_i\prec L(\pi^{-1}(\Sigma))$. 
\end{thrm}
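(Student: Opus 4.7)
The plan is to transport the problem into the auxiliary algebra $\tilde M := L(\Lambda)\bar\otimes L(\Gamma)$ via the trace-preserving $*$-embedding $\Delta^\pi$, exploit the AFP structure inherited from $L(\Gamma)$, and then descend using the comultiplication-intertwining principle of \cite{CIK13}. I work out the AFP case $\Gamma = \Gamma_1 \ast_\Sigma \Gamma_2$; the HNN case is parallel, using that HNN factors are corners of AFPs \cite{Ue08}.

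The first step is to decompose $\tilde M = \tilde M_1 \ast_{\tilde P} \tilde M_2$, with $\tilde M_i := L(\Lambda)\bar\otimes L(\Gamma_i)$ and $\tilde P := L(\Lambda)\bar\otimes L(\Sigma)$, and to note that $B_i := \Delta^\pi(A_i)$ are commuting diffuse subalgebras of $\tilde M$ whose join $B_1 \vee B_2$ is a finite-index inclusion inside the (non-corner) subalgebra $\Delta^\pi(M) \subset \tilde M$, since $\Delta^\pi$ is a trace-preserving $*$-embedding.

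The next two steps verify the hypotheses needed to run the argument of Theorem \ref{intertwiningincore1} inside $\tilde M$. For non-intertwining, pick $\lambda_n \in \Lambda$ with $\pi(\lambda_n) = \gamma_n$ a reduced alternating word in $\Gamma_1 \setminus \Sigma$ and $\Gamma_2 \setminus \Sigma$ of growing length; factoring $\Delta^\pi(u_{\lambda_n}) = (u_{\lambda_n}\otimes 1)(1\otimes v_{\gamma_n})$ and using AFP freeness as in Proposition \ref{intertamalgam}, one checks $\|E_{\tilde M_k}(x\,\Delta^\pi(u_{\lambda_n})\,y)\|_2 \to 0$ for every $x,y \in \tilde M$, so $\Delta^\pi(M) \nprec_{\tilde M} \tilde M_k$ by Theorem \ref{corner}. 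For non-relative-amenability, the standard nondegeneracy hypothesis gives that $L(\Gamma)$ is not amenable relative to $L(\Sigma)$ (\cite[Theorem 7.1]{Io12}); tensoring by $L(\Lambda)$ this transfers to show $\Delta^\pi(M)$ is not amenable relative to $\tilde P$ inside $\tilde M$. The finite-index inclusion $B_1\vee B_2 \subset \Delta^\pi(M)$ propagates both properties to $B_1\vee B_2$.

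With these in place I would run the proof of Theorem \ref{intertwiningincore1} verbatim to conclude that $B_i \prec_{\tilde M} \tilde P$ for some $i$, and then invoke the standard $\Delta^\pi$-descent (cf.\ \cite[Proposition 3.4]{CIK13}, which rests on the identity $L(\pi^{-1}(\Sigma)) = \{u_\lambda : \pi(\lambda) \in \Sigma\}''$ together with a Popa-type sequence-extraction argument) to pass from $\Delta^\pi(A_i) \prec_{\tilde M} L(\Lambda)\bar\otimes L(\Sigma)$ to $A_i \prec_M L(\pi^{-1}(\Sigma))$, as required. The main obstacle is precisely the finite-index mismatch: Theorem \ref{intertwiningincore1} is formulated with finite index inside a corner $pMp$, whereas here $B_1\vee B_2$ has finite index only inside the non-corner subalgebra $\Delta^\pi(M)$; the fix is the observation that the only places finite index is used in the proof of \ref{intertwiningincore1} are to eliminate $A_1\vee A_2 \prec M_i$ and relative amenability of $A_1\vee A_2$, both of which can equally well be eliminated via the corresponding statements for the larger subalgebra $\Delta^\pi(M)$ --- which is exactly what the two preceding steps accomplish.
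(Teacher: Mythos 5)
Your overall architecture is exactly the paper's: the paper likewise pushes everything through $\Delta^\pi$ into $\widetilde M=L(\Lambda)\bar\otimes L(\Gamma)$, uses the same decomposition $\widetilde M=\bigl(L(\Lambda)\bar\otimes L(\Gamma_1)\bigr)\ast_{L(\Lambda)\bar\otimes L(\Sigma)}\bigl(L(\Lambda)\bar\otimes L(\Gamma_2)\bigr)$, reruns the proof of Theorem \ref{intertwiningincore1} (i.e., the \cite[Theorem A]{Va13} trichotomy applied to $\Delta^\pi(A)$ for amenable diffuse $A\subset A_1$) inside $\widetilde M$, combines the outcomes via \cite[Corollary F.14]{BO08}, and descends by the comultiplication principle --- the paper cites \cite[Proposition 8.5]{CK15} rather than your \cite{CIK13} reference, but it is the same mechanism. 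Your accounting of the finite-index mismatch is also accurate and matches what the paper actually does: in both bad cases the paper upgrades to a statement about $\Delta^\pi(L(\Lambda))$ using $[M:A_1\vee A_2]<\infty$ before deriving a contradiction, and the intermediate case $\Delta^\pi(A_2)\prec_{\widetilde M} L(\Lambda)\bar\otimes L(\Gamma_i)$ is resolved by an \cite[Theorem 1.2.1]{IPP05} two-step argument that uses no finite index, consistent with your claim about where finite index enters. The one place you genuinely diverge is the elimination of $\Delta^\pi(M)\prec_{\widetilde M}\widetilde M_k$: your direct Proposition~\ref{intertamalgam}-style computation with lifts $\lambda_n$ of long alternating words (legitimate, since $\pi$ is onto, and the estimate $\|E_{\widetilde M_k}(x\,\Delta^\pi(u_{\lambda_n})\,y)\|_2\to 0$ does go through via Theorem \ref{corner}) is more self-contained than the paper's route, which instead descends via \cite[Proposition 8.5]{CK15} to $L(\Lambda)\prec_M L(\pi^{-1}(\Gamma_i))$, hence $[\Gamma:\Gamma_i]<\infty$, which is impossible in the nondegenerate setting.

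There is, however, one genuine flaw in your write-up: the non-relative-amenability step. Tensoring $L(\Gamma)\nsubseteq_{\rm amen}L(\Sigma)$ by $L(\Lambda)$ yields only that $\widetilde M$ itself is not amenable relative to $\widetilde P=L(\Lambda)\bar\otimes L(\Sigma)$ inside $\widetilde M$. This does \emph{not} transfer to $\Delta^\pi(M)$: the comultiplication image is an infinite-index subalgebra of $\widetilde M$, and relative amenability passes \emph{down} to subalgebras, so its negation does not (for instance, every amenable subalgebra of $\widetilde M$ is trivially amenable relative to $\widetilde P$). What is actually needed --- and what the paper invokes --- is the comultiplication transfer \cite[Proposition 8.6]{CK15}: if $\Delta^\pi(L(\Lambda))$ is amenable relative to $L(\Lambda)\bar\otimes L(\Sigma)$ inside $\widetilde M$, then $\Gamma=\pi(\Lambda)$ is amenable relative to $\Sigma$ inside $\Gamma$, which contradicts \cite{Io12} under the nondegeneracy assumptions ($[\Gamma_1:\Sigma]\geq 2$, $[\Gamma_2:\Sigma]\geq 3$, which your freeness computation also tacitly uses). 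This is a local and repairable defect --- substitute the citation and your proof closes --- but as written the justification ``tensoring by $L(\Lambda)$'' proves the wrong statement.
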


\begin{proof} We treat only the case of amalgamated free product as the other one follows similarly. Let $\pi:\Lambda\to \Gamma $ be an epimorphism and consider the embeddings $\Delta^\pi(A_1), \Delta^\pi(A_2)\subset \Delta^\pi(M)\subset L(\Lambda)\bar\otimes L(\Gamma) =\widetilde{M}$.  Now we may view $\widetilde{M}= L(\Lambda)\bar\otimes L(\Gamma_1)*_{L(\Gamma)\bar\otimes L(\Sigma)}L(\Lambda)\bar\otimes L(\Gamma_2) $.  Let $A\subset A_1$ be an arbitrary diffuse amenable subalgebra.  Naturally, $\Delta^\pi(A)\subset \Delta^\pi (A_1) $  is an amenable diffuse  subalgebra of $\widetilde{M} $.  Thus by
 \cite[Theorem A]{Va13} we have the following trichotomy:
\begin{enumerate}[label=(\alph*)]
\item \label{g1}$\Delta^\pi(A)\prec_{\widetilde{M}} L(\Lambda)\bar\otimes L(\Sigma) $, or
\item \label{g2}$\Delta^\pi(A_2)\prec_{\widetilde{M}} L(\Lambda)\bar\otimes L(\Gamma_i) $ for some $i=1,2 $, or
\item \label{g3}$\Delta^\pi(A) $ is coamenable relative to $L(\Lambda)\bar\otimes L(\Sigma) $ inside $\widetilde{M} $. 
\end{enumerate}
Assume \ref{g3} holds.  Then  a repeated application of \cite[Theorem A]{Va13} gives either
\begin{enumerate}[label=(\alph*),resume]
\item \label{g4}$\Delta^\pi(A_2)\prec_{\widetilde{M}} L(\Lambda)\bar\otimes L(\Sigma) $, or
\item \label{g5}$\Delta^\pi(A_1\vee A_2)\prec L(\Lambda)\bar\otimes L(\Gamma_i) $ for some $i=1,2 $, or
\item \label{g6}$\Delta^\pi(A_1\vee A_2) $ is coamenable relative to $ L(\Lambda)\bar\otimes L(\Sigma) $ inside $\widetilde{M} $.
\end{enumerate}
We claim \ref{g3} will necessarily imply \ref{g4} as cases \ref{g5} and \ref{g6} will necessarily yield a contradiction. 
If \ref{g5} were to hold, this may further implies $\Delta^\pi (L(\Lambda))\prec_{\tilde{M}}L(\Lambda)
\bar\otimes L(\Gamma_i)  $ since $[M:A_1\vee A_2]<\infty $ and hence by \cite[Proposition 8.5]{CK15} we have $L(\Lambda)\prec_M L(\pi^{-1}(\Gamma_i)) $.  This gives $[\Lambda: \pi^{-1}(\Gamma_i)]<\infty $ which in turn implies $[\Gamma: \Gamma_i]<\infty $ which can hold only if $\Gamma_i=\Sigma $, a contradiction.\\
If \ref{g6} holds, since $[M:A_1\vee A_2]<\infty $ this would further imply $\Delta^\pi(L(\Gamma)) $ coamenable relative to $L(\Lambda)\bar\otimes L(\Sigma) $ inside $\widetilde{M} $  by \cite{OP07}.  Using \cite[Proposition 8.6]{CK15}, then $\Gamma=\pi(\Lambda)  $ is amenable relative to $\Sigma $ inside $\Gamma $.  However, \cite{Io12} would imply $[\Gamma_i,\Sigma]\leq 2 $ for $i=1,2$, again leading to a contradiction.  Hence if $\Delta^\pi(A) $  is amenable relative to $L(\Lambda)\bar\otimes L(\Sigma) $ inside $\widetilde M $, then $\Delta^\pi(A_2)\prec_{\widetilde M} L(\Lambda)\bar\otimes L(\Sigma) $. 

Now if we suppose \ref{g2} holds, we claim this also implies $\Delta^\pi(A_2)\prec_{\widetilde{M}} L(\Lambda)\bar\otimes L(\Sigma) $.  If $\Delta^\pi(A_2)\prec_{\widetilde{M}}L(\Lambda)\bar\otimes L(\Gamma_i) $,  then there exists projections $ p\in \Delta^\pi(A_2)$, $q\in L(\Lambda)\bar\otimes L(\Gamma_i)  $, a partial isometry $v\in \widetilde{M} $,   and a $* $-isomorphism $\Psi:p \Delta^\pi(A_2)p\to B\subset q   L(\Lambda)\bar\otimes L(\Gamma_i)q$ such that
\begin{equation}\label{f1}
\Psi(x)v=vx \quad \forall x\in p \Delta^\pi(A_2)p.
\end{equation} 
Additionally, we may assume $\Delta E_{L(\Lambda)\bar\otimes L(\Sigma)}(v^*v)=q $.  
Next, if  $B\prec_{L(\Lambda)\otimes L(\Gamma_i)} L(\Lambda)\otimes L(\Sigma)$, there exists $\Phi:rBr\rightarrow tL(\Lambda)\bar\otimes L(\Sigma)t$ such that
\begin{equation}\label{f2}
\Phi(x)w=wx \, \text{for all} \, x\in rBr,\, \text{where}\,  r\leq q \,\text{ and } w\in L(\Lambda)\bar\otimes L(\Gamma_i)
\end{equation}
Using (\ref{f1}) together with (\ref{f2}), we get that
\begin{equation}\label{f3}
\Phi(\Psi(x))wv=w\Psi(x)v=wvx.
\end{equation}
Note that if $wv=0$, we have $wvv^*=0$, $wE_{L(\Lambda)\bar\otimes L(\Gamma_i)}(vv^*)=0$. Then we obtain
$wq=w\Delta E_{L(\Lambda)\bar\otimes L(\Gamma_i)}(vv^*)=0$. Therefore, $w=0$, a contradiction.
As the consequence, $wv\not= 0$ and hence (\ref{f3}) show that 
\begin{equation}\label{f4}
\Delta^{\pi}(A_2) \prec L(\Lambda)\bar\otimes L(\Sigma).
\end{equation}
If $B\not\prec L(\Lambda)\bar\otimes L(\Sigma)$, by \cite[Theorem 1.2.1]{IPP05} we get $vv^*\in L(\Lambda)\bar\otimes L(\Gamma_i)$ and hence $Bvv^*=v\Delta^{\pi}(A_2)v^*\subset L(\Lambda)\bar\otimes L(\Gamma_i)$. 
Since $Bvv^*\not\prec L(\Lambda)\bar\otimes L(\Sigma)$, again by \cite[Theorem 1.2.1]{IPP05} it follows that $q\mathcal{N}_{L(\Lambda)\otimes L(\Gamma_i)}(v\Delta^{\pi}(A_2)v^*)''\subset L(\Lambda)\bar\otimes L(\Gamma_i)$ and hence $v\Delta^{\pi}(A_1\vee A_2)v^*\subset L(\Lambda)\bar\otimes L(\Gamma)$ and $\Delta^{\pi}(A_1\vee A_2)\prec L(\Lambda)\bar\otimes L(\Gamma_i)$. Since $[M:A_1\vee A_2]<\infty$, we have $\Delta^{\pi}(L(\Lambda))\prec L(\Lambda)\bar\otimes L(\Gamma_i)$ which as before leads to a contradiction. Thus, case \ref{g2} always leads to (\ref{f4}), i.e., $\Delta^{\pi}(A_2)\prec L(\Lambda)\bar\otimes L(\Sigma)$.

Summarizing the results above we have for every amenable diffuse subalgebra $A\subset A_1 $ either
\begin{center}
$\Delta^\pi(A)\prec_{\widetilde{M}}L(\Lambda)\bar\otimes L(\Sigma) $\quad or \quad
$\Delta^\pi(A_2)\prec_{\widetilde{M}}L(\Lambda)\bar\otimes L(\Sigma)$.
\end{center}
By \cite[Corollary F.14, Appendix F]{BO08}, $\Delta^\pi(A_i)\prec_{\widetilde{M}} L(\Lambda)\bar\otimes L(\Sigma) $ for some $i\in \set{1,2} $.  A final application of \cite[Proposition 8.5]{CK15} allows us to conclude $A_i\prec_M L(\pi^{-1}(\Sigma))$, as desired.
\end{proof}

\begin{defn} A subgroup $\Sigma <\G$ is called \emph{weakly malnormal} if there exists $\g_1,...,\g_n\in \G$ satisfying $|\cap^n_{i=1} \g_i \Sigma \g_i^{-1}|<\infty$.  
\end{defn}

\begin{cor}\label{intertkernel} Assume Notation \ref{1} above and suppose that either  $\G=\G_1\ast_\Sigma\G_2$ or $\G= {\rm HNN}(\La,\Sg,\theta)$ is an icc group satisfying the conditions from Corollary \ref{intertwiningcoregroupsafphnn}. Assume also that $\Sigma <\G$ is a weakly malnormal subgroup. Let $p\in L(\La)$ be a projection and let $A_1,A_2\subset pL(\La)p$ be diffuse commuting subalgebras such that $A_1\vee A_2= pL(\La)p$. Then there exists $i=1,2$ such that $A_i\prec L(\ker(\pi))$. 
\end{cor}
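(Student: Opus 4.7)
The plan is to combine Theorem~\ref{intertquotient} with the weak malnormality of $\Sg$ in $\G$, in two main stages.

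\emph{First stage (apply Theorem~\ref{intertquotient}).} Since $A_1\vee A_2=pL(\La)p$ trivially has finite index in $pL(\La)p$, Theorem~\ref{intertquotient} applies directly to the commuting diffuse pair $A_1,A_2\subset L(\La)$ and produces $i\in\{1,2\}$ with $A_i\prec_{L(\La)} L(\pi^{-1}(\Sg))$. Relabeling if necessary, assume $A_1\prec_{L(\La)} L(\pi^{-1}(\Sg))$.

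\emph{Second stage (lift weak malnormality to $\La$).} Using that $\Sg<\G$ is weakly malnormal, pick $\g_1,\dots,\g_n\in\G$ with $\Sg_0:=\bigcap_{j=1}^n \g_j\Sg\g_j^{-1}$ finite, and choose lifts $\tilde\g_j\in\pi^{-1}(\g_j)$. Because $\pi$ is a group homomorphism, $\tilde\g_j\,\pi^{-1}(\Sg)\,\tilde\g_j^{-1}=\pi^{-1}(\g_j\Sg\g_j^{-1})$, and consequently
\[
K:=\bigcap_{j=1}^n \tilde\g_j\,\pi^{-1}(\Sg)\,\tilde\g_j^{-1}=\pi^{-1}(\Sg_0)
\]
is a finite-index extension of $\ker\pi$ in $\La$. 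Thus $L(\ker\pi)\subset L(K)$ is finite index, which means $A_1\prec L(K)$ is equivalent to $A_1\prec L(\ker\pi)$, and it suffices to establish the former.

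\emph{Upgrading to the intersection.} To promote $A_1\prec L(\pi^{-1}(\Sg))$ to $A_1\prec L(K)$ we exploit the large-commutant hypothesis: $A_2\subset A_1'\cap pL(\La)p$ is diffuse and $A_1\vee A_2=pL(\La)p$. Passing to the amplification $\widetilde M=L(\La)\bar\otimes L(\G)$ via $\Delta^\pi$ (as in the proof of Theorem~\ref{intertquotient}), one has $\Delta^\pi(A_1)\prec_{\widetilde M} L(\La)\bar\otimes L(\Sg)$. Conjugating the target by the unitary $1\otimes v_{\g_j}\in\widetilde M$ turns $L(\La)\bar\otimes L(\Sg)$ into $L(\La)\bar\otimes L(\g_j\Sg\g_j^{-1})$, so in principle analogous intertwinings of $\Delta^\pi(A_1)$ into each of these conjugates should follow. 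A standard ``intersection of intertwinings'' argument, combined with the commutation $[\Delta^\pi(A_1),\Delta^\pi(A_2)]=0$ and the diffuseness of $\Delta^\pi(A_2)$, would then yield $\Delta^\pi(A_1)\prec_{\widetilde M} L(\La)\bar\otimes L(\Sg_0)$. Since $\Sg_0$ is finite this is equivalent to $\Delta^\pi(A_1)\prec_{\widetilde M} L(\La)\otimes\mathbb C$, and \cite[Proposition~8.5]{CK15} finally converts this back into $A_1\prec_{L(\La)} L(\ker\pi)$, as desired.

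The hard part is the ``intersection of intertwinings'' step: the unitaries $1\otimes v_{\g_j}$ do not normalize $\Delta^\pi(A_1)$, so intersecting Popa's intertwining criterion (Theorem~\ref{corner}) across different $j$'s is not automatic. The remedy I anticipate is to use the diffuse commutant $\Delta^\pi(A_2)$ in the amplification to spread the intertwining sequences uniformly over the conjugates before intersecting, in the spirit of the manoeuvres used in \cite{Io12,Va13,CdSS15,DHI16}. All other ingredients---Theorem~\ref{intertquotient}, the group-theoretic lift of $\Sg_0$ to $K$, and the transfer through $\Delta^\pi$---are either already available or routine.
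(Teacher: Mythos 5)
Your overall route is the same as the paper's, but the decisive step is left open, and that step is exactly where the paper's (two-line) proof does its work: the paper applies Theorem \ref{intertquotient} to get $A_i\prec L(\pi^{-1}(\Sg))$ and then quotes \cite[Corollary 7, Proposition 8]{HPV10}, which encapsulate precisely the ``intersection of intertwinings over conjugates'' manoeuvre that you explicitly flag as anticipated rather than proved. As written, your proposal therefore has a genuine gap at its crux. Conjugating the target by $1\otimes v_{\g_j}$ does give, for each $j$ separately, an intertwining into $L(\La)\bar\otimes L(\g_j\Sg\g_j^{-1})$, but, as you yourself concede, Popa's criterion (Theorem \ref{corner}) does not intersect across different $j$, and diffuseness of $\Delta^\pi(A_2)$ alone is not the mechanism that repairs this. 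The correct input is regularity: since $A_2$ commutes with $A_1$ and $A_1\vee A_2=pL(\La)p$ with \emph{equality} (note this hypothesis is strictly stronger than the finite-index hypothesis of Theorem \ref{intertquotient}, and this is exactly why the corollary assumes it), every unitary of $A_2$ normalizes $A_1$, so $\mathcal N_{pL(\La)p}(A_1)''=pL(\La)p$. It is under this regularity hypothesis, together with weak malnormality of $\Sg<\G$, that the results of \cite{HPV10} upgrade $A_1\prec L(\pi^{-1}(\Sg))$ to $A_1\prec L(\ker(\pi))$. Your sketch never isolates this as the needed hypothesis, and without it (or without citing a result of HPV10 type) the upgrading step is unproven.

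The surrounding ingredients of your proposal are correct and agree with what the paper needs: stage one is identical to the paper's use of Theorem \ref{intertquotient}; the group-theoretic observation that $K=\bigcap_j\tilde\g_j\,\pi^{-1}(\Sg)\,\tilde\g_j^{-1}=\pi^{-1}(\Sg_0)$ contains $\ker(\pi)$ with index $|\Sg_0|<\infty$, so that $A_1\prec L(K)$ is equivalent to $A_1\prec L(\ker(\pi))$, is sound; and the transfer $\Delta^\pi(A)\prec_{\widetilde M}L(\La)\bar\otimes L(\Sg)\Leftrightarrow A\prec L(\pi^{-1}(\Sg))$ via \cite[Proposition 8.5]{CK15} is used correctly. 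One structural remark: once a statement of the form of \cite[Proposition 8]{HPV10} is available, the detour back through the comultiplication $\Delta^\pi$ is unnecessary for the upgrading stage---one can run the malnormality argument directly inside $L(\La)$ with the family of subgroups $\pi^{-1}(\g_j\Sg\g_j^{-1})$, which is how the paper's citation resolves the problem in a single step.
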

\begin{proof} This follows directly by combining Theorem \ref{intertquotient} and  \cite[Corollary 7, Proposition 8]{HPV10}\end{proof}

\begin{thrm}[Theorem in \cite{CKP14}]\label{intertquotientCrss} Assume the Notation \ref{1} above and suppose that  $\G\in \mathcal C_{\rm rss}$. Let $p\in L(\La)$ be a projection and let $A_1,A_2\subset L(\La)$ be diffuse commuting subalgebras such that $A_1\vee A_2\subset pL(\La)p$ has finite index. Then there exists $i=1,2$ such that $A_i\prec L(\ker(\pi))$. 
\end{thrm}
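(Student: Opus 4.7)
The plan is to mimic the proof strategy of Theorem \ref{intertquotient}, replacing Vaes' trichotomy \cite{Va13} for amalgamated free product von Neumann algebras with the analogous dichotomy available for groups in $\mathcal C_{\rm rss}$ from \cite{CIK13}, which itself draws on the bi-exactness/solidity framework of Popa--Vaes \cite{PV11,PV12} and Ozawa \cite{Oz03}. The finite case of $\G$ is trivial: if $\G$ is finite then $\ker(\pi)\leqslant \La$ has finite index, so $L(\ker\pi)\subseteq L(\La)$ is a finite index inclusion and $A_i\prec L(\ker\pi)$ automatically. Hence we may assume $\G$ is infinite and, since $\mathcal C_{\rm rss}$ contains only finite groups among its amenable members, that $\G$ is non-amenable.

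First I would invoke the $*$-embedding $\Delta^\pi:L(\La)\to \widetilde M:= L(\La)\bar\otimes L(\G)$ from Notation \ref{1} and regard $\Delta^\pi(A_1),\Delta^\pi(A_2)\subset \Delta^\pi(pL(\La)p)$ as commuting diffuse subalgebras of $\tilde p\widetilde M\tilde p$, where $\tilde p=\Delta^\pi(p)$. For $\G\in\mathcal C_{\rm rss}$ one has the following dichotomy, which is the whole point of the class: for every diffuse subalgebra $Q\subset \tilde p\widetilde M\tilde p$, either $Q\prec_{\widetilde M} L(\La)\otimes 1$, or $\mathcal N_{\tilde p\widetilde M\tilde p}(Q)''$ is amenable relative to $L(\La)\otimes 1$ inside $\widetilde M$.

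Applying this dichotomy to any diffuse amenable subalgebra $A\subset A_1$ (whose normalizer contains the commuting algebra $\Delta^\pi(A_2)$) yields either
\begin{enumerate}[label=(\alph*)]
\item $\Delta^\pi(A)\prec_{\widetilde M} L(\La)\otimes 1$, or
\item $\Delta^\pi(A_2)$ is amenable relative to $L(\La)\otimes 1$ in $\widetilde M$.
\end{enumerate}
In case (b), apply the dichotomy a second time to $\Delta^\pi(A_2)$, whose normalizer contains $\Delta^\pi(A_1\vee A_2)$. This gives either (b$_1$) $\Delta^\pi(A_2)\prec_{\widetilde M} L(\La)\otimes 1$, or (b$_2$) $\Delta^\pi(A_1\vee A_2)$ is amenable relative to $L(\La)\otimes 1$ in $\widetilde M$. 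But $[pL(\La)p:A_1\vee A_2]<\infty$, so (b$_2$) propagates to give that $\Delta^\pi(pL(\La)p)$ itself is amenable relative to $L(\La)\otimes 1$; by the standard transfer principle in the spirit of \cite[Proposition 8.6]{CK15}, this forces $\G=\pi(\La)$ to be amenable, contradicting our reduction. Hence (b$_2$) is impossible.

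Combining the above, for every diffuse amenable $A\subset A_1$ one has either $\Delta^\pi(A)\prec_{\widetilde M} L(\La)\otimes 1$ or $\Delta^\pi(A_2)\prec_{\widetilde M} L(\La)\otimes 1$. By \cite[Corollary F.14]{BO08} this upgrades to $\Delta^\pi(A_i)\prec_{\widetilde M} L(\La)\otimes 1$ for some $i\in\{1,2\}$, and a final application of \cite[Proposition 8.5]{CK15} (descending intertwining through $\Delta^\pi$) delivers $A_i\prec_{L(\La)} L(\ker\pi)$, as required. The main obstacle is isolating the precise form of the dichotomy for arbitrary $\G\in\mathcal C_{\rm rss}$ acting on a tensor product $N\bar\otimes L(\G)$; for the hyperbolic and nonamenable free product constituents this is already contained in \cite{PV11,PV12,Io12}, but a uniform statement across the whole class $\mathcal C_{\rm rss}$ requires the consolidation carried out in \cite{CIK13}.
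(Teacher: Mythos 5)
Your proposal is essentially the intended argument: the paper does not prove this statement itself (it is imported from \cite{CKP14}), and the proof there runs exactly as you describe, mirroring the paper's own proof of Theorem \ref{intertquotient} --- embed via the comultiplication $\Delta^\pi$, view $L(\La)\bar\otimes L(\G)$ as the crossed product of $L(\La)$ by the trivial $\G$-action, apply the defining property of $\mathcal C_{\rm rss}$ twice, rule out the relative-amenability branch using finite index and the transfer principle, upgrade via \cite[Corollary F.14]{BO08}, and pull back with \cite[Proposition 8.5]{CK15}. One correction is needed in how you state the key dichotomy: as written (``for every diffuse subalgebra $Q\subset \tilde p\widetilde M\tilde p$, either $Q\prec_{\widetilde M} L(\La)\otimes 1$ or $\mathcal N_{\tilde p\widetilde M\tilde p}(Q)''$ is amenable relative to $L(\La)\otimes 1$'') it is false --- take $Q=\tilde p\widetilde M\tilde p$ itself with $\G$ infinite and non-amenable: neither branch holds, since $\widetilde M\prec L(\La)\otimes 1$ would force $\G$ finite and amenability of $\widetilde M$ relative to $L(\La)\otimes 1$ would force $\G$ amenable. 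The correct defining property of $\mathcal C_{\rm rss}$ from \cite[Definition 2.7]{CIK13} requires the hypothesis that $Q$ be \emph{amenable relative to} $L(\La)\otimes 1$ inside $\widetilde M$. Fortunately your two applications respect this: the first is to a diffuse \emph{amenable} $A\subset A_1$ (amenable relative to anything), and the second, to $\Delta^\pi(A_2)$, takes place precisely in the branch where its relative amenability has been granted, so the argument survives verbatim once the hypothesis is inserted. Your reduction to $\G$ infinite non-amenable is also doing genuine work --- if infinite amenable groups belonged to $\mathcal C_{\rm rss}$ the theorem would fail already for $\G=\La=\mathbb Z$, $\pi={\rm id}$ --- and your reading of the class (finite groups are the only amenable members relevant here) is the one consistent with \cite{CIK13} and with this paper's usage.
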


\begin{thrm}\label{almosttensor} Let $\G\in Quot_n(\mathcal{T})\cup Quot_n(\mathcal{T})$-by-$\mathcal C_{\rm rss}$. Let $r\in L(\G)$ be a projection and assume $P_1,P_2\subseteq rL(\G)r$ are commuting diffuse subfactors such that $P_1\vee P_2\subseteq rL(\G)r$ is finite index. Then there exist icc commuting subgroups $\Sigma_1, \Sigma_2<\G$ such that  $[\G:\Sigma_1\Sigma_2]<\infty$ and  $P_1\cong^{com}_{L(\G)} L(\Sg_1)$, $P_2\cong^{com}_{L(\G)} L(\Sg_2)$. 
 \end{thrm}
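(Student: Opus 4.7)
I would prove the theorem by induction on the composition length of $\G$, treating groups in $Quot_n(\mathcal T)$-by-$\mathcal C_{\rm rss}$ as having length $n{+}1$ (with a top $\mathcal C_{\rm rss}$ layer). The overall objective at each step is to first locate an icc subgroup $\Sigma_1 \leqslant \G$ with $P_1 \cong^{com}_{L(\G)} L(\Sigma_1)$, and then produce the commuting $\Sigma_2$ by a direct application of Theorem \ref{virtualprod}, which is tailor-made for precisely this situation.

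\textbf{Intertwining step.} Let $\pi\colon \G \to \G/N$ be the top quotient of the composition series, so that $\G/N \in \mathcal T \cup \mathcal C_{\rm rss}$ and $N$ has strictly smaller depth. If $\G/N \in \mathcal C_{\rm rss}$, Theorem \ref{intertquotientCrss} applied to $\pi$ yields, after relabeling, $P_1 \prec_{L(\G)} L(N)$. If instead $\G/N \in \mathcal T$, write $\G/N$ as an amalgam $\G_1\ast_\Sigma \G_2$ or HNN-extension with core $\Sigma \in \mathcal C_{\rm rss}$; then Corollary \ref{intertkernel} delivers $P_1 \prec_{L(\G)} L(N)$ provided $\Sigma$ is weakly malnormal in $\G/N$, which follows from $\Sigma \in \mathcal C_{\rm rss}$ together with the index conditions (if this fails, one can instead use Theorem \ref{intertquotient} to intertwine into $L(\pi^{-1}(\Sigma))$, a proper subgroup still of smaller composition depth, and repeat the argument there).

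\textbf{Dichotomy and recursion.} With $P_1 \prec_{L(\G)} L(N)$ in hand, I apply Lemma \ref{intertwiningdichotomy} (noting $\mathcal Z(L(N))$ is purely atomic, as $N$ is finite-by-icc from the composition structure). In the commensurability branch, $P_1 \cong^{com}_{L(\G)} L(N)$ and I simply set $\Sigma_1 := N$ (after quotienting by the finite center if needed to ensure iccness). In the finite-index branch, one obtains a $*$-isomorphism $\Phi\colon pP_1p \to B \subseteq eL(N)e$ with $B \vee (B'\cap eL(N)e) \subseteq eL(N)e$ a finite-index inclusion of II$_1$ factors. Set $C := B' \cap eL(N)e$; then $(B,C)$ is a pair of commuting diffuse subfactors in $eL(N)e$ with finite-index product, and $N$ has strictly smaller depth, so the inductive hypothesis furnishes commuting icc subgroups $\Omega_1, \Omega_2 \leqslant N$ with $[N:\Omega_1\Omega_2]<\infty$ satisfying $B \cong^{com}_{L(N)} L(\Omega_1)$ and $C \cong^{com}_{L(N)} L(\Omega_2)$. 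Composing $pP_1p \cong^{\Phi,w}_{L(\G)} B$ with $B \cong^{com}_{L(N)} L(\Omega_1)$ via Proposition \ref{transitivity} yields $P_1 \cong^{com}_{L(\G)} L(\Omega_1)$; I set $\Sigma_1 := \Omega_1$.

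\textbf{Producing $\Sigma_2$.} In either branch, $\Sigma_1$ is an icc subgroup of $\G$ satisfying $P_1 \cong^{com}_{L(\G)} L(\Sigma_1)$, and the commuting pair $(P_1, P_2)$ with finite-index product in $rL(\G)r$ satisfies the hypotheses of Theorem \ref{virtualprod}. That theorem then returns a subgroup $\Sigma_2 \leqslant C_\G(\Sigma_1)$ with $[\G : \Sigma_1 \Sigma_2] < \infty$ and $P_2 \cong^{com}_{L(\G)} L(\Sigma_2)$; since $P_2$ is a diffuse factor, $\Sigma_2$ is infinite, and absorbing a finite normal piece into a finite-index subgroup (if necessary) makes it icc. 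I expect the main obstacle to be the bookkeeping in the recursive step---ensuring the support and unit conditions in Proposition \ref{transitivity} are genuinely met when composing commensurabilities across the descending ambients $L(N) \subseteq L(\G)$, and confirming that the weak malnormality (or the fallback via $\pi^{-1}(\Sigma)$) works uniformly throughout the induction so that complexity strictly decreases at each stage.
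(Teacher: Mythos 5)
Your plan reproduces the paper's architecture exactly at the skeleton level---induct, intertwine $P_1$ into a group subalgebra, run the dichotomy of Lemma \ref{intertwiningdichotomy}, recurse through the finite-index branch via Proposition \ref{transitivity}, and finish with Theorem \ref{virtualprod}---but your primary intertwining route in the $\mathcal{T}$-quotient case contains a genuine error. Weak malnormality of the core $\Sigma$ in a $\mathcal{T}$-amalgam does \emph{not} follow from $\Sigma\in\mathcal C_{\rm rss}$ together with the index conditions: take $\Sigma=\mathbb F_2$ (non-elementary hyperbolic, so in $\mathcal C_{\rm rss}$) and $\G_i=\Sigma\times H_i$, giving $(\Sigma\times H_1)\ast_\Sigma(\Sigma\times H_2)=\Sigma\times(H_1\ast H_2)\in\mathcal{T}$, in which $\Sigma$ is an infinite \emph{normal} subgroup, so $\bigl|\bigcap_i \g_i\Sigma\g_i^{-1}\bigr|=\infty$ for every choice of $\g_i$. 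Worse, the conclusion you want from Corollary \ref{intertkernel}, namely $P_1\prec L(N)$ for the kernel, is simply false in general: for $\G=\mathbb F_2\times\mathbb F_2=(\mathbb F_2\times\mathbb Z)\ast_{\mathbb F_2}(\mathbb F_2\times\mathbb Z)\in\mathcal{T}$ the commuting tensor factors intertwine into $L(\Sigma)$ and nowhere deeper. This is precisely why the paper's Theorem \ref{intertquotient} targets $L(\pi^{-1}(\Sigma))$ rather than $L(\ker\pi)$; your parenthetical ``fallback'' is in fact the only viable route and is the route the paper takes. (Note also that Corollary \ref{intertkernel} as stated requires $A_1\vee A_2=pL(\La)p$, not merely finite index.)

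Two further problems remain in the recursion. First, the floor is missing: in the base case $\G\in\mathcal{T}$ your ``top quotient'' is trivial (so the proposed step degenerates to $P_1\prec\mathbb C$), and after intertwining into $L(\Sigma)$ with $\Sigma\in\mathcal C_{\rm rss}$, the finite-index branch of Lemma \ref{intertwiningdichotomy} cannot be fed back into your induction, since $\mathcal C_{\rm rss}$ groups lie outside your induction classes. The paper closes this by observing that every corner of $L(\Sigma)$ for $\Sigma\in\mathcal C_{\rm rss}$ is \emph{virtually prime}, so branch (\ref{p1}) of the dichotomy---which would produce commuting diffuse II$_1$ factors with finite-index join inside a corner of $L(\Sigma)$---is impossible, and only $P_1\cong^{com}_{L(\G)}L(\Sigma)$ survives; your proposal never invokes virtual primeness and offers no substitute. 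Second, your induction measure does not decrease: with your ``$+1$ for the top $\mathcal C_{\rm rss}$ layer'' convention, the reduction from $\G\in Quot_n(\mathcal{T})$ to $\pi^{-1}(\Sigma)\in Quot_{n-1}(\mathcal{T})$-by-$\mathcal C_{\rm rss}$ goes from length $n$ to length $n$, so the concern you flag at the end is real as stated. The paper's remedy is to prove the statement simultaneously for $Quot_n(\mathcal{T})$ and $Quot_n(\mathcal{T})$-by-$\mathcal C_{\rm rss}$ at each level $n$, so that $\pi^{-1}(\Sigma)$ is covered by the hypothesis at level $n-1$; adopt that bookkeeping, drop the Corollary \ref{intertkernel} route entirely, and add the virtual-primeness collapse at the floor, and your outline becomes the paper's proof.
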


\begin{proof} For ease of notation denote by $M=L(\Gamma)$. We will proceed by induction on $n$. 

Assume $n=1$. In this case $\G$ either belongs to $\mathcal{T}$ or it is a $\mathcal{T}$-by-$\mathcal C_{\rm rss}$ group. 

First assume $\G=\G_1\ast_\Sg \G_2$.  Since $P_1\vee P_2\subseteq pMp$ is finite index, by Corollary \ref{intertwiningcoregroupsafphnn} we can assume $P_1 \prec L(\Sigma)$. Since any corner $eL(\Sigma)e$ is virtually prime then using Lemma \ref{intertwiningdichotomy} we must have that  $P_1\cong_M^{com} L(\Sigma)$ and the  desired conclusion follows by applying Theorem \ref{virtualprod}. The case  $\G={\rm HNN}(\La,\Sg,\theta)$ follows similarly.
 
Now assume $\G$ is a $\mathcal{T}$-by-$\mathcal C_{\rm rss}$ group. Let $\pi: \G \ra \La$ be a surjective homomorphism so that $\La\in \mathcal C_{\rm rss}$ and  $\ker(\pi)\in \mathcal{T}$. By Theorem \ref{intertquotientCrss} we have $P_1\prec L(\ker(\pi))$. Using Lemma \ref{intertwiningdichotomy} one of the following cases must hold:

\begin{enumerate} 
\item\label{p1'}  There exist projections $p\in P_1, e\in L(\ker(\pi)) $, a partial isometry $w\in M$,  and a unital injective $*$-homomorphism $\Phi: pP_1p\ra eL(\ker(\pi))e$ such that \begin{enumerate}
\item\label{21''} $\Phi(x)w=wx \text{ for all } x\in pP_1p$;
\item\label{22''} $s(E_{L(\ker(\pi))}(ww^*))=e$;
\item \label{23''} If $B:=\Phi(pP_1p)$ then $B\vee (B'\cap eL(\ker(\pi))e) \subseteq eL(\ker(\pi))e$ is a finite index inclusion of II$_1$ factors.
 \end{enumerate}
\item\label{p2'} $P_1\cong^{com}_M L(\ker(\pi))$.
\end{enumerate}

If case (\ref{p2'}) were to hold the the desired conclusion follows from Theorem \ref{virtualprod}.

Assume case (\ref{p1'}) holds. Since $\ker(\pi)\in \mathcal{T}$  then using (\ref{23''}) and the previous case one can find commuting subgroups $\Omega_1,\Omega_2 \leqslant \ker(\pi)$ such that $[\ker(\pi):\Omega_1\Omega_2]<\infty$ and $B\cong^{com}_{L(\ker(\pi)} L(\Omega_1)$ and $B'\cap eL(\ker(\pi))e\cong^{com}_{L(\ker(\pi))} L(\Omega_2)$ Proposition \ref{transitivity} we get that $P_1\cong^{com}_M L(\Omega_1)$ and the conclusion follows again from Theorem \ref{virtualprod}.

For the inductive step we argue in a similar manner. First assume $\G \in Quot_n(\mathcal{T})$ and suppose there is a surjective group homomorphism $\pi: \G \ra \La$  where  $\La$ is either an amalgamated free product of an HNN-extension and  $\ker(\pi)\in Quot_{n-1}(\mathcal{T})$. By Theorem \ref{intertquotient} we have $P_1\prec L(\pi^{-1}(\Sigma))$. 
 Using Lemma \ref{intertwiningdichotomy} one of the following cases must hold:

\begin{enumerate} 
\item\label{p1''}  There exist projections $p\in P_1, e\in L(\pi^{-1}(\Sigma)) $, a partial isometry $w\in M$,  and a unital injective $*$-homomorphism $\Phi: pP_1p\ra eL(\pi^{-1}(\Sigma))e$ such that \begin{enumerate}
\item\label{21'''} $\Phi(x)w=wx \text{ for all } x\in pP_1p$;
\item\label{22'''} $s(E_{L(\pi^{-1}(\Sigma))}(ww^*))=e$;
\item \label{23'''} If $B:=\Phi(pP_1p)$ then $B\vee (B'\cap eL(\pi^{-1}(\Sigma))e) \subseteq eL(\pi^{-1}(\Sigma))e$ is a finite index inclusion of II$_1$ factors.
 \end{enumerate}
\item\label{p2''} $P_1\cong^{com}_M L(\pi^{-1}(\Sigma))$.
\end{enumerate}
If case (\ref{p2''}) were to hold the the desired conclusion follows from Theorem \ref{virtualprod}.

Assume case (\ref{p1''}) holds.  Since $\pi^{-1}(\Sigma)$ is $Quot_{n-1}(\mathcal{T})$-by-$\mathcal C_{\rm rss}$ then using (\ref{23'''}) and the  induction hypothesis one can find commuting subgroups $\Omega_1,\Omega_2 \leqslant \pi^{-1}(\Sigma)$ such that $[\pi^{-1}(\Sg):\Omega_1\Omega_2]<\infty$ and $B\cong^{com}_{L(\pi^{-1}(\Sigma))} L(\Omega_1)$ and $B'\cap eL(\pi^{-1}(\Sigma))e\cong^{com}_{L(\pi^{-1}(\Sigma))} L(\Omega_2)$ Proposition \ref{transitivity} we get that $P_1\cong^{com}_M L(\Omega_1)$ and the conclusion follows again from Theorem \ref{virtualprod}.

When $\G\in Quot_n(\mathcal{T})$-by-$\mathcal C_{\rm rss}$ one can proceed in a similar manner and we leave the details to the reader.\end{proof}

\begin{thrm}\label{tensordecomppolyamalgams}  Denote by $\mathcal D= \mathcal C_{\rm rss}\cup (\text{finite-by-}\mathcal{T})^c$. Let $\G\in Quot_n(\mathcal{T})\cup (Quot_n(\mathcal{T})$-by-$\mathcal C_{\rm rss})$ be an icc group. Let $M=L(\G)$ and assume that $ M=M_1\bar\otimes M_2$, where $M_i$ are diffuse factors. Then there exist a product groups decomposition $\G=\G_1\times \G_2$, where $\G_i \in Quot^c_{n_i}( \mathcal D)$ for some $n_i\in \overline{1,n}$. Moreover, one can find a unitary $u\in M$ and $t>0$ such that $M_1=uL(\G_1)^tu^*$ and $M_2=uL(\G_2)^{1/t}u^*$.  
 \end{thrm}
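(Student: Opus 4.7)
The plan is to assemble three ingredients from the preceding sections: Theorem \ref{almosttensor}, which manufactures commuting subgroups whose group factors commensurate a given pair of commuting subfactors; Theorem \ref{fromvirtualtoproduct}, which upgrades such a commensuration into an honest direct-product decomposition of the ambient group matching the tensor factorization; and Theorem \ref{prodstructureextensionamalagam}, which controls the algebraic structure of each direct factor inside the class of $n$-step extensions.

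Step 1 (commuting subgroups). Apply Theorem \ref{almosttensor} with $r=1$, $P_1=M_1$, $P_2=M_2$. Since $M_1\vee M_2=M$ trivially has finite index, this directly yields commuting icc subgroups $\Sigma_1,\Sigma_2<\G$ with $[\G:\Sigma_1\Sigma_2]<\infty$ and $M_i\cong^{com}_M L(\Sigma_i)$ for $i=1,2$. Note that neither $\Sigma_i$ can be finite because then the commensuration would force $M_i$ to have a finite-dimensional corner, contradicting diffuseness.

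Step 2 (product decomposition of $\G$). The hypotheses of Theorem \ref{fromvirtualtoproduct} are essentially in place. Non-amenability of $\Sigma_1,\Sigma_2$, formally required there, follows in our setting because every group in $Quot_n(\mathcal{T})$ is non-amenable (the kernels at each layer are in $\mathcal{T}$, which consists of non-amenable groups by the assumptions $[\G_j:\Sigma]\geq 2,3$ or $\Sigma\neq\La\neq\theta(\Sigma)$), and likewise for the $Quot_n(\mathcal{T})$-by-$\mathcal C_{\rm rss}$ case; one checks, essentially as in Theorem 6.1 of \cite{DHI16} or the proof of Theorem \ref{fromvirtualtoproduct} itself, that if one of the $\Sigma_i$ were amenable then the commensuration would force $M$ to split off an amenable piece incompatible with the non-amenability of $L(\G)$. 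Invoking Theorem \ref{fromvirtualtoproduct} then delivers a direct product decomposition $\G=\G_1\times\G_2$ together with a unitary $u\in M$ and a scalar $t>0$ such that
\begin{equation*}
M_1=u L(\G_1)^t u^*\quad\text{and}\quad M_2=u L(\G_2)^{1/t} u^*.
\end{equation*}

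Step 3 (structure of the direct factors). It remains to show $\G_i\in Quot^c_{n_i}(\mathcal D)$ for some $n_i\in\overline{1,n}$. In the case $\G\in Quot_n^c(\mathcal{T})$ this is exactly the content of Theorem \ref{prodstructureextensionamalagam}. In the remaining case $\G$ sits in a short exact sequence $1\to K\to\G\to Q\to 1$ with $K\in Quot_n(\mathcal{T})$ and $Q\in\mathcal C_{\rm rss}\subseteq\mathcal D$. Intersecting the decomposition $\G=\G_1\times\G_2$ with the normal subgroup $K$ gives $K=(K\cap\G_1)\times(K\cap\G_2)$ (since $\G_1,\G_2$ commute), and Theorem \ref{prodstructureextensionamalagam} applies to each factor $K\cap\G_i\in Quot^c_{k_i}(\mathcal D)$ with $k_i\leq n$. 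Using the induced short exact sequences $1\to K\cap\G_i\to\G_i\to Q_i\to 1$ with $Q_i\leqslant Q\in\mathcal C_{\rm rss}$ (so $Q_i\in\mathcal C_{\rm rss}\subseteq\mathcal D$), Proposition \ref{quot}(1),(5) assembles $\G_i\in Quot^c_{n_i}(\mathcal D)$ for appropriate $n_i\in\overline{1,n}$.

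The heavy analytic lifting is entirely absorbed into Theorems \ref{almosttensor} and \ref{fromvirtualtoproduct}; the principal remaining technical point is the careful bookkeeping of extension levels in Step 3, in particular handling the extra $\mathcal C_{\rm rss}$ quotient layer in the second case without inflating $n_i$ beyond $n$. This is genuinely an algebraic computation rather than an operator-algebraic obstruction, so the main difficulty of the theorem is really localized inside the proofs of the two input theorems.
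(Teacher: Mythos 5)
Your overall architecture coincides with the paper's proof, which is precisely the one-line combination of Theorem \ref{almosttensor}, Theorem \ref{fromvirtualtoproduct} (the paper's proof cites Theorem \ref{fromrelcomtocomgroups} at this spot, but the statement actually being invoked is \ref{fromvirtualtoproduct}, which is what you correctly use), and Theorem \ref{prodstructureextensionamalagam}. However, your Step 3 contains a genuine error: for a normal subgroup $K\lhd \G_1\times\G_2$ it is \emph{not} true that $K=(K\cap\G_1)\times(K\cap\G_2)$, and commutation of $\G_1$ with $\G_2$ is irrelevant to this. One only gets $(K\cap\G_1)\times(K\cap\G_2)\leqslant K$, and the quotient $K/\bigl((K\cap\G_1)\times(K\cap\G_2)\bigr)$ embeds as a \emph{central} subgroup of $\bigl(\G_1/(K\cap\G_1)\bigr)\times\bigl(\G_2/(K\cap\G_2)\bigr)$. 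A fiber product such as $K=\{(x,y)\in F_2\times F_2 \,:\, \pi(x)=\pi(y)\}$, for an epimorphism $\pi:F_2\to\mathbb Z/2\mathbb Z$, is normal with finite quotient in $\mathcal C_{\rm rss}$, yet $(K\cap\G_1)\times(K\cap\G_2)=\ker\pi\times\ker\pi$ is proper in $K$. So in the $Quot_n(\mathcal{T})$-by-$\mathcal C_{\rm rss}$ case you cannot simply intersect; the correct move, mirroring the induction inside Theorem \ref{prodstructureextensionamalagam} and the by-case treatment in Theorem \ref{almosttensor}, is to push the decomposition forward along $\rho:\G\to\G/K\in\mathcal C_{\rm rss}$ and pass back to preimages up to commensurability, using that the splitting defect is only central. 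Relatedly, your parenthetical claim that $Q_i\leqslant Q\in\mathcal C_{\rm rss}$ forces $Q_i\in\mathcal C_{\rm rss}$ is unjustified, since $\mathcal C_{\rm rss}$ is not stated to be closed under passage to subgroups.

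A second, softer issue is your Step 2 bridge from Theorem \ref{almosttensor} (which yields only icc commuting subgroups) to Theorem \ref{fromvirtualtoproduct} (which requires non-amenable ones). You are right to flag the mismatch --- the paper's own one-line proof glosses over it --- but your proposed justification is not a proof: the principle ``$M$ would split off an amenable piece, contradicting non-amenability of $L(\G)$'' is false in general, since non-amenable icc groups can have McDuff factors splitting off the hyperfinite II$_1$ factor; the paper's own Remarks \ref{rk1} exhibit icc amalgams with exactly this behavior. A correct argument must exploit the specific provenance of the subgroups $\Sigma_i$ in the proofs of Theorems \ref{almosttensor} and \ref{virtualprod} (e.g., that the relevant cores lie in $\mathcal C_{\rm rss}$), not merely the non-amenability of $L(\G)$.
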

\begin{proof} The conclusion follows directly by combining Theorems \ref{almosttensor}, \ref{fromrelcomtocomgroups} and \ref{prodstructureextensionamalagam}.\end{proof}

\subsection{Graph products of groups}  Let $\mathcal G$ be a graph without loops and multiple edges and denote its vertex set by $\mathcal V(\mathcal G)=\mathcal V$. Let $\{\G_v\}_{v\in \mathcal V}$ be a family of groups, indexed over the vertices of $\mathcal G$ and called vertex groups. The graph product of  $\{\G_v\}_{v\in \mathcal V}$, denoted by ${\mathcal G}(\G_v, v\in \mathcal V)$, is defined as the quotient of the free product $\ast_{v\in \mathcal V} \G_v$ by the relations $[\G_u,\G_v]=1$ whenever $u$ and $v$ are adjacent vertices in $\mathcal G$. These groups were introduced by E. Green in \cite{Gr90} and are natural generalizations of the right angled Artin and Coxeter groups. The study of graph products and their subgroups became a trendy subject over the last decade in geometric group theory and many important results have been discovered, \cite{A13,HW08,W11,MO13,AM10}.

Given any subset $U \subseteq \mathcal V$, the subgroup $\G_{ U}=\langle\G_u:u\in U\rangle$ of ${\mathcal G}( \G_v, v\in \mathcal V)$ is called a \emph{full subgroup}. In turn this can be naturally identified with the graph product $\mathcal G_{ U}(\G_u, u\in U)$ corresponding to the subgraph $\mathcal G_U$ of $\mathcal G$, spanned by the vertices of $U$. Any conjugate of a full subgroup is called a \emph{parabolic subgroup}. 

For every $v\in \mathcal V$ we denote by ${\rm link}(v)$ the subset of vertices $w\neq v$ that are adjacent to $v$ in $\mathcal G$. Similarly, for every $U\subseteq \mathcal V$ we denote by ${\rm link}( U)=\cap_{u\in U }{\rm link}(u) $. Also we make the convention that ${\rm link}(\emptyset)= \mathcal V$.  Notice that $ U\cap {\rm link}( U)=\emptyset$.

Graph product groups naturally admit many amalgamated free product decompositions. For further use we recall from \cite[Lemma 3.20]{Gr90} the following free amalgam decomposition involving parabolic subgroups associated with link sets of $\mathcal G$; precisely, for any $v\in \mathcal V$ we have  
\begin{align}\label{111}\mathcal{G}(\Gamma_w, w\in \mathcal{V}) = \G_{\mathcal V\setminus \{v\}}\ast_{\G_{\rm link}(v)} \G_{{\rm star}(v)},
\end{align}    
where we denoted by ${\rm star}(v)=\{v\}\cup {\rm link}(v)$.

 Let $\mathcal G$ be a graph and let $\mathcal V$ be its vertex set. We denote by $\mathcal V_{\rm star}$ the subset of all $v\in \mathcal V$ such that ${\rm star}(v)=\mathcal V$. With this notation at hand we recall the following well-known result which will be useful in the proof of the main result. 
 
 \begin{prop}\label{prop:ProductGraphProd} Assume $\mathcal{G} $ is a graph let $\Gamma=\mathcal{G}(\Gamma_v,v\in \mathcal{V}) $ be a graph product group, where $\mathcal V_{\rm star}=\emptyset$. If $\G=\La_1\times \La_2$ is a nontrivial product decomposition there exists $B\subset \mathcal V$ such that $\La_1=\G_B$ and $\La_2=\G_{{\rm link}(B)}$.
 \end{prop}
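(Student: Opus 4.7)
The plan is to apply Lemma \ref{amalgamprodecomp}(1) once for each vertex, using the canonical amalgam decomposition (\ref{111}): for each $v\in \mathcal V$ one has
\[
\G=\G_{\mathcal V\setminus\{v\}}\ast_{\G_{\mathrm{link}(v)}}\G_{\mathrm{star}(v)}.
\]
Combined with the hypothesis $\G=\Lambda_1\times\Lambda_2$, Lemma \ref{amalgamprodecomp}(1) produces a permutation $\sigma_v\in\mathfrak S_2$ such that $\Lambda_{\sigma_v(1)}\leqslant \G_{\mathrm{link}(v)}$, together with compatible splittings
\[
\G_{\mathrm{link}(v)}=\Lambda_{\sigma_v(1)}\times \Sigma_0^v,\qquad \G_{\mathrm{star}(v)}=\Lambda_{\sigma_v(1)}\times \G_{\mathrm{star}(v)}^0,\qquad \Lambda_{\sigma_v(2)}\supseteq \G_{\mathrm{star}(v)}^0.
\]
I would then set $B=\{v\in\mathcal V:\sigma_v(1)=2\}$ and $B'=\mathcal V\setminus B=\{v:\sigma_v(1)=1\}$, and attempt to prove $\Lambda_1=\G_B$ and $\Lambda_2=\G_{\mathrm{link}(B)}$ with $\mathrm{link}(B)=B'$.

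First I would verify that $B$ and $B'$ are both nonempty proper subsets. Assuming (harmlessly) that each $\G_v$ is nontrivial, the intersection is empty: if both $\Lambda_1,\Lambda_2\leqslant \G_{\mathrm{link}(v)}$ then $\G=\Lambda_1\Lambda_2\leqslant \G_{\mathrm{link}(v)}$, forcing $\G_v\leqslant \G_{\mathrm{link}(v)}$ which is impossible by disjointness of vertex groups in the graph product normal form. For nonemptiness, if $B'=\mathcal V$ then $\Lambda_1\leqslant \bigcap_{v\in\mathcal V}\G_{\mathrm{link}(v)}=\G_{\mathrm{link}(\mathcal V)}=\G_{\mathcal V_{\mathrm{star}}}=\{1\}$, contradicting $\Lambda_1$ being an infinite factor of the nontrivial product decomposition (symmetrically for $B$).

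The key step is to upgrade $\Lambda_{\sigma_v(1)}\leqslant \G_{\mathrm{link}(v)}$ to the vertex-level statement $\G_v\leqslant \Lambda_{\sigma_v(2)}$. The argument uses the two product structures on $\G_{\mathrm{star}(v)}$: the tautological one $\G_{\mathrm{star}(v)}=\G_v\times \G_{\mathrm{link}(v)}$, and the one $\G_{\mathrm{star}(v)}=\Lambda_{\sigma_v(1)}\times \G_{\mathrm{star}(v)}^0$ from the lemma, with $\Lambda_{\sigma_v(1)}\leqslant \G_{\mathrm{link}(v)}$. Since $\G_v$ commutes with $\G_{\mathrm{link}(v)}\supseteq \Lambda_{\sigma_v(1)}$, and using $\G_v\cap \G_{\mathrm{link}(v)}=\{1\}$, a projection/centralizer analysis inside $\G=\Lambda_1\times\Lambda_2$ places $\G_v$ in $\Lambda_{\sigma_v(2)}$. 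Granting this, $\G_B\leqslant \Lambda_1$ and $\G_{B'}\leqslant\Lambda_2$, and since $\Lambda_1,\Lambda_2$ commute, every vertex of $B$ is adjacent to every vertex of $B'$, giving $B'\subseteq \mathrm{link}(B)$; the absence of loops yields $\mathrm{link}(B)\cap B=\emptyset$, so $\mathrm{link}(B)=B'$. Finally, $\G=\G_B\cdot\G_{\mathrm{link}(B)}$ embeds into $\Lambda_1\times\Lambda_2=\G$, forcing equality in each coordinate.

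The main obstacle is exactly the vertex-containment $\G_v\leqslant \Lambda_{\sigma_v(2)}$. Direct complements in a group are not unique—the complements to $\Lambda_{\sigma_v(1)}$ in $\G_{\mathrm{star}(v)}$ are a torsor over $\mathrm{Hom}(\G_v\times \Sigma_0^v,Z(\Lambda_{\sigma_v(1)}))$—so it is not automatic that $\G_{\mathrm{star}(v)}^0$ literally contains $\G_v$. Overcoming this requires either exploiting that $\G_v$ commutes with all of $\G_{\mathrm{link}(v)}$ (hence lies in the centralizer $C_\G(\Lambda_{\sigma_v(1)})=Z(\Lambda_{\sigma_v(1)})\times \Lambda_{\sigma_v(2)}$) together with a centerless-type property of $\Lambda_{\sigma_v(1)}$ inherited from the icc/$\mathcal V_{\mathrm{star}}=\emptyset$ hypothesis, or, alternatively, invoking the classical normal-form theory of graph products to rule out any nontrivial $\G_v$-component inside $\Lambda_{\sigma_v(1)}$.
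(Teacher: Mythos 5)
The paper never proves Proposition \ref{prop:ProductGraphProd}: it is recalled as a well-known fact from graph product theory (cf.\ \cite{Gr90,AM10}), so there is no in-paper argument to measure you against. That said, your route --- applying Lemma \ref{amalgamprodecomp}(1) to the vertex amalgam (\ref{111}) at every $v\in\mathcal V$ and aggregating --- is sound in outline and is exactly the paper's own modus operandi elsewhere (Proposition \ref{productstructureiterationamalgam} is proved by ``applying successively Lemma \ref{amalgamprodecomp}''). More importantly, the step you leave unresolved as a disjunction of two unexecuted options does close, and by the first option you name: since $\mathcal V_{\rm star}=\emptyset$ (and vertex groups may be taken nontrivial), the center of a graph product satisfies $Z(\G)=\prod_{v\in \mathcal V_{\rm star}}Z(\G_v)=1$, so from $Z(\G)=Z(\La_1)\times Z(\La_2)$ both factors are centerless; hence $C_\G(\La_{\sigma_v(1)})=Z(\La_{\sigma_v(1)})\times \La_{\sigma_v(2)}=\La_{\sigma_v(2)}$, and since $\G_v$ commutes with $\G_{{\rm link}(v)}\supseteq \La_{\sigma_v(1)}$ you get $\G_v\leqslant \La_{\sigma_v(2)}$ in two lines --- no torsor analysis of complements and no normal-form computation is needed. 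You should state and use this explicitly rather than hedge; note also that the conjugation hidden in the proof of Lemma \ref{amalgamprodecomp} is harmless precisely because the $\La_i$, being direct factors, are normal, so $\La_{\sigma_v(1)}\leqslant h\G_{{\rm link}(v)}h^{-1}$ already gives $\La_{\sigma_v(1)}\leqslant \G_{{\rm link}(v)}$.

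Two further corrections and one caveat. First, $\bigcap_{v\in\mathcal V}{\rm link}(v)=\emptyset$ outright, since no vertex lies in its own link; it does not equal $\mathcal V_{\rm star}$, though your conclusion $\La_1\leqslant \G_\emptyset=1$ is unaffected (and only nontriviality of $\La_1$ is needed, not infiniteness; to identify $\bigcap_v \G_{{\rm link}(v)}$ with a full subgroup you are implicitly using the parabolic-intersection property, e.g.\ via \cite[Proposition 3.4]{AM10}). Second, and more substantively: Lemma \ref{amalgamprodecomp}(1) is established in the paper by operator-algebraic means, through Theorem \ref{intertwiningincore1} and Corollary \ref{intertwiningcoregroupsafphnn}, whose proofs require the non-degeneracy conditions $[\G_1:\Sg]\geq 2$, $[\G_2:\Sg]\geq 3$ together with non-amenability of $L(\G)$ relative to $L(\Sg)$. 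At a vertex $v$ the relevant indices are $|\G_v|$ and at least $|\G_w|$ for some $w\notin{\rm star}(v)$; for small torsion vertex groups (e.g.\ all $\G_v\cong\mathbb Z/2\mathbb Z$) these can both equal $2$ and $\G$ can even be amenable, so the lemma's proof does not apply there, whereas the proposition --- a classical group-theoretic fact --- still holds by normal-form arguments. In the setting where the paper actually invokes the proposition (Theorem \ref{tensordecompgraphprod}, with $|\G_v|=\infty$) this is immaterial, but as a proof of the proposition exactly as stated your argument is slightly narrower than the ``well-known'' purely combinatorial one.
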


Also for further use we record the following basic result that follows from the study of parabolic subgroups of graph products performed in \cite[Section 3]{AM10} (see also \cite[Lemma 6.1]{MO13})

\begin{lem}\label{lem:FiniteIndexGraphGroup}
Assume $\mathcal{G} $ is a graph let $\Gamma=\mathcal{G}(\Gamma_v,v\in \mathcal{V}) $ be a graph product group, where $\Gamma_v $ is infinite for every $v\in \mathcal{V} $.  Assume there exist subsets $ U,  W \subseteq  \mathcal V$ and $\g\in \G$ satisfying  $\gamma \Gamma_{ W}\gamma^{-1}\leqslant \Gamma_{ U}$ and $[\Gamma_{ U}: \gamma \Gamma_{ W}\gamma^{-1}]<\infty $. Then $ U= W$ and $\Gamma_{ U}=\gamma\Gamma_{ W}\gamma^{-1}  $.  
\end{lem}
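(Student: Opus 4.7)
The plan is to exploit the theory of parabolic subgroups of graph products developed in \cite{AM10} and used in \cite{MO13}, together with the fundamental intersection formula $\G_A\cap \G_B=\G_{A\cap B}$ for full subgroups. First I would dispose of the trivial case $W=\emptyset$: then $\gamma\G_W\gamma^{-1}=\{1\}$ has finite index in $\G_U$, forcing $\G_U$ to be finite; since $\G_U$ contains every $\G_v$ for $v\in U$ and each such vertex group is infinite by hypothesis, this yields $U=\emptyset=W$ at once. So from now on I would assume $W\neq\emptyset$.

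The main structural step is to reduce $\gamma\G_W\gamma^{-1}$ to a standard parabolic inside $\G_U$. A parabolic subgroup of $\G$ that is contained in a full subgroup $\G_U$ is in fact parabolic inside the graph product $\G_U=\mathcal{G}_U(\G_u, u\in U)$; this is the content of the intersection lemma for parabolics in \cite[Section 3]{AM10} (see also \cite[Lemma 6.1]{MO13}). Applied to the parabolic $\gamma\G_W\gamma^{-1}$, this produces $\delta\in \G_U$ and $W'\subseteq U$ with
\[
\gamma\G_W\gamma^{-1}=\delta\G_{W'}\delta^{-1}.
\]
Next I would argue $W'=U$ by a coset count. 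Conjugating by $\delta^{-1}$, the subgroup $\G_{W'}\leqslant \G_U$ has finite index. If there were some $v\in U\setminus W'$, the intersection formula would give $\G_v\cap \G_{W'}=\G_{\{v\}\cap W'}=\{1\}$, so the elements of $\G_v$ would represent pairwise distinct left cosets of $\G_{W'}$ in $\G_U$; this forces $|\G_v|\leqslant [\G_U:\G_{W'}]<\infty$, contradicting the standing hypothesis that $\G_v$ is infinite. Hence $W'=U$, and consequently $\gamma\G_W\gamma^{-1}=\delta\G_U\delta^{-1}=\G_U$ since $\delta\in \G_U$.

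To conclude $W=U$, I would appeal to the rigidity of conjugacy classes of standard parabolic subgroups in graph products: if $\G_W$ and $\G_U$ are conjugate inside $\G$, then necessarily $W=U$, which is again part of the Antol\'in--Minasyan structure theorem for parabolics. Combining this with the equality $\gamma\G_W\gamma^{-1}=\G_U$ produced above gives the desired $U=W$ together with $\G_U=\gamma\G_W\gamma^{-1}$. The main obstacle in this plan is the reduction to a standard parabolic inside $\G_U$, which is the delicate point genuinely requiring the normal-form combinatorics of \cite{AM10}; the remaining coset count uses the infiniteness of the vertex groups in a transparent way, and the final rigidity step is by now a standard citation.
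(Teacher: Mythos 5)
Your proposal is correct, but it takes a genuinely different route from the paper's. The paper first quotes \cite[Corollary 3.8]{AM10} to get $W\subseteq U$, then intersects finitely many $\Gamma_U$-conjugates of $\gamma\Gamma_W\gamma^{-1}$ to produce a finite-index \emph{normal} parabolic core $\lambda\Gamma_T\lambda^{-1}\lhd \Gamma_U$ with $T\subseteq W$ (via \cite[Proposition 3.4]{AM10}), and then invokes the normalizer formula \cite[Proposition 3.13]{AM10}, $\mathcal{N}_\Gamma(\lambda\Gamma_T\lambda^{-1})=\lambda\Gamma_{T\cup\mathrm{link}(T)}\lambda^{-1}$, to split $\Gamma_U=\lambda\Gamma_T\lambda^{-1}\vee\bigl(\lambda\Gamma_{\mathrm{link}(T)}\lambda^{-1}\cap\Gamma_U\bigr)$; the second factor is a finite parabolic, hence trivial because all vertex groups are infinite, giving $\Gamma_U=\lambda\Gamma_T\lambda^{-1}$ and then $T=W=U$. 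You avoid both the normal core and the normalizer formula: your only nonelementary input is that a parabolic of $\Gamma$ contained in the full subgroup $\Gamma_U$ is parabolic \emph{in} $\Gamma_U$, after which a one-line coset count ($\Gamma_v\cap\Gamma_{W'}=\Gamma_{\{v\}\cap W'}=1$ for $v\in U\setminus W'$, so $|\Gamma_v|\le[\Gamma_U:\Gamma_{W'}]<\infty$) forces $W'=U$, and conjugacy rigidity of full subgroups (Corollary 3.8 applied in both directions to $\gamma\Gamma_W\gamma^{-1}=\Gamma_U$) yields $W=U$. Your key reduction step is true and, should the precise reference prove elusive, admits a two-line proof via the canonical retraction $\rho_U:\Gamma\to\Gamma_U$ killing the vertex groups outside $U$: since $\rho_U$ restricts to the identity on $\gamma\Gamma_W\gamma^{-1}\subseteq\Gamma_U$, one gets $\gamma\Gamma_W\gamma^{-1}=\rho_U(\gamma)\,\Gamma_{W\cap U}\,\rho_U(\gamma)^{-1}$ with $\rho_U(\gamma)\in\Gamma_U$; combined with $W\subseteq U$ this even gives $W'=W$, so your final rigidity step could be absorbed into the coset count. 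Both arguments use the infiniteness of the vertex groups at the same decisive moment (the paper kills the finite parabolic complement $\mu\Gamma_X\mu^{-1}$; you bound $|\Gamma_v|$ by the index); yours is the more elementary and self-contained of the two, while the paper's normal-core and normalizer computation matches how \cite{AM10} is deployed elsewhere in its Section 5.
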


\begin{proof}
First, notice \cite[Corollary 3.8]{AM10} implies that $ W\subseteq  U $.  Since $[\Gamma_{ U}: \gamma \Gamma_{ W}\gamma^{-1}]<\infty $ there exist $\gamma_1,...,\gamma_k\in \Gamma_{ U} $ so that
$\bigcap_{i=1}^k \gamma_i\gamma \Gamma_{ W}{\gamma}^{-1}\gamma_i^{-1}$
is a finite index, normal subgroup of $\Gamma_{U} $.  Using \cite[Proposition 3.4]{AM10} inductively, one can find $\lambda\in \Gamma $ and $ T\subseteq  W$ such that $\bigcap_{i=1}^k \gamma_i\gamma \Gamma_{ W}{\gamma}^{-1}\gamma_i^{-1} =\lambda\Gamma_{ T} \lambda^{-1}$. Altogether, we have \begin{align}\label{100}\lambda \Gamma_{ T}\lambda^{-1}\leqslant \Gamma_{ U}\leqslant \mathcal{N}_\Gamma(\lambda\Gamma_{ T}\lambda^{-1}) \text{ and } [\Gamma_{\mathcal U}:\lambda \Gamma_{ T}\lambda^{-1}]<\infty.
\end{align} However, the normalizer formula of \cite[Proposition 3.13]{AM10} shows that 
\begin{align}\label{101}
\mathcal{N}_\Gamma(\lambda \Gamma_{ T}\lambda^{-1})=&\lambda \Gamma_{ T\cup{\rm link}( T)}\lambda^{-1}
= \lambda (\Gamma_{ T}\vee \Gamma_{{\rm link}( T)})\lambda^{-1}
= \lambda \Gamma_{ T}\lambda^{-1} \vee \lambda \Gamma_{{\rm link} ( T) }\lambda^{-1}.
\end{align}
Since $ T\cap {\rm link}( T)=\emptyset $ then $\lambda \Gamma_{ T}\lambda^{-1}, \lambda \Gamma_{{\rm link} (T) }\lambda^{-1}< \lambda \Gamma_{ T\cup{\rm link}( T)}\lambda^{-1}$ are commuting subgroups with trivial intersection. Then using (\ref{100}) and (\ref{101}) we get that $\G_{ U}= \lambda \Gamma_{ T}\lambda^{-1} \vee (\lambda \Gamma_{{\rm link} ( T) }\lambda^{-1}\cap \G_{\mathcal U})$. Since $[\Gamma_{ U}:\lambda \Gamma_{T}\lambda^{-1}]<\infty$ it follows that $\lambda \Gamma_{{\rm link} (T) }\lambda^{-1}\cap \G_{ U}$ is finite. By \cite[Proposition 3.4]{AM10} again there are $ X \subseteq {\rm link}( T)\cap  U$ and $\mu\in \G$ so that $\lambda \Gamma_{{\rm link} ( T) }\lambda^{-1}\cap \G_{ U}=\mu \G_{ X} \mu^{-1}$. However, since all non-trivial vertex groups are infinite it follows that $\G_{ X}=1$ and hence $\G_{ U}= \lambda \G_{ T}\lambda^{-1}$. Since, by construction, we have $ T \subseteq  W\subseteq  U$, this further entails $ T= W =  U$ and moreover $[\Gamma_U: \gamma\Gamma_W\gamma^{-1}]$ in the formula before  ; in particular,  $\g \G_{W} \g^{-1}=\G_{ U}$. 
\end{proof}

\begin{lem}
Let $\G=\mathcal{G}(\G_v, v\in \mathcal V)$ be an icc graph product group where $|\G_v|=\infty$ for all $v\in \mathcal V$.
If $M_1\bar\otimes M_2=L(\G)$, for diffuse $M_i$'s then there exists a partition $ A_1\sqcup A_2= \mathcal V\setminus \mathcal V_{\rm star}$ satisfying ${\rm link} (A_1)= A_2 \sqcup \mathcal V_{\rm star}$, ${\rm link} (A_2)= A_1 \sqcup \mathcal V_{\rm star}$, and 
\begin{align*}
M_i \prec^s L(\G_{{\rm link}(A_i)}), \text{ for all } i=1,2.
\end{align*}
\end{lem}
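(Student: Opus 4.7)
The plan is to use the amalgamated free product decomposition at each non-star vertex, together with Popa's intertwining machinery, to split $\mathcal V\setminus\mathcal V_{\rm star}$ according to which tensor factor intertwines into the link subalgebra, and then to exploit the graph structure to verify the adjacency and link identities. Fix $v\in\mathcal V\setminus\mathcal V_{\rm star}$; identity \eqref{111} expresses $\G = \G_{\mathcal V\setminus\{v\}}\ast_{\G_{{\rm link}(v)}}\G_{{\rm star}(v)}$. Since $v\notin\mathcal V_{\rm star}$ there exists $u\in\mathcal V\setminus{\rm star}(v)$, and $|\G_v|=|\G_u|=\infty$ then makes both indices $[\G_{\mathcal V\setminus\{v\}}:\G_{{\rm link}(v)}]$ and $[\G_{{\rm star}(v)}:\G_{{\rm link}(v)}]$ infinite. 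Applying Corollary \ref{intertwiningcoregroupsafphnn} to the commuting diffuse subfactors $M_1,M_2\subset L(\G)$ with $M_1\vee M_2 = L(\G)$, I obtain $M_{i(v)}\prec L(\G_{{\rm link}(v)})$ for some $i(v)\in\{1,2\}$.

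Setting $A_1 = \{v\in\mathcal V\setminus\mathcal V_{\rm star} : M_1\prec L(\G_{{\rm link}(v)})\}$ and $A_2 = (\mathcal V\setminus\mathcal V_{\rm star})\setminus A_1$ then produces a partition with $M_i\prec L(\G_{{\rm link}(v)})$ for every $v\in A_i$. The crucial point is to prove that any $v\in A_1$ and $w\in A_2$ are adjacent in $\mathcal G$. Suppose otherwise: $w\in A_2$ gives $M_1\not\prec L(\G_{{\rm link}(w)})$, while $M_2\subseteq M_1'\cap L(\G)$ and $M_1\vee M_2=L(\G)$ yield $\mathcal N_{L(\G)}(M_1)''=L(\G)$. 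Running the AFP normalizer analysis in the decomposition at $w$ (in the spirit of \cite[Theorem 1.2.1]{IPP05}, \cite[Theorem A]{Va13}, and the proofs of Theorems \ref{tensordecompafp} and \ref{intertwiningincore1}) then forces $L(\G)\prec L(\G_{\mathcal V\setminus\{w\}})$ or $L(\G)\prec L(\G_{{\rm star}(w)})$. Translating these intertwinings via the standard commensurability consequence (cf.\ \cite[Proposition 8.5]{CK15}) would commensurate $\G$ with a subgroup of a proper parabolic: this is impossible in the first case because $\G_w$ is infinite with $\G_w\cap\G_{\mathcal V\setminus\{w\}}=1$, and in the second because $w\notin\mathcal V_{\rm star}$ supplies $u\notin{\rm star}(w)$ with $|\G_u|=\infty$ and $\G_u\cap\G_{{\rm star}(w)}=1$. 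Once this adjacency is in hand, the identities ${\rm link}(A_1)=A_2\sqcup\mathcal V_{\rm star}$ and ${\rm link}(A_2)=A_1\sqcup\mathcal V_{\rm star}$ follow immediately from the definition of $\mathcal V_{\rm star}$ and the convention $v\notin{\rm link}(v)$.

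Finally, to upgrade the pointwise intertwinings $M_i\prec L(\G_{{\rm link}(v)})$ for $v\in A_i$ into $M_i\prec^s L(\G_{{\rm link}(A_i)})$, I would invoke the intersection property for parabolic subgroups of graph products (\cite[Proposition 3.4]{AM10}), which yields $\bigcap_{v\in A_i}\G_{{\rm link}(v)}=\G_{{\rm link}(A_i)}$, together with its analytic counterpart that the family of parabolic subgroups $U$ for which $M_i\prec L(\G_U)$ is closed under intersection when $M_i$ is a factor; the latter is established by iterating the AFP argument one vertex at a time. The strengthening from $\prec$ to $\prec^s$ is automatic because $M_i'\cap L(\G)=M_{3-i}$ is itself a factor, so every nonzero central projection of $M_i'\cap L(\G)$ can be absorbed into the intertwiner. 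The principal obstacle is the adjacency step: invoking the AFP normalizer machinery on a tensor factor $M_1$ that sits inside $L(\G)$ but not inside either of the two AFP factors at the chosen vertex requires careful bookkeeping of how the identity $\mathcal N_{L(\G)}(M_1)''=L(\G)$ interacts with the Ioana--Vaes structural theorems and how the resulting intertwining $L(\G)\prec L(\G_W)$ descends to group-level commensurability for icc parabolic subgroups.
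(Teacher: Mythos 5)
Your first step (the per-vertex AFP decomposition \eqref{111} plus Corollary \ref{intertwiningcoregroupsafphnn}, giving $M_{i(v)}\prec L(\G_{{\rm link}(v)})$ for each $v\in\mathcal V\setminus\mathcal V_{\rm star}$) matches the paper, and your observation that the upgrade $\prec\;\Rightarrow\;\prec^s$ is free because $M_i'\cap L(\G)=M_{3-i}$ is a factor is exactly the paper's appeal to \cite[Lemma 2.4(2)]{DHI16}. But your ``crucial point'' --- proving directly that every $v\in A_1$ is adjacent to every $w\in A_2$ --- contains a genuine gap. From the hypothesis that $v$ and $w$ are non-adjacent you try to force $L(\G)\prec L(\G_{\mathcal V\setminus\{w\}})$ or $L(\G)\prec L(\G_{{\rm star}(w)})$ out of the normalizer machinery at $w$. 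That forcing cannot happen: the trichotomy of \cite[Theorem A]{Va13} (as packaged in Theorem \ref{intertwiningincore1}) always has the branch ``$M_2\prec L(\G_{{\rm link}(w)})$'' available, and for $w\in A_2$ that branch is actually \emph{true}, so knowing $M_1\not\prec L(\G_{{\rm link}(w)})$ produces no contradiction whatsoever. Note also that the vertex $v$ never enters your argument --- you only use data at $w$ --- so the argument is structurally incapable of detecting adjacency between $v$ and $w$. Adjacency is not a pointwise phenomenon here; in the paper it is obtained only at the end, globally: from $M_i\prec^s L(\G_{{\rm link}(A_i)})$ one passes to relative commutants to get $L(\G_{A_1})\prec M_2$, then transitivity via \cite[Lemma 3.7]{Va07} gives $L(\G_{A_1})\prec L(\G_{{\rm link}(A_2)})$, then \cite[Lemma 2.2]{CI17} converts this into a group-level finite-index containment $[\G_{A_1}:h\G_{{\rm link}(A_2)}h^{-1}\cap\G_{A_1}]<\infty$, and finally the parabolic rigidity Lemma \ref{lem:FiniteIndexGraphGroup} (this is where $|\G_v|=\infty$ for all $v$ is essential) yields $A_1\subseteq{\rm link}(A_2)$, from which the partition identities follow by the set computation $\mathcal V=\mathcal V_{\rm star}\cup A_1\cup A_2\subseteq{\rm link}(A_2)\sqcup A_2\subseteq\mathcal V$.

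Your assembly step is also under-justified, though it gestures at the right ingredients. Knowing $M_i\prec L(\G_{{\rm link}(v)})$ for each $v\in A_i$ separately does not let you intersect: intertwining only places a corner of $M_i$ inside some \emph{conjugate-like} position relative to each $L(\G_{{\rm link}(v)})$, so the relevant subgroups are arbitrary conjugates $h_v\G_{{\rm link}(v)}h_v^{-1}$, not $\G_{{\rm link}(v)}$ themselves; the unconjugated identity $\bigcap_{v\in A_i}\G_{{\rm link}(v)}=\G_{{\rm link}(A_i)}$ you invoke is not what is needed. The paper's Claim \ref{116} handles this with Vaes's approximation formalism: $\prec^s$ is converted to $(M_i)_1\subset_{\rm approx}L(\G_{{\rm link}(v)})$ via \cite[Lemma 2.5]{Va10}, these approximate containments are intersected using \cite[Lemma 2.7]{Va10}, and only then does \cite[Proposition 3.4]{AM10} identify each intersection $\bigcap_v h_v\G_{{\rm link}(v)}h_v^{-1}$ as a parabolic $k\G_Tk^{-1}$ with $T\subseteq{\rm link}(A_i)$. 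Your phrase ``established by iterating the AFP argument one vertex at a time'' does not substitute for this; without the $\subset_{\rm approx}$ machinery (or an equivalent), the family $\{U:\,M_i\prec L(\G_U)\}$ is not known to be closed under intersection. In summary: the skeleton (per-vertex intertwining, $\prec^s$ upgrade, combination over $A_i$, combinatorial conclusion) is the paper's, but the two load-bearing steps --- pairwise adjacency and the intersection of intertwinings --- are respectively incorrect and unproved as you have written them.
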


\begin{proof} If $\mathcal V= \mathcal V_{\rm star}$, the conclusion is trivial. So fix $v\in \mathcal V\setminus \mathcal V_{\rm star}$ arbitrary. Using the assumptions and formula (\ref{111}), there is a nontrivial  decomposition of $\Gamma $ as an amalgamated free product $\G=\G_{\mathcal V\setminus \{ v\} }*_{\G_{\operatorname{link}(v)}}\G_{\operatorname{star}(v)}$. Thus we have $M_1\bar\otimes M_2=L(\G_{\mathcal V\setminus\{v\}})*_{L(\G_{\operatorname{link}(v)})}L(\G_{\operatorname{star}(v)})$, and, using Corollary \ref{intertwiningcoregroupsafphnn}, we  must have either $M_1\prec L(\G_{\operatorname{link}(v)})$ or $M_2\prec L(\G_{\operatorname{link}(v)})$.
Since $\mathcal{N}_M(M_i)''=M$ is a ${\rm II}_1$ factor, \cite[Lemma 2.4(2)]{DHI16} further implies 
\begin{center}
$M_1\prec^s L(\G_{{\rm link}(v)})\quad$ or $\quad M_2\prec^s L(\G_{\operatorname{link}(v)})$.
\end{center}
For each $i=1,2$ denote by $ A_i\subset \mathcal V$ the maximal subset such that  
\begin{equation}\label{112}
M_i\prec^s L(\G_{\operatorname{link}(v)}) \text{ for all }\,v\in A_i.
\end{equation}

Notice that $A_1\cup A_2=\mathcal V\setminus \mathcal V_{\rm star}$.
Next we show the following 

\begin{claim}\label{116}For each $i=1,2$ we have $M_i \prec^s L(\G_{{\rm link}(A_i)})$.
\end{claim}
\vskip 0.03in
\noindent \emph{Proof of Claim 2.7.} Throughout this proof we will follow the notation used in \cite[Definition 2.2]{Va10}. Using (\ref{112}) and  \cite[Lemma 2.5]{Va10} we have 
\begin{equation*}
(M_i)_1\subset_{\rm approx} L(\G_{\operatorname{link}(v)}) \text{ for all } v\in A_i.
\end{equation*}
Thus, using \cite[Lemma 2.7]{Va10} successively, we obtain 
\begin{equation}\label{113}
(M_i)_1\subset_{\rm approx} L(\mathcal S)\text{, where }\mathcal S=\{\displaystyle\bigcap_{v\in A_i} h_v \G_{\operatorname{link}(v)}h_v^{-1} \,,\, h_v\in \G\}.
\end{equation}
Also,  using \cite[Proposition 3.4]{AM10} successively, for each $\{h_v\}_{v\in A_i}\subset \G$ there are $T\subset \bigcap_{v\in A_i}\operatorname{link}(v)=\operatorname{link}(A_i)$ and $k\in \G$ so that 
\begin{equation}\label{114}
\bigcap_{v\in A_i} h_v \G_{{\rm link}(v)} h_v^{-1} = k\G_T k^{-1} \leqslant k \G_{\operatorname{link}(A_i)} k^{-1}. 
\end{equation} Altogether, (\ref{113}), (\ref{114}) and \cite[Lemma 2.5]{Va10} show that $M_i\prec^s L(\G_{\operatorname{link}(A_i)})$, as desired. \hfill$\blacksquare$
\vskip 0.03in
Claim \ref{116} implies that $L(\G_{\operatorname{link}(A_i)})'\cap M\prec M_i'\cap M$.  Since $L(\G_{A_i})\subseteq L(\G_{\operatorname{link}(A_i)})'\cap M$ we further get $L(\G_{A_1})\prec M_2$. Since $M_2\prec^s L(\G_{\operatorname{link}(A_2)})$ then \cite[Lemma 3.7]{Va07}
 further leads to  
\begin{equation*}
L(\G_{A_1})\prec L(\G_{\operatorname{link}(A_2)}). 
\end{equation*}
Similarly we have $L(\G_{A_2})\prec L(\G_{\operatorname{link}(A_1)})$. Assume wlog that $A_1\neq\emptyset$. Then by \cite[Lemma 2.2]{CI17} there is $h\in\G$ such that $[\G_{A_1}:h\G_{\operatorname{link}(A_2)}h^{-1}\cap \G_{A_1}]<\infty$. Using \cite[Proposition 3.4]{AM10} again, there are $T\subset \operatorname{link}(A_2)\cap A_1$ and $s\in \G$ so that $h\G_{\operatorname{link}(A_2)} h^{-1}\cap \G_{A_1}=s\G_T s^{-1}$. Thus $s\G_Ts^{-1}\leqslant \G_{A_1}$ is finite index and by Lemma \ref{lem:FiniteIndexGraphGroup}  we have $A_1=T$ and also $s\G_Ts^{-1}= \G_{A_1}$. Since $T\subseteq \operatorname{link}(A_2)$ then $A_1\subseteq \operatorname{link}(A_2)$ and hence 
\begin{equation*} 
 \mathcal V=\mathcal V_{\rm star}\cup A_1\cup A_2\subseteq {\rm link}(A_2)\sqcup A_2\subseteq \mathcal V.
\end{equation*}
Therefore $A_2\sqcup {\rm link}(A_2)=\mathcal V$ and moreover $\mathcal V_{\rm star}\sqcup A_1={\rm link}(A_2)$ and $\mathcal V_{\rm star}\sqcup A_2={\rm link}(A_1)$. Thus Claim \ref{116} gives the desired conclusion.\end{proof}

\begin{thrm}\label{tensordecompgraphprod}
Let $\G=\mathcal{G}(\G_v, v\in \mathcal V)$ an graph product group. Assume that $\G$ is icc, $ \mathcal V_{\rm star}=\emptyset$, and $|\G_v|=\infty$ for all $v\in \mathcal V$.
If $M_1\bar\otimes M_2=L(\G)=M$, for diffuse $M_i$'s then there exists a proper subset $ A\subset \mathcal V$ such that $A\sqcup {\rm link}(A)=\mathcal V$, a unitary  $u\in M$, and a scalar $t>0$ such that 
\begin{align*}
M_1 = uL(\G_{{\rm link}(A)})^t u^*\quad\text{ and }\quad M_2 = uL(\G_A)^{1/t} u^*.
\end{align*}

\end{thrm}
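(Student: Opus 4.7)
The plan is to combine the preceding lemma with the Ozawa-Popa matching technique (packaged in Theorem \ref{fromvirtualtoproduct}) to compare two natural tensor decompositions of $L(\G)$.

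First, the hypothesis $\mathcal V_{\rm star}=\emptyset$ lets me apply the preceding lemma, which yields a partition $\mathcal V=A\sqcup A^c$ (set $A:=A_1$) with ${\rm link}(A)=A^c$, ${\rm link}(A^c)=A$, and the strong intertwining relations $M_1\prec^s L(\G_{A^c})$ and $M_2\prec^s L(\G_A)$. The set $A$ must be proper, and in fact $|A|,|A^c|\geq 2$: a singleton $A=\{v\}$ would force ${\rm star}(v)=\mathcal V$, contradicting $\mathcal V_{\rm star}=\emptyset$. Since every vertex of $A$ is adjacent to every vertex of $A^c$, Proposition \ref{prop:ProductGraphProd} delivers the direct-product decomposition $\G=\G_A\times\G_{A^c}$, and hence a second tensor factorisation $L(\G)=L(\G_A)\bar\otimes L(\G_{A^c})$. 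The subgroups $\G_A,\G_{A^c}$ are icc (since $\G$ is) and moreover non-amenable: if $\mathcal G_A$ were a complete graph then any $v\in A$ would satisfy ${\rm star}(v)=\mathcal V$, so $A$ must contain a pair of non-adjacent vertices whose infinite vertex groups generate a non-amenable free product inside $\G_A$, and similarly for $\G_{A^c}$.

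Set $\Sigma_1:=\G_{A^c}$ and $\Sigma_2:=\G_A$; these are commuting, non-amenable, icc subgroups with $\G=\Sigma_1\Sigma_2$. The central task is to upgrade each strong intertwining $M_i\prec^s L(\Sigma_i)$ to the spatial commensurability $M_i\cong^{com}_M L(\Sigma_i)$. Here I follow the Ozawa-Popa partial-isometry technique from \cite[Proposition 12]{OP03} (as employed in the opening of the proof of Theorem \ref{fromvirtualtoproduct}): starting from $M_1\prec L(\Sigma_1)$ and the factorisation $L(\G)=L(\Sigma_2)\bar\otimes L(\Sigma_1)$, one produces a partial isometry $v\in M$ and a scalar $\mu>0$ satisfying $vv^*\in L(\Sigma_2)^{1/\mu}$, $v^*v\in M_1'\cap M$, and $vM_1v^*\subseteq L(\Sigma_1)^\mu vv^*$. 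Combining this with the symmetric relation $M_2\prec L(\Sigma_2)$ and analyzing relative commutants inside the two tensor factorisations forces the image of the corner to have finite index in $L(\Sigma_1)^\mu vv^*$, thereby establishing $M_i\cong^{com}_M L(\Sigma_i)$ for both $i$.

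With commensurability secured, Theorem \ref{fromvirtualtoproduct} applies and produces a unitary $u\in M$ and scalar $t>0$ such that $M_1=uL(\Sigma_1)^tu^*=uL(\G_{{\rm link}(A)})^tu^*$ and $M_2=uL(\Sigma_2)^{1/t}u^*=uL(\G_A)^{1/t}u^*$, exactly as claimed. The main obstacle is the upgrade $\prec^s$ to $\cong^{com}$: in the AFP setting of Theorem \ref{tensordecompamalgam}, virtual primeness of $L(\Sigma)$ together with Lemma \ref{intertwiningdichotomy} accomplishes this upgrade, but in the graph-product context $L(\Sigma_i)$ is typically far from prime (itself a graph product), so instead one must exploit the auxiliary structural input that $L(\G)$ already splits as $L(\Sigma_2)\bar\otimes L(\Sigma_1)$ and iterate the Ozawa-Popa matching symmetrically on both tensor factors.
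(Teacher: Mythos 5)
Your opening steps coincide with the paper's: the unlabeled lemma preceding the theorem gives the partition $\mathcal V=A\sqcup{\rm link}(A)$ and the strong intertwinings $M_1\prec^s L(\G_{{\rm link}(A)})$, $M_2\prec^s L(\G_A)$, and your peripheral observations (properness of $A$; the splitting $\G=\G_A\times\G_{{\rm link}(A)}$, which is immediate from the graph-product presentation and is really the converse of Proposition \ref{prop:ProductGraphProd}; non-amenability and icc-ness of the two full subgroups) are all correct. The genuine gap is the step you yourself flag as the main obstacle: the upgrade from $M_i\prec^s L(\Sigma_i)$ to $M_i\cong^{com}_{L(\G)} L(\Sigma_i)$. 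Your sentence ``analyzing relative commutants inside the two tensor factorisations forces the image of the corner to have finite index in $L(\Sigma_1)^\mu vv^*$'' is an assertion, not an argument: nothing in the \cite[Proposition 12]{OP03} partial-isometry construction bounds the index of $vM_1v^*$ inside a corner of $L(\Sigma_1)$, and the mechanism that produces such finite-index control in the amalgam theorems (Lemma \ref{intertwiningdichotomy}) runs on virtual primeness of the core, which, as you note, is unavailable here. What actually closes this hole are the \emph{reverse} intertwinings, which you never derive: passing to relative commutants in $M_i\prec^s L(\Sigma_i)$ and upgrading via \cite[Lemma 2.4(2)]{DHI16} (the normalizers generate the II$_1$ factor $M$) yields $L(\G_A)\prec^s M_2$ and $L(\G_{{\rm link}(A)})\prec^s M_1$; with intertwining in both directions and the two tensor splittings $M=M_1\bar\otimes M_2=L(\G_{{\rm link}(A)})\bar\otimes L(\G_A)$ in hand, \cite[Theorem 6.1]{DHI16} gives the unitary conjugacy directly. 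That is the paper's route: it never passes through $\cong^{com}$ or Theorem \ref{fromvirtualtoproduct} at all---those tools exist precisely for the amalgam and poly-amalgam settings, where no second tensor splitting of $M$ is available. Your plan to ``iterate the Ozawa--Popa matching symmetrically'' amounts to a proposal to re-prove \cite[Theorem 6.1]{DHI16} from scratch, and as written it is a plan rather than a proof.

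A secondary defect: even granting $M_i\cong^{com}_{L(\G)} L(\Sigma_i)$, Theorem \ref{fromvirtualtoproduct} outputs \emph{some} decomposition $\G=\G_1\times\G_2$ with $M_1=uL(\G_1)^tu^*$; its statement does not identify $\G_1$ with $\Sigma_1=\G_{{\rm link}(A)}$, so your final display overreaches. The identification happens to hold here---in the proof of Theorem \ref{fromvirtualtoproduct} the groups $\G_i$ are assembled from iterated virtual centralizers, and since $\G=\Sigma_1\times\Sigma_2$ exactly with both factors icc these iterations return $\Sigma_1,\Sigma_2$---but you would need to argue this, or else match the output decomposition against the parabolic structure using Proposition \ref{prop:ProductGraphProd} and Lemma \ref{lem:FiniteIndexGraphGroup}, which is how the paper pins down the conjugacy in terms of $\G_A$ and $\G_{{\rm link}(A)}$.
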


\begin{proof} Using Lemma \ref{116} together with the hypothesis assumptions there exist proper subsets $A_1,A_2\subset \mathcal V$ so that $A_1 ={\rm link} (A_2)$, $A_2={\rm link}(A_1)$, and for each $i=1,2$ we have $M_i \prec^s L(\G_{{\rm link}(A_i)})$. So letting $A:=A_1$ we get 
\begin{align}\label{121}
M_1 \prec^s L(\G_{{\rm link}(A)})\quad\text{ and }\quad M_2\prec^s L(\G_A).
\end{align}
 Passing to the relative commutants intertwining in the above relations we also have $L(\G_A)\prec M_2$ and $
L(\G_{{\rm link}(A)})\prec M_1$. As $\mathcal N_{M}(L(\G_A))''=\mathcal N_{M}(L(\G_{\rm link (A)}))''=M$ is a II$_1$ factor we further conclude by \cite[Lemma 2.4(2)]{DHI16} that 
\begin{align}\label{122'}
 L(\G_A)\prec^s M_2\quad \text{ and }\quad
L(\G_{{\rm link}(A)})\prec^s M_1.
\end{align} Using (\ref{121})-(\ref{122'}) in combination with \cite[Theorem 6.1]{DHI16}, Lemma  \ref{lem:FiniteIndexGraphGroup}, and Proposition \ref{prop:ProductGraphProd} one can find $u\in \mathcal U(M)$ and $t>0$ such that $M_1 = uL(\G_{{\rm link}(A)})^t u^*$ and $M_2 = uL(\G_A)^{1/t} u^*$.
\end{proof}

%\begin{cor}
%Let $\G=\mathcal{G}(\G_v, v\in \mathcal V)$ an icc graph product group. Suppose that $\mathcal G_{\rm star}=\emptyset$ and also $|\G_v|=\infty$ for all $v\in \mathcal V$.
%Assume that $P_1\overline{\otimes} P_2\overline{\otimes}\cdots \overline{\otimes}P_k =L(\G)=M$, for diffuse $P_i$'s.  Then the edge-complement graph of $\mathcal G$ admits $k$ connected components  $\mathcal G^c=\sqcup^k_{i=1} \mathcal T_i$ and there exist $u\in \mathcal U(M)$ and scalars $t_1t_2 \cdots t_k=1$ such that 
%\begin{align*}
%uP_i^{t_i} u^* = L(\G_{\mathcal V(\mathcal T_i)})\quad\text{ for all }i=1,k,
%\end{align*}
%where $\mathcal V(\mathcal T_i)$ is the vertex set of $\mathcal T_i$.
%\end{cor}
%\begin{proof}
%Follows using the previous theorem and induction on $k$.  
%\end{proof}

%%%%%%%%%%%%%%%%%%%%%%%%%%%%%%%%%%%%%%%%%%%%%%%%%%%%%%%%%%%%%%%%%%%%%%%%%%%%%%%%%%%%%%%%%%%%%%%%%%%%%%%%%%%%%%%%%%%%%%%%%%%%%%%%%%%%%%%%%%%%%%%%%%%%%%%%%%%%%%%%%%%%%%%%%%%%%%%%%%%%%%%%%%%%%%%%%%%%%%            PRIME FACTORS        %%%%%%%%%%%%%%%%%%%%%%%%%%%%%%%%%%%%%%%%%%%%%%%%%%%%%%%%%%%%%%%%%%%%%%%%%%%%%%%%%%%%%%%%%%%%%%%%%%%%%%%%%%%%%%%%%%%%%%%%%%%%%%%%%%%%%%%%%%%%%%%%%%%%%%%%%%%%%%%%%%%%%%%%%%%%%%%%%%%%%%%%%%%%%%%%%%%%%%%%%%%%

\section{Applications to Prime Factors}\label{sec:ApplicationToPrimeFactors} Despite all the spectacular progress witnessed over the last decade, understanding the primeness aspects of AFP group factors $L(\G_1\ast_\Sigma \G_2)$ remains a nontrivial problem that seems to depend entirely on the ``nature'' and ``position'' of the amalgamated group $\Sigma$ inside $\G_1\ast_\Sigma\G_2$. While there is no satisfactory general answer to this problem we highlight below several natural conditions on $\Sigma$ that will insure primeness. This will lead to new examples of prime II$_1$ factors.

First we note the following particular case ($n=1$) of Corollary \ref{intertkernel}.

\begin{cor}\label{primeamalgam1} Let $\G=\G_1\ast_\Sigma \G_2$ be an amalgamated free product or $\G ={\rm HNN}(\La ,\Sigma, \theta)$ be an HNN extension as in the statement of Corollary \ref{intertwiningcoregroupsafphnn}. If $\G$ is icc and $\Sigma$ is weakly malnormal in $\G$ then $L(\G)$ is prime. 
\end{cor}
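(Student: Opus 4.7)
The plan is a short contradiction argument that essentially bundles together Corollary \ref{intertwiningcoregroupsafphnn} with the weak malnormality technology of \cite{HPV10}. Suppose by contradiction that $L(\G)$ is not prime and write $L(\G) = M_1 \bar\otimes M_2$ for diffuse subfactors $M_1, M_2$. Taking $r=1$, $A_1 = M_1$, $A_2 = M_2$, we have that $M_1, M_2 \subseteq L(\G)$ are commuting diffuse subalgebras with $M_1 \vee M_2 = L(\G)$ (in particular, finite index). So Corollary \ref{intertwiningcoregroupsafphnn} applies and yields, after possibly relabeling,
\begin{equation*}
M_1 \prec_{L(\G)} L(\Sigma).
\end{equation*}

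Next I would invoke weak malnormality: by \cite[Corollary 7, Proposition 8]{HPV10} applied to the inclusion $\Sigma < \G$ and to the subalgebra $M_1 \subseteq L(\G)$, the conclusion $M_1 \prec L(\Sigma)$ combined with weak malnormality of $\Sigma$ implies that the normalizer $\mathcal{N}_{L(\G)}(M_1)''$ is amenable relative to $L(\Sigma)$ inside $L(\G)$. Since $M_2 \subseteq M_1' \cap L(\G) \subseteq \mathcal{N}_{L(\G)}(M_1)''$ and $M_1 \subseteq \mathcal{N}_{L(\G)}(M_1)''$, we deduce
\begin{equation*}
L(\G) \;=\; M_1 \vee M_2 \;\subseteq\; \mathcal{N}_{L(\G)}(M_1)'',
\end{equation*}
so $L(\G)$ itself is amenable relative to $L(\Sigma)$.

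This is the desired contradiction: under the hypotheses of Corollary \ref{intertwiningcoregroupsafphnn} (which are part of the standing assumptions), the proof of that corollary explicitly recalled that $L(\G)$ is \emph{not} amenable relative to $L(\Sigma)$ (via \cite[Theorem 7.1]{Io12} for amalgamated free products, and the analogous fact for non-degenerate HNN-extensions). Hence no such tensor decomposition can exist and $L(\G)$ must be prime. The HNN case is handled identically, replacing the AFP version of Corollary \ref{intertwiningcoregroupsafphnn} with the HNN one.

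The only non-routine ingredient is the step invoking \cite{HPV10} to upgrade $M_1 \prec L(\Sigma)$ to the relative amenability of $\mathcal{N}_{L(\G)}(M_1)''$ with respect to $L(\Sigma)$; everything else reduces to assembling results already established in the paper. Since this is the same black box used in Corollary \ref{intertkernel}, no new technical obstacle arises here.
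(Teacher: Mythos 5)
Your overall skeleton is close to the paper's actual route: the paper obtains this corollary simply as the $n=1$ case of Corollary \ref{intertkernel} (take $\La=\G$, $\pi=\mathrm{id}$, so $\ker(\pi)=1$), i.e., Corollary \ref{intertwiningcoregroupsafphnn} gives $M_i\prec L(\Sg)$ for some $i$, and weak malnormality via \cite[Corollary 7, Proposition 8]{HPV10} finishes. The genuine gap in your write-up is what you claim that black box delivers. Those results of \cite{HPV10} (compare \cite[Theorem 6.16]{PV06}, which the paper cites alongside \cite[Corollary 7]{HPV10} in part e) of Corollary \ref{prime}) are \emph{intertwining} statements: if $A\subset pMp$ has normalizer generating $pMp$ and $A\prec L(\Sg)$, then $A\prec L(\g_1\Sg\g_1^{-1}\cap\cdots\cap\g_n\Sg\g_n^{-1})$; weak malnormality makes this intersection finite, so $A\prec \C 1$, i.e., $A$ is \emph{not diffuse}. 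Applied with $A=M_1$ --- whose normalizer is indeed all of $L(\G)$, since $\mathcal U(M_1)\cup\mathcal U(M_2)$ normalizes $M_1$ and generates $M_1\vee M_2=L(\G)$ --- this contradicts diffuseness of $M_1$ immediately. What these results do \emph{not} assert is your step that ``$M_1\prec L(\Sg)$ plus weak malnormality of $\Sg$ implies $\mathcal N_{L(\G)}(M_1)''$ is amenable relative to $L(\Sg)$.''

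That claimed implication is in fact false as a general principle, not merely misattributed. Take $\G=(\Sg\times\Lambda)\ast_\Sg\G_2$ with $\Sg=\langle a\rangle\cong\Z$ malnormal in $\G_2=\mathbb F_2$ and $\Lambda$ nonamenable: then $\Sg$ is weakly malnormal in $\G$ (already $g\Sg g^{-1}\cap\Sg=1$ for $g\in\G_2\setminus\Sg$), and $M_1:=L(\Sg)$ is diffuse with $M_1\prec L(\Sg)$, yet $\mathcal N_{L(\G)}(M_1)''\supseteq L(\Sg\times\Lambda)=L(\Sg)\bar\otimes L(\Lambda)$ is not amenable relative to $L(\Sg)$. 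In your intended application the hypotheses are jointly contradictory (that is the whole point of the corollary), so your chain of deductions only ``succeeds'' vacuously; as written, the decisive step has no valid justification, and the subsequent appeal to the non-amenability of $L(\G)$ relative to $L(\Sg)$ is a detour you never legitimately reach. The repair is short: replace the relative-amenability upgrade by the actual conclusion of \cite[Corollary 7, Proposition 8]{HPV10} --- intertwining of the regular subalgebra $M_1$ into the algebra of a finite subgroup, hence non-diffuseness of $M_1$ --- which contradicts the standing assumption directly; the HNN case then goes through verbatim, exactly as in the paper.
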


When $\Sigma$ is amenable we have the following strengthening of the previous result.

\begin{cor}\label{primeamalgam2} Let  $\G=\G_1\ast_\Sigma \G_2$ be an amalgamated free product or let $\G ={\rm HNN}(\La ,\Sigma, \theta)$ be an HNN extension as in the statement of Corollary \ref{intertwiningcoregroupsafphnn}. Also assume $\G$ is icc, $\Sigma$ is amenable, and there exists $\g\in \G$ such that $|\g\Sigma \g^{-1}\cap \Sigma| <\infty$. Then $L(\G)$ is virtually prime, i.e. for every commuting, diffuse subfactors $A_1,A_2\subset L(\G)$ we have that $[L(\G): A_1\vee A_2]=\infty$. 
\end{cor}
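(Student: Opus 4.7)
The plan is to argue by contradiction. Suppose that $A_1,A_2\subset M:=L(\G)$ are commuting diffuse subfactors with $[M:A_1\vee A_2]<\infty$; our goal is to derive a contradiction by combining the amenability and weak malnormality of $\Sigma$ with the intertwining machinery already established earlier in the paper.

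First, by Corollary \ref{intertwiningcoregroupsafphnn} applied to the finite-index commuting pair $A_1,A_2\subset M$, after relabeling we may assume $A_1\prec_M L(\Sigma)$. Since $\Sigma$ is amenable, $L(\Sigma)$ is a hyperfinite finite von Neumann algebra; the intertwining provides a trace-preserving $*$-embedding of a nonzero corner of the factor $A_1$ into a corner of $L(\Sigma)$, and as subalgebras of amenable finite von Neumann algebras are amenable, this corner is amenable. By factoriality, $A_1$ itself is therefore amenable.

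Next, we use the weak malnormality of $\Sigma$ in $\G$ to propagate $A_1\prec_M L(\Sigma)$ to the normalizer. Exactly as in the proof of Corollary \ref{intertkernel}, the normalizer-intertwining tools \cite[Corollary 7 and Proposition 8]{HPV10} yield $\mathcal{N}_M(A_1)''\prec_M L(\Sigma)$. Since $A_2$ commutes with $A_1$, we have $A_2\subseteq A_1'\cap M\subseteq \mathcal{N}_M(A_1)''$, so $A_2\prec_M L(\Sigma)$ as well; the argument of the previous paragraph then shows that $A_2$ is amenable too.

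Since $A_1$ and $A_2$ are commuting amenable subfactors of the factor $M$, their join is naturally identified with the tensor product $A_1\vee A_2\cong A_1\bar\otimes A_2$, which is amenable. Combined with $[M:A_1\vee A_2]<\infty$ and the preservation of amenability under finite-index extensions of finite von Neumann algebras, this forces $M$ to be amenable. However, the proof of Corollary \ref{intertwiningcoregroupsafphnn} invokes \cite[Theorem 7.1]{Io12} to show that $M$ is not amenable relative to $L(\Sigma)$ inside $M$; since $L(\Sigma)$ is itself amenable, this implies $M$ is non-amenable, a contradiction. The HNN case is handled analogously, with Theorem \ref{intertwiningincore2} replacing Theorem \ref{intertwiningincore1}. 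The principal technical point is the normalizer-propagation step, where weak malnormality enters essentially through the HPV normalizer-intertwining machinery.
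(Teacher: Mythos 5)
Your endgame (commuting amenable subfactors generate an amenable algebra, amenability passes up a finite-index inclusion, and $M=L(\G)$ is non-amenable because it is not amenable relative to the amenable $L(\Sg)$) is sound, but it rests entirely on the step $\mathcal{N}_M(A_1)''\prec_M L(\Sigma)$, and that step is a genuine gap. The tools you cite run in the opposite direction: in Corollary \ref{intertkernel} the standing hypothesis is $A_1\vee A_2=pL(\La)p$, so each $A_i$ is \emph{regular} in $pL(\La)p$, and \cite[Corollary 7, Proposition 8]{HPV10} are used to push the intertwining of a regular diffuse subalgebra \emph{down} from $L(\pi^{-1}(\Sigma))$ to $L(\ker\pi)$ --- a non-embedding result for (quasi-)regular subalgebras, not an upgrade of $A_1\prec L(\Sg)$ to intertwining of its normalizer into $L(\Sg)$. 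In the present corollary you only have $[M:A_1\vee A_2]<\infty$, so $A_1$ is not regular in $M$ and nothing in \cite{HPV10} controls $\mathcal{N}_M(A_1)''$. Moreover, the propagation principle you assert is false under mere weak malnormality: upgrades of the form ``$A\prec L(\Sg)$ diffuse implies $\mathcal N_M(A)''\prec L(\Sg)$'' require (almost) malnormality of $\Sg$, i.e.\ $|g\Sg g^{-1}\cap\Sg|<\infty$ for \emph{all} $g\notin\Sg$, as in the quasi-normalizer control of \cite{IPP05,Po03}. The hypothesis here supplies a single $\g$ with $|\g\Sg\g^{-1}\cap\Sg|<\infty$, which is compatible with $\Sg$ containing an infinite subgroup $H$ normalized by some $t\in\G\setminus\Sg$; then $A_1=L(H)$ satisfies $A_1\prec L(\Sg)$ while $\mathcal N_M(A_1)''\supseteq L(\langle H,t\rangle)$ need not intertwine into $L(\Sg)$. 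Finally, note that in your setting the claimed conclusion would give $A_1\vee A_2\prec_M L(\Sg)$ and then, by finite index, $M\prec_M L(\G_k)$, which Proposition \ref{intertamalgam} excludes --- so if such a propagation lemma were available the proof would already be over at that point, which is a sign that no such unconditional lemma exists. Since $A_2\prec L(\Sg)$ is never legitimately established (re-running the Vaes trichotomy on $A_2$ also gives nothing once $A_1$ is known to be amenable, because the relative-amenability branch then holds vacuously), the amenability of $A_2$, and hence your contradiction, has no support.

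For comparison, the paper's proof never touches $A_2$ and uses the two hypotheses in completely different roles. Amenability of $\Sg$ makes $A_1$ a hyperfinite II$_1$ factor, whose central sequences are central for $A:=A_1\vee A_2$, so $A'\cap A^{\omega}$ is diffuse; the finite-index assumption transports these central sequences to $M$ via $[M^\omega:A^\omega]=[M:A]<\infty$ \cite[Proposition 1.11]{PP86} and the Pimsner--Popa averaging map of \cite[Lemma 3.1]{Po02}, so that $M'\cap M^{\omega}$ is diffuse, i.e.\ $M$ has property Gamma and $\G$ is inner amenable. Weak malnormality enters only group-theoretically: by \cite[Corollary 2.2]{MO13} it makes $\G$ acylindrically hyperbolic, contradicting non-inner-amenability of such groups \cite[Theorem 2.32]{DGO11}. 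If you want to salvage your von Neumann algebraic strategy, the missing ingredient is precisely a substitute for the normalizer step, and the paper's central-sequence route is the mechanism that circumvents it.
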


\begin{proof} Assume by contradiction that $M=L(\G)$ is not virtually prime, i.e. there exist diffuse, commuting subfactors $A_1,A_2 \subseteq M$ such that  $[M:A]<\infty$, where $A=A_1\vee A_2$. Applying Corollary \ref{intertwiningcoregroupsafphnn}  there exists $i\in\overline{1,2}$ so that  $A_i\prec L(\Sigma)$; assume $i=1$. Since $\Sigma$ is amenable, then $A_1$ is diffuse amenable. Thus $A$ is McDuff and hence $A'\cap A^\omega$ is a diffuse von Neumann algebra. By \cite[Proposition 1.11]{PP86} we have that $[M^\omega:A^\omega]=[M:A]<\infty$ and hence $[A'\cap A^\omega:A'\cap M^\omega]<\infty$. Altogether these show that $A'\cap M^\omega$ is diffuse. 

On the other hand since $[M:A]<\infty$ there exists $\{m_i\}\subset M$, a finite Pimsner-Popa basis. Using \cite[Lemma 3.1]{Po02} it follows that the map $\Phi(x)= [M:A]^{-1} \sum_i m_ixm_i^*$ implements the conditional expectation from $A' \cap M^{\omega}$ onto $M' \cap M^{\omega}$. In addition, the index of $\Phi$ is majorised by $[M:A]^{-1}$. Thus $[A' \cap M^{\omega}:M' \cap M^{\omega} ]<\infty$ and since $A'\cap M^\omega$ is diffuse we conclude that $M' \cap M^{\omega}$ is also diffuse; in particular $M$ has property Gamma and thus $\G$ is inner amenable. However, the assumptions and \cite[Corollary 2.2]{MO13} give that $\G$ must be acylindrically hyperbolic which contradicts \cite[Theorem 2.32]{DGO11} (see also \cite[Theorem C]{CSU13} for a more general argument).       
\end{proof}

The previous results lead to several new examples of groups that give rise to prime factors. In addition, we are also able to recover some of the primeness results proved in \cite[Corollary B]{DHI16}.

\begin{cor} If $\G$ is a group in one of the following classes then $L(\G)$ is prime.
\begin{enumerate}
\item [a)] the integral two-dimensional Cremona group $Aut_k (k[x,y])$, where $k$ is a countable field;
\item [b)] Higman's group $\langle a,b,c,d \, |\, a^b=a^2, b^c=b^2, c^d=c^2, d^a=d^2\rangle$ \cite{Hi51};
\item [c)] Burger-Mozes' groups \cite{BM01}, Camm's groups \cite{Ca51}, Bhattacharjee's groups \cite{Bh94};
\item [d)] $PSL_2(\mathbb Z[S^{-1}])$, where $S$ is a finite collection of primes;
\item [e)] $PGL_2(k[t])$, $PSL_2(k[t])$, where $k$ is a  countable field.
\item[f)] $PE_2(R[t]) =E_2(R[t])/(I_2)$, where $R $ is a finite commutative integral domain or a countable commutative integral domain with $|(r)|=\infty $ for every $r\in R\setminus\set{ 0} $.
\end{enumerate} 
\end{cor}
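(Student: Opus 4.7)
The unified strategy is, for each of the six families (a)--(f), to realize $\G$ as an icc amalgamated free product $\G_1\ast_\Sigma\G_2$ (with $[\G_1:\Sigma]\geq 2$ and $[\G_2:\Sigma]\geq 3$) or a non-degenerate HNN extension ${\rm HNN}(\La,\Sigma,\theta)$, and then to verify the hypotheses of either Corollary \ref{primeamalgam1} (weak malnormality of $\Sigma$ in $\G$) or Corollary \ref{primeamalgam2} ($\Sigma$ amenable with at least one conjugate meeting $\Sigma$ in a finite subgroup). Either corollary then directly yields primeness of $L(\G)$.

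Each decomposition comes from a classical Bass--Serre / graph-of-groups theorem. For (a) I would invoke the Jung--van der Kulk theorem, giving $Aut_k(k[x,y])=A\ast_C B$ with $A$ the affine subgroup, $B$ the Jonqui\`eres triangular subgroup, and $C=A\cap B$ their solvable, hence amenable, intersection. For (b), Higman's group $H_4$ admits the standard splitting $H_4=H_1\ast_F H_2$, where the $H_i$ are assembled from the four Baumslag--Solitar relations in pairs and $F$ is a nonabelian free group. For (c), Camm's, Bhattacharjee's, and Burger--Mozes' groups are designed as amalgamated free products (or tree-lattice constructions) over edge groups that are malnormal by construction. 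For (d), Serre's action of $PSL_2(\mathbb{Z}[S^{-1}])$ on the product of the $|S|$ Bruhat--Tits trees gives iterated amalgams and HNN-extensions over congruence-type edge subgroups; in particular one can start from $PSL_2(\mathbb Z)\cong \mathbb Z/2\ast \mathbb Z/3$ and iterate one prime at a time via HNN-extensions. For (e), Nagao's theorem gives $GL_2(k[t])=GL_2(k)\ast_{B(k)} B(k[t])$ with $B$ the Borel subgroup (solvable, hence amenable), which descends to the projective quotients $PGL_2(k[t])$ and $PSL_2(k[t])$. For (f), the analogous Nagao-type decomposition of $PE_2(R[t])$ over a solvable Borel-type edge subgroup is available from work of Bux--Mohammadi--Wortman and the structure theory of $E_2$ over polynomial rings.

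In the cases (a), (e), (f) the edge group $\Sigma$ is solvable, so it suffices to exhibit a single element $\g\in\G$ acting hyperbolically on the associated Bass--Serre tree whose translation length is large enough to force $|\g\Sigma\g^{-1}\cap\Sigma|<\infty$; this is standard for Borel-in-Chevalley type inclusions. In cases (b), (c), (d) the edge group is non-amenable and one needs the full weak malnormality hypothesis of Corollary \ref{primeamalgam1}, which I would verify either directly from the design of the amalgam (explicitly built to be malnormal, as in (c)) or via acylindricity of the Bass--Serre tree action combined with non-elementarity, following \cite{MO13}. The main obstacle is this case-by-case verification of weak malnormality, which requires rather different arguments depending on the algebraic nature of the edge group and the geometric properties of the tree action. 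The icc condition on $\G$ is immediate in each case from simplicity (for (b), (c)), linearity with Zariski density (for (a), (d), (e), (f)), or direct inspection.
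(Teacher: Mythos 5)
Your strategy collapses precisely in cases c) and d), and these are exactly the cases where the paper's product-rigidity machinery is indispensable. For the Burger--Mozes and Bhattacharjee groups the splitting is $\G=A\ast_C B$ with $[A:C]<\infty$ and $[B:C]<\infty$, where $C$ is a nonabelian free group; likewise $PSL_2(\Z[S^{-1}])$ splits over the Hecke congruence subgroup $\widetilde\Gamma_0(p_n)$, which has index $p_n+1$ in each vertex copy of $SL_2(\Z[T^{-1}])$. A subgroup of finite index in both vertex groups is commensurated in the amalgam, i.e. $[\Sigma:\Sigma\cap\g\Sigma\g^{-1}]<\infty$ for every $\g\in\G$, so \emph{every} finite intersection $\bigcap_{i=1}^n\g_i\Sigma\g_i^{-1}$ has finite index in $\Sigma$ and is in particular infinite. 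Hence weak malnormality can never hold and Corollary \ref{primeamalgam1} is unusable; Corollary \ref{primeamalgam2} is also out since $\Sigma$ is non-amenable, and the acylindricity route via \cite{MO13} fails for the same reason (these groups are irreducible lattices in products of trees, resp.\ in $SL_2(\R)\times\prod_{p\in S}SL_2(\Q_p)$, so your claim that the edge groups are ``malnormal by construction'' is false). What the paper actually does here is qualitatively different: since $\Sigma$ is commensurable to a free group, $L(\Sigma)$ is solid by \cite{Oz03}, hence all its corners are virtually prime, and Theorem \ref{tensordecompamalgam} upgrades a hypothetical decomposition $L(\G)=M_1\bar\otimes M_2$ to a direct-product splitting of the group $\G$ itself, which is then contradicted group-theoretically --- by simplicity in case c) and by irreducibility of the lattice in case d). Your proposal contains no mechanism of this kind, so c) and d) are genuine gaps rather than verifications left to the reader. (The paper also runs case b) through Theorem \ref{tensordecompamalgam} combined with \cite[Corollary 4.26]{MO13}, although there your weak-malnormality plan is at least not structurally doomed.)

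Two further inaccuracies are worth flagging. In cases e) and f) your ``single hyperbolic element'' heuristic is wrong as stated: any two conjugates of $PB(k)$ by \emph{constant} matrices intersect in a copy of the diagonal torus, which is infinite whenever $k$ is infinite --- the paper computes $B(k)^g\cap B(k)\cong k^*\times k^*$ for $g\in SL_2(k)$ --- so finiteness of $|\g\Sigma\g^{-1}\cap\Sigma|$ requires a conjugator genuinely involving $t$; the paper instead sidesteps this by pushing $P_1\prec L(PB(k))$ into $L(PB(k)\cap PB(k)^g)$ via \cite[Theorem 6.16]{PV06} (cf.\ \cite{HPV10}) and contradicting factoriality of $P_1$, since the target algebra is abelian. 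Finally, for finite $k$ in e) and finite $R$ in f) your icc claims are unsubstantiated and your two corollaries do not obviously apply; the paper handles these subcases by an entirely separate appeal to \cite[Theorem 5.2]{CH08}.
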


\begin{proof} a) It is well known that $\G=Aut_k (k[x,y])$ can be decomposed as an amalgamated free product $\G=A\ast_C B$ where:\begin{enumerate} \item $A$ is the group of all affine maps $\left ( \begin{array}{c} x \\ y \end{array}\right ) \ra \left ( \begin{array}{c} ax+by+c \\ dx+ey+f \end{array}\right ), $
where $a,b,....,f\in k$ and $\det\left(\begin{array}{cc}a,b\\c,d\end{array}\right)\neq 0$; 
\item $B$ is the triangular subgroup of all maps of the form $\left ( \begin{array}{c} x \\ y \end{array}\right ) \ra \left ( \begin{array}{c} ax+b \\ cy+g(x) \end{array}\right ), $  where $a,c\in k^*, b\in k$ and $g(x)\in k[x]$;
\item $C=A\cap B$ are all the triangular maps above where $g(x)$ is linear.
\end{enumerate}
Then using \cite[Lemma 4.23]{MO13} the conclusion follows from Corollary \ref{primeamalgam1}. In fact it is known that $C$ is nilpotent and hence amenable so the stronger conclusion from Corollary \ref{primeamalgam2} still holds. 

b) Higman group is an amalgam $\G=A\ast_C B$ where $A=\langle a,b,e \,|\, b^a=b^2 ,e^b=e^2\rangle$ and $B=\langle c,d,f \,|\, d^c=d^2,f^d=f^2\rangle $ with free amalgamated subgroups $C =\langle a,e\rangle$  and $\langle c,f\rangle $ with $a=f$, $e=c$. The result follows from Theorem \ref{tensordecompamalgam} and  \cite[Corollary 4.26]{MO13}.

c) Assume by contradiction $L(\G)=M_1\bar\otimes M_2$ for diffuse $M_i$'s. Note in all these cases $\G= A\ast_C B$ where $C<A$ and $C<B$ are finite index inclusions of non-abelian free groups. By \cite{Oz03} $L(C)$ is solid, hence virtually prime. Using Theorem \ref{tensordecompamalgam} we get $\G = C\times H$  where $H= A/C \ast B/C$, contradicting the simplicity of $\G$, \cite{BM01,Ca51,Bh94}.

d) First write $S=\{p_1,\ldots,p_n\}=T\cup\set{p_n} $.  By  \cite[Corollary 2 p.~80]{Se03},  $SL_2{Z[S]^{-1}} $ admits a decomposition as an amalgamated free product
$$SL_2(\mathbb Z[S^{-1}])=SL_2(\mathbb Z [T^{-1}])\ast_{\Gamma_0(p_n)} SL_2( \Z [T^{-1}]),$$ where $\Gamma_0(p_n)$ is the subgroup of all matrices whose lower-left entry is $0\mod p_n$, the Hecke congruence subgroup of level $p_n $ in $SL_2(\Z [T^{-1}])$. Now observe that $[SL_2(\Z[T^{-1}]:\Gamma_0(p_n)] =p_n+1$.
 Thus, after we mod out by the center $\pm I $, we obtain the presentation of $PSL_2(\mathbb Z[S^{-1}]) $ as the  amalgamated free product
\begin{align*}
PSL_2(\mathbb Z[S^{-1}])=PSL_2(\mathbb Z[T^{-1}])*_{\tilde\Gamma_0(p_n)}PSL_2(\mathbb Z[T^{-1}]),
\end{align*}
where $\widetilde\Gamma_0(p_n)\cong \Gamma_0(p_n)/(\pm I) $.  
Furthermore, $ PSL_2(\Z)$ contains a free subgroup of finite index, thereby showing 
 the group $\tilde\Gamma_0(p_n)$ is commensurable to a nonabelian free group and hence $L(\widetilde\Gamma_0(p_n))$ is virtually prime by \cite{Oz03}. 
 
If $L(PSL_2(\Z[S^{-1}] )) $  is non-prime,  Theorem \ref{tensordecompamalgam} implies that $PSL_2(\Z[S^{-1}] $ decomposes as a direct product of infinite groups, which is impossible since $PSL_2(\Z[S^{-1}]) $ is an irreducible lattice in $SL_2(\R)\times \prod_{p\in S} SL_2(\Q_p) $. 

e)   Letting $R$ be an arbitrary unital ring, we denote by $B(R) $ ( or $SB(R)$) as the subgroup of $GL_2(R) $ ( resp.~$SL_2(R) $) 
\begin{align*}
B(R):=\set{\left.
\begin{pmatrix}
a&b\\0& d 
\end{pmatrix}
\right\vert  a,d\in R^*, d\in R
},\\ SB(R):=\set{\left.
\begin{pmatrix}
a&b\\0& a^{-1} 
\end{pmatrix}
\right\vert  a\in R^*, d\in R}.
\end{align*} 
Nagao's Theorem (\cite[Theorem 6 p.~86]{Se03}) allows us to decompose $GL_2(k[t]) $ and $SL_2(k[t])$ as into the amalgamated free products $GL_2(k[t])=GL_2(k)*_{B(k)}B(k[t]) $ and
$SL_2(k[t])=SL_2(k)*_{SB(k)}SB_2(k[t]) $, respectively.  Taking the central quotient then yields the following decompositions:
\begin{align*}
PGL_2(k[t])=&PGL_2(k)*_{PB(k)}PB(k[t])\\
PSL_2(k[t])=&PSL_2(k)*_{PSB(k)}PSB(k[t])
\end{align*}
Where $PB(R):=B(R)/( R^*\cdot I_2) $ and $PSB(R)= SB(R)/( R^*\cdot I_2)	 $.

Now if $k $ were finite, $PB(k) $ and $PSB(k) $ are of type I and thus the result follows from \cite[Theorem 5.2]{CH08}.

Next  assume $k$ is a countably infinite field and note this is sufficient to conclude that $PB_2(k) $ and $PSB_2(k) $ are icc.  Now, there exists   $g\in SL_2(k)\leqslant GL_2(k[t])$ so that $B(k)^g $ and $SB(K)^g $ are a subgroups of the lower triangular matrices with entries in $k$.  
\begin{align*}
B(k)^g\cap B(k)=\set{\left.
\begin{pmatrix}
a&0\\ 0 & b
\end{pmatrix} \right\vert a,b\in k^*   }\cong k^*\times k^*, \\
SB(k)^g\cap SB(k)=\set{\left.
\begin{pmatrix}
a&0\\ 0 & a^{-1}
\end{pmatrix} \right\vert a\in k^*   }\cong k^*. 
\end{align*}
Now, if $P_1\bar\otimes P_2\cong L(PGL_2 (k[t])) $ is a decomposition into II$_1$ factors.  Corollary \ref{intertwiningcoregroupsafphnn}  allows us to assume  $P_1\prec L(PB(k))$, which moreover implies  $P_1\prec L(PB(k)\cap PB(k)^g) $ \cite[Theorem 6.16]{PV06}, c.f.~\cite[Corollary 7]{HPV10} .  However, this now implies that $ P_1$ is not a II$_1$ factor since $ PB(k)\cap PB(k)^g$ is abelian, a contradiction.
The case of $PSL_2(k[t]) $ follows analogously.

f) For any ring unital $S $, $E_2(S) $ denotes the subgroup of $SL_2(S) $ generated by the elementary matrices, and note that if $S $ is not a field, the inclusion is in fact proper (see \cite{KM97}).  By
Nagao's theorem, we write  $E_2(R[t])\cong E_2(R)*_{SB(R)} SB(R[t]) $ and we see the case when $R $ is finite follows again from an application of \cite[Theorem 5.2]{CH08}.    We remark that the  condition  $|(r)|=\infty $ for every $r\in R\setminus\set{0} $ is both necessary and sufficient to ensure that the group $PSB(R) $ is icc. Thus, after passing to the central quotient, we  proceed exactly as in part e). 
\end{proof}
\begin{Remarks}
We mention that one may relax the assumption that $k $ or $R$ are countable in parts e) and f).  Though the resulting von Neumann algebra is no longer seperable, i.e.~it does not act on a seperable Hilbert space, the techniques found in \cite{Po83} are adaptable to this situation and thus we conclude that $L(\Gamma) $ is prime if:
\begin{enumerate}
\item $\Gamma=PGL_2(k[t]) $ for every field $k$, or
\item $ \Gamma=PSL_2(k[t])$ for every field $k $, or
\item $\Gamma=PE_2(R[t]) $ for every finite integral domain $R$, or
\item $\Gamma= PE_2(S[t]) $ for every infinite integral domain such that $|(r)|=\infty $ for every $r\in R $.
\end{enumerate}
Finally, we close by mentioning our generalization of a conjecture of J.~Peterson which was our impetus for undertaking the current project.  
\end{Remarks}

\begin{conj} If $\G$ is a Burger-Mozes group \cite{BM01} or a Camm's group \cite{Ca51}, or a Bhattacharjee's group \cite{Bh94} then $L(\G)$ is  strongly solid. 
\end{conj}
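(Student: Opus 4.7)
The plan is to establish strong solidity for $L(\G)$ where $\G=A\ast_C B$ belongs to one of the three classes in c). The key structural feature we exploit is that in all these examples $C<A$, $C<B$ are finite index inclusions of non-abelian free groups, and $\G$ is simple and acylindrically hyperbolic with respect to its action on the Bass-Serre tree. As a first step, I would establish strong solidity of the vertex factors: since $A,B$ are commensurable to non-abelian free groups, the Ozawa-Popa theorem on strong solidity of free group factors combined with the stability of strong solidity under finite-index extensions (as in Houdayer-Vaes) yields that $L(A), L(B)$ and $L(C)$ are all strongly solid.

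The second step is to transport rigidity to the AFP level via a deformation-rigidity analysis. Given a diffuse amenable subalgebra $P\subset L(\G)$ whose normalizer $\mathcal{N}_{L(\G)}(P)''$ we want to control, I would apply the malleable deformation on the AFP algebra $L(\G)=L(A)\ast_{L(C)} L(B)$ of Ioana-Peterson-Popa, together with the normalizer-intertwining dichotomies of Ioana (Io12) and Vaes (Va13) that already underpin several results of the current paper. This yields a trichotomy: either $\mathcal{N}_{L(\G)}(P)''$ is amenable (our goal), or $P$ embeds in $L(A)$ or $L(B)$ up to unitary conjugation, or $P\prec L(C)$. In the second case, the normalizer remains inside a conjugate of $L(A)$ or $L(B)$ up to corners by a standard quasi-normalizer argument (e.g.\ the IPP theorem), at which point strong solidity of the vertex factors closes the argument.

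The third step handles the case $P\prec L(C)$, and this is where the simplicity/acylindrical hyperbolicity of $\G$ enters decisively. Using the weak malnormality of $C$ in $\G$ (which follows from \cite[Corollary 2.2]{MO13} combined with acylindrical hyperbolicity, as already used in the proof of Corollary \ref{primeamalgam2}), one shows that $\mathcal{N}_{L(\G)}(P)''\prec L(A^g)$ or $L(B^g)$ for some $g\in\G$, again reducing to strong solidity of the vertex factors. The main obstacle, and the reason this remains open, is that the standard Popa-Vaes spectral gap / $s$-malleable deformation machinery for AFP factors does not automatically yield the full normalizer control needed when $C$ is large (non-amenable and of infinite index in $\G$); a genuinely new ingredient appears necessary to handle the interaction between the AFP deformation and the amenable subalgebra. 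A promising route would be to develop an analog of the comultiplication $\Delta^{\pi}$ technique from Section 6 adapted to the Bass-Serre tree action of $\G$ combined with acylindricity, thereby promoting the intertwining of $P$ into a full vertex stabilizer.
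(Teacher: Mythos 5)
You should know at the outset that the paper does not prove this statement at all: it is stated as a \emph{conjecture}, explicitly offered as the closing open problem of the paper (``our generalization of a conjecture of J.~Peterson which was our impetus for undertaking the current project''). The strongest results the paper actually establishes for Burger--Mozes, Camm and Bhattacharjee groups are primeness of $L(\G)$ (part c) of the corollary in Section \ref{sec:ApplicationToPrimeFactors}, via Theorem \ref{tensordecompamalgam} and solidity of $L(C)$ from \cite{Oz03}), not strong solidity. So there is no proof in the paper to compare yours against, and to your credit your own write-up concedes at the crucial third step that the question ``remains open''; what you have produced is a strategy sketch with an acknowledged hole, not a proof.

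The hole is exactly where you place it, and it is worth spelling out why it cannot be patched with the tools you list. In the trichotomy of \cite{Va13} applied to $L(\G)=L(A)\ast_{L(C)}L(B)$, the third branch concludes only that $\mathcal N_{L(\G)}(P)''$ is amenable \emph{relative to} $L(C)$; since in all three classes $C$ is a non-abelian free group, $L(C)$ is non-amenable, and relative amenability over a non-amenable subalgebra yields nothing like amenability (e.g.\ $L(C)$ is amenable relative to itself). Upgrading this relative amenability to honest amenability is precisely the missing ingredient, and note the contrast with the paper's primeness arguments, where one only needs to rule out \emph{finite-index} joins of commuting diffuse subalgebras, for which solidity or virtual primeness of $L(C)$ suffices. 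Your handling of the branch $P\prec L(C)$ also asserts an implication with no support in the cited results: in this paper \cite{MO13} and \cite{DGO11} are used only to exclude inner amenability of $\G$ (proof of Corollary \ref{primeamalgam2}), and \cite{HPV10} only to transfer intertwining through the comultiplication (Corollary \ref{intertkernel}); none of these localizes $\mathcal N_{L(\G)}(P)''$ into a conjugate of $L(A)$ or $L(B)$ when a corner of $P$ merely intertwines into $L(C)$, a non-amenable, infinite-index subalgebra. Finally, even granting such a localization, strong solidity of $L(A)$ controls normalizers computed \emph{inside} $L(A)$, not inside $L(\G)$; you would need a relative strong solidity statement for the inclusion $L(A)\subset L(\G)$ over the non-amenable edge algebra $L(C)$, which the current deformation/rigidity technology does not supply. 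Your first step (strong solidity of the vertex algebras) is fine, since $A$, $B$ are virtually free, hence hyperbolic, and one may invoke \cite{CS11,PV12}; it is the second and third steps that fail, which is precisely why the paper states the result as a conjecture rather than a theorem.
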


\end{document}